\mathchardef\mhyphen="2D
\def\on{\operatorname}
\definecolor{ao}{rgb}{0.0, 0.5, 0.0}
\providecommand{\leftsquigarrow}{%
  \mathrel{\mathpalette\reflect@squig\relax}%
}
\newcommand{\reflect@squig}[2]{%
  \reflectbox{$\m@th#1\rightsquigarrow$}%
}
\newtheorem{theorem}{Theorem}[section]
\newtheorem{lemma}[theorem]{Lemma}
\newtheorem{conjecture}[theorem]{Conjecture}
\newtheorem{proposition}[theorem]{Proposition}
\newtheorem{corollary}[theorem]{Corollary}
\newtheorem{introthm}{Theorem}
\theoremstyle{definition}
\newtheorem{construction}[theorem]{Construction}
\newtheorem{definition}[theorem]{Definition}
\newtheorem{notation}[theorem]{Notation}
\newtheorem{remark}[theorem]{Remark}
\newtheorem{example}[theorem]{Example}
\title{Ginzburg algebras of triangulated surfaces and perverse schobers}
\author{Merlin Christ}
\date{\today}
\begin{document}

\maketitle
\abstract{Ginzburg algebras associated to triangulated surfaces provide a means to categorify the cluster algebras of these surfaces. As shown by Ivan Smith, the finite derived category of such a Ginzburg algebra can be embedded into the Fukaya category of the total space of a Lefschetz fibration over the surface. Inspired by this perspective, we provide a description of the unbounded derived category in terms of a perverse schober. The main novelty is a gluing formalism describing the Ginzburg algebra as a colimit of certain local Ginzburg algebras associated to discs. As a first application, we give a new construction of derived equivalences between these Ginzburg algebras associated to flips of an edge of the triangulation. Finally, we note that the perverse schober as well as the resulting gluing construction can also be defined over the sphere spectrum.}

\tableofcontents

\section{Introduction}
Cluster algebras were introduced by Fomin and Zelevinsky \cite{FZ02} as a class of commutative algebras equipped with a combinatorial structure relating different subsets of the algebra called clusters. Since then, there has been a great interest in cluster algebras and their relation to other subjects including Teichm\"uller theory, polyhedral surfaces, representation theory of quivers and aspects of noncommutative algebraic geometry such as Calabi-Yau algebras, Calabi-Yau categories and stability conditions. A survey with many references can be found in \cite{Kel08}, we also refer to the cluster algebra portal \cite{CAP} for further surveys and information regarding cluster algebras.

Relevant for this work is a particular class of cluster algebras associated to oriented marked surfaces equipped with an ideal triangulation introduced in \cite{GSV05,FG06,FG09}, and further studied in \cite{FST08,FT12}. These cluster algebras can be described in two different ways. The first perspective is geometric and provides a description in terms of the decorated Teichm\"uller spaces of the surfaces. The cluster variables arise as lambda lengths, which form the coordinates of the Teichm\"uller space. These lambda lengths satisfy an analogue of the classical Ptolemy relations, which gives rise to the cluster exchange relations. The second perspective makes direct use of the combinatorics of the ideal triangulation. The mutation matrix used to define the cluster algebra arises as the signed adjacency matrix of the ideal triangulation, which counts the number of incidences of the ideal triangles. The resulting algebra does not depend on the choice of ideal triangulation but only on the underlying marked surface. 

This second perspective in particular shows that cluster algebras of marked surfaces can be considered as cluster algebras associated to quivers, which can be categorified via $2$-Calabi-Yau (CY) triangulated categories, called cluster categories, and $3$-CY triangulated categories. To describe the $3$-CY categorification of the cluster algebra associated to a quiver $Q$, one chooses a non-degenerate potential $W$. The $3$-CY categorification is then given by the derived category of the Ginzburg algebra $\mathscr{G}(Q,W)$ associated to the quiver with potential $(Q,W)$. The $2$-CY cluster category can be obtained from the derived category of the Ginzburg algebra via the Verdier quotient $\mathcal{D}(\mathscr{G}(Q,W))^{\on{perf}}/\mathcal{D}(\mathscr{G}(Q,W))^{\on{fin}}$, see \cite{Ami09}. There is also a direct link between the Ginzburg algebras and the combinatorics of the cluster algebras, we refer to \cite{Kel12} for a survey.

To describe the results of this work, we first recall the construction of the quiver $Q_\mathcal{T}^\circ$, and a choice of non-degenerate potential $W_\mathcal{T}$, associated to an ideal triangulation $\mathcal{T}$ of a marked surface ${\bf S}$, see \cite{Lab09,GLS16}. We assume for simplicity that $\mathcal{T}$ has no self-folded triangles. The quiver $Q^\circ_\mathcal{T}$ has as vertices the internal edges of $\mathcal{T}$ and an arrow $a:i\rightarrow j$ for each ideal triangle containing the edges $i,j$ where the edge $j$ follows the edge $i$ in the clockwise order of the edges of the ideal triangle induced by the orientation of the surface. The non-degenerate potential $W_\mathcal{T}=W'_\mathcal{T}+W''_\mathcal{T}\in kQ^\circ_\mathcal{T}$ consists of a part $W'_\mathcal{T}$ which is the sum of the clockwise 3-cycles inscribed in the interior ideal triangles of $\mathcal{T}$ and a part $W''_\mathcal{T}$ which is a sum of counter-clockwise cycles, one for each interior marked point of ${\bf S}$.
 
To above mentioned $2$-CY and $3$-CY categorifications can be described in terms of the combinatorial geometry of $\mathcal{T}$, see \cite{QZ17} and references therein for the $2$-CY cluster category and \cite{Qiu18,QZ19} for the finite part of the derived category of the Ginzburg algebra $\mathscr{G}(Q_\mathcal{T}^\circ,W'_\mathcal{T})$. For us most relevant is Ivan Smith's realization of the finite part of the derived category of $\mathscr{G}(Q_\mathcal{T}^\circ,W_\mathcal{T}')$ as a full subcategory of the Fukaya category of a Calabi-Yau $3$-fold $Y^\circ$ equipped with a Lefschetz fibration $\pi:Y^\circ\rightarrow \Sigma$, see \cite{Smi15}. The surface $\Sigma$ is obtained from ${\bf S}$ by removing all interior marked points, i.e.~$\Sigma= {\bf S}\backslash (M\cap {\bf S}^\circ)$, where ${\bf S}^\circ={\bf S}\backslash \partial {\bf S}$ denotes the interior of ${\bf S}$ and $M$ denotes the set of marked points. Inspired by the geometry of $\pi$, we give in this paper a description of the entire unbounded derived category of the Ginzburg algebra $\mathscr{G}(Q_\mathcal{T}^\circ,W_\mathcal{T}')$ in terms of the global sections of a perverse schober. 

Before we describe our model for $\mathcal{D}(\mathscr{G}(Q_\mathcal{T}^\circ,W_\mathcal{T}'))$, we highlight the relation to a model for the partially wrapped Fukaya categories of graded surfaces or equivalently the derived categories of gentle algebras \cite{HKK17,LP20}. Consider an ideal triangulation of a graded marked surface ${\bf S}$ and the dual ribbon graph $\Gamma$. The Fukaya category of the surface ${\bf S}$ is equivalent to the dg-category of global sections of a constructible cosheaf of dg-categories on the ribbon graph $\Gamma$, see \cite{DK15,HKK17}. The cosheaf description of the Fukaya category categorifies the statement that the middle cohomology $\on{H}_{\Gamma}(\Sigma,\mathbb{Z}[1])$ of the surface $\Sigma$ with support on $\Gamma$ is equivalent to the abelian group of global sections of a constructible cosheaf $\underline{\on{H}}_{\Gamma}(\mathbb{Z}[1])$ on $\Gamma$ whose stalk at a point $x$ is the homology $\on{H}_{\Gamma\cap U}(U,\mathbb{Z}[1])$ of a small neighborhood $x\in U\subset \Sigma$ with support on $\Gamma\cap U$. Our model describes the derived category of the Ginzburg algebra in terms of the global sections of a different constructible cosheaf of dg-categories on $\Gamma$. Denote by $\Gamma^\circ$ the ribbon graph obtained by removing all exterior edges of $\Gamma$. Decategorified, the idea behind our model is to express the middle cohomology of the $3$-fold $Y^\circ$ with support on $\pi^{-1}(\Gamma^\circ)$ in terms of the abelian group of global sections with support on $\Gamma^\circ$ of the perverse pushforward $\pi_*(\mathbb{Z}[3])$ to $\Sigma$, which in turn is equivalent to the global sections with support on $\Gamma^\circ$ of a constructible cosheaf $\underline{\on{H}}_{\Gamma}(\pi_*\mathbb{Z}[3])$ on $\Gamma$.  We will not provide a systematic categorification of the perverse pushforward functor $\pi_*$, but rather provide an explicit description of the categorification of the constructible cosheaf $\underline{\on{H}}_{\Gamma}(\pi_*\mathbb{Z}[3])$. This will be achieved by constructing a perverse schober on the surface that is classified locally, at every critical value of Smith's Lefschetz fibration, by the $\on{Ind}$-complete version of the spherical adjunction
\[ \mathcal{W}(T^*S^2) \longleftrightarrow \mathcal{D}(k)^{\on{perf}}\,.\]
The explicit computability of our model then arises from a concrete algebraic description of this adjunction, as well as the resulting categorification of $\underline{\on{H}}_{\Gamma}(\pi_*\mathbb{Z}[3]))$ in terms of variants of Waldhausen's $\on{S}_\bullet$-construction. A full definition of the notion of a perverse schober on a surface is not yet documented in the literature, we thus introduce a framework for the treatment of perverse schobers on surfaces which are parametrized by ribbon graphs. Our definition of parametrized perverse schober can be seen as a generalization of the approach to topological Fukaya categories of surfaces of \cite{DK18,DK15}, allowing for the treatment of nonconstant coefficients. The main result of this paper is the following.

\begin{introthm}\label{introthm1}
Let $\mathcal{T}$ be an ideal triangulation of an oriented marked surface ${\bf S}$ and consider the dual ribbon graph $\Gamma$. There exists a $\Gamma$-parametrized perverse schober $\mathcal{F}_\mathcal{T}$ whose stable $\infty$-category of global sections with support on $\Gamma^\circ$ satisfies
\[
\mathcal{H}_{\Gamma^\circ}(\Gamma,\mathcal{F}_\mathcal{T})\simeq \mathcal{D}(\mathscr{G}(Q^\circ_\mathcal{T},W'_\mathcal{T}))\,,
\]
i.e.~is equivalent to the unbounded derived $\infty$-category of the Ginzburg algebra $\mathscr{G}(Q_\mathcal{T}^\circ,W'_\mathcal{T})$. 
\end{introthm}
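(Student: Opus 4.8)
The plan is to prove the equivalence by a local-to-global argument, realizing both sides as the same gluing -- a colimit of stable $\infty$-categories -- over the combinatorics of the ribbon graph $\Gamma^\circ$, and then identifying the local contributions. First I would construct $\mathcal{F}_\mathcal{T}$. In the framework for ribbon-graph-parametrized perverse schobers set up above, such a datum amounts to a generic stable $\infty$-category assigned to the edges of $\Gamma$ together with, at each vertex, a compatible local model -- a spherical functor -- subject to the monodromy constraints imposed by the cyclic ribbon structure. I would build $\mathcal{F}_\mathcal{T}$ so that at every critical value of Smith's Lefschetz fibration the local model is the $\on{Ind}$-completion of the spherical adjunction $\mathcal{W}(T^*S^2)\leftrightarrow\mathcal{D}(k)^{\on{perf}}$, these critical values being distributed over $\Gamma$ according to the combinatorics of $\mathcal{T}$. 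The key point is that the prescribed gluing is consistent -- the monodromy around each singular point is the spherical twist of the corresponding adjunction -- which I would verify from an explicit algebraic presentation: identify $\mathcal{W}(T^*S^2)$ with the derived $\infty$-category of a concrete dg-algebra, the adjunction with restriction and induction along an explicit dg-functor, and read off the twist directly.

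Next I would compute the sections of $\mathcal{F}_\mathcal{T}$ over the local pieces of $\Gamma^\circ$. By the description of the local sections of a perverse schober in terms of a variant of Waldhausen's $\on{S}_\bullet$-construction, the sections with support over the portion of $\Gamma^\circ$ belonging to a single ideal triangle $\Delta$ are identified with the unbounded derived $\infty$-category $\mathcal{D}(\mathscr{G}_\Delta)$ of the \emph{local Ginzburg dg-algebra of the disc} carrying $\Delta$, while the generic categories on the interior edges record the transition between adjacent triangles. By definition of global sections with support, $\mathcal{H}_{\Gamma^\circ}(\Gamma,\mathcal{F}_\mathcal{T})$ is the limit of the resulting diagram of local categories and restriction functors; since all of these functors are spherical and hence admit both adjoints, this limit may equivalently be written as a colimit of the same diagram along the left adjoints, which exhibits
\[
\mathcal{H}_{\Gamma^\circ}(\Gamma,\mathcal{F}_\mathcal{T})\;\simeq\;\operatorname*{colim}_{\Delta}\,\mathcal{D}(\mathscr{G}_\Delta)\,,
\]
the colimit running over the diagram that glues the disc Ginzburg algebras along the interior edges shared by pairs of triangles.

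It then remains to prove the matching statement on the other side: that $\mathcal{D}(\mathscr{G}(Q^\circ_\mathcal{T},W'_\mathcal{T}))$ is the colimit of the very same diagram $\Delta\mapsto\mathcal{D}(\mathscr{G}_\Delta)$, with cocone the restriction functors produced in the local analysis. This is the principal novelty and, I expect, the main obstacle. I would establish it by exhibiting $\mathscr{G}(Q^\circ_\mathcal{T},W'_\mathcal{T})$ directly as a homotopy colimit of the local Ginzburg dg-algebras: the quiver $Q^\circ_\mathcal{T}$, its double, the adjoined reversed arrows in degree $-1$ and loops in degree $-2$, and the Ginzburg differential all decompose over the interior triangles -- precisely because the partial potential $W'_\mathcal{T}$ is a sum of terms each supported inside a single triangle, which is why one must use $W'_\mathcal{T}$ rather than the full $W_\mathcal{T}$ (the counter-clockwise cycles of $W''_\mathcal{T}$ encircle interior marked points and would obstruct the decomposition). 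The gluing is along the dg-subcategories attached to the shared interior edges, which also carry the degree $-2$ loops sitting there, and the delicate part is to run this decomposition at the level of stable $\infty$-categories rather than merely on underlying graded algebras -- controlling the morphism complexes on both sides and checking that the induced cocone is the one coming from $\mathcal{F}_\mathcal{T}$. I would do this by comparing compact generators together with their endomorphism dg-algebras. Combining this with the previous paragraph yields the asserted equivalence.
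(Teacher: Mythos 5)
Your overall strategy is the same as the paper's: build $\mathcal{F}_\mathcal{T}$ from copies of the spherical adjunction $f^*:\mathcal{D}(k)\leftrightarrow\on{Fun}(S^2,\mathcal{D}(k))$ at the vertices of the dual ribbon graph, pass from the limit defining (supported) global sections to the colimit of the left adjoint diagram in $\mathcal{P}r^L$, identify the local pieces with derived categories of explicit dg-algebras, and compare the resulting colimit with a homotopy colimit presentation of the Ginzburg dg-algebra (the paper does this by induction on the number of triangles, first proving the stronger statement $\mathcal{H}(\Gamma,\mathcal{F}_\mathcal{T})\simeq\mathcal{D}(\mathscr{G}_\mathcal{T})$ for the \emph{relative} Ginzburg algebra and then killing the external edges by a further pushout with $0$). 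However, there are two genuine gaps. First, your description of the local pieces and the gluing is off in a way that matters. The piece attached to a triangle $\Delta$ must be the \emph{relative} Ginzburg algebra of the $3$-gon (the dg-category $D_3$ of \Cref{algmod1}), in which the three edges are frozen vertices carrying \emph{no} degree~$2$ loops; the non-relative Ginzburg algebra of a triangle is zero, so "disc Ginzburg algebras" in the naive sense cannot be glued at all. The edge categories along which one glues are $\on{Fun}(S^2,\mathcal{D}(k))\simeq\mathcal{D}(k[t_1])$ with generator in degree~$1$ --- they do not "carry the degree $-2$ loops". The degree~$2$ loop $l_e$ of the Ginzburg algebra at an internal edge is \emph{created by} the gluing: each side contributes a contracting homotopy $l'_{e,2}$ witnessing that the image of $t_1$ equals $\pm\sum p_e[a,a^*]p_e$ up to a degree-$1$ class, and $l_e$ is (up to sign) the sum of the two. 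Without setting up the local algebras with these auxiliary generators (the cofibrant replacement step in the paper's induction), the colimit comparison in your third paragraph does not produce the commutator relation $d(l_e)=\sum p_e[a,a^*]p_e$.

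Second, you never address the choices involved in assembling $\mathcal{F}_\mathcal{T}$, and these are not innocuous. The inclusion functors $\varsigma_i:\mathcal{N}_{f^*}\to\mathcal{V}^3_{f^*}$ from an edge category into a vertex category come in sign variants (on generators, $t_1\mapsto\pm(cc^*-a^*a)$, etc., see \Cref{ifn}), and when two triangles are glued along an edge the two inclusions must be arranged so that the images of $t_1$ from the two sides are \emph{negatives} of each other; otherwise the glued differential has wrong signs and the resulting dg-algebra is not (obviously) the Ginzburg algebra of $(Q^\circ_\mathcal{T},W'_\mathcal{T})$. The paper fixes this by composing certain restriction functors with the suspended cotwist $T=T_{\on{Fun}(S^2,\mathcal{D}(k))}[2]$ (which is \emph{not} the identity in even dimension, cf.\ \Cref{ctwprop}), and shows that the total choice amounts to a spin structure on $\Sigma$. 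Your proposal, as written, would either leave $\mathcal{F}_\mathcal{T}$ underdetermined or, with an arbitrary choice, fail at the final identification. These are the two places where the "principal novelty" you correctly locate actually requires work beyond comparing compact generators.
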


Note that if $\mathcal{T}$ contains no interior marked points, the potential $W'_\mathcal{T}=W_\mathcal{T}$ is non-degenerate. Given an ideal triangulation $\mathcal{T}$ with interior marked points, the potential $W'_\mathcal{T}$ is in general degenerate. In this case, the Ginzburg algebra $\mathscr{G}(Q_\mathcal{T}^\circ,W_\mathcal{T}')$ is not expected to fully capture the cluster combinatorics.

Informally, \Cref{introthm1} can be summarized as the statement that the derived $\infty$-category $\mathcal{D}(\mathscr{G}(Q^\circ_\mathcal{T},W'_\mathcal{T}))$ arises via the gluing of simpler $\infty$-categories. The pieces used in the gluing construction are the derived $\infty$-categories of certain \textit{relative Ginzburg algebras} of $n$-gons. This terminology was suggested by Bernhard Keller in his ICRA 2020 lecture series on relative Calabi-Yau structures. The derived $\infty$-category of a relative Ginzburg algebra also appears as the $\infty$-category of global sections $\mathcal{H}(\Gamma,\mathcal{F}_\mathcal{T})$ of the parametrized perverse schober $\mathcal{F}_\mathcal{T}$ (without any restrictions on the support). The $\infty$-category $\mathcal{H}(\Gamma,\mathcal{F}_\mathcal{T})$ contains $\mathcal{H}_{\Gamma^\circ}(\Gamma,\mathcal{F}_\mathcal{T})\simeq \mathcal{D}(\mathscr{G}(Q_\mathcal{T}^\circ,W_\mathcal{T}')$ as a full subcategory. The passage from all global sections to global sections with support on the interior thus constitutes a loss of information which explains why the non-relative Ginzburg algebras cannot directly be glued. In terms of the underlying cluster algebras, our gluing construction seems to be a special case of the procedure of amalgamation and defrosting of cluster algebras of \cite{FG06a}.

To make the gluing construction of $\mathcal{D}(\mathscr{G}(Q_\mathcal{T}^\circ,W_\mathcal{T}'))$ work, we need to determine the correct way to glue the pieces. Making different choices would lead to different signs of the differentials of the Ginzburg algebra. The total choice of signs is equivalent to a choice of spin structure on the surface $\Sigma={\bf S}\backslash (M\cap {\bf S}^\circ)$, see \Cref{spinsec}. 

The formalism used for the description of the perverse schober $\mathcal{F}_\mathcal{T}$ works not only in the $k$-linear setting but also over the sphere spectrum. Many of our results naturally extend to this more general setting, see \Cref{specsec}. 

In \Cref{sec1.1}, we recall the full definition of the $3$-CY Ginzburg algebra and continue by introducing relative Ginzburg algebras. In \Cref{sec1.2} contains a discussion of parametrized perverse schobers and Smith's results. In \Cref{sec1.3} we describe the gluing construction of the Ginzburg algebra.

\subsection{Relative Ginzburg algebras of triangulated surfaces}\label{sec1.1}

A quiver $Q$ consists of a finite set of vertices, denoted $Q_0$, and a finite set of arrows, denoted $Q_1$, together with source and target maps $s,t:Q_1\rightarrow Q_0$. A quiver is called graded if each arrow carries an integer labeling. Given a graded quiver $Q$, we denote by $kQ$ the graded path algebra over a commutative ring $k$. A potential $W$ for a quiver $Q$ is an element of the cyclic path algebra $kQ^{\on{cyc}}$, meaning the algebra of $k$-linear sums of cyclic paths. 

For the definition of the Ginzburg algebra, due to \cite{Gin06}, we follow \cite{Kel11}. Consider a quiver with potential $(Q,W)$. We denote by $Q'$ the graded quiver with the same set of vertices as $Q$ and graded arrows of the following three kinds.
\begin{itemize}
\item An arrow $a:i\rightarrow j$ in degree $0$ for each $a:i\rightarrow j\in Q_1$.
\item An arrow $a^*:j\rightarrow i$ in degree $1$ for each $a:i\rightarrow j\in Q_1$.
\item An arrow $l_i:i\rightarrow i$ in degree $2$ for each $i\in Q_0$.
\end{itemize}
The cyclic derivative $\partial_a:kQ_{cyc}\rightarrow kQ$ with respect to $a\in Q_1$ is the $k$-linear map taking a cycle $c$ to $\partial_ac=\sum_{c=uav}uv$, where $u,v\in kQ$ are allowed to be lazy paths. We denote the lazy path at a vertex $i\in Q_0$ by $p_i$. We define the Ginzburg algebra $\mathscr{G}(Q,W)$ to be the dg-algebra whose underlying graded algebra is given by the graded path algebra $kQ'$ and whose differential $d$ is determined by the following action on the generators.
\begin{align*}
 a   & \mapsto 0\\
 a^* & \mapsto \partial_{a} W\\
 l_i & \mapsto \sum_{a\in Q_1}p_i[a,a^*]p_i 
\end{align*}
Note that $\mathscr{G}(Q,W)$ is not the completed Ginzburg algebra, as for example considered in \cite{KY11,Smi15}. We will not consider completed Ginzburg algebras in this paper. In terms of the associated derived $\infty$-categories of these dg-algebras this does not mean much of a loss, because the derived $\infty$-category of the completed Ginzburg algebra can be realized as a full subcategory of the derived $\infty$-category of the non-completed Ginzburg algebra. This perspective however neglects the additional topological structure of the completed Ginzburg algebra, see for example the Appendix in \cite{KY11}.

We now introduce a relative version of the Ginzburg algebra $\mathscr{G}(Q_\mathcal{T},W'_\mathcal{T})$ associated to an ideal triangulation $\mathcal{T}$ of an oriented marked surface ${\bf S}$. We define a quiver $Q_\mathcal{T}$ by adapting the definition of the quiver $Q^\circ_\mathcal{T}$ to include the boundary of ${\bf S}$. We let $Q_\mathcal{T}$ be the quiver with a vertex for each edge of $\mathcal{T}$ (including boundary edges) and an arrow $a:i\rightarrow j$ for each ideal triangle containing the edges $i,j$ where the edge $j$ follows the edge $i$ in the clockwise order. If $\mathcal{T}$ contains self-folded triangles, we additionally include an arrow $a:i\rightarrow i$ for each self-folded edge $i$ of $\mathcal{T}$. The quiver $Q_\mathcal{T}$ contains a clockwise $3$-cycle $T(f)$ for each ideal triangle $f$ of $\mathcal{T}$. We define the potential 
\[ \overline{W}'_\mathcal{T}=\sum_f T(f)\in kQ_\mathcal{T}^{\on{cyc}}\,.\]  
We denote by $\tilde{Q}_\mathcal{T}$ the graded quiver with the same set of vertices as $Q_\mathcal{T}$ and graded arrows of the following three kinds.
\begin{itemize}
\item An arrow $a:i\rightarrow j$ in degree $0$ for each $a:i\rightarrow j\in (Q_\mathcal{T})_1$.
\item An arrow $a^*:j\rightarrow i$ in degree $1$ for each $a:i\rightarrow j\in (Q_\mathcal{T})_1$.
\item An arrow $l_{i}:i\rightarrow i$ in degree $2$ for each vertex $i\in (Q_\mathcal{T})_0$ given by an internal edge of $\mathcal{T}$.
\end{itemize}
We define the \textit{relative Ginzburg algebra} $\mathscr{G}_\mathcal{T}$ to be the dg-algebra whose underlying graded algebra is given by the graded path algebra $k\tilde{Q}_\mathcal{T}$ and whose differential is determined by the following action on the generators.
\begin{align*}
 a   & \mapsto 0\\
 a^* & \mapsto \partial_{a} \overline{W}'_\mathcal{T}\\
 l_{i} & \mapsto \sum_{a\in (Q_\mathcal{T})_1}p_i[a,a^*]p_i 
\end{align*}

The relative Ginzburg algebra $\mathscr{G}_\mathcal{T}$ is an example of the more general relative Ginzburg algebras associated to ice quivers with potential, see \cite{Wu21b}. An ice quiver is a quiver equipped with the further datum of a subquiver, whose vertices and arrows are called frozen. The ice quiver underlying $\mathscr{G}_\mathcal{T}$ is given by $Q_\mathcal{T}$, with frozen vertices given by the boundary edges of $\mathcal{T}$ and no frozen arrows. The potential is $\overline{W}_\mathcal{T}'$. 

The quiver $Q_\mathcal{T}^\circ$ is the full subquiver of $Q_\mathcal{T}$ spanned by the vertices corresponding to internal edges. The potential $W_\mathcal{T}'=\sum_{f}T(f)\in (kQ^\circ_\mathcal{T})^{\on{cyc}}$ consists of all $3$-cycles inscribed into internal ideal triangles of $\mathcal{T}$. Note that if the boundary of ${\bf S}$ is empty, then $(Q_\mathcal{T},\overline{W}_\mathcal{T}')=(Q_\mathcal{T}^\circ,W_\mathcal{T}')$ and the relative Ginzburg algebra $\mathscr{G}_\mathcal{T}$ is equivalent to $\mathscr{G}(Q_\mathcal{T}^\circ,W_\mathcal{T}')$. As an example, let ${\bf S}$ be the $3$-gon and $\mathcal{T}$ a triangle. The relative Ginzburg algebra $\mathscr{G}_\mathcal{T}$ is then given by the graded path algebra of the graded quiver 
\begin{equation}\label{quiv1}
\begin{tikzcd}
                                                 & \cdot \arrow[ld, "1"] \arrow[rd, "0", bend left] &                                                  \\
\cdot \arrow[ru, "0", bend left] \arrow[rr, "1"] &                                                    & \cdot \arrow[lu, "1"] \arrow[ll, "0", bend left]
\end{tikzcd}
\end{equation}
with differential $d$ mapping each arrow of degree $1$ to the composite of the two opposite arrows of degree $0$. The Ginzburg algebra $\mathscr{G}(Q_\mathcal{T}^\circ,W_\mathcal{T}')$ of the triangle $\mathcal{T}$ is however zero. 

\Cref{introthm1} extends to relative Ginzburg algebra in the following way. 

\begin{introthm}\label{introthm1.5}
Let $\mathcal{T}$ be an ideal triangulation of an oriented marked surface ${\bf S}$ with dual ribbon graph $\Gamma$. The $\infty$-category of global sections of the parametrized perverse schober $\mathcal{F}_\mathcal{T}$ satisfies
\[
\mathcal{H}(\Gamma,\mathcal{F}_\mathcal{T})\simeq \mathcal{D}(\mathscr{G}_\mathcal{T})\,.
\]
\end{introthm}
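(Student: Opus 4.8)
The plan is to prove \Cref{introthm1.5} by a local-to-global argument, matching a decomposition of the parametrized perverse schober $\mathcal{F}_\mathcal{T}$ along $\Gamma$ with a presentation of the relative Ginzburg algebra $\mathscr{G}_\mathcal{T}$ as a homotopy colimit of the relative Ginzburg algebras of the ideal triangles of $\mathcal{T}$. Both the functor of global sections $\mathcal{H}(\Gamma,-)$ and the construction $\mathcal{D}(-)$ applied to a suitable gluing of dg-algebras turn a decomposition of $\Gamma$ into a limit of stable $\infty$-categories, so it suffices to (i) identify the local pieces and (ii) identify the gluing data on the two sides. I expect the disc case in (i) to be isolated as a separate proposition in the body of the paper, after which \Cref{introthm1.5} becomes a pure gluing statement.

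For step (i), consider a vertex $v$ of $\Gamma$, which is trivalent if $\mathcal{T}$ has no self-folded triangle. The restriction of $\mathcal{F}_\mathcal{T}$ to the star of $v$ is, by construction, the local model of a perverse schober on a disc with a single singularity, parametrized by an $n$-spider and classified by the $\mathrm{Ind}$-completion of the spherical adjunction $\mathcal{W}(T^*S^2)\leftrightarrow\mathcal{D}(k)^{\on{perf}}$. Using the explicit algebraic presentation of this adjunction together with the description of the global sections of a perverse schober on a disc in terms of (relative) Waldhausen $\on{S}_\bullet$-constructions recalled in \Cref{sec1.2}, I would compute $\mathcal{H}(\operatorname{star}(v),\mathcal{F}_\mathcal{T})$ directly and identify it with $\mathcal{D}(\mathscr{G}_{f_v})$, where $\mathscr{G}_{f_v}$ denotes the relative Ginzburg algebra of the $n$-gon (for $n=3$, the dg-path algebra of the quiver \eqref{quiv1}); the half-edges of $v$ correspond to the frozen vertices.

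For step (ii), cover $\Gamma$ by the stars $\operatorname{star}(v_f)$, one for each ideal triangle $f$, with pairwise overlaps given by small neighborhoods of the midpoints of the internal edges $i$ of $\mathcal{T}$, on which $\mathcal{F}_\mathcal{T}$ is locally constant with generic stalk $\mathcal{D}(k)$. The descent property of $\mathcal{H}(\Gamma,-)$ then presents $\mathcal{H}(\Gamma,\mathcal{F}_\mathcal{T})$ as the limit of the Čech diagram with entries the $\mathcal{H}(\operatorname{star}(v_f),\mathcal{F}_\mathcal{T})\simeq\mathcal{D}(\mathscr{G}_f)$ and the edge-stalks $\mathcal{D}(k)$, whose structure functors are the nearby-cycle (restriction-to-the-frozen-vertex) functors $\mathcal{D}(\mathscr{G}_f)\to\mathcal{D}(k)$, $X\mapsto X\otimes_{\mathscr{G}_f}\mathscr{G}_f p_i$. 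On the algebra side I would exhibit the matching presentation: the graded quiver $\tilde{Q}_\mathcal{T}$, the potential part $a^*\mapsto\partial_a\overline{W}'_\mathcal{T}$ of the differential, and all generators except the loops at internal edges are local to the triangles, so $\mathscr{G}_\mathcal{T}$ is obtained from $\coprod_f\mathscr{G}_f$ glued along the semisimple edge-algebras by freely adjoining, for each internal edge $i$, a degree-$2$ loop $l_i$ with $d l_i=\sum_{a\ni i}p_i[a,a^*]p_i$ — the sum of the two half-commutators coming from the two adjacent triangles. This is precisely the effect of the gluing functors of $\mathcal{F}_\mathcal{T}$ along an internal edge. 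Since the gluing is along semisimple subalgebras there are no higher $\operatorname{Tor}$-corrections, so $\mathcal{D}(-)$ carries this homotopy colimit of dg-categories to the above Čech limit, giving $\mathcal{H}(\Gamma,\mathcal{F}_\mathcal{T})\simeq\mathcal{D}(\mathscr{G}_\mathcal{T})$.

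The main obstacle I anticipate is the precise matching of the gluing data, and in particular the signs. The nearby- and vanishing-cycle functors of $\mathcal{F}_\mathcal{T}$ carry orientation data, and when two triangle pieces are glued along an internal edge the resulting loop acquires a differential of the form $\pm p_i[a,a^*]p_i\pm p_i[b,b^*]p_i$; only a globally consistent choice of these signs — equivalently, a spin structure on $\Sigma={\bf S}\setminus(M\cap{\bf S}^\circ)$, as discussed in \Cref{spinsec} — reproduces the differential of $\mathscr{G}_\mathcal{T}$ on the nose rather than that of a sign-twisted dg-algebra. One must therefore verify that $\mathcal{F}_\mathcal{T}$ is assembled with a spin structure compatible with the conventions defining $\mathscr{G}_\mathcal{T}$, and, separately, that the chosen cover genuinely computes $\mathcal{H}(\Gamma,-)$ (that the relevant colimit of dg-categories along the semisimple pieces is a homotopy colimit). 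With these in place, the case of self-folded triangles is subsumed by allowing the corresponding degenerate local model at the relevant vertex of $\Gamma$.
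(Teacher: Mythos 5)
Your overall local-to-global strategy is the right shape and matches the paper's, which also proceeds by gluing relative Ginzburg algebras of triangles and keeps careful track of signs via twists. But the key technical step — identifying the gluing data on the two sides — is mis-specified, and the error is not cosmetic. You take the edge-stalks of $\mathcal{F}_\mathcal{T}$ to be $\mathcal{D}(k)$ and propose gluing the local pieces $\mathcal{D}(\mathscr{G}_f)$ along "semisimple edge-algebras," with structure functors $X\mapsto X\otimes_{\mathscr{G}_f}\mathscr{G}_f p_i$. In fact, the edge-stalk of $\mathcal{F}_\mathcal{T}$ is the generic stalk $\on{Fun}(S^2,\mathcal{D}(k))\simeq\mathcal{D}(k[t_1])$, not $\mathcal{D}(k)$; the category $\mathcal{D}(k)$ is the vanishing-cycle category at a vertex, which you have conflated with the nearby-cycle category appearing at an edge. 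Correspondingly, the structure functors are the categorified restriction maps $\varrho_i:\mathcal{V}^n_{f^*}\to\on{Fun}(S^2,\mathcal{D}(k))$, which the paper shows (via \Cref{algmod2}) are modeled by dg-functors $k[t_1]\to D_3$ sending $t_1$ to $\pm(cc^*-a^*a)$ and its cyclic rotations — not by restriction to a lazy idempotent. The dg-algebra gluing is therefore a homotopy pushout over $k[t_1]$, not over $k$; it is this $k[t_1]$-gluing that \emph{produces} the internal loop $l_i$ and its differential $\sum_a p_i[a,a^*]p_i$ by identifying the two loops coming from the adjacent triangles, rather than one being free to "adjoin" $l_i$ afterward. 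Your "no higher $\operatorname{Tor}$-corrections because the gluing is semisimple" argument thus does not apply: the object being glued over is $k[t_1]$, and one instead needs a cofibrant model. This is exactly why the paper replaces $\mathscr{G}_\mathcal{T}$ by a Morita-equivalent $C_\mathcal{T}$ carrying extra degree-$1$ and degree-$2$ loops at every frozen vertex, so that the comparison maps $i_e^\pm:k[t_1]\to C_\mathcal{T}$ become cofibrations and the homotopy pushout is an ordinary pushout.

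You correctly flag the sign/spin issue, but leaving it as "one must verify a compatible spin structure" hides most of the remaining work. The paper resolves this by running the gluing as an induction on the number of ideal triangles and, at each step, inserting a twist $T_e$ (the suspended cotwist of $f^*\dashv f_*$) on the relevant edges precisely when the two local signs agree, so that the differentials of the two adjacent degree-$2$ loops sum to the Ginzburg differential. Without tracking, edge by edge, which of $i_e^+$ or $i_e^-$ models each evaluation functor (the second part of the paper's inductive hypothesis), there is no way to know where these twists go, and the Čech limit will compute the derived category of a sign-twisted variant of $\mathscr{G}_\mathcal{T}$ rather than $\mathscr{G}_\mathcal{T}$ itself. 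In short: your step (i) is essentially the paper's \Cref{algmod1}/\Cref{algmod2}, but your step (ii) needs to be rebuilt with $\mathcal{D}(k[t_1])$ (not $\mathcal{D}(k)$) as the overlap category, with the $\varrho_i$ as structure functors, with the cofibrant dg-model $C_\mathcal{T}$ in place of $\mathscr{G}_\mathcal{T}$, and with an explicit bookkeeping device (the paper's induction plus the $T_e$'s, or an a priori choice of spin structure as in \Cref{thm3}) to control the signs.
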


In \cite[Section 7.6]{Kel11}, it shown that mutation of quivers with potential induce derived equivalences between the respective Ginzburg algebras. In \cite{Lab09} it is shown that if two ideal triangulations $\mathcal{T},\mathcal{T}'$ are related by a flip of an edge, the associated quivers with potentials $(Q^\circ_\mathcal{T},W_\mathcal{T})$ and $(Q^\circ_{\mathcal{T}'},W_{\mathcal{T}'})$ are related by quiver mutation. In combination these two results show that flips of ideal triangulations induce derived equivalences of the associated Ginzburg algebras. We extend the derived equivalences to the relative Ginzburg algebras. 

\begin{introthm}\label{introthm2}
Let ${\bf S}$ be an oriented marked surface with two ideal triangulations $\mathcal{T},\mathcal{T}'$ related by a flip of an edge $e$ of $\mathcal{T}$. Then there exists an equivalence of $\infty$-categories 
\[ \mu_e: \mathcal{D}(\mathscr{G}_\mathcal{T})\simeq \mathcal{D}(\mathscr{G}_{\mathcal{T}'})\,.\]
\end{introthm}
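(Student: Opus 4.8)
The plan is to leverage \Cref{introthm1.5}, which reidentifies $\mathcal{D}(\mathscr{G}_\mathcal{T})$ with the global sections $\mathcal{H}(\Gamma,\mathcal{F}_\mathcal{T})$ of a parametrized perverse schober, together with the local-to-global (gluing) nature of these global sections. First I would observe that a flip is a local move: the triangulations $\mathcal{T}$ and $\mathcal{T}'$ agree outside the quadrilateral $\square_e$ formed by the two ideal triangles incident to $e$, so the dual ribbon graphs $\Gamma$ and $\Gamma'$ agree outside a contractible neighbourhood $N$ of the edge of $\Gamma$ dual to $e$ (together with its two trivalent endpoints), and under this identification the restricted perverse schobers $\mathcal{F}_\mathcal{T}|_{\Gamma\setminus N}$ and $\mathcal{F}_{\mathcal{T}'}|_{\Gamma'\setminus N}$ coincide. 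Since $\mathcal{H}(\Gamma,-)$ is computed as a limit over the exit-path category of $\Gamma$, it decomposes as a pullback of $\mathcal{H}(N,\mathcal{F}_\mathcal{T}|_N)$ and of $\mathcal{H}(\Gamma\setminus N^\circ,\mathcal{F}_\mathcal{T}|_{\Gamma\setminus N^\circ})$ over the values of the schober on the finitely many half-edges of $\Gamma$ crossing $\partial N$ — and likewise for $\mathcal{T}'$. An equivalence $\mu_e$ is therefore obtained by gluing the identity on the common outside part with a local equivalence $\Phi\colon \mathcal{H}(N,\mathcal{F}_\mathcal{T}|_N)\xrightarrow{\ \sim\ }\mathcal{H}(N,\mathcal{F}_{\mathcal{T}'}|_N)$ that is compatible with the restriction functors to the four half-edges meeting $\partial N$ (which agree on both sides).

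This reduces the theorem to the case where ${\bf S}=\square_e$ is a square and $\mathcal{T},\mathcal{T}'$ are its two triangulations. By \Cref{introthm1.5} applied to $\square_e$ we have $\mathcal{H}(N,\mathcal{F}_\mathcal{T}|_N)\simeq \mathcal{D}(\mathscr{G}_{\square_e,\mathcal{T}})$ and the analogous statement for $\mathcal{T}'$, where $\mathscr{G}_{\square_e,\mathcal{T}}$ is the relative Ginzburg algebra of the square with its diagonal $e$ — an ice quiver with potential whose only non-frozen vertex is $e$ and whose potential is the sum $T(f_1)+T(f_2)$ of the two inscribed $3$-cycles. Thus it remains to produce an equivalence $\mathcal{D}(\mathscr{G}_{\square_e,\mathcal{T}})\simeq\mathcal{D}(\mathscr{G}_{\square_e,\mathcal{T}'})$ that intertwines the four functors restricting to the frozen edges.

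For this local equivalence there are two complementary routes. The representation-theoretic one: by \cite{Lab09} the flip of $e$ is the mutation of the ice quiver with potential at the non-frozen vertex $e$, and the argument of \cite[Section~7.6]{Kel11}, in the relative/ice setting of \cite{Wu21b}, produces an explicit invertible $\mathscr{G}_{\square_e,\mathcal{T}}$--$\mathscr{G}_{\square_e,\mathcal{T}'}$-bimodule (the mutation bimodule) implementing a derived equivalence, and one checks directly that this bimodule restricts to the identity on the frozen idempotents. The geometric route, closer to the spirit of the paper: compute $\mathcal{H}(N,\mathcal{F}_\mathcal{T}|_N)$ from the gluing formula in terms of the $\on{S}_\bullet$-type construction and the spherical adjunction $\mathcal{W}(T^*S^2)\leftrightarrow\mathcal{D}(k)^{\on{perf}}$ at the single interior critical value, and observe that this description is manifestly invariant under the rotation of the square exchanging the two diagonals; the induced rotation of $N$ then yields $\Phi$, compatible with the boundary legs by construction. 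I expect the main obstacle to be exactly the bookkeeping needed to make the local equivalence compatible with the frozen restriction functors \emph{and} with the sign conventions: as noted in \Cref{spinsec} the signs of the differential encode a spin structure on $\Sigma$, and one must verify that the flip, suitably interpreted, intertwines the spin structures on $\Sigma$ obtained from $\mathcal{T}$ and from $\mathcal{T}'$, so that the local equivalences assemble without a sign obstruction. Once this is in place, gluing $\Phi$ with the identity as above yields $\mu_e\colon\mathcal{D}(\mathscr{G}_\mathcal{T})\simeq\mathcal{D}(\mathscr{G}_{\mathcal{T}'})$.
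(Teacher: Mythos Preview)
Your reduction to the local quadrilateral via the gluing description of global sections is correct and is exactly how the paper sets things up: $\mathcal{F}_\mathcal{T}$ is decomposed as the gluing of its restriction to the two triangles containing $e$ and the complementary schober, and one only has to produce a local equivalence over the square compatible with the four boundary evaluations.

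Your geometric route, however, has a genuine gap. The rotation of the square exchanging the two diagonals cyclically permutes the four boundary edges, so the induced $\Phi$ intertwines $\on{ev}_{e_i}$ with $\on{ev}_{e_{i+1}}$ rather than with $\on{ev}_{e_i}$; it therefore cannot be glued with the identity on the complement. No rigid symmetry of the square carries one diagonal to the other while fixing each side individually, so this is not repairable by choosing a different symmetry. (Incidentally, the local picture has two singular vertices, one per ideal triangle, not a ``single interior critical value''.) Your representation-theoretic route via \cite{Wu21b} does work and is exactly the alternative the paper records after the statement of the theorem, but the boundary compatibility of the mutation bimodule needs more than the one-line check you indicate.

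The paper's own argument is different from both of your routes: it realises the flip as a zigzag of \emph{edge contractions} of ribbon graphs. One first inserts auxiliary bivalent vertices carrying the trivial spherical functor $0_{\mathcal{N}_{f^*}}$, then contracts toward a graph with a single non-singular $4$-valent vertex, and reverses the process to reach the local graph of $\mathcal{T}'$; see the spans \eqref{span1}--\eqref{span2} and the sequence \eqref{schobers0}--\eqref{schobers5}. Each contraction preserves global sections by \Cref{conprop} (no two singularities are merged), and the intermediate re-indexings are implemented by the paracyclic twist $T_{\mathcal{V}^n_{0_{\mathcal{N}_{f^*}}}}$ of \Cref{paralem4}. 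Crucially, the four external legs stay fixed throughout this process, which is precisely what the rotation argument fails to achieve; as a bonus the construction lets one compute $\mu_e(P_e)$ directly (\Cref{mutprop1}) without importing the mutation bimodule.
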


We will prove \Cref{introthm2} in \Cref{sec6.4} using an intrinsic feature of the theory of parametrized perverse schobers, namely equivalences of global sections induced from contractions of the underlying ribbon graphs.

We thank Bernhard Keller for informing us about an alternative approach to \Cref{introthm2}. A result of Yilin Wu \cite{Wu21b} extends the argument from \cite[Section 7.6]{Kel12} to relative Ginzburg algebras, showing that the mutations of ice quivers with potential of \cite{Pre20} induce derived equivalences between the associated relative Ginzburg algebras. \Cref{introthm2} may then be recovered by additionally extending the results of \cite{Lab09} relating flips of the ideal triangulation and mutations of quivers with potentials to ice quivers.

\subsection{Perverse schobers and Fukaya categories}\label{sec1.2} 

Perverse schobers are a conjectured categorification of the notion of perverse sheaf \cite{KS14}. An approach to the categorification of a perverse sheaf on a disk was suggested in \cite{KS14}. The datum of a perverse sheaf on a disk with a single singularity in the center is equivalent to the datum of a certain quiver diagram; the proposed 'ad-hoc' categorification of the quiver description is a spherical adjunction. In this paper, we extend this ad-hoc categorification to perverse schobers on oriented marked surfaces. We combinatorially describe perverse schober using ribbon graphs. Such a ribbon graph arises as the dual to an ideal triangulation of the marked surface. Given a ribbon graph $\Gamma$ we define a poset $\on{Exit}(\Gamma)$ with 
\begin{itemize}
\item objects the vertices and edges of $\Gamma$,
\item morphisms of the form $v\rightarrow e$ with $v$ a vertex and $e$ an incident edge.
\end{itemize} 
For each $n$-valent vertex $v$ of $\Gamma$ there exists a subposet $\on{Exit}(\Gamma)_{v/}\subset \on{Exit}(\Gamma)$ consisting of the vertex $v$ and the $n$ incident edges. We define a perverse schober $\mathcal{F}$ parametrized by $\Gamma$ to be a functor $\mathcal{F}:\on{Exit}(\Gamma)\rightarrow \on{St}$ into the $\infty$-category of stable $\infty$-categories such that the restriction to $\on{Exit}(\Gamma)_{v/}$ is for every vertex $v$ equivalent to a particular diagram obtained from a spherical adjunction. The exact definition is based on the categorified Dold-Kan correspondence of \cite{Dyc17} and categorifies the 'fractional spin' description of perverse sheaves on a disc of \cite{KS16}. The definition of parametrized perverse schober captures the idea that a perverse schober on a surface is a collection of suitably glued together spherical adjunctions, categorifying the description of perverse sheaves on surfaces given in \cite{KS16}. The $\infty$-category of global sections $\mathcal{H}(\Gamma,\mathcal{F})$ of a parametrized perverse schober $\mathcal{F}$ is defined as the limit of $\mathcal{F}$ in $\on{St}$. Under mild technical assumptions, the global sections of $\mathcal{F}$ are equivalent to a suitable colimit
of the dual to $F$ (left adjoint diagram), which describes a constructible cosheaf, see \Cref{sec4.3}

Given an ideal triangulation $\mathcal{T}$ without self-folded triangles of an oriented marked surface ${\bf S}$, Smith \cite{Smi15} defines a Calabi-Yau $3$-fold $Y$ with an affine conic fibration $\pi:Y\rightarrow {\bf S}$. The relation to Ginzburg algebras is as follows, see \textit{loc.~cit}.
\begin{itemize}
\item The derived category of finite modules over $\mathscr{G}(Q_\mathcal{T}^\circ,W_\mathcal{T}')$ arises as a full subcategory of the derived Fukaya category $\on{Fuk}(Y)$ of $Y$, where $W_\mathcal{T}'$ is the potential of $Q_\mathcal{T}^\circ$ consisting of clockwise $3$-cycles.
\item The derived category of finite modules over $\mathscr{G}(Q^\circ_\mathcal{T},W_\mathcal{T})$ arises as a full subcategory of the derived Fukaya category $\on{Fuk}(Y,b)$ of $Y$ with a twisting background class $b \in H^2(Y,\mathbb{Z}_2)$. Here, $W_\mathcal{T}=W_\mathcal{T}'+W_\mathcal{T}''$ is the potential consisting of clockwise $3$-cycles and counter-clockwise cycles.
\end{itemize}
The geometry of $\pi$ becomes clear when considering its fibers, which are given as follows.
\begin{itemize}
\item The generic fiber of $\pi$ is diffeomorphic to $T^*S^2$.
\item In the interior of each ideal triangle of $\mathcal{T}$, there exists exactly one singular value with singular fiber given by the $2$-dimensional $A_1$-singularity.
\item The fibers of the interior marked points in ${\bf S}$ are given by $\mathbb{C}^2\amalg \mathbb{C}^2$. 
\end{itemize}
We denote by $\Sigma\coloneqq {\bf S}\backslash (M\cap {\bf S}^\circ)$, with ${\bf S}^\circ$ the interior of ${\bf S}$, the surface without the interior marked points and $Y^\circ \coloneqq \pi^{-1}(\Sigma)$. Note that the restriction $\pi|_{Y^\circ}:Y^\circ\rightarrow \Sigma$ of $\pi$ is a Lefschetz fibration.

The twist by the background class $b\in H^2(Y,\mathbb{Z}_2)$ changes signs in the signed count of pseudo-holomorphic curves passing through the fibers of the interior marked points. Without the background class the signed count of such pseudo-holomorphic curves always vanishes so that the derived Fukaya category of $Y^\circ$ is equivalent to the derived Fukaya category of $Y$. The change in the $A_\infty$-structure of the derived Fukaya category of $Y$ induced by the background class $b$ accounts exactly for the difference between the potentials $W'_\mathcal{T}$ and $W_\mathcal{T}$. 

We expect the $\infty$-category of global sections of the parametrized perverse schober $\mathcal{F}_\mathcal{T}$ of \Cref{introthm1} to describe (the $\on{Ind}$-completion of) a partially wrapped Fukaya category of $Y^\circ$. We further expect the global sections with support on $\Gamma^\circ$ (the graph obtained from $\Gamma$ by removing boundary edges) to then correspond to the ($\on{Ind}$-completion of the) wrapped Fukaya category of $Y^\circ$. In the case of the unpunctured $n$-gon, where $Y^\circ=Y$ is the $3$-dimensional $A_{n-3}$-singularity and $Q_\mathcal{T}^\circ$ the $A_{n-3}$-quiver, it is shown in \cite{LU21} that $\mathcal{W}(Y^\circ)\simeq\mathcal{D}(\mathscr{G}(Q_\mathcal{T}^\circ,W_\mathcal{T}'))^{\on{perf}}$, meaning that $\mathcal{H}_{\Gamma^\circ}(\Gamma,\mathcal{F}_\mathcal{T})$ is equivalent to the $\on{Ind}$-completion of the wrapped Fukaya category of $Y^\circ$. 

We describe in \Cref{sec1.3} how the geometry of the Lefschetz fibration manifests itself in the definition of $\mathcal{F}_\mathcal{T}$. We expect that the twisting by the background class $b$ can be described as a deformation of the wrapped Fukaya category. It would be interesting to study the relation between such a deformation and the description in terms of parametrized perverse schobers.

\subsection{The gluing construction of Ginzburg algebras}\label{sec1.3}

We now describe the construction of the perverse schober $\mathcal{F}_\mathcal{T}$ appearing in \Cref{introthm1} and \Cref{introthm1.5}. We assume for simplicity that all ideal triangles of $\mathcal{T}$ are not self-folded. The ribbon graph $\Gamma$ dual to $\mathcal{T}$ parametrizing $\mathcal{F}_\mathcal{T}$ consists of a vertex for each ideal triangle and an edge for each edge of $\mathcal{T}$. Boundary edges of $\mathcal{T}$ correspond to external edges of the ribbon graph. Parametrized perverse schobers can, as can sheaves, be glued. To define $\mathcal{F}_\mathcal{T}$, it thus suffices to define  $\mathcal{F}_\mathcal{T}$ locally at each vertex of $\Gamma$. The local datum at each vertex is a spherical adjunction, which we choose to be
\begin{equation} \label{sphadjeq}
f^*:\mathcal{D}(k)\longleftrightarrow \on{Fun}(S^2,\mathcal{D}(k)):f_*\,,
\end{equation}
where $\on{Fun}(S^2,\mathcal{D}(k))$ is the $\infty$-category of local systems on the $2$-sphere with values in $\mathcal{D}(k)$ and $f^*$ is the pullback functor along $S^2\rightarrow \ast$. This adjunction was shown in \cite{Chr20} to be spherical. 

The $\infty$-category $\on{Fun}(S^2,\mathcal{D}(k))$ is equivalent to the derived $\infty$-category of the polynomial algebra $k[t_1]$ with generator $t_1$ in degree $1$, see \Cref{genlem}. This derived $\infty$-category is by a result of \cite{Abo11} equivalent to the Ind-completion of the wrapped Fukaya of the cotangent bundle $T^*S^2$, which is the generic fiber of the Lefschetz fibration $\pi|_{Y^\circ}$. Under these equivalences, the image $f^*(k)$ corresponds to the Lagrangian zero section of $T^*S^2$. The fibration $\pi|_{Y^\circ}$ has exactly one singular value in each ideal triangle of $\mathcal{T}$, so that, up to homotopy of $\Gamma$, the vertices of $\Gamma$ lie at the singular values of $\pi|_{Y^\circ}$. The singular fibers are given by the $A_1$-singularity. The relation between the geometry of $\pi|_{Y^\circ}$ and the definition of $\mathcal{F}_\mathcal{T}$ can thus be summarized as follows.
\begin{itemize}
\item The wrapped Fukaya category of the generic fiber $T^*S^2$ of $\pi|_{Y^\circ}$ gives rise to the $\infty$-category on the right of \eqref{sphadjeq}. This $\infty$-category describes the generic stalk of $\mathcal{F}_\mathcal{T}$.
\item Each vertex of $\Gamma$ corresponds to a singular value of $\pi|_{Y^\circ}$. The $\infty$-category on the left of \eqref{sphadjeq} describes the categorification of the vector space of vanishing cycles at that singularity of $\pi|_{Y^\circ}$. Since the $A_1$-singularity has a unique vanishing cycle, this $\infty$-category is given by $\mathcal{D}(k)$. 
\item The spherical adjunction $f^*\dashv f_*$ arises from a spherical object, the Lagrangian zero section, in the wrapped Fukaya category of $T^*S^2$ describing the vanishing cycle. 
\end{itemize}
We further note, that the perverse schober only achieves to models the Lefschetz fibration $\pi|_{Y^\circ}$ and not the full fibration $\pi$. The fibers $\mathbb{C}^2\amalg \mathbb{C}^2$ of $\pi$ over the interior marked points of ${\bf S}$ are not encoded in $\mathcal{F}_\mathcal{T}$.

The parametrized perverse schober $\mathcal{F}_\mathcal{T}$ in total corresponds to the datum of a diagram 
\[ \mathcal{F}_\mathcal{T}: \on{Exit}(\Gamma)\rightarrow \on{St}\] in the $\infty$-category $\on{St}$ of stable $\infty$-categories indexed by the poset $\on{Exit}(\Gamma)$, see \Cref{sec1.2}. The computations in \Cref{sec5} show that the parametrized perverse schober $\mathcal{F}_\mathcal{T}$ assigns 
\begin{itemize}
\item to each vertex of $\Gamma_\mathcal{T}$ a stable $\infty$-category equivalent to the derived $\infty$-category of the relative Ginzburg algebra of the $3$-gon, depicted in \eqref{quiv1}. This uses that each vertex of $\Gamma_\mathcal{T}$ is trivalent.
\item to each edge of $\Gamma_\mathcal{T}$ a stable $\infty$-category equivalent to the derived $\infty$-category of the polynomial algebra $k[t_1]$ with generator $t_1$ in degree $1$. Note that $k[t_1]$ is equivalent to the $2$-Calabi-Yau completion of $k$ in the sense of \cite{Kel11}, i.e.~a $2$-dimensional Ginzburg algebra.
\end{itemize}  
The equivalence $\mathcal{H}(\Gamma,\mathcal{F}_\mathcal{T})\simeq \mathcal{D}(\mathscr{G}_\mathcal{T})$ of \Cref{introthm1.5} thus expresses that the derived $\infty$-category of the relative Ginzburg algebra $\mathscr{G}_\mathcal{T}$ is glued from relative Ginzburg algebras of $3$-gons along $2$-dimensional Ginzburg algebras. We further illustrate the gluing construction of $\mathscr{G}_\mathcal{T}$ in two examples in \Cref{sec6.2}.

\subsection*{Notation and conventions}

We follows the notation and conventions of \cite{HTT} and \cite{HA}. In particular, we always use the homological grading.

\subsection*{Acknowledgments}
I wish to thank my supervisor Tobias Dyckerhoff for proposing this topic and for his support and encouragement. I further thank Dylan Allegretti and Bernhard Keller for helpful comments and an anonymous referee for helping improve the readability of the paper. The author acknowledges support by the Deutsche Forschungsgemeinschaft under Germany’s Excellence Strategy – EXC 2121 “Quantum Universe” – 390833306.

\section{Preliminaries}\label{sec2}

This paper is formulated using the language of stable $\infty$-categories. It would in principle be possible to formulate most results in the framework of dg-categories. Our reason to use stable $\infty$-categories is to gain access to the powerful framework developed in \cite{HTT,HA}. As a side effect, we also profit in \Cref{specsec} from the added generality of stable $\infty$-categories over dg-categories. The essential computations in the gluing construction of the Ginzburg algebras are however performed using the category of dg-categories with its quasi-equivalence model structure.

The goal of this section is to review background material on the relation between on the one hand ring spectra, stable $\infty$-categories and their colimits and on the other hand dg-algebras, dg-categories and their homotopy colimits. All material appearing in this section for which we could not find references in the literature is well known to experts. In \Cref{sec2.1,sec2.2} we discuss some generalities on limits and colimits in $\infty$-categories of $\infty$-categories and on $\infty$-categories of modules associated to ring spectra. In \Cref{sec2.3,sec2.4,sec2.5} we relate dg-categories with $\infty$-categories. In \Cref{sec2.6} we discuss semiorthogonal decompositions.

For an extensive treatment of the theory of $\infty$-categories and stable $\infty$-categories we refer to \cite{HTT} and \cite{HA}, respectively.

\subsection{Limits and colimit in \texorpdfstring{$\infty$}{infinity}-categories of \texorpdfstring{$\infty$}{infinity}-categories}\label{sec2.1}

We begin by introducing the following $\infty$-categories of $\infty$-categories.

\begin{definition}
We denote 
\begin{enumerate}
\item by $\on{Cat}_\infty$ the $\infty$-category of $\infty$-categories.
\item by $\on{St}\subset \on{Cat}_\infty$ the subcategory spanned by stable $\infty$-categories and exact functors.
\item by $\on{St}^{\on{idem}}\subset \on{St}$ the full subcategory spanned by idempotent complete stable $\infty$-categories.
\end{enumerate}
An $\infty$-category is called presentable if it is equivalent to the Ind-completion of a small $\infty$-category and admits all colimits\footnote{We always assume all limits and colimits to be small in the sense of \cite{HTT}.}, see \cite[Section 5.5]{HTT}. We further denote
\begin{enumerate}\setcounter{enumi}{3}
\item by $\mathcal{P}r^L\subset \on{Cat}_\infty$ the subcategory spanned by presentable $\infty$-categories and colimit preserving functors.
\item by $\mathcal{P}r^R\subset \on{Cat}_\infty$ the subcategory spanned by presentable $\infty$-categories and accessible and limit preserving functors.
\item by $\mathcal{P}r^L_{\on{St}}\subset \mathcal{P}r^L$ and $\mathcal{P}r^R_{\on{St}}\subset \mathcal{P}r^{R}$ the full subcategories spanned by stable $\infty$-categories.
\end{enumerate}
\end{definition}

We are further interested in $R$-linear $\infty$-categories, where $R$ is an $\mathbb{E}_\infty$-ring spectrum, i.e.~a commutative algebra object in the symmetric monoidal $\infty$-category $\on{Sp}$ of spectra. The $\infty$-category $\mathcal{P}r^L$ also admits the structure of a symmetric monoidal $\infty$-category, see \cite[Section 4.8.1]{HA}. Given an $\mathbb{E}_\infty$-ring spectrum $R$, the $\infty$-category $\on{LMod}_R\in \mathcal{P}r^L$ of left module-spectra over $R$ is an algebra object of $\mathcal{P}r^L$. 

\begin{definition}~  
\begin{enumerate}\setcounter{enumi}{6}
\item Let $R$ be an $\mathbb{E}_\infty$-ring spectrum. The $\infty$-category of $\on{LinCat}_R=\on{LMod}_{\on{LMod}_R}(\mathcal{P}r^L)$ of left modules in $\mathcal{P}r^L$ over $\on{LMod}_{R}$, is called the $\infty$-category of $R$-linear $\infty$-categories.
\end{enumerate}
\end{definition}

\begin{remark}
Though not directly contained in the definition, it can be shown that any $R$-linear $\infty$-category is automatically stable, see \cite[D.1.5]{SAG} for a discussion.
\end{remark}

\begin{remark}
A left-tensoring of an $\infty$-category $\mathcal{M}$ over a monoidal $\infty$-category $\mathcal{C}^\otimes$ is a coCartesian fibration of $\infty$-operads $\mathcal{O}^\otimes \rightarrow \mathcal{LM}^\otimes$ over the left-module $\infty$-operad $\mathcal{LM}^\otimes$, such that there are equivalences of fibers $\mathcal{O}^\otimes_{\langle m\rangle}\simeq \mathcal{M}$ and $\mathcal{O}^\otimes_{\langle a\rangle}\simeq \mathcal{C}^\otimes$. We refer to \cite[Section 4.2.1]{HA} for more details. Objects of $\on{LinCat}_R$ can be identified with stable and presentable $\infty$-categories $\mathcal{C}$ equipped with the datum of a left-tensoring over the symmetric monoidal $\infty$-category $\on{LMod}_R$, such that the tensor product $\mhyphen \otimes_R \mhyphen: \on{LMod}_R\times\mathcal{C} \rightarrow \mathcal{C}$ preserves colimits separately in each variable, see \cite[Appendix D]{SAG}. Let $\mathcal{M}_1,\mathcal{M}_2$ be $R$-linear $\infty$-categories as witnessed by the coCartesian fibrations $\mathcal{O}^\otimes_1,\mathcal{O}^\otimes_2\rightarrow \mathcal{LM}^\otimes$. An $R$-linear functor $\mathcal{M}_1\rightarrow \mathcal{M}_2$ thus corresponds to a morphism of $\infty$-operads $\mathcal{O}^\otimes_1\rightarrow \mathcal{O}^\otimes_2$ over $\mathcal{LM}^\otimes$.
\end{remark}

We now recall in order of appearance results on
\begin{enumerate}[i)]
\item how to compute limits in $\on{Cat}_\infty$,
\item how to compute limits and colimits in $\mathcal{P}r^L$, $\mathcal{P}r^L_{\on{St}}$ and $\mathcal{P}r^R$, $\mathcal{P}r^R_{\on{St}}$,
\item how to compute limits and colimits in $\on{LinCat}_R$ and
\item how to compute limits and colimits in $\on{St}^{\on{idem}}$.
\end{enumerate}

i) There is a general formula for limits in $\on{Cat}_\infty$. Let $D:Z\rightarrow \on{Set}_\Delta$ be a diagram taking values in $\infty$-categories. Consider the coCartesian fibration $p:X\rightarrow Z$ classified by $D$. The limit $\infty$-category $\on{lim}D$ is equivalent to the $\infty$-category of coCartesian sections\footnote{We call a section $s:Z\rightarrow X$ of a coCartesian fibration $p:X\rightarrow Z$ coCartesian if for all edges $e\in Z_1$, the edge $s(e)\in X_1$ is $s$-coCartesian.} of $p$, see \cite[3.3.3.2]{HTT}. If $Z$ is the nerve of a $1$-category, the above model for computing limits in $\on{Cat}_\infty$ can be described more explicitly. We can use the relative nerve construction, see \cite[3.2.5.2]{HTT}, for the coCartesian fibration classified by $D$, which is very explicitly defined. We denote this model for the coCartesian fibration by $p:\Gamma(D)\rightarrow K$ and call it the (covariant) Grothendieck construction. A more detailed introduction to the relative nerve construction can be found in Section 1.2 of \cite{Chr20}.

ii) One of the nice features of presentable $\infty$-categories is that there is an $\infty$-categorical adjoint functor theorem, which states that a functor between presentable $\infty$-categories admits a right adjoint if and only if it preserves all colimits and admits a left adjoint if and only if it is accessible\footnote{A functor between presentable $\infty$-categories being accessible reduces to the condition of preserving filtered colimits.} and preserves all limits. There thus exists an adjoint equivalence of $\infty$-categories \[\on{radj}:\mathcal{P}r^L\simeq \left(\mathcal{P}r^R\right)^{op}:\on{ladj}^{op}\,,\] with the functors $\on{radj},\on{ladj}$ acting as the identity on objects. The functor $\on{radj}$ maps a colimit preserving functor to its right adjoint and the functor $\on{ladj}$ maps an accessible and limit preserving functor to its left adjoint. The adjoint equivalence $\on{radj}\dashv \on{ladj}$ also restricts to an adjoint equivalence between $\mathcal{P}r^L_{\on{St}}$ and $(\mathcal{P}r_{\on{St}}^R)^{op}$. The equivalences $\on{radj},\on{ladj}$ preserve all limits and colimits, so that we can exchange the computations of limits and colimits of diagrams of (stable) presentable $\infty$-categories. For the computation of limits, we can use i) and the fact that that the inclusions $\mathcal{P}^L_{\on{St}}\subset \mathcal{P}r^L\subset \on{Cat}_\infty$ and $\mathcal{P}r^R_{\on{St}}\subset \mathcal{P}r^R\subset \on{Cat}_\infty$ preserve all limits. 

iii) The computation of limits and colimits of $R$-linear $\infty$-categories reduces to the computation of limits and colimits in $\mathcal{P}r^L$, because the forgetful functor $\on{LMod}_{\on{LMod}_R}(\mathcal{P}r^L)\rightarrow \mathcal{P}r^L$ preserves all limits and colimits, see \cite[4.2.3.1,4.2.3.5]{HA}.

iv) The inclusion functor $\on{St}^{\on{idem}}\subset \on{Cat}_\infty$ preserves all limits. The computation of colimits of idempotent stable $\infty$-categories can be related to the computation of colimits of presentable stable $\infty$-categories via the colimit preserving Ind-completion functor $\on{Ind}:\on{St}^{\on{idem}}\rightarrow \mathcal{P}r^L_{\on{St}}$. 
Given an $\infty$-category $\mathcal{C}\in \mathcal{P}r^L_{\on{St}}$, we denote by $\mathcal{C}^c\in \on{St}^{\on{idem}}$ its full subcategory of compact objects. Note that for $\mathcal{C}\in \on{St}^{\on{idem}}$, there exists an equivalence $\on{Ind}(\mathcal{C})^c\simeq \mathcal{C}$.

\subsection{Modules over ring spectra}\label{sec2.2}

Consider the symmetric monoidal $\infty$-category $\on{Sp}$ of spectra. $\on{Sp}$ is a stable and presentable $\infty$-category. An $\mathbb{E}_1$-ring spectrum is an object of $\on{Alg}(\on{Sp})$, the $\infty$-category of (coherently associative) algebra objects in $\on{Sp}$. For every such $\mathbb{E}_1$-ring spectrum $R$, there is a stable and presentable $\infty$-category $\on{RMod}_R$ of right $R$-modules in $\on{Sp}$. If $R$ can be enhanced to a commutative algebra object of $\on{Sp}$, i.e.~an $\mathbb{E}_\infty$-ring spectrum, then $\on{RMod}_R$ inherits the structure of a symmetric monoidal $\infty$-category. In this case, we can form the $\infty$-category $\on{Alg}(\on{RMod}_R)$ of algebra objects in $\on{RMod}_R$. Given $A\in \on{Alg}(\on{RMod}_R)$, we can again form the $\infty$-category $\on{RMod}_A(\on{RMod}_R)$ of right $A$-modules in $\on{RMod}_R$. Alternatively, we can also consider the $\mathbb{E}_1$-ring spectrum $\xi(A)\in \on{Alg}(\on{Sp})$ underlying $A$ obtained as follows. We consider the forgetful functor $\on{RMod}_R\rightarrow \on{Sp}$, mapping a right $R$-module to the underlying spectrum. This functor extends to a functor $\xi:\on{Alg}(\on{RMod}_R)\rightarrow \on{Alg}(\on{Sp})$, which we apply to $A$. We can form the $\infty$-category of right modules $\on{RMod}_{\xi(A)}$ over $\xi(A)$. We will show in \Cref{modcor} that this does not yield a further $\infty$-category, there exists an equivalence of $\infty$-categories 
\[\on{RMod}_{A}(\on{RMod}_R)\simeq \on{RMod}_{\xi(A)}\,.\]

Let $\mathcal{D}$ be a stable $\infty$-category and consider any object $X\in \mathcal{D}$. We can find an $\mathbb{E}_1$-ring spectrum $\on{End}(X)\in \on{Alg}(\on{Sp})$, called the endomorphism algebra, with the following properties, see \cite[7.1.2.2]{HA}. 
\begin{itemize}
\item $\pi_n\on{End}(X) \simeq \pi_0\on{Map}_{\mathcal{D}}(X[n],X)$ for all $n\in \mathbb{Z}$.
\item The induced ring structure of $\pi_*\on{End}(X)$ is determined by the composition of endomorphisms in the homotopy category $\on{Ho}(\mathcal{D})$.
\end{itemize}
The algebra object $\on{End}(X)$ is an endomorphism object of $X$ in the sense of \cite[Section 4.7.1]{HA} and its existence expresses the enrichment of the stable $\infty$-category $\mathcal{D}$ in spectra. 

Assume that the stable $\infty$-category $\mathcal{D}$ is also presentable. An object $X\in \mathcal{D}$ is called a compact generator if 
\begin{itemize}
\item $X$ is compact, i.e.~$\on{Map}_{\mathcal{D}}(X,\mhyphen)$ commutes with filtered colimits and
\item an object $Y\in \mathcal{D}$ is zero if and only if $\on{Map}_{\mathcal{D}}(X,Y[i])\simeq \ast$ for all $i\in \mathbb{Z}$.
\end{itemize}
The importance of this notion is that if $X$ is a compact generator, there exists an equivalence of $\infty$-categories $\mathcal{D}\simeq \on{RMod}_{\on{End}(X)}$, see \cite[7.1.2.1]{HA}.\medskip

We now restrict to $R$-linear $\infty$-categories where $R$ is an $\mathbb{E}_\infty$-ring spectrum. The most important case will be where $R=k$ is a commutative ring. Suppose that $\mathcal{D}$ is an $R$-linear $\infty$-category and $X\in \mathcal{D}$ a compact generator. \Cref{genlem1} shows we can lift $\on{End}(X)$ along the forgetful functor $\xi:\on{Alg}(\on{RMod}_R)\rightarrow \on{Alg}(\on{Sp})$ to an algebra object in $\on{RMod}_R$.

\begin{lemma}\label{genlem1}
Let $R$ be an $\mathbb{E}_\infty$-ring spectrum. Let $\mathcal{C}$ be a stable and presentable $R$-linear $\infty$-category with a compact generator $X$. Then there exists an algebra object $\on{End}_R(X)\in \on{Alg}(\on{RMod}_R)$ and an equivalence of $R$-linear $\infty$-categories
\begin{equation}\label{geneq} 
\mathcal{C}\simeq \on{RMod}_{\on{End}_R(X)}(\on{RMod}_R)\,.
\end{equation}
The algebra object $\on{End}_R(X)$ is mapped under the functor $\xi:\on{Alg}(\on{RMod}_R)\rightarrow \on{Alg}(\on{Sp})$ to the endomorphism algebra \makebox{$\on{End}(X)\in \on{Alg}(\on{Sp})$.} 
\end{lemma}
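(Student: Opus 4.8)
The plan is to extract $\on{End}_R(X)$ directly as an endomorphism object in the $R$-linear setting, using the general theory of endomorphism objects in $\infty$-categories left-tensored over a monoidal $\infty$-category from \cite[Section 4.7.1]{HA}. Concretely, since $\mathcal{C}$ is an object of $\on{LinCat}_R$, it comes equipped with the datum of a left-tensoring over the symmetric monoidal $\infty$-category $\on{LMod}_R$ (equivalently $\on{RMod}_R$, as $R$ is $\mathbb{E}_\infty$), such that $\mhyphen\otimes_R\mhyphen$ preserves colimits in each variable. The mapping spectrum $\on{Map}_{\mathcal{C}}(X,\mhyphen)$ then refines to a functor $\mathcal{C}\to \on{RMod}_R$ valued in $R$-modules, and because $X$ is compact and $\mathcal{C}$ is presentable, this functor preserves colimits and filtered colimits appropriately; applying \cite[4.7.1.40]{HA} (existence of endomorphism objects in a left-tensored setting) to the object $X$ yields a morphism object $\on{Mor}_{\on{RMod}_R}(X,X)$, which carries a canonical algebra structure and is by definition $\on{End}_R(X)\in \on{Alg}(\on{RMod}_R)$. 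The compatibility with $\xi$ is then essentially built in: the forgetful functor $\on{RMod}_R\to\on{Sp}$ is lax monoidal and the left-tensoring over $\on{LMod}_R$ restricts along $\mathbb{S}\to R$ to the left-tensoring of $\mathcal{C}$ over $\on{Sp}=\on{LMod}_{\mathbb{S}}$, so the endomorphism object computed in $\on{RMod}_R$ maps under $\xi$ to the endomorphism object computed in $\on{Sp}$, which is precisely $\on{End}(X)\in\on{Alg}(\on{Sp})$ by the characterization recalled above \Cref{genlem1}.

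The second and main step is to upgrade the module structure to an equivalence \eqref{geneq}. Having produced $\on{End}_R(X)$, the left-tensoring furnishes a canonical $R$-linear functor
\[
\Phi:\on{RMod}_{\on{End}_R(X)}(\on{RMod}_R)\longrightarrow \mathcal{C}\,,\qquad M\longmapsto M\otimes_{\on{End}_R(X)} X\,,
\]
sending the free rank-one module to $X$; this is the $R$-linear barr-construction/relative tensor functor, colimit preserving by the colimit-preservation of $\otimes_R$. To see $\Phi$ is an equivalence, I would pass to underlying $\infty$-categories: by point iii) of the discussion above (the forgetful functor $\on{LinCat}_R\to\mathcal{P}r^L$ preserves and reflects limits and colimits, hence detects equivalences), it suffices to check that $\xi(\Phi)$ is an equivalence. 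By the paragraph preceding \Cref{genlem1} (or directly \cite[7.1.2.1]{HA}), since $X$ is a compact generator of the underlying stable presentable $\infty$-category of $\mathcal{C}$, the functor $\on{Map}(X,\mhyphen)$ induces an equivalence $\mathcal{C}\simeq \on{RMod}_{\on{End}(X)}$; and under the identification $\on{RMod}_{\on{End}_R(X)}(\on{RMod}_R)\simeq \on{RMod}_{\xi(\on{End}_R(X))}=\on{RMod}_{\on{End}(X)}$ of \Cref{modcor} (applied with $A=\on{End}_R(X)$), the functor $\xi(\Phi)$ is identified with the inverse of that equivalence. Hence $\xi(\Phi)$, and therefore $\Phi$, is an equivalence of ($R$-linear) $\infty$-categories.

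The step I expect to be the main obstacle is the first one: carefully producing $\on{End}_R(X)$ as an algebra object of $\on{RMod}_R$ together with the identification of its image under $\xi$. The subtlety is purely coherence-theoretic — one must invoke the machinery of \cite[Section 4.7.1]{HA} for endomorphism objects relative to a left module $\infty$-operad and check that the hypotheses (self-enrichment of $\mathcal{C}$ over $\on{RMod}_R$, which follows from presentability of $\mathcal{C}$ together with colimit-preservation of the tensoring, cf.\ \cite[Appendix D]{SAG}) are met, and then track that restricting the $\on{RMod}_R$-tensoring along $\mathbb{S}\to R$ to the $\on{Sp}$-tensoring is compatible with the formation of endomorphism objects. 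Once this coherence bookkeeping is in place, everything else reduces cleanly to the already-cited results \cite[7.1.2.1]{HA}, \Cref{modcor}, and the limit/colimit-preservation properties from point iii).
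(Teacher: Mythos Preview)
Your overall strategy is sound and close in spirit to the paper's, but there is one genuine circularity you should fix. In the second step you invoke \Cref{modcor} to identify $\on{RMod}_{\on{End}_R(X)}(\on{RMod}_R)\simeq \on{RMod}_{\xi(\on{End}_R(X))}$. In this paper, however, \Cref{modcor} is \emph{deduced from} \Cref{genlem1} (its proof reads: ``The statement thus follows from the second part of \Cref{genlem1} and \cite[7.1.2.1]{HA}''). So as written your argument is circular. The fix is easy: either cite an independent source for that identification (e.g.\ a direct monadicity argument over $\on{Sp}$, or the relevant base-change statement in \cite[Section 4.3.3]{HA}), or avoid it altogether by arguing directly that your $\Phi$ is fully faithful on the generator and essentially surjective, without passing through \Cref{modcor}.

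Modulo that, your route differs from the paper's mainly in packaging. The paper produces $\on{End}_R(X)$ as $G(X)$ for $G$ the right adjoint of $\mhyphen\otimes_R X:\on{RMod}_R\to\mathcal{C}$, and then gets both the algebra structure and the equivalence \eqref{geneq} in one stroke from \cite[4.8.5.8]{HA}; the identification $\xi(\on{End}_R(X))\simeq \on{End}(X)$ is obtained by factoring the $\on{Sp}$-action through $\on{RMod}_R$ and using the universal property of the endomorphism object. You instead build $\on{End}_R(X)$ via the enriched endomorphism-object machinery of \cite[Section 4.7.1]{HA}, then separately construct $\Phi$ and check it is an equivalence by descent to $\on{Sp}$. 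The paper's approach is shorter because \cite[4.8.5.8]{HA} packages the Schwede--Shipley step $R$-linearly; your approach is perhaps more transparent about \emph{why} $\xi$ carries $\on{End}_R(X)$ to $\on{End}(X)$, since you phrase it as compatibility of morphism objects under restriction of the tensoring along $\mathbb{S}\to R$.
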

\begin{proof}
The left tensoring of $\mathcal{C}$ over $R$ determined an $R$-linear functor $\mhyphen \otimes_R X:\on{RMod}_R\rightarrow \mathcal{C}$. By the adjoint functor theorem, the functor admits a right adjoint $G$. We denote $\on{End}_R(X):=G(X)\in \on{RMod}_R$. The existence of lift of $\on{End}_R(X)$ to $\on{Alg}(\on{RMod}_R)$ and the existence of the equivalence \eqref{geneq} follow from \cite[4.8.5.8]{HA}, compare also to the proof of \cite[7.1.2.1]{HA}. The right adjoint of the composite functor 
\[ \on{Sp}\xlongrightarrow{\mhyphen \otimes R}\on{RMod}_R\xlongrightarrow{\mhyphen \otimes_R X} \mathcal{C}\]
maps $X$ to the endomorphism object $\on{End}(X)$. By the universal property of $\on{End}(X)$ and $X\in \mathcal{C}\simeq \on{RMod}_{\xi(\on{End}_R(X))}(\on{Sp})$, there exists a morphism $\xi(\on{End}_R(X))\rightarrow \on{End}(X)$ in $\on{Alg}(\on{Sp})$, which is an equivalence on underlying spectra and thus an equivalence of $\mathbb{E}_1$-ring spectra.
\end{proof}

\begin{remark}\label{rendrem}
In the setting of \Cref{genlem1}, the algebra object $\on{End}_R(X)$ is an endomorphism object of $X$ in the $\infty$-category $\mathcal{C}$ considered as left tensored over $\on{RMod}_R$. We call $\on{End}_R(X)$ the $R$-linear endomorphism algebra of $X$.
\end{remark}

\begin{corollary}\label{modcor}
Let $R$ be an $\mathbb{E}_\infty$-ring spectrum and $A\in \on{Alg}(\on{RMod}_R)$. Then there exists an equivalence of $\infty$-categories
\begin{equation}\label{modeqeq2} 
\on{RMod}_A(\on{RMod}_R)\simeq \on{RMod}_{\xi(A)}\,,
\end{equation}
where $\xi:\on{Alg}(\on{RMod}_R)\rightarrow \on{Alg}(Sp)$ denotes the forgetful functor.
\end{corollary}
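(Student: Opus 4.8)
The plan is to realize $\on{RMod}_A(\on{RMod}_R)$ as a stable presentable $\infty$-category equipped with a canonical compact generator, to compute the endomorphism algebra of that generator, and then to invoke \cite[7.1.2.1]{HA}; the whole statement then reduces to the elementary identification of the endomorphism $\mathbb{E}_1$-ring of a free rank-one module, for which \Cref{genlem1} already does the essential work. Since $R$ is $\mathbb{E}_\infty$, the $\infty$-category $\on{RMod}_R$ is presentably symmetric monoidal, so for $A\in\on{Alg}(\on{RMod}_R)$ the $\infty$-category $\on{RMod}_A(\on{RMod}_R)$ is again presentable and stable (see \cite[Sections 4.2 and 7.1]{HA}). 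The composite forgetful functor $U\colon \on{RMod}_A(\on{RMod}_R)\to \on{RMod}_R\to \on{Sp}$ is conservative and preserves all colimits, being a composite of right adjoints of free/forgetful adjunctions, and its left adjoint sends the sphere spectrum $\mathbb{S}$ to the free right $A$-module $A$. Hence the mapping spectrum $\on{Map}_{\on{RMod}_A(\on{RMod}_R)}(A,M)$ is naturally equivalent to $U(M)$, and it follows that $A$ is a compact generator: it is compact since $U$ preserves filtered colimits, and it generates since $U$ is conservative. By \cite[7.1.2.1]{HA} there is then an equivalence $\on{RMod}_A(\on{RMod}_R)\simeq\on{RMod}_{\on{End}(A)}$, where $\on{End}(A)\in\on{Alg}(\on{Sp})$ is the endomorphism $\mathbb{E}_1$-ring of $A$.

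It remains to identify $\on{End}(A)$ with $\xi(A)$ as $\mathbb{E}_1$-rings. Evaluation at the unit already gives an equivalence of spectra $\on{Map}_{\on{RMod}_A(\on{RMod}_R)}(A,A)\simeq U(A)\simeq\xi(A)$, and the content is that this respects the algebra structures, composition of endomorphisms corresponding to the multiplication of $\xi(A)$ with the conventions of \cite[Section 4.7.1]{HA}. I would deduce this from \Cref{genlem1} applied to the compact generator $X=A$: it yields $\xi(\on{End}_R(A))\simeq\on{End}(A)$ in $\on{Alg}(\on{Sp})$, while the $R$-linear endomorphism algebra $\on{End}_R(A)$ is identified with $A$ in $\on{Alg}(\on{RMod}_R)$ --- again the endomorphism algebra of a free rank-one module, now computed internally to $\on{RMod}_R$. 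Chaining the equivalences gives $\on{RMod}_A(\on{RMod}_R)\simeq\on{RMod}_{\on{End}(A)}\simeq\on{RMod}_{\xi(A)}$; one in fact obtains an $R$-linear equivalence.

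The conceptual reason behind this computation, which I would also record, is monadicity: $U$ is monadic by \cite[4.7.3.5]{HA}, its left adjoint is $V\mapsto V\otimes_{\mathbb{S}}\xi(A)$, so the induced monad on $\on{Sp}$ is $(-)\otimes_{\mathbb{S}}\xi(A)$, whose $\infty$-category of algebras is by definition $\on{RMod}_{\xi(A)}$. The only genuine obstacle is bookkeeping: keeping the left/right module conventions straight so that one lands on $\on{RMod}_{\xi(A)}$ rather than on $\on{RMod}_{\xi(A)^{\mathrm{op}}}$, and checking that $\on{Map}_{\on{RMod}_A(\on{RMod}_R)}(A,A)$, which is a priori a totalization of the two-sided bar construction over $R$, is identified with $\xi(A)$ as an $\mathbb{E}_1$-algebra and not merely as a spectrum. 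All of this is standard; alternatively one could quote \cite[Section 4.8.5]{HA} directly, but the argument above keeps the section self-contained.
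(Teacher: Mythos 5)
Your proposal is correct and takes essentially the same route as the paper: establish that $A$ is a compact generator of $\on{RMod}_A(\on{RMod}_R)$, apply \cite[7.1.2.1]{HA}, and then use \Cref{genlem1} to identify the endomorphism $\mathbb{E}_1$-ring $\on{End}(A)$ with $\xi(A)$ via $\on{End}_R(A)\simeq A$. One small slip in the justification: you write that $U$ preserves all colimits ``being a composite of right adjoints'' --- being a right adjoint only gives limit-preservation; colimit-preservation of the forgetful functor is a separate fact (e.g.\ \cite[4.2.3.5]{HA}), and in any case for compactness of $A$ only preservation of filtered colimits (accessibility) is needed.
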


\begin{proof}
The $\infty$-category $\on{RMod}_A(\on{RMod}_R)$ is presentable by \cite[4.2.3.7]{HA}, stable by \cite[7.1.1.4]{HA} and left-tensored over $\on{RMod}_R$ by \cite[Section 4.3.2]{HA}. Consider the monadic adjunction $\mhyphen \otimes A :\on{RMod}_R\leftrightarrow \on{RMod}_A(\on{RMod}_R):G$. The adjunction and that $G$ is conservative and accessible imply that $A$ is a compact generator. The $R$-linear endomorphism algebra of $A\in \on{RMod}_A(\on{RMod}_R)$ is given by $A\in \on{Alg}(\on{RMod}_R)$. The statement thus follows from the second part of \Cref{genlem1} and \cite[7.1.2.1]{HA}.
\end{proof}

Let $R$ be an $\mathbb{E}_\infty$-ring spectrum. We end this section with a brief discussion of the relation between colimits of algebra objects in $\on{RMod}_R$ and the colimits of the corresponding $\infty$-categories of right modules in $\on{LinCat}_R$. There is a functor $\theta: \on{Alg}(\on{RMod}_R)\rightarrow \on{LinCat}_R$ that assigns to an algebra object $A\in \on{Alg}(\on{RMod}_R)$ the $\infty$-category $\on{RMod}_A(\on{RMod}_R)$, see \cite[section 4.8.3]{HA}. The functor $\theta$ assigns to an edge $\phi:A\rightarrow B$ in $\on{Alg}(\on{RMod}_R)$ the relative tensor product 
\[\theta(\phi)=\mhyphen \otimes_{A}B:\on{RMod}_A(\on{RMod}_R)\longrightarrow \on{RMod}_B(\on{RMod}_R) \]
using the right $A$-module structure on $B$ provided by $\phi$. For all $\phi:A\rightarrow B$, the functor $\theta(\phi)$ admits a right adjoint, given by the pullback functor $\phi^*:\on{RMod}_B(\on{RMod}_R)\rightarrow \on{RMod}_A(\on{RMod}_R)$ along $\phi$, see \cite[4.6.2.17]{HA}. The functor $\theta$ preserves colimits indexed by contractible simplicial sets (i.e.~simplicial sets whose geometric realization is a contractible space), most notably pushouts.

\subsection{Differential graded categories and their modules}\label{sec2.3}

Let $k$ be a commutative ring. A $k$-linear dg-category is a 1-category enriched in the $1$-category $\on{Ch}(k)$ of chain complexes of $k$-modules. Given a dg-category $C$ and two objects $x,y\in C$, we write $\on{Hom}_C(x,y)$ or $\on{Hom}(x,y)$ for the mapping complex. We consider dg-algebras as dg-categories with a single object.  

\begin{definition}\label{moddef}
Let $A$ and $B$ be $k$-linear dg-algebras.
\begin{itemize}
\item A left $A$-module $M$ is a graded left module over the graded algebra underlying $A$ equipped with a differential $d_M$ such that 
\[ d_M(a.m)=d_A(a).m+(-1)^{\on{deg}(a)}a.d_M(m)\]
for all $a\in A$ and $m\in M$.
\item A right $A$-module $M$ is a graded right module over the graded algebra underlying $A$ equipped with a differential $d_M$ such that \[d_M(m.a)=d_M(m).a+(-1)^{\on{deg}(m)}m.d_A(a)\]
for all $a\in A$ and $m\in M$. We also refer to right $A$-modules simply as $A$-modules.  
\item An $A$-$B$-bimodule $M$ is a graded bimodule over the graded algebras underlying $A$ and $B$ equipped endowed with a differential $d_M$, which exhibits $M$ as a left $A$-module and a right $B$-module. If $A=B$, we call $M$ an $A$-bimodule.
\end{itemize}
\end{definition}

\begin{remark}\label{sgnrl}
Let $M$ be an $A$-$B$-bimodule with differential $d_M$. The shifted $A$-$B$-bimodule $M[1]$ can be described as follows.
\begin{itemize}
\item The differential is $-d_M$.
\item The left action $._{[1]}$ of $a\in A$ on $m\in M[1]$ is given by $a._{[1]}m=(-1)^{\on{deg}(a)}a.m$, where $a.m$ denotes the left action of $a\in A$ on $m\in M$.
\item The right action $._{[1]}$ of $b\in B$ on $m\in M[1]$ is given by $m._{[1]}b=m.b$, where $m.b$ denotes the right action of $b\in B$ on $m\in M$.
\end{itemize} 
\end{remark}

We can identify left $A$-modules with dg-functors $A\rightarrow \on{Ch}(k)$, right $A$-modules with dg-functors $A^{op}\rightarrow \on{Ch}(k)$ and $A$-$B$-bimodules with dg-functors $A\otimes B^{\on{op}}\rightarrow \on{Ch}(k)$. The following definition is thus consistent with \Cref{moddef}.

\begin{definition}
Let $C$ be a dg-category. We call a dg-functor $C^{op}\rightarrow \on{Ch}(k)$ a right $C$-module. We denote by $\on{dgMod}(C)$ the dg-category of right $C$-modules.
\end{definition}

\begin{remark}
Given any dg-category $C$, the dg-category $\on{dgMod}(C)$ is pretriangulated, with distinguished triangles of the form $x\xrightarrow{a} y\rightarrow \on{cone}(a)$. 
\end{remark}

Given a dg-category $C$ and an object $x\in C$, we denote by $\on{End}^{\on{dg}}(x)$ the endomorphism dg-algebra with underlying chain complex given by $\on{Hom}_C(x,x)$ and algebra structure determined by the composition of morphisms in $C$. 

\begin{lemma}\label{dgmodlem}
Let $C$ be a dg-category with finitely many objects $x_1,\dots,x_n$. Then there exists an equivalence of dg-categories $\on{dgMod}(C)\simeq \on{dgMod}(\on{End}^{\on{dg}}(\bigoplus_{i=1}^n x_i)$, where $\on{End}^{\on{dg}}(\bigoplus_{i=1}^nx_i)$ is the endomorphism dg-algebra of $\bigoplus_{i=1}^n x_i$ in $\on{dgMod}(C)$. 
\end{lemma}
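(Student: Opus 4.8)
The statement is the classical Morita-type reduction: a dg-category with finitely many objects is Morita equivalent to the endomorphism dg-algebra of the direct sum of its objects, and moreover the equivalence holds already on the level of (strict) module dg-categories, not just derived categories. The plan is to produce the equivalence by a Yoneda-type argument. Set $A = \on{End}^{\on{dg}}(\bigoplus_{i=1}^n x_i)$, computed inside $\on{dgMod}(C)$ via the Yoneda embedding $h\colon C\hookrightarrow \on{dgMod}(C)$, so that $A$ has underlying complex $\bigoplus_{i,j}\on{Hom}_C(x_i,x_j)$ with composition as multiplication, and $h(\bigoplus_i x_i)$ is naturally a right $A$-module. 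The idempotents $e_i\in A$ given by the identities of the $x_i$ satisfy $\sum_i e_i = 1_A$, and $h(x_i) \simeq e_iA$ as right $A$-modules.

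First I would define the functor $\Phi\colon \on{dgMod}(C)\to \on{dgMod}(A)$. On a right $C$-module $M\colon C^{op}\to \on{Ch}(k)$, set $\Phi(M) = \bigoplus_{i=1}^n M(x_i)$, which carries a right $A$-module structure: the component $\on{Hom}_C(x_i,x_j)$ of $A$ acts by the structure maps $M(x_j)\otimes \on{Hom}_C(x_i,x_j)\to M(x_i)$ of the dg-functor $M$. Equivalently, $\Phi(M) = \on{Hom}_{\on{dgMod}(C)}\big(h(\bigoplus_i x_i),\, M\big)$, which makes the right $A$-module structure manifest and functoriality automatic; the identification of the two descriptions is the dg-Yoneda lemma, $\on{Hom}_{\on{dgMod}(C)}(h(x_i),M)\cong M(x_i)$, compatible with composition. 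The candidate inverse $\Psi\colon \on{dgMod}(A)\to \on{dgMod}(C)$ sends a right $A$-module $N$ to the right $C$-module $x_i\mapsto Ne_i$, with $C$ acting through the inclusion of the $\on{Hom}_C(x_i,x_j)$ into $A$; equivalently $\Psi(N) = N\otimes_A h(\bigoplus_i x_i)$, viewing $h(\bigoplus_i x_i)$ as an $A$-$C$-bimodule.

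Next I would check that $\Phi$ and $\Psi$ are mutually inverse equivalences of dg-categories. The composite $\Psi\Phi$ sends $M$ to $x_i\mapsto \big(\bigoplus_j M(x_j)\big)e_i = M(x_i)$, with the correct $C$-action, so $\Psi\Phi\cong \on{id}$; the composite $\Phi\Psi$ sends $N$ to $\bigoplus_i Ne_i = N(\sum_i e_i) = N$, using $\sum_i e_i = 1_A$, and one verifies the $A$-action is recovered. These natural isomorphisms are essentially formal, the only care needed being signs and the dg-functoriality of the constructions; I would phrase the whole argument via the adjunction $\big({-}\otimes_A h(\bigoplus_i x_i)\big)\dashv \on{Hom}_{\on{dgMod}(C)}(h(\bigoplus_i x_i),{-})$ and observe that the bimodule $h(\bigoplus_i x_i)$ is both a compact generator on the $A$-side and, as $\bigoplus_i h(x_i)$, split-generates the representables on the $C$-side, with the relevant unit and counit being isomorphisms directly rather than merely quasi-isomorphisms — which is what gives an equivalence of the strict module dg-categories rather than only of their homotopy categories.

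The main obstacle is not conceptual but bookkeeping: one must pin down a single consistent sign convention (the Koszul rule, consistent with \Cref{sgnrl}) so that the $A$-module structure on $\bigoplus_i M(x_i)$, the bimodule structure on $h(\bigoplus_i x_i)$, and the unit/counit isomorphisms are all genuine dg-maps, and then check that $\Phi$ really is a dg-functor (compatible with the enrichment of $\on{Hom}$-complexes), not merely a functor of underlying categories. Once the dg-Yoneda lemma is in place and the idempotent decomposition $1_A = \sum_i e_i$ is used, everything else is a direct verification.
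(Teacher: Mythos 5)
Your proof is correct and follows essentially the same route as the paper, which simply says the equivalence "follows directly from spelling out the datum of a right module over $C$ and over $\on{End}^{\on{dg}}(\bigoplus_{i=1}^nx_i)$"; your Yoneda/idempotent-decomposition argument is precisely what that spelling-out amounts to, with the unit and counit being strict isomorphisms as you note.
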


\begin{proof}
This follows directly from spelling out the datum of a right module over $C$ and over $\on{End}^{\on{dg}}(\bigoplus_{i=1}^nx_i)$.
\end{proof}

\subsection{A model for the derived \texorpdfstring{$\infty$}{infinity}-category of a dg-algebra}\label{sec2.4}

Let $A$ be a $k$-linear dg-algebra. Starting with the dg-category $\on{dgMod}(A)$, we can form the $1$-category $\on{dgMod}(A)_0$, with the same objects as $\on{dgMod}(A)$ and with mapping sets given by the $0$-cycles. This $1$-category admits the projective model structure, where the weak equivalences are given by quasi-isomorphisms and the fibrations are given by degree-wise surjections. All objects of $\on{dgMod}(A)_0$ are fibrant. A description of the cofibrant objects in $\on{dgMod}(A)_0$ can be found for example in \cite{BMR14}, where they are called $q$-semi-projective objects. A right $A$-module $M$ is cofibrant if and only if
\begin{itemize}
\item the ungraded module $\bigoplus_{i\in \mathbb{Z}}M_i$ is a projective right module over the ungraded algebra $\bigoplus_{i\in \mathbb{Z}}A_i$ and
\item for all acyclic right $A$-modules $N$, the mapping complex $\on{Hom}_A(M,N)$ is acyclic.
\end{itemize}
If $A=k$ is a commutative ring, the cofibrant objects are the complexes of projective $k$-modules. We denote by $\on{dgMod}(A)^\circ\subset \on{dgMod}(A)$ the full dg-subcategory spanned by fibrant-cofibrant objects. 
We call the dg-nerve $\mathcal{D}(A)\coloneqq N_{\on{dg}}(\on{dgMod}(A)^\circ)$ the (unbounded) derived $\infty$-category of $A$.

Before we can further discuss the properties of $\mathcal{D}(A)$, we need to briefly discuss localizations of $\infty$-categories. 

\begin{definition} \label{reflocdef}
A functor $f:\mathcal{C}\rightarrow \mathcal{C}'$ between $\infty$-categories is a reflective localization if $f$ has a fully faithful right adjoint. 
\end{definition}

In \cite{HTT}, localizations in the sense of \Cref{reflocdef} are simply called localizations. We are however interested in a more general class of localizations, which can be characterized by the following universal property.

\begin{definition}\label{locdef}
Let $\mathcal{C}$ be an $\infty$-category and let $W$ be a collection of morphisms in $\mathcal{C}$. We call an $\infty$-category $\mathcal{C}'$ the $\infty$-categorical localization of $\mathcal{C}$ at $W$ if there exists a functor $f:\mathcal{C}\rightarrow \mathcal{C}'$, such that, for every $\infty$-category $\mathcal{D}$, composition with $f$ induces a fully faithful functor 
\[ \chi: \on{Fun}(\mathcal{C}',\mathcal{D})\rightarrow \on{Fun}(\mathcal{C},\mathcal{D})\,,\]
whose essential image consists of those functors $F:\mathcal{C}\rightarrow \mathcal{D}$ for which $F(\alpha)$ is an equivalence in $\mathcal{D}$ for all $\alpha \in W$. In that case, we also write $\mathcal{C}'=\mathcal{C}[W^{-1}]$. 
\end{definition} 

It is shown in \cite[5.2.7.12]{HTT}, that reflective localizations are localizations in the sense of \Cref{locdef}. If the collection of morphisms $W$ is closed under homotopy and composition and contains all equivalences in $\mathcal{C}$, we can regard $\mathcal{C}[W^{-1}]$ as a fibrant replacement of $(\mathcal{C},W)$ in the model category of marked simplicial sets, see also the discussion in the beginning of \cite[Section 4.1.7]{HA}.

Our first goal in this section is to prove the following analogue of \cite[1.3.5.15]{HA}, relating the derived $\infty$-category of $A$ with the $\infty$-categorical localization of $\on{dgMod}(A)_0$ at the collection of quasi-isomorphisms.

\begin{proposition}\label{locprop}
Let $A$ be a dg-algebra and let $W$ denote the collection of quasi-isomorphisms. There exists an equivalence of $\infty$-categories
\[ \mathcal{D}(A)\simeq N(\on{dgMod}(A)_0)[W^{-1}]\,.\]
\end{proposition}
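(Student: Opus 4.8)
The statement is the $\infty$-categorical analogue of \cite[1.3.5.15]{HA}, which handles the case $A=k$, and the strategy is to reduce to that result. The plan is to exhibit a zig-zag of functors between $\mathcal{D}(A)=N_{\on{dg}}(\on{dgMod}(A)^\circ)$ and the localization $N(\on{dgMod}(A)_0)[W^{-1}]$ and show it is an equivalence. First I would recall that the projective model structure on $\on{dgMod}(A)_0$ presents an $\infty$-category, and by the general comparison between model categories and their underlying $\infty$-categories (e.g. \cite[1.3.4.16, 1.3.4.20, 1.3.4.21]{HA}, or \cite[4.2.4.4]{HTT} applied to the simplicial localization) the fibrant-cofibrant objects with the $\on{dg}$-nerve compute the localization: there is a canonical functor $N(\on{dgMod}(A)_0)\to N_{\on{dg}}(\on{dgMod}(A)^\circ)$ sending a module to a cofibrant replacement, it carries $W$ to equivalences, and the induced functor $N(\on{dgMod}(A)_0)[W^{-1}]\to \mathcal{D}(A)$ is an equivalence.

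More concretely, the key steps I would carry out are: (1) observe that $\on{dgMod}(A)_0$ with the quasi-isomorphisms is a model category in which every object is fibrant, so the full subcategory on cofibrant objects $\on{dgMod}(A)^{\on{cof}}_0$ has the same localization at weak equivalences (cofibrant replacement is a Quillen-equivalence-type statement; cf. the discussion preceding \cite[1.3.4.16]{HA}); (2) identify $\on{dgMod}(A)^{\on{cof}}_0 = (\on{dgMod}(A)^\circ)_0$, the $0$-cycle category of the dg-category of fibrant-cofibrant modules; (3) invoke the fact (as in the proof of \cite[1.3.5.15]{HA}, using that $\on{dgMod}(A)^\circ$ is a dg-category enriched in the category of chain complexes whose objects satisfy the property that Hom-complexes compute derived Homs) that the dg-nerve $N_{\on{dg}}(\on{dgMod}(A)^\circ)$ is the $\infty$-categorical localization of $N((\on{dgMod}(A)^\circ)_0)$ at the homotopy equivalences, which here coincide with the quasi-isomorphisms between cofibrant modules; and (4) assemble steps (1)--(3) to conclude that the composite $N(\on{dgMod}(A)_0)[W^{-1}]\simeq N((\on{dgMod}(A)^\circ)_0)[W^{-1}]\simeq N_{\on{dg}}(\on{dgMod}(A)^\circ)=\mathcal{D}(A)$.

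Step (3) is the main obstacle, and it is exactly the point where \cite[1.3.5.15]{HA} does the real work for $A=k$: one must know that for a pretriangulated dg-category $\mathcal{E}$ all of whose Hom-complexes are "already derived" (i.e. $\mathcal{E}=\mathcal{E}^\circ$ in the relevant sense), the dg-nerve inverts the homotopy equivalences and nothing more, and that this nerve has the correct universal property. I would handle this either by citing a sufficiently general version of the comparison — for instance the statement that for any dg-category $\mathcal{E}$, $N_{\on{dg}}(\mathcal{E})$ is the localization of $N(\mathcal{E}_0)$ at the maps becoming isomorphisms in $\on{H}^0(\mathcal{E})$, which for $\mathcal{E}=\on{dgMod}(A)^\circ$ is precisely $W$ restricted to cofibrant objects — or, if such a reference is not available in the form needed, by adapting Lurie's argument for $\on{dgMod}(k)^\circ$ verbatim, since the only inputs used there are that cofibrant modules are closed under the relevant operations (shifts, cones, filtered colimits) and that a quasi-isomorphism between cofibrant modules is a homotopy equivalence. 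The remaining steps (1), (2), (4) are formal: (2) is essentially a definition-chase, (1) is the standard fact that cofibrant replacement induces an equivalence of localizations, and (4) is just composing equivalences.
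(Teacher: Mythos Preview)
Your proposal is correct and follows essentially the same strategy as the paper, with one minor difference in how the argument is decomposed. The paper factors the comparison through the dg-nerve of the \emph{full} module dg-category: first it shows $N(\on{dgMod}(A)_0)[H^{-1}]\simeq N_{\on{dg}}(\on{dgMod}(A))$ for $H$ the chain homotopy equivalences (this is your step (3) but applied to all modules, not just cofibrant ones; the proof is verbatim \cite[1.3.4.5]{HA}), and then shows $N_{\on{dg}}(\on{dgMod}(A)^\circ)\simeq N_{\on{dg}}(\on{dgMod}(A))[W^{-1}]$ by exhibiting cofibrant replacement as a reflective localization directly at the level of dg-nerves. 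You instead swap the order: first restrict to cofibrants at the $1$-categorical level (your step (1)), then take the dg-nerve (your step (3)). Both orderings work and rest on the same key input; the paper's route has the mild advantage of proving the cofibrant-replacement step explicitly rather than invoking a general model-categorical comparison, while your route is closer to the standard textbook presentation.
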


Given a model category $C$, the $\infty$-categorical localization of $N(C)$ at the collection of weak equivalences is called the $\infty$-category underlying $C$. We refer to \cite{Hin16} for general background. \Cref{locprop} thus shows that the derived $\infty$-category of $A$ is the $\infty$-category underlying the model category $\on{dgMod}(A)_0$.

For the proof of \Cref{locprop} we need the following two lemmas.

\begin{lemma}\label{loclem1}
Let $A$ be a dg-algebra. The inclusion functor $N(\on{dgMod}(A)_0)\rightarrow N_{\on{dg}}(\on{dgMod}(A))$ induces an equivalence of $\infty$-categories 
\[N(\on{dgMod}(A)_0)[H^{-1}]\rightarrow N_{\on{dg}}(\on{dgMod}(A))\,,\]
where $H$ is the collection of chain homotopy equivalences.  
\end{lemma}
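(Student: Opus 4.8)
The plan is to exhibit both sides as fibrant replacements (in the appropriate model-categorical sense) of the same marked simplicial set, following closely the strategy of \cite[1.3.5.15]{HA}. First I would recall that $N_{\on{dg}}(\on{dgMod}(A))$ is the dg-nerve of a dg-category all of whose mapping complexes are fibrant, and that the collection $H$ of chain homotopy equivalences is closed under homotopy, composition, and contains all isomorphisms. So by the discussion following \Cref{locdef}, the $\infty$-categorical localization $N(\on{dgMod}(A)_0)[H^{-1}]$ can be computed as a fibrant replacement of $(N(\on{dgMod}(A)_0),H)$ in the model category of marked simplicial sets. The key point is then to show that the inclusion $\iota:N(\on{dgMod}(A)_0)\rightarrow N_{\on{dg}}(\on{dgMod}(A))$, regarded as a map of marked simplicial sets where the target has exactly the chain homotopy equivalences marked, is such a fibrant replacement: it is a marked weak equivalence, and the target is fibrant.

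The main steps I would carry out are as follows. Step one: verify that $N_{\on{dg}}(\on{dgMod}(A))$ is an $\infty$-category (true since $\on{dgMod}(A)$ is a dg-category, using the standard fact about dg-nerves) and that an edge in it is an equivalence precisely when the corresponding closed degree-zero morphism is a chain homotopy equivalence; hence with $H$ marked it is fibrant in marked simplicial sets. Step two: show that $\iota$ inverts $H$, i.e.\ that every chain homotopy equivalence in $\on{dgMod}(A)_0$ becomes an equivalence in $N_{\on{dg}}(\on{dgMod}(A))$ — this is immediate from the characterization of equivalences just mentioned. Step three, the substantive one: show that the induced functor $N(\on{dgMod}(A)_0)[H^{-1}]\rightarrow N_{\on{dg}}(\on{dgMod}(A))$ is an equivalence. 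For essential surjectivity this is clear, since the two have the same objects. For fully faithfulness one must compare mapping spaces: the mapping space in $N_{\on{dg}}(\on{dgMod}(A))$ between $x$ and $y$ is (a model for) the Dold–Kan nerve of the truncation $\tau_{\le 0}\on{Hom}_A(x,y)$, while the mapping space in the localization is computed via a hammock/fractions construction along $H$. Here I would invoke the general machinery identifying the $\infty$-category underlying a category of fibrant objects (or of the full subcategory of a model category on fibrant objects, with weak equivalences the homotopy equivalences) with its simplicial/dg localization — concretely, one can cite that $\on{dgMod}(A)_0$ with chain homotopy equivalences presents, via the path-object given by $\on{Hom}_A(x,y)\otimes (\text{interval})$, the same mapping spaces; alternatively one reduces to the analogous statement for $\on{Ch}(k)$ itself and transports along the enriched-functor-category structure, exactly as in the proof of \cite[1.3.5.15]{HA}.

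Concretely, the cleanest route is probably to mimic \cite[1.3.5.15]{HA} almost verbatim: \emph{loc.\ cit.}\ treats $\on{Ch}(k)$, and the argument is formulated so that it applies to any dg-category, in particular to $\on{dgMod}(A)$. One observes that $\on{dgMod}(A)$ is enriched over the symmetric monoidal model category $\on{Ch}(k)$, that its underlying category $\on{dgMod}(A)_0$ carries a model structure compatible with this enrichment in which all objects are fibrant, and that the homotopy equivalences of the enrichment are exactly the chain homotopy equivalences; then the comparison between the dg-nerve and the localization at these homotopy equivalences is the content of the general statement behind \cite[1.3.5.15]{HA}. So I would structure the proof as: (a) set up the $\on{Ch}(k)$-enrichment and note all objects fibrant; (b) identify equivalences in the dg-nerve with chain homotopy equivalences; (c) apply the general comparison result.

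The main obstacle I anticipate is step three, the mapping-space comparison: one must be careful that the ``fractions'' localization at chain homotopy equivalences genuinely computes the right mapping spaces, which in the abstract requires a calculus-of-fractions-type argument or an appeal to the theory of categories of fibrant objects / the fact that $\on{dgMod}(A)_0$ with its homotopy equivalences admits functorial path objects making every object both fibrant and cofibrant for the relevant structure. This is exactly the delicate part of \cite[1.3.5.15]{HA}, and I expect the bulk of the work (and the right citation) to be in reducing the general dg-category statement to that reference rather than in any new computation.
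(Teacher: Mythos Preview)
Your proposal is correct and takes essentially the same approach as the paper: both defer to Lurie's argument in \cite{HA}, and your elaboration of the steps (marked simplicial sets, identification of equivalences in the dg-nerve with chain homotopy equivalences, mapping-space comparison via the $\on{Ch}(k)$-enrichment) is exactly the content of that argument. The only discrepancy is bibliographic: the paper's proof is the one-liner ``the proof of \cite[1.3.4.5]{HA} applies verbatim,'' whereas you repeatedly cite \cite[1.3.5.15]{HA}; the former is the general dg-nerve comparison you want, while the latter is the specific application to $\on{Ch}(k)$ that the paper is \emph{generalizing} in \Cref{locprop}.
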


\begin{proof}
The proof of \cite[1.3.4.5]{HA} applies verbatim.
\end{proof}

\begin{lemma}\label{loclem2}
Let $A$ be a dg-algebra. There exists an equivalence of $\infty$-categories 
\[ N_{\on{dg}}(\on{dgMod}(A)^\circ)\simeq N_{\on{dg}}(\on{dgMod}(A))[W^{-1}]\,.\]
\end{lemma}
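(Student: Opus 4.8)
The plan is to deduce the equivalence $N_{\on{dg}}(\on{dgMod}(A)^\circ)\simeq N_{\on{dg}}(\on{dgMod}(A))[W^{-1}]$ from the analogous statement already recorded for chain homotopy equivalences, namely \Cref{loclem1}, together with the standard behaviour of cofibrant replacement in the projective model structure on $\on{dgMod}(A)_0$. The key observation is that on the full subcategory $\on{dgMod}(A)^\circ$ of fibrant-cofibrant (i.e.\ $q$-semi-projective) objects, a morphism is a quasi-isomorphism if and only if it is a chain homotopy equivalence; this is the usual Whitehead-type statement for the projective model structure, and it will let me identify the two relevant classes of weak equivalences on the subcategory $\on{dgMod}(A)^\circ$.

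First I would set up the comparison at the level of $\infty$-categorical localizations. By \Cref{loclem1} we have $N_{\on{dg}}(\on{dgMod}(A))\simeq N(\on{dgMod}(A)_0)[H^{-1}]$, and localizing further at the images of the quasi-isomorphisms $W$ gives $N_{\on{dg}}(\on{dgMod}(A))[W^{-1}]\simeq N(\on{dgMod}(A)_0)[W^{-1}]$, using that $H\subseteq W$ and that iterated localizations compose (the universal property in \Cref{locdef} is manifestly transitive). So it suffices to produce an equivalence $N_{\on{dg}}(\on{dgMod}(A)^\circ)\simeq N(\on{dgMod}(A)_0)[W^{-1}]$. Next I would invoke cofibrant replacement: choosing functorially, for each right $A$-module $M$, a $q$-semi-projective resolution $QM\xrightarrow{\sim} M$ gives a functor $Q:\on{dgMod}(A)_0\to \on{dgMod}(A)^\circ_0$ together with a natural weak equivalence $Q\Rightarrow \on{id}$. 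The inclusion $\iota:\on{dgMod}(A)^\circ_0\hookrightarrow \on{dgMod}(A)_0$ sends chain homotopy equivalences to weak equivalences and, restricted to $\on{dgMod}(A)^\circ$, every quasi-isomorphism is already a chain homotopy equivalence, so after passing to nerves and localizing, $\iota$ and $Q$ induce functors between $N(\on{dgMod}(A)^\circ_0)[H^{-1}]$ and $N(\on{dgMod}(A)_0)[W^{-1}]$ that are mutually inverse up to natural equivalence — the natural weak equivalence $Q\Rightarrow\on{id}$ and the corresponding one on the subcategory become natural equivalences after localization. Finally, $N(\on{dgMod}(A)^\circ_0)[H^{-1}]\simeq N_{\on{dg}}(\on{dgMod}(A)^\circ)$ by the same argument as \Cref{loclem1} applied to the pretriangulated dg-category $\on{dgMod}(A)^\circ$ (its $H^{-1}$-localization of the $0$-cycle category recovers the dg-nerve), which closes the chain of equivalences.

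The main obstacle I anticipate is the careful bookkeeping needed to turn the $1$-categorical data — a functorial cofibrant replacement with its natural transformation to the identity — into an honest adjoint equivalence (or at least a pair of mutually inverse equivalences) of the localized $\infty$-categories, while being precise about \emph{which} class of morphisms is being inverted on each side. Concretely, one must check that $\iota$ does not merely preserve but \emph{detects} the relevant equivalences on $\on{dgMod}(A)^\circ$, i.e.\ that $\on{Ho}(\on{dgMod}(A)^\circ_0)$ with homotopy equivalences inverted already coincides with the corresponding localization at quasi-isomorphisms; this is exactly the point where $q$-semi-projectivity is used and where one should cite the characterization of cofibrant objects recalled before \Cref{locprop} (or \cite{BMR14}). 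Once that identification is in place, the rest is a formal consequence of the universal property in \Cref{locdef} and of \Cref{loclem1}.
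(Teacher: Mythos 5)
Your argument is correct, but it follows a genuinely different route from the paper's. The paper proves the lemma by showing directly that the inclusion
\[ N_{\on{dg}}(\on{dgMod}(A)^\circ)^{\on{op}}\hookrightarrow N_{\on{dg}}(\on{dgMod}(A))^{\on{op}} \]
is a reflective localization at the quasi-isomorphisms: it invokes \cite[5.2.7.8]{HTT} and verifies the needed mapping-space criterion by computing $\on{Hom}_{\on{dgMod}(A)}(P,Q')\to\on{Hom}_{\on{dgMod}(A)}(P,Q)$ for $P$ cofibrant and $Q'\to Q$ a cofibrant-replacement trivial fibration. That argument never passes through the $0$-cycle categories and never needs a \emph{functorial} cofibrant replacement; it only uses the existence of a cofibrant replacement for each object and the lifting/acyclicity properties of the projective model structure. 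The cost is that it must work with opposite $\infty$-categories (since Lurie's machinery in \cite[1.3.5.12]{HA} is phrased in terms of fibrant replacement) and with the somewhat delicate criterion of \cite[5.2.7.8]{HTT}.

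Your route instead localizes the $1$-categories $\on{dgMod}(A)_0$ and $\on{dgMod}(A)^\circ_0$, using \Cref{loclem1} twice (once for $\on{dgMod}(A)$, once for $\on{dgMod}(A)^\circ$), transitivity of $\infty$-categorical localization, and a functorial cofibrant replacement $Q$ together with Whitehead's theorem to produce explicit mutually inverse functors between $N(\on{dgMod}(A)_0)[W^{-1}]$ and $N(\on{dgMod}(A)^\circ_0)[H^{-1}]$. This is correct: the small object argument supplies $Q$ functorially, the natural transformation $Q\Rightarrow\on{id}$ lands in $W$ (resp.\ $H$ on the subcategory, by Whitehead), and the universal property of localization upgrades these to mutually inverse equivalences of the localized $\infty$-categories. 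One should verify, as you note in passing, that $\on{dgMod}(A)^\circ$ meets the hypotheses needed to re-apply the argument of \Cref{loclem1}/\cite[1.3.4.5]{HA}; this holds because the cofibrant objects form a full dg-subcategory closed under shifts and mapping cones (an extension of $q$-semi-projectives along a degreewise split short exact sequence is $q$-semi-projective). In short, the paper's argument is leaner on hypotheses (no functorial factorization) and stays inside the dg-nerve picture, whereas yours is more explicit and avoids the opposite-category bookkeeping; both are valid.
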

\begin{proof}
We adapt the proofs of \cite[1.3.4.6, 1.3.5.12]{HA}. We show that the inclusion functor 
\[ i:\on{N}_{\on{dg}}(\on{dgMod}(A)^\circ)^{\on{op}}\rightarrow \on{N}_{\on{dg}}(\on{dgMod}(A))^{\on{op}}\] 
admits a left adjoint which exhibits $\on{N}_{\on{dg}}(\on{dgMod}(A)^\circ)^{\on{op}}$ as a reflective localization at the collection of quasi-isomorphisms. Note that any functor is a localization if and only if the opposite functor is a localization. We thus conclude that $\on{N}_{\on{dg}}(\on{dgMod}(A)^\circ)$ is equivalent as an $\infty$-category to the localization of $\on{N}_{\on{dg}}(\on{dgMod}(A))$ at the collection of quasi-isomorphisms.  

To verify that $i^{\on{op}}$ is a reflective localization, we need to show that it admits left adjoint $G:\on{N}_{\on{dg}}(\on{dgMod}(A))^{\on{op}}\rightarrow \on{N}_{\on{dg}}(\on{dgMod}(A)^\circ)^{\on{op}}$. To show that $W$ is the collection of quasi-isomorphisms, we need to show by \cite[5.2.7.12]{HTT} that any edge $e:M\rightarrow N$ in $\on{N}_{\on{dg}}(\on{dgMod}(A))^{op}$ is a quasi-isomorphism if and only if $G(e)$ is an equivalence. Consider a trivial fibration $f:Q'\rightarrow Q$ in $\on{dgMod}(A)$ given a cofibrant replacement and any $P\in \on{dgMod}(A)^\circ$. \cite[5.2.7.8]{HTT} shows the existence of $G$, provided that the composition with $f$ induces an isomorphism of spaces
\begin{equation*}
\on{Map}_{\on{N}_{\on{dg}}(\on{dgMod}(A)^\circ)}(P,Q')\rightarrow \on{Map}_{\on{N}_{\on{dg}}(\on{dgMod}(A))}(P,Q)\,.
\end{equation*}
We deduce this from the assertion that composition with $f$ induces a quasi-isomorphism 
\begin{equation}\label{alphaeq} 
\alpha:\on{Hom}_{\on{dgMod}(A)}(P,Q')\rightarrow \on{Hom}_{\on{dgMod}(A)}(P,Q)\,.
\end{equation}
The surjectivity of $\alpha$ follows from the lifting property of the cofibration $0\rightarrow P$ with respect to trivial fibrations. The kernel of $\alpha$ is given by $\on{Hom}_{\on{dgMod}(A)}(P,\on{ker}(f))$. Using that $f$ is a quasi-isomorphism, we deduce that $\on{ker}(f)$ is acyclic. The contractibility of the kernel of $\alpha$ thus follows from property of $P$ being cofibrant. We can thus deduce the existence of $G$. We note that $G$ is pointwise given by choosing a cofibrant replacement. Consider an edge $e:M\rightarrow N$ in $\on{N}_{\on{dg}}(\on{dgMod}(A))^{op}$. If $e$ is a quasi-isomorphism, it follows from Whitehead's theorem for model categories that $G(e)$ is an equivalence. If $G(e)$ is an equivalence, we have the following commutative diagram in $\on{N}_{\on{dg}}(\on{dgMod}(A))$. 
\[
\begin{tikzcd}
G(M) \arrow[r, "G(e)"] \arrow[d] & G(N) \arrow[d] \\
M \arrow[r, "e"]                 & N             
\end{tikzcd}
\]
The vertical edges and the upper horizontal edge are quasi-isomorphisms. It follows that $e$ is also a quasi-isomorphism.
\end{proof}

\begin{proof}[Proof of \Cref{locprop}]
By \Cref{loclem1,loclem2}, there exists an equivalence of $\infty$-categories 
\[ \left( N(\on{dgMod}(A))[H^{-1}]\right)[W^{-1}]\simeq \mathcal{D}(A)\,.\] 
Using that $H\subset W$, the statement follows.
\end{proof}

Let $k$ be a commutative ring. The symmetric monoidal structure of the $1$-category $\on{Ch}(k)$ can be used to also endow the $\infty$-category $\mathcal{D}(k)$ with a symmetric monoidal structure. As shown in \cite[7.1.4.6]{HA} there exists an equivalence of $\infty$-categories
\begin{equation}\label{equiveq1}
\on{N}(\on{Alg}(\on{Ch}^{\otimes}(k)))[W^{-1}] \simeq \on{Alg}(\mathcal{D}(k))\,.
\end{equation}
The left side of \eqref{equiveq1} is the $\infty$-categorical localization of the nerve of the $1$-category of dg-algebras at the collection of quasi-isomorphisms. The right side of \eqref{equiveq1} is the $\infty$-category of algebra objects in $\mathcal{D}(k)$. The equivalence \eqref{equiveq1} expresses that every dg-algebra can be considered as an algebra object in $\mathcal{D}(k)$ and that every algebra object in $\mathcal{D}(k)$ can be obtained this way (meaning it can be rectified). Unless stated otherwise, we will omit the identification \eqref{equiveq1} and consider dg-algebras as algebra objects in the symmetric monoidal $\infty$-category $\mathcal{D}(k)$. 

We can consider $k$ also as an $\mathbb{E}_\infty$-ring spectrum. The $\infty$-category $\on{RMod}_k$ of right modules over $k$ thus inherits a symmetric monoidal structure. The $\infty$-categories $\mathcal{D}(k)$ and $\on{RMod}_k$ are equivalent as symmetric monoidal $\infty$-categories, see \cite[7.1.2.13]{HA}.\medskip

Let $A$ be a $k$-linear dg-algebra and $X$ a cofibrant $A$-module. Consider the Quillen adjunction 
\begin{equation}\label{adj2} 
\mhyphen \otimes_k^{\on{dg}} X:\on{dgMod}(k)\leftrightarrow \on{dgMod}(A):\on{Hom}_A(X,\mhyphen)\,,
\end{equation}
between the tensor functor on the level of chain complexes and the internal Hom functor composed with the forgetful functor $\on{dgMod}(A)\rightarrow \on{dgMod}(k)$. Given a Quillen-adjunction between model categories, there is an associated adjunctions between the underlying $\infty$-categories, see \cite{Maz16}. We denote the adjunction of $\infty$-categories underlying the Quillen adjunction \eqref{adj2} by
\begin{equation}\label{adj1}
\mhyphen \otimes_k^{\on{dg}}X:\mathcal{D}(k)\leftrightarrow \mathcal{D}(A):\on{RHom}_A(X,\mhyphen)\,.
\end{equation}

\begin{lemma}
Let $A$ be a $k$-linear dg-algebra. The $\infty$-category $\mathcal{D}(A)$ admits the structure of a $k$-linear $\infty$-category such that for any $X\in \mathcal{D}(A)$ the functor $\mhyphen \otimes_k^{dg} X$ is $k$-linear.
\end{lemma}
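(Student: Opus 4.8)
The goal is to equip $\mathcal{D}(A)$ with a $k$-linear structure, i.e.\ exhibit it as an object of $\on{LinCat}_k$, compatibly with the tensoring $\mhyphen\otimes_k^{dg}X$. The plan is to identify $\mathcal{D}(A)$ with a module $\infty$-category over $\mathcal{D}(k)\simeq \on{RMod}_k$ and then invoke the machinery of \Cref{sec2.2}. First I would recall that, by \Cref{locprop}, $\mathcal{D}(A)$ is the $\infty$-category underlying the model category $\on{dgMod}(A)_0$, and that via \eqref{equiveq1} the dg-algebra $A$ may be regarded as an algebra object $A\in \on{Alg}(\mathcal{D}(k))=\on{Alg}(\on{RMod}_k)$. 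One then applies \Cref{modcor}: the $\infty$-category $\on{RMod}_{\xi(A)}$ of right $\xi(A)$-module spectra is equivalent to $\on{RMod}_A(\on{RMod}_k)$, which is by construction a $k$-linear $\infty$-category (an object of $\on{LinCat}_k = \on{LMod}_{\on{RMod}_k}(\mathcal{P}r^L)$), left-tensored over $\on{RMod}_k\simeq \mathcal{D}(k)$.

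The key step is then to produce a symmetric-monoidal-module identification $\mathcal{D}(A)\simeq \on{RMod}_A(\on{RMod}_k)$ lifting the underlying equivalence of $\infty$-categories. For this I would use that $A$, viewed as a free module over itself, is a compact generator of $\mathcal{D}(A)$ with endomorphism dg-algebra $A$ itself, together with the dictionary between dg-modules and module spectra: the dg-nerve model $\mathcal{D}(A) = N_{\on{dg}}(\on{dgMod}(A)^\circ)$ matches the presentable stable $\infty$-category $\on{RMod}_{\xi(A)}$ via $X\mapsto \on{RHom}_A(A,X)$, and one checks this equivalence is $\mathcal{D}(k)$-linear because it intertwines the respective tensorings by $\mathcal{D}(k)$: on the dg side the tensoring is the derived functor of $\mhyphen\otimes_k^{dg}\mhyphen$ from the Quillen adjunction \eqref{adj2}, and on the module-spectrum side it is the canonical $\on{RMod}_k$-tensoring from \cite[Section 4.3.2]{HA}. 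Concretely, the free--forgetful adjunction $\mhyphen\otimes_k X:\mathcal{D}(k)\leftrightarrow \mathcal{D}(A):\on{RHom}_A(X,\mhyphen)$ of \eqref{adj1}, in the case $X=A$, realizes $\mathcal{D}(A)$ as modules over the $k$-linear endomorphism algebra $\on{End}_k(A)$ in the sense of \Cref{genlem1}, and the second part of that lemma identifies $\on{End}_k(A)$ with $A$ itself as an algebra object of $\on{RMod}_k$; this transports the $k$-linear structure to $\mathcal{D}(A)$.

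Finally, for an arbitrary $X\in\mathcal{D}(A)$, the functor $\mhyphen\otimes_k^{dg}X:\mathcal{D}(k)\to\mathcal{D}(A)$ is the composite of the symmetric monoidal $\mhyphen\otimes_k^{dg}X:\mathcal{D}(k)\to\mathcal{D}(k)$-bimodule action with the action of $\mathcal{D}(k)$ on $\mathcal{D}(A)$, hence is $k$-linear by construction of the module structure; alternatively one notes that under the rectification \eqref{equiveq1} it corresponds to tensoring the $A$-module $X$ by a complex of $k$-modules, which is manifestly compatible with the $\on{RMod}_k$-tensoring on $\on{RMod}_A(\on{RMod}_k)$. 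The main obstacle I anticipate is the bookkeeping in the second step: verifying that the equivalence $\mathcal{D}(A)\simeq\on{RMod}_{\xi(A)}$ obtained from the compact generator $A$ is not merely an equivalence of $\infty$-categories but one of $\mathcal{D}(k)$-linear $\infty$-categories, i.e.\ that it upgrades to a morphism in $\on{LinCat}_k$ and is compatible with the chosen tensorings rather than some a priori different $\on{RMod}_k$-action. This is exactly what \Cref{genlem1} is designed to handle via the endomorphism-object universal property, so the proof should reduce to citing \Cref{genlem1}, \Cref{modcor}, and \Cref{locprop} and assembling them.
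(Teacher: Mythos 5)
Your strategy --- identify $\mathcal{D}(A)$ with $\on{RMod}_A(\on{RMod}_k)$ and transport the $k$-linear structure across the equivalence --- is genuinely different from the paper's, which constructs the left-tensoring of $\mathcal{D}(A)$ over $\mathcal{D}(k)$ from scratch by writing down an explicit coCartesian fibration of $\infty$-operads $N(O^\otimes_A)\to\mathcal{LM}^\otimes$ built from the $1$-categorical tensor product $\mhyphen\otimes^{\on{dg}}_k\mhyphen$, passing to $\infty$-categorical localizations, and then exhibiting $\mhyphen\otimes^{\on{dg}}_kX$ as a map of $\infty$-operads over $\mathcal{LM}^\otimes$. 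But as written your proposal is circular: you invoke \Cref{genlem1} to obtain $\mathcal{D}(A)\simeq\on{RMod}_{\on{End}_k(A)}(\on{RMod}_k)$, yet \Cref{genlem1} takes as hypothesis that the input $\infty$-category is \emph{already} $R$-linear --- which for $\mathcal{D}(A)$ is precisely what this lemma is supposed to establish. (The paper's subsequent proposition $\mathcal{D}(A)\simeq\on{RMod}_A$ does exactly what you want and is logically downstream of the present lemma for that reason.) Your self-diagnosis that ``this is exactly what \Cref{genlem1} is designed to handle'' is therefore off: \Cref{genlem1} converts a compact generator into an $R$-linear equivalence of module categories, but it cannot conjure the $R$-linear structure on $\mathcal{D}(A)$ in the first place.

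The circularity could be patched by using the un-linearized \cite[7.1.2.1]{HA} instead, which needs only that $\mathcal{D}(A)$ be compactly generated stable; that gives $\mathcal{D}(A)\simeq\on{RMod}_{\on{End}(A)}$, and one could then transport the $k$-linear structure of $\on{RMod}_A(\on{RMod}_k)\simeq\on{RMod}_{\xi(A)}$. But even after that, the ``such that'' clause remains unaddressed: you would still have to prove that $\mhyphen\otimes_k^{\on{dg}}X$, which is defined at the level of the model category $\on{dgMod}(A)_0$ and only then passed to underlying $\infty$-categories via the Quillen adjunction \eqref{adj2}--\eqref{adj1}, is a $k$-linear functor for the transported tensoring. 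That compatibility is the real content of the lemma, not bookkeeping, and is close in spirit to what the paper proves later in \Cref{mndcomprop}. The paper's direct operad construction avoids the issue entirely by building the $\mathcal{LM}^\otimes$-structure out of $\mhyphen\otimes^{\on{dg}}_k\mhyphen$ itself, so the $k$-linearity of $\mhyphen\otimes^{\on{dg}}_kX$ holds by construction rather than by a post hoc comparison.
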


\begin{proof}
The $\infty$-category $\mathcal{D}(A)$ is stable and presentable by \cite[1.3.5.9, 1.3.5.21]{HA}. We now show $\mathcal{D}(A)$ is left tensored over $\mathcal{D}(k)$. Note that $\on{dgMod}(k)_0\simeq \on{Ch}(k)$ is a symmetric monoidal model category with respect to the tensor product, which we denote in the following by $\otimes$, see \cite[7.1.2.11]{HA}. We further denote the Quillen bifunctor $\on{dgMod}(k)\times \on{dgMod}(A)\rightarrow \on{dgMod}(A)$ given by the relative tensor product by $\mhyphen \otimes_k^{\on{dg}}\mhyphen $. Recall that $\mathcal{LM}^\otimes$ denotes the left-module $\infty$-operad, see \cite[4.2.1.7]{HA}. We define a $1$-category $O_A^\otimes$ as follows.
\begin{itemize}
\item An object of $O_A^\otimes$ consists of an object $(\underbrace{a,\dots,a}_{i\on{-many}},\underbrace{m,\dots,m}_{j\on{-many}})\in \mathcal{LM}^{\otimes}$ and objects 
\[(x_1,\dots,x_i)\in (\on{dgMod}(k)^\circ)^{\times i},\,(m_1,\dots,m_j)\in (\on{dgMod}(A)^\circ)^{\times j}\,.\]
\item For $n=1,2$, consider the object $X_n$ of $O_A^\otimes$ given by $l_n=(\underbrace{a,\dots,a}_{i_n\on{-many}},\underbrace{m,\dots,m}_{j_n\on{-many}})\in \mathcal{LM}^{\otimes}$  and 
\[(x^n_1,\dots,x^n_{i_n})\in (\on{dgMod}(k)^\circ)^{\times {i_n}},\,(m_1^n,\dots,m^n_{j_n})\in (\on{dgMod}(A)^\circ)^{\times {j_n}}\,.\]
A morphism $X_1\rightarrow X_2$ consists of a morphism $\alpha:l_1\rightarrow l_2$ in $\mathcal{LM}^\otimes$, which we also consider as a morphism of sets $\tilde{\alpha}:\{1,\dots,i_1+j_1\}\rightarrow \{1,\dots,i_2+j_2\}$, morphisms 
\[ \bigotimes_{e\in \tilde{\alpha}^{-1}(i)} a^1_e\rightarrow a_i^2\]
in $\on{dgMod}(k)^\circ$ for $1\leq i \leq i_2$ and morphisms
\[ \bigg( \bigotimes_{e\in \tilde{\alpha}^{-1}(j)\backslash \on{max}(\tilde{\alpha}^{-1}(j))} a^1_e \bigg) \otimes_k m_{\on{max}(\tilde{\alpha}^{-1}(j))-i_1}^1\rightarrow m^2_{j-i_2}\] in $\on{dgMod}(A)^\circ$ for $i_1+1\leq j \leq i_2+j_2$.
\end{itemize}
The forgetful functor $N(O_A^\otimes)\rightarrow \mathcal{LM}^\otimes$ is a coCartesian fibration of $\infty$-operads, exhibiting $N((\on{dgMod}(A)^\circ)_0)$ as left-tensored over the symmetric monoidal $\infty$-category $N((\on{dgMod}(k)^\circ)_0)$. By the discussion following \cite[4.1.7.3]{HA} and using that $\mhyphen \otimes_k^{\on{dg}}\mhyphen$ preserves weak equivalences in both entries, it follows that the left-tensoring passes to the $\infty$-categorical localizations at the chain homotopy equivalences, meaning that we obtain that $\mathcal{D}(A)$ is left-tensored over $\mathcal{D}(k)$. The action of $\mathcal{D}(k)$ on $\mathcal{D}(A)$ preserves colimits in both variables, as follows from the monoidal product $\mhyphen \otimes_k \mhyphen$ being a Quillen-bifunctor. To see that $\mhyphen \otimes_k^{\on{dg}}X$ is a $k$-linear functor, we need to describe an extension of $
\mhyphen \otimes_k^{\on{dg}} X$ to a map $\alpha:N(O^\otimes_k)\rightarrow N(O^\otimes_A)$ of $\infty$-operads over $\mathcal{LM}^\otimes$. We leave the details of the description of a functor of $1$-categories $\alpha':O^\otimes_k\rightarrow O^\otimes_A$ whose nerve $N(\alpha')$ defines the desired functor $\alpha$ to the reader.
\end{proof}

\begin{proposition}
Let $A$ be a $k$-linear dg-algebra. Using the symmetric monoidal equivalence $\mathcal{D}(k)\simeq \on{RMod}_k$, we can consider $\on{RMod}_A\overset{\eqref{modeqeq2}}{\simeq} \on{RMod}_A(\on{RMod}_k)$ as left-tensored over $\mathcal{D}(k)$. There exists an equivalence 
\begin{equation}\label{modeleq} \mathcal{D}(A)\simeq \on{RMod}_{A}
\end{equation} 
of $\infty$-categories left-tensored over $\mathcal{D}(k)$.
\end{proposition}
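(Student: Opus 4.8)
The plan is to produce the equivalence in \eqref{modeleq} by combining the three preceding results: \Cref{locprop}, which identifies $\mathcal{D}(A)$ with the $\infty$-categorical localization $N(\on{dgMod}(A)_0)[W^{-1}]$; the symmetric monoidal equivalences $\mathcal{D}(k)\simeq \on{RMod}_k$ and \Cref{modcor}, which give $\on{RMod}_A\simeq \on{RMod}_A(\on{RMod}_k)$ as a $\mathcal{D}(k)$-linear $\infty$-category; and \Cref{genlem1}, which characterizes any stable presentable $k$-linear $\infty$-category with a compact generator in terms of $\on{RMod}$ over its $k$-linear endomorphism algebra. So I would first invoke the previous lemma to note that $\mathcal{D}(A)$ is a stable, presentable, $k$-linear $\infty$-category, and exhibit an object $X\in \mathcal{D}(A)$ playing the role of compact generator.

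The natural candidate is the free rank-one module $A\in \on{dgMod}(A)^\circ$ (it is cofibrant, being $q$-semi-projective, and fibrant). First I would check that $A$ is a compact generator of $\mathcal{D}(A)$: compactness follows because $\on{Map}_{\mathcal{D}(A)}(A,\mhyphen)$ computes (the space associated to) the underlying complex, which commutes with filtered colimits of modules, and generation follows because $\on{Map}_{\mathcal{D}(A)}(A,M[i])$ detects the homology of $M$ in all degrees, so it vanishes for all $i$ exactly when $M\simeq 0$. Then by \Cref{genlem1} there is an equivalence of $k$-linear $\infty$-categories $\mathcal{D}(A)\simeq \on{RMod}_{\on{End}_k(A)}(\on{RMod}_k)$, and the $k$-linear endomorphism algebra $\on{End}_k(A)$ maps under $\xi$ to the $\mathbb{E}_1$-ring spectrum $\on{End}(A)$. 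It remains to identify $\on{End}_k(A)$, as an algebra object of $\mathcal{D}(k)\simeq\on{RMod}_k$, with the dg-algebra $A$ viewed via \eqref{equiveq1} as an algebra object of $\mathcal{D}(k)$: the underlying complex of $\on{End}_k(A)=\on{RHom}_A(A,\mhyphen)$ evaluated on $A$, i.e.\ $\on{RHom}_A(A,A)$, is computed by the adjunction \eqref{adj1} (with $X=A$ cofibrant) to be $\on{Hom}_A(A,A)\simeq A$ with its composition product, so $\on{End}_k(A)\simeq A$ as algebra objects. Combining, $\mathcal{D}(A)\simeq \on{RMod}_A(\on{RMod}_k)\overset{\eqref{modeqeq2}}{\simeq}\on{RMod}_A$ as $\mathcal{D}(k)$-linear $\infty$-categories.

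The main obstacle I anticipate is not the formal structure but the bookkeeping needed to identify $\on{End}_k(A)$ \emph{as an algebra object}, not merely as a complex, with the dg-algebra $A$: the universal property of $\on{End}_k(A)$ in \Cref{genlem1} pins it down only up to the canonical comparison map, and one must argue this map of algebra objects is an equivalence. The cleanest way is to use the second part of \Cref{genlem1}: $\xi(\on{End}_k(A))\simeq \on{End}(A)$ as $\mathbb{E}_1$-ring spectra, and separately observe that the $\mathbb{E}_1$-ring spectrum underlying $A$ (as an algebra object of $\mathcal{D}(k)$, rectified via \eqref{equiveq1}) is also $\on{End}(A)$; since $\xi$ is conservative on the relevant mapping spaces of algebra objects (an equivalence of underlying $\mathbb{E}_1$-ring spectra that is compatible with the $k$-linear structures is an equivalence of $k$-algebra objects), we conclude $\on{End}_k(A)\simeq A$ in $\on{Alg}(\on{RMod}_k)$. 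A secondary, more tedious point already flagged in the proof of the preceding lemma is the construction of the $1$-categorical functor $O_A^\otimes\to O_k^\otimes$ witnessing $k$-linearity of the relevant functors at the model-categorical level; here I would simply cite that lemma and the compatibility of the localization with the left-tensoring, so that the equivalence produced by \Cref{genlem1} is genuinely $\mathcal{D}(k)$-linear rather than merely an equivalence of stable $\infty$-categories.
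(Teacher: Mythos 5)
Your proposal follows the paper's own proof essentially step for step: you take $A$ as a compact generator of the stable, presentable, $k$-linearly enhanced $\mathcal{D}(A)$, invoke \Cref{genlem1} (i.e.~\cite[4.8.5.8]{HA}) to get $\mathcal{D}(A)\simeq\on{RMod}_{\on{End}_k(A)}(\on{RMod}_k)$, and then identify $\on{End}_k(A)\simeq A$ in $\on{Alg}(\mathcal{D}(k))$ via the canonical comparison morphism furnished by the universal property of the endomorphism object. The paper verifies that this comparison $\chi:A\to\on{End}_k(A)$ is an equivalence by checking the underlying chain complex is $\on{RHom}_A(A,A)\simeq A$, which is the same check as your appeal to the forgetful functor $\xi$; you phrase it as conservativity of $\xi$, which is correct once you note (as you do, if a bit loosely) that the comparison map must first be constructed in $\on{Alg}(\on{RMod}_k)$ before conservativity can be applied.
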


\begin{proof}
Consider the adjunction of $\infty$-categories $\mhyphen \otimes_k^{\on{dg}}A:\mathcal{D}(k)\leftrightarrow \mathcal{D}(A):\on{RHom}_A(A,\mhyphen)$ underlying the Quillen adjunction $\mhyphen\otimes_k^{\on{dg}} A:\on{dgMod}(k)\rightarrow \on{dgMod}(A):\on{Hom}_A(A,\mhyphen)$. Using the adjunction it can be directly checked that $A$ is a compact generator of $\mathcal{D}(A)$. It follows from \cite[4.8.5.8]{HA} that there exists an equivalence 
\begin{equation}\label{dmeq1}
\mathcal{D}(A)\simeq \on{RMod}_{\on{End}_k(A)}(\mathcal{D}(k))
\end{equation}
 of $\infty$-categories left-tensored over $\mathcal{D}(k)$, where $\on{End}_k(A)\in \on{Alg}(\mathcal{D}(k))$ is the $k$-linear endomorphism algebra of $A$, see \Cref{rendrem}. We note that the underlying chain complex satisfies $\on{End}_k(A)\simeq \on{RHom}_A(A,A)\simeq A$. By the universal property of $\on{End}_k(A)$, there exists a morphism of dg-algebras $\chi:A\rightarrow \on{End}_k(A)$, the underlying morphism of chain complexes of which is induced by the actions $A\otimes_kA\rightarrow A$ and $A\otimes_k \on{End}_k(A)\rightarrow A$. The latter is induced by the counit of the adjunction $\mhyphen\otimes_k^{\on{dg}}A\dashv \on{RHom}_A(A,\mhyphen)$ and thus equivalent to the former. It follows that $\chi$ induces a quasi-isomorphism $\on{End}_k(A)=\on{RHom}_A(A,A)\simeq A$ on underlying chain complexes and is hence a quasi-isomorphism of dg-algebras. In total we obtain, that there also exists an equivalence of $k$-linear $\infty$-categories $\on{RMod}_{\on{End}_k(A)}(\mathcal{D}(k))\simeq \on{RMod}_{A}(\mathcal{D}(k))\simeq \on{RMod}_A(\on{RMod}_k)$, which combined with \eqref{dmeq1} shows the statement. 
\end{proof}

Let $A,B\in \on{Alg}(\mathcal{D}(k))$ be dg-algebras and $F:\on{RMod}_A\rightarrow \on{RMod}_B$ a $k$-linear functor. Clearly $F(A)\in \on{RMod}_B$ carries the structure of a right $B$-module. Let $m:A\otimes_k A\rightarrow A$ be the multiplication map of $A$. Using the $k$-linearity of $F$, we find an action map
\[A\otimes_k F(A)\simeq F(A\otimes_k A)\xrightarrow{F(m)} F(A)\,,\]
which is part of the datum of a left $A$-module structure on $F(A)$. It turns out that both module structures are compatible, so that we can endow $F(A)$ with the structure of an $A$-$B$-bimodule. In total, we obtain a functor 
\[ \phi:\on{Lin}_k(\on{RMod}_A,\on{RMod}_B)\rightarrow \!\!\tensor[_A]{\on{BMod}}{_{B}}(\mathcal{D}(k))\,.\]
As is shown in \cite[Section 4.8.4]{HA}, the functor $\phi$ is an equivalence of $\infty$-categories. Given a bimodule $M\in\!\! \tensor[_A]{\on{BMod}}{_{B}}(\mathcal{D}(k))$, we denote by $\mhyphen \otimes_A M$ a choice of $k$-linear functor such that $\phi(\mhyphen \otimes_A M)\simeq M$.

\begin{proposition}\label{mndcomprop}
Let $A,B$ be dg-algebras and $M\in\!\!\tensor[_A]{\on{BMod}}{_B}(\mathcal{D}(k))\simeq \mathcal{D}(A^{op}\otimes_kB)$ and consider the functor of $\infty$-categories $\mhyphen \otimes^{\on{dg}}_A M$ underlying the right Quillen functor $\mhyphen \otimes^{\on{dg}}_A M : \on{dgMod}(A)\rightarrow \on{dgMod}(B)$. There exists a commutative diagram in $\on{LinCat}_k$ as follows.
\begin{equation}\label{mndcom}
\begin{tikzcd}
\on{RMod}_A \arrow[d, "\eqref{modeleq}"']\arrow[d, "\simeq"] \arrow[r, "\mhyphen\otimes_AM"] & {\on{RMod}}{_B} \arrow[d, "\eqref{modeleq}"']\arrow[d, "\simeq"] \\
\mathcal{D}(A) \arrow[r, "\mhyphen\otimes_A^{\on{dg}}M"]        & \mathcal{D}(B)                 
\end{tikzcd}
\end{equation}
\end{proposition}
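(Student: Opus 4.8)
The plan is to deduce the commutativity of \eqref{mndcom} in $\on{LinCat}_k$ from an identification of bimodules, using the equivalence $\phi\colon\on{Lin}_k(\on{RMod}_A,\on{RMod}_B)\xrightarrow{\ \sim\ }{}_{A}\on{BMod}_{B}(\mathcal{D}(k))$ recalled just before \Cref{mndcomprop}. Transporting the lower horizontal functor $\mhyphen\otimes^{\on{dg}}_AM$ along the vertical equivalences \eqref{modeleq} produces a functor $G\colon\on{RMod}_A\to\on{RMod}_B$, and \eqref{mndcom} commutes in $\on{LinCat}_k$ if and only if $G\simeq\mhyphen\otimes_AM$ as $k$-linear functors. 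First I would check that $G$ and $\mhyphen\otimes_AM$ both lie in $\on{Lin}_k(\on{RMod}_A,\on{RMod}_B)$: for $\mhyphen\otimes_AM$ this is its defining property, and for $G$ it holds because \eqref{modeleq} is a $\mathcal{D}(k)$-linear equivalence and $\mhyphen\otimes^{\on{dg}}_AM$, being the underlying $\infty$-functor of a Quillen functor between the projective model structures, is colimit preserving (it is the left derived functor of the left adjoint $\mhyphen\otimes_AM\colon\on{dgMod}(A)\to\on{dgMod}(B)$) and $k$-linear by the construction of the $\mathcal{D}(k)$-action in the proposition preceding \Cref{mndcomprop}. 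Since $\phi$ is an equivalence, it then suffices to produce an equivalence $\phi(G)\simeq M$ in ${}_{A}\on{BMod}_{B}(\mathcal{D}(k))$, i.e.\ to identify $G(A)$, equipped with its canonical $A$-$B$-bimodule structure, with $M$.

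Next I would compute $G(A)$. The equivalence \eqref{modeleq} — which by its construction in the preceding proposition (via the compact generator $A$ of $\mathcal{D}(A)$ and the identification $\on{End}_k(A)\simeq A$) is $\mathcal{D}(k)$-linear and identifies an object of $\mathcal{D}(B)$ represented by a right dg-$B$-module with the corresponding object of $\on{RMod}_B$ — matches the generator $A\in\on{RMod}_A$ with the free rank-one dg-module $A\in\on{dgMod}(A)^\circ$. Applying $\mhyphen\otimes^{\on{dg}}_AM$ to the cofibrant object $A$ gives literally $A\otimes_AM\cong M$ in $\on{dgMod}(B)^\circ$, so transporting back along \eqref{modeleq} yields $G(A)\simeq M$ in $\on{RMod}_B$. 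It then remains to match the left $A$-actions: the left $A$-module structure on $\phi(G)=G(A)$ furnished by $\phi$ is the composite $A\otimes_kG(A)\simeq G(A\otimes_kA)\xrightarrow{G(m)}G(A)$, with $m$ the multiplication of $A$, and at the level of dg-modules this is strictly compatible with $G(A)\cong M$: $m$ is a map of right $A$-modules, and tensoring it over $A$ with $M$ gives $A\otimes_kM\cong(A\otimes_kA)\otimes_AM\to A\otimes_AM\cong M$, which is precisely the left $A$-action of the bimodule $M$ (here one uses that the left and right $A$-actions on $M$ commute).

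The hard part will be the final step: upgrading these strict dg-level compatibilities to an equivalence $\phi(G)\simeq M$ in ${}_{A}\on{BMod}_{B}(\mathcal{D}(k))$. Concretely one must track the external tensoring $\mhyphen\otimes_k\mhyphen$, the relative tensoring $\mhyphen\otimes^{\on{dg}}_AM$, and the localization and Quillen functors relating $\on{dgMod}$ with $\mathcal{D}$ through the passage to underlying $\infty$-categories, checking that they all respect the $\mathcal{D}(k)$-linear (and bimodule) structures — which should follow from the construction of the $\mathcal{D}(k)$-action on $\mathcal{D}(A)$ in the preceding proposition together with the discussion following \cite[4.1.7.3]{HA} — and in particular that the canonical bimodule structure attached by $\phi$ to a $k$-linear functor is the one read off at the chain level from $\mhyphen\otimes^{\on{dg}}_AM$. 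Granting this, $\phi(G)\simeq M$ and hence $G\simeq\mhyphen\otimes_AM$ in $\on{Lin}_k(\on{RMod}_A,\on{RMod}_B)$, which is exactly the asserted commutativity of \eqref{mndcom} in $\on{LinCat}_k$.
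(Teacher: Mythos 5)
Your proposal is essentially the same argument as the paper's. The paper defines the transported functor (calling it $\chi$ rather than $G$), invokes the classification $\on{Lin}_k(\on{RMod}_A,\on{RMod}_B)\simeq{}_A\!\on{BMod}_B(\mathcal{D}(k))$ from \cite[Section~4.8.4]{HA} to write it as $\mhyphen\otimes_A N$ for a bimodule $N$, identifies the right $B$-module structures of $N$ and $M$, and then checks the left $A$-action of $N$ via $A\otimes_k N\simeq \chi(A\otimes_kA)\xrightarrow{\chi(m)}\chi(A)\simeq N$, exactly as you do. The one point you flag as "the hard part" — upgrading the chain-level identifications of the two separate module structures to an equivalence of bimodule objects in $\mathcal{D}(k)$ — is precisely what the paper handles by observing that $N$ can be rectified to a strict dg-bimodule, so that it is determined (up to quasi-isomorphism) by its left $A$- and right $B$-module structures separately. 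If you insert that rectification observation in your last paragraph, your proof closes the gap the same way the paper does.

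One small side note: the statement calls $\mhyphen\otimes^{\on{dg}}_AM$ a "right Quillen functor," but since the relative tensor product is a left adjoint (to $\on{Hom}_B(M,\mhyphen)$) it is a \emph{left} Quillen functor; you implicitly correct this by describing it as the left derived functor of a left adjoint, which is the intended reading.
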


\begin{remark}
As justified by \Cref{mndcomprop}, we will not distinguish in notation between the functors $\mhyphen\otimes_A M$ and $\mhyphen\otimes_A^{\on{dg}}M$ in the remainder of the text. 
\end{remark}

\begin{proof}[Proof of \Cref{mndcomprop}]
The $k$-linear functor 
\[ \chi:\on{RMod}_A \simeq \mathcal{D}(A)\xrightarrow{\mhyphen\otimes_A^{\on{dg}}M}  \mathcal{D}(B)\simeq {\on{RMod}}{_B}\] 
is of the form $\mhyphen\otimes_A N$ for $N\in\!\tensor[_A]{\on{BMod}}{_B}(\mathcal{D}(k))$, see \cite[Section 4.8.4]{HA}. We note that $N$ can be rectified to a strict dg-bimodule and is thus determined by its right $B$-module structure and its left $A$-module structure. The right $B$-module structures of $N$ and $M$ are clearly equivalent. In particular, there exists an equivalence $N\simeq M$ of underlying chain complexes. The left action of $A$ on $N$ is determined by $A\otimes_k N\simeq \chi(A\otimes_kA)\xrightarrow{\chi(m)} \chi(A)\simeq N,$ where $m$ denotes the multiplication of $A$ and is thus equivalent to the given left action of $A$ on the $A$-$B$-bimodule $M$. This shows that $N\simeq M$ as bimodules.
\end{proof}

\begin{proposition}\label{dgrem}
Let $A$ be a $k$-linear dg-algebra and $X\in \on{dgMod}(A)$ a cofibrant $A$-module. The $k$-linear endomorphism algebra $\on{End}_k(X)\in \on{Alg}(\mathcal{D}(k))$ of $X$ is quasi-isomorphic to the endomorphism dg-algebra $\on{End}^{\on{dg}}(X)$ of $X$. 
\end{proposition}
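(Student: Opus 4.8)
The plan is to identify $\on{End}_k(X)$ with $\on{End}^{\on{dg}}(X)$ by tracing through the construction of the $k$-linear endomorphism algebra used in \Cref{genlem1}. Recall that $\on{End}_k(X)$ was defined as $G(X)$, where $G$ is the right adjoint of the functor $\mhyphen \otimes_k^{\on{dg}} X:\mathcal{D}(k)\to \mathcal{D}(A)$, and that this right adjoint is precisely $\on{RHom}_A(X,\mhyphen)$, the functor of $\infty$-categories underlying the right Quillen functor $\on{Hom}_A(X,\mhyphen):\on{dgMod}(A)\to \on{dgMod}(k)$ from the Quillen adjunction \eqref{adj2}. Since $X$ is cofibrant and every object of $\on{dgMod}(A)_0$ is fibrant, the derived functor $\on{RHom}_A(X,X)$ is computed by the strict mapping complex $\on{Hom}_A(X,X)$, i.e.\ there is a quasi-isomorphism $\on{End}_k(X)\simeq \on{End}^{\on{dg}}(X)$ on underlying chain complexes. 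The content of the proposition is that this quasi-isomorphism is compatible with the multiplicative structure.

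First I would make precise that the algebra structure on $\on{End}_k(X)$ is the one provided by \cite[4.8.5.8]{HA} (as invoked in the proof of \Cref{genlem1}), namely that $\on{End}_k(X)$ is an endomorphism object of $X$ in $\mathcal{D}(A)$ regarded as left-tensored over $\mathcal{D}(k)$. By the universal property of endomorphism objects in the sense of \cite[Section 4.7.1]{HA}, to produce a map of algebra objects $\on{End}^{\on{dg}}(X)\to \on{End}_k(X)$ in $\on{Alg}(\mathcal{D}(k))$ it suffices to exhibit $\on{End}^{\on{dg}}(X)$ as acting on $X$ via a map $\on{End}^{\on{dg}}(X)\otimes_k X\to X$ in $\mathcal{D}(A)$ which is a module action; the evident evaluation map $\on{Hom}_A(X,X)\otimes_k X\to X$ at the level of chain complexes (which is a map of right $A$-modules since $X$ is an $A$-module and endomorphisms are $A$-linear) supplies exactly this datum, using the strict model for the left-tensoring of $\mathcal{D}(A)$ over $\mathcal{D}(k)$ constructed in the lemma preceding \Cref{mndcomprop}. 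The universal property then yields a canonical algebra map $\on{End}^{\on{dg}}(X)\to \on{End}_k(X)$.

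It then remains to check that this algebra map is an equivalence on underlying objects of $\mathcal{D}(k)$, which reduces to the quasi-isomorphism $\on{Hom}_A(X,X)\simeq \on{RHom}_A(X,X)$ noted above, valid because $X$ is cofibrant. Since a map in $\on{Alg}(\mathcal{D}(k))$ is an equivalence as soon as its image in $\mathcal{D}(k)$ under the (conservative) forgetful functor is an equivalence, this completes the argument. The main obstacle I anticipate is purely bookkeeping: namely carefully matching the $1$-categorical model $O_A^\otimes$ for the left-tensoring of $\mathcal{D}(A)$ over $\mathcal{D}(k)$ (used in the previous lemma) with the abstract endomorphism-object formalism of \cite[Section 4.7.1]{HA}, so that the strict evaluation pairing $\on{Hom}_A(X,X)\otimes_k X\to X$ is recognized as a genuine left-$\on{End}^{\on{dg}}(X)$-module structure compatible with the $A$-action — everything else is formal once this identification is in place. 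Alternatively, one can bypass the universal-property argument entirely: observe that the monad on $\mathcal{D}(k)$ underlying the adjunction $\mhyphen\otimes_k^{\on{dg}}X\dashv \on{RHom}_A(X,\mhyphen)$ is modeled strictly by the Quillen adjunction \eqref{adj2}, whose associated monad on $\on{dgMod}(k)_0$ is $\mhyphen\otimes_k \on{Hom}_A(X,X)$ with its standard associative algebra structure, and then invoke that passage to underlying $\infty$-categories sends this strict monad to the monad $\mhyphen\otimes_k \on{End}_k(X)$; comparing algebra structures then gives $\on{End}_k(X)\simeq \on{End}^{\on{dg}}(X)$ in $\on{Alg}(\mathcal{D}(k))$ directly.
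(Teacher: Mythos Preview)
Your proposal is correct and follows essentially the same route as the paper: both identify $\on{End}_k(X)=G(X)=\on{RHom}_A(X,X)$ via the adjunction underlying \eqref{adj2}, note that cofibrancy of $X$ gives $\on{RHom}_A(X,X)=\on{Hom}_A(X,X)=\on{End}^{\on{dg}}(X)$ on underlying complexes, and then invoke the universal property of the endomorphism object together with the evident action $\on{Hom}_A(X,X)\otimes_k X\to X$ to produce an algebra map which is a quasi-isomorphism on underlying complexes. The paper additionally cites \Cref{mndcomprop} to pin down that $F\simeq \mhyphen\otimes_k^{\on{dg}}X$, but otherwise the arguments coincide; your alternative monad-comparison sketch is not used in the paper but would also work.
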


\begin{proof}
\Cref{mndcomprop} shows that the functor 
\[F:\mathcal{D}(k)\simeq \on{RMod}_k \xrightarrow{\mhyphen \otimes_k X} \on{RMod}_A(\on{RMod}_k)\simeq \mathcal{D}(A)\]
is equivalent to $\mhyphen \otimes_k^{\on{dg}}X$. The right adjoint $G$ of $F$ is given by $\on{RHom}_A(X,\mhyphen)$. It follows that $\on{RHom}_A(X,X)\simeq G(X)=\on{End}_k(X)$ in $\mathcal{D}(k)$, see also the definition of $\on{End}_k(X)$ in the proof of \Cref{genlem1}. Using that $\on{RHom}_A(X,X)=\on{Hom}_A(X,X)=\on{End}^{\on{dg}}(X)$ and the explicit $\on{Hom}_{A}(X,X)$-module structure on $X$, it follows from the universal property of the endomorphism object that there exists a morphism of dg-algebras $\alpha:\on{RHom}_A(X,X)\rightarrow \on{End}_k(X)$, which restricts to the quasi-isomorphism on underlying chain complexes and is hence an quasi-isomorphism of dg-algebras.
\end{proof}

\subsection{Morita theory of dg-categories}\label{sec2.5}

Let $k$ be a commutative ring. We denote by $\on{dgCat}_k$ the category of $k$-linear dg-categories. Given a dg-category $C\in \on{dgCat}_k$, the dg-category $\on{dgMod}(C)$ admits a model structure called the projective model structure. We have already encountered this model structure in \Cref{sec2.4} in the case where $C$ is a dg-algebra. We define $C^{\on{perf}}$ as the full dg-subcategory of $\on{dgMod}(C)$ spanned by fibrant-cofibrant objects $x$ which are compact in the homotopy category $H^0(\on{dgMod}(C))$, i.e.~$\on{Hom}(x,\mhyphen)$ preserves coproducts. This assignment forms a functor 
\[ (\mhyphen)^{\on{perf}}:\on{dgCat}_k\rightarrow \on{dgCat}_k\,.\]

As shown by Tabuada \cite{Tab05b}, the category $\on{dgCat}_k$ admits a model structure where 
\begin{itemize}
\item the weak equivalences are the quasi-equivalences, that is dg-functors $F:A\rightarrow B$ such that for all $a,a'\in A$, the morphism between morphism complexes $F(a,a'):\on{Hom}_{A}(a,a')\rightarrow \on{Hom}_{{B}}(F(a),F(a'))$ is a quasi-isomorphism and such that the induced functor on homotopy categories is an equivalence.
\item the fibrations are the dg-functors $F$ such that for all $a,a'\in A$, the morphism between mapping complexes $F(a,a'):\on{Hom}_A(a,a')\rightarrow \on{Hom}_B(F(a),F(a'))$ is degreewise surjective and satisfies that for any isomorphism $b\rightarrow F(a')$ in the homotopy category of $B$ there exists a lift along $F$ to an isomorphism $a\rightarrow a'$ in the homotopy category of $A$.
\end{itemize}

The $\infty$-category underlying this model category is given by the $\infty$-categorical localization $\on{dgCat}_k[W^{-1}]$ of the nerve of $\on{dgCat}_k$ at the collection $W$ of weak equivalences. This model structure can be further localized at the collection $M$ of Morita-equivalences, that is dg-functors $F$ such that $(F)^{\on{perf}}$ is a quasi-equivalence. The resulting model structure is called the Morita model structure. The corresponding localization functor 
\[ L: \on{dgCat}_k[W^{-1}]\longrightarrow \on{dgCat}_k[M^{-1}]\]
exhibits $\on{dgCat}_k[M^{-1}]$ as a reflective localization of $\on{dgCat}_k[W^{-1}]$ and thus preserves colimits. Given $C\in \on{dgCat}_k[W^{-1}]$, its image $L(C)$ is quasi-equivalent to $C^{\on{perf}}$. The Morita model structure models the $\infty$-category of $k$-linear, stable and idempotent complete $\infty$-categories, meaning that there exists an equivalence of $\infty$-categories 
\begin{equation}\label{mdeq1} 
\on{dgCat}_k[M^{-1}]\simeq \on{Mod}_{N_{\on{dg}}(k^{\on{perf}})}(\on{St}^{\on{idem}})\,,
\end{equation}
see \cite{Coh13}. The right side of \eqref{mdeq1} describes the $\infty$-category of modules in the symmetric monoidal category $\on{St}^{\on{idem}}$ over the algebra object $N_{\on{dg}}(k^{\on{perf}})$. The equivalence \eqref{mdeq1} maps a dg-category $C$ to the dg-nerve of the dg-category $C^{\on{perf}}$. $\on{Ind}$-completion provides a further colimit preserving functor $\on{Ind}:\on{Mod}_{\mathcal{D}(k)^{\on{perf}}}(\on{St}^{\on{idem}})\rightarrow \on{LinCat}_k$. In total we obtain the colimit preserving functor 
\begin{equation}\label{psieq} 
\mathcal{D}(\mhyphen): \on{dgCat}_k[W^{-1}]\xlongrightarrow{L} \on{dgCat}_k[M^{-1}]\simeq \on{Mod}_{\mathcal{D}(k)^{\on{perf}}}(\on{St}^{\on{idem}}) \xlongrightarrow{\on{Ind}} \on{LinCat}_k\xlongrightarrow{\on{forget}} \mathcal{P}r^L\,.
\end{equation}
Note that given a dg-algebra $A$, the derived $\infty$-category $\mathcal{D}(A)$ is equivalent to the image of $A$ under \eqref{psieq}, so that the notation $\mathcal{D}(\mhyphen)$ for the functor \eqref{psieq} is justified. Furthermore, we can compute colimits in $\on{dgCat}_k[W^{-1}]$ as homotopy colimits in $\on{dgCat}_k$ with respect to the quasi-equivalence model structure. 

\subsection{Semiorthogonal decompositions}\label{sec2.6}

In this section we discuss semiorthogonal decompositions of stable $\infty$-categories of length $n\geq 2$. Some of the treatment is based on the discussion of semiorthogonal decompositions of length $n=2$ in \cite{DKSS19}.

\begin{definition}
Let $\mathcal{V}$ and $\mathcal{A}$ be stable $\infty$-categories. We call $\mathcal{A}\subset \mathcal{V}$ a stable subcategory if the inclusion functor is fully faithful, exact and its image is closed under equivalences.  
\end{definition}

\begin{notation}
Let $\mathcal{V}$ be a stable $\infty$-category and $\mathcal{A}_1,\dots,\mathcal{A}_n\subset \mathcal{V}$ stable subcategories. We denote by $\langle \mathcal{A}_1,\dots,\mathcal{A}_n\rangle$ the smallest stable subcategory of $\mathcal{V}$ containing $\mathcal{A}_1,\dots,\mathcal{A}_n$.
\end{notation}

\begin{definition}\label{soddef}
Let $\mathcal{V}$ be a stable $\infty$-category and let $\mathcal{A}_1,\dots,\mathcal{A}_n$ be stable subcategories of $\mathcal{V}$. Consider the full subcategory $\mathcal{D}$ of $\on{Fun}(\Delta^{n-1},\mathcal{V})$ spanned by diagrams $D:\Delta^{n-1}\rightarrow \mathcal{V}$ satisfying the following two conditions.
\begin{itemize}
\item $D(i)$ lies in $\langle \mathcal{A}_{n-i},\dots,\mathcal{A}_{n}\rangle$ for $0\leq i\leq n-1$.
\item The cofiber of $D(i)\rightarrow D(i+1)$ in $\mathcal{V}$ lies in $\mathcal{A}_{n-i-1}$ for all $0\leq i \leq n-2$.
\end{itemize}
We call the ordered $n$-tuple $(\mathcal{A}_1,\dots,\mathcal{A}_n)$ a semiorthogonal decomposition of $\mathcal{V}$ of length $n$ if the restriction functor $\mathcal{D}\rightarrow \mathcal{V}$ to the vertex $n-1$ is a trivial fibration.
\end{definition}

\begin{definition}
Let $\mathcal{V}$ be a stable $\infty$-category and $\mathcal{A}\subset \mathcal{V}$ a stable subcategory. We define 
\begin{itemize}
\item the right orthogonal $\mathcal{A}^\perp$ to be the full subcategory of $\mathcal{V}$ spanned by those vertices $x\in \mathcal{V}$ such that for all $a\in \mathcal{A}$ the mapping space $\on{Map}_\mathcal{V}(a,x)$ is contractible. 
\item the left orthogonal $\prescript{\perp}{}{\mathcal{A}}$ to be the full subcategory of $\mathcal{V}$ spanned by those vertices $x\in \mathcal{V}$ such that for all $a\in \mathcal{A}$ the mapping space $\on{Map}_\mathcal{V}(x,a)$ is contractible.
\end{itemize}
\end{definition}

The next lemma shows that semiorthogonal decompositions of length $n$ are simply repeated semiorthogonal decompositions of length $2$.

\begin{lemma}\label{sodlem1}
Let $\mathcal{V}$ be a stable $\infty$-category and $\mathcal{A}_i\subset \mathcal{V}$, for $1\leq i\leq n$, a stable subcategory. $(\mathcal{A}_1,\dots,\mathcal{A}_n)$ is a semiorthogonal decomposition of $\mathcal{V}$ if and only if
\begin{enumerate}[i)]
\item $\langle \mathcal{A}_1,\dots,\mathcal{A}_n\rangle=\mathcal{V}$ and 
\item $(\mathcal{A}_i,\prescript{\perp}{}{\mathcal{A}_i})$ forms a semiorthogonal decomposition of $\langle \mathcal{A}_{i},\dots,\mathcal{A}_n\rangle$ for all $1\leq i\leq n-1$.
\end{enumerate}
\end{lemma}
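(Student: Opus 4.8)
\textbf{Proof plan for \Cref{sodlem1}.} The plan is to reduce the length-$n$ statement to the length-$2$ theory developed in \cite{DKSS19} by an induction on $n$, treating the defining diagram category $\mathcal{D}\subset \on{Fun}(\Delta^{n-1},\mathcal{V})$ from \Cref{soddef} as a tower of successive cofiber sequences. First I would unwind \Cref{soddef} in the case $n=2$ and record that it agrees with the notion of semiorthogonal decomposition of length $2$ used in \cite{DKSS19}; in particular, the data of a length-$2$ semiorthogonal decomposition $(\mathcal{A},\mathcal{B})$ of $\mathcal{V}$ is equivalent to the assertions that $\langle\mathcal{A},\mathcal{B}\rangle=\mathcal{V}$, that $\on{Map}_\mathcal{V}(a,b)\simeq 0$ for $a\in\mathcal{A}$, $b\in\mathcal{B}$ wait — note the ordering convention here is that the orthogonality goes the other way, so I should be careful to extract from the trivial-fibration condition exactly which $\on{Hom}$-vanishing and which functorial-cofiber-sequence statement it encodes, and that the inclusions $\mathcal{A}\hookrightarrow\mathcal{V}$ and $\mathcal{B}\hookrightarrow\mathcal{V}$ admit the appropriate adjoints. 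This identification is the bookkeeping core of the argument and I expect it to be essentially formal given the setup in \Cref{sec2.6}.

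Next I would set up the induction. For the forward direction, suppose $(\mathcal{A}_1,\dots,\mathcal{A}_n)$ is a semiorthogonal decomposition of $\mathcal{V}$. Restricting a diagram $D:\Delta^{n-1}\to\mathcal{V}$ in $\mathcal{D}$ along the face $\Delta^{n-2}\hookrightarrow\Delta^{n-1}$ skipping the last vertex, and using that the evaluation $\mathcal{D}\to\mathcal{V}$ at the vertex $n-1$ is a trivial fibration, I would produce a functor $\mathcal{V}\to\mathcal{D}\to\on{Fun}(\Delta^{n-2},\mathcal{V})$ landing in the analogous diagram category for the tuple $(\mathcal{A}_2,\dots,\mathcal{A}_n)$, together with a natural cofiber sequence $D(n-2)\to D(n-1)\to (\text{cofiber in }\mathcal{A}_1)$; this exhibits $\langle\mathcal{A}_2,\dots,\mathcal{A}_n\rangle=D(n-2)$ as a stable subcategory of $\mathcal{V}$ with $(\langle\mathcal{A}_2,\dots,\mathcal{A}_n\rangle{}^{\perp},\langle\mathcal{A}_2,\dots,\mathcal{A}_n\rangle)$ — again, modulo the ordering convention, $(\mathcal{A}_1,\langle\mathcal{A}_2,\dots,\mathcal{A}_n\rangle)$ — a length-$2$ semiorthogonal decomposition of $\mathcal{V}$, and simultaneously realizes $(\mathcal{A}_2,\dots,\mathcal{A}_n)$ as a length-$(n-1)$ semiorthogonal decomposition of $\langle\mathcal{A}_2,\dots,\mathcal{A}_n\rangle$. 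Condition i) is then immediate, and condition ii) for $i=1$ is the length-$2$ statement just obtained while ii) for $2\leq i\leq n-1$ follows from the inductive hypothesis applied to $\langle\mathcal{A}_2,\dots,\mathcal{A}_n\rangle$. For the converse direction, given i) and ii), the $i=1$ instance of ii) together with the inductive hypothesis gives a length-$(n-1)$ semiorthogonal decomposition of $\langle\mathcal{A}_2,\dots,\mathcal{A}_n\rangle$; I would then glue the length-$2$ decomposition $(\mathcal{A}_1,\langle\mathcal{A}_2,\dots,\mathcal{A}_n\rangle)$ of $\mathcal{V}$ on top of it to build the requisite section $\mathcal{V}\to\mathcal{D}$ of the evaluation functor, checking that the glued diagram lands in $\mathcal{D}$ (the support conditions on $D(i)$ hold by construction and the cofiber conditions hold because they hold for the two pieces) and that the resulting functor is inverse to evaluation at vertex $n-1$ up to the contractible choices, i.e.\ that $\mathcal{D}\to\mathcal{V}$ is a trivial fibration.

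The main obstacle I anticipate is the careful manipulation of the diagram category $\mathcal{D}\subset\on{Fun}(\Delta^{n-1},\mathcal{V})$: making precise the claim that the evaluation-at-$(n-1)$ functor being a trivial fibration is equivalent to the existence of coherent functorial cofiber sequences, and that these can be assembled and disassembled one step at a time. Concretely, one wants that a functor $\Delta^{n-1}\to\mathcal{V}$ in $\mathcal{D}$ is determined, up to a contractible space of choices, by its value at the last vertex, which is a Kan-extension/cofiber-sequence statement that in the length-$2$ case is handled in \cite{DKSS19}; for general $n$ I would iterate this, viewing $\Delta^{n-1}$ as built from $\Delta^{n-2}$ by attaching the last vertex and invoking the pasting law for cofiber squares in the stable $\infty$-category $\mathcal{V}$. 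A secondary but purely notational hazard is keeping the ordering convention of \Cref{soddef} (which puts $\mathcal{A}_n$ "innermost" and reads off cofibers into $\mathcal{A}_{n-i-1}$) consistent with the convention for left orthogonals in condition ii); once the $n=2$ case is pinned down correctly this propagates without further difficulty. Everything else — stability and idempotent-closedness of $\langle\mathcal{A}_{i},\dots,\mathcal{A}_n\rangle$, exactness of the relevant functors — is routine and follows from the hypotheses together with the fact that the subcategories in question are generated by stable subcategories under finite (co)limits and shifts.
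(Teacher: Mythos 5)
Your plan is correct and follows essentially the same route as the paper's proof: both argue by induction on $n$, peeling off one layer of the decomposition at a time and using the Kan-extension/trivial-fibration calculus to identify the length-$n$ diagram category $\mathcal{D}$ with a pullback of the length-$2$ and length-$(n-1)$ pieces (the paper packages this via auxiliary categories $\mathcal{D}^i_j$ and the diagram \eqref{rdiag}). One small misstatement in your converse step: it is the $i\geq 2$ instances of ii), together with the inductive hypothesis, that produce the length-$(n-1)$ decomposition of $\langle\mathcal{A}_2,\dots,\mathcal{A}_n\rangle$; the $i=1$ instance supplies the outer length-$2$ piece that gets glued on.
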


\begin{proof}
For $1\leq i \leq n-1$ and $0\leq j \leq n-i-1$, denote by $\mathcal{D}^{i}_j\subset \on{Fun}(\Delta^{\{j,\dots,n-i\}},\langle \mathcal{A}_i,\dots,\mathcal{A}_{n}\rangle)$ the full subcategory spanned by diagrams $D^i_j$ satisfying that
\begin{itemize}
\item $D_j^i(l)$ lies in $\langle \mathcal{A}_{n-l},\dots,\mathcal{A}_{n}\rangle$ for $j\leq l\leq n-i$,
\item the cofiber of $D^i_j(l)\rightarrow D^i_j(l+1)$ in $\mathcal{V}$ lies in $\mathcal{A}_{n-l+1}$ for all $j\leq l \leq n-i-1$.
\end{itemize} 
We denote by $r_{i,j}:\mathcal{D}^i_j\rightarrow \langle \mathcal{A}_i,\dots,\mathcal{A}_n\rangle$ the functor given by the restriction to the vertex $n-i$. Note that for $i<k \leq n-j-1$, there is a trivial fibration $\mathcal{D}^i_j\rightarrow \mathcal{D}^i_{n-k}\times_{\langle \mathcal{A}_{k},\dots,\mathcal{A}_n\rangle}\mathcal{D}^k_j$.  

Now suppose that $(\mathcal{A}_1,\dots,\mathcal{A}_n)$ is a semiorthogonal decomposition and let $\mathcal{D}\rightarrow \mathcal{V}$ be the corresponding trivial fibration. Condition $i)$ is immediate. For condition $ii)$, we need to show that $r_{i,n-i-1}$ is a trivial fibration for all $1\leq i \leq n-1$. Using that pullbacks preserve trivial fibrations, it follows that 
\begin{equation}\label{rfun1} 
\mathcal{D}'=\mathcal{D}\times_{\mathcal{V}}\langle \mathcal{A}_i,\dots,\mathcal{A}_n\rangle\rightarrow \langle \mathcal{A}_i,\dots,\mathcal{A}_n\rangle
\end{equation} 
is a trivial fibration. We can describe the elements of $\mathcal{D}'$ as the left Kan extensions along the inclusion $\Delta^{\{0,\dots,n-i\}}\rightarrow \Delta^{n}$ of elements of $\mathcal{D}^{i}_{0}$. It thus follows from \cite[4.3.2.15]{HTT} that the restriction functor $\mathcal{D}'\rightarrow \mathcal{D}^{i}_0$ to $\Delta^{\{0,\dots,i\}}$ is a trivial fibration. Using that the functor \eqref{rfun1} factors through $r_{i,0}:\mathcal{D}^i_0\rightarrow \langle \mathcal{A}_i,\dots,\mathcal{A}_n\rangle$, it follows from the 2/3 property of equivalences that also $r_{i,0}$ is a trivial fibration. 
The following commutative diagram thus shows that $r_{i,n-i-1}$ is a trivial fibration. We have thus shown statement $ii)$.
\begin{equation}\label{rdiag}
\begin{tikzcd}
D^i_{0} \arrow[r, "\text{triv fib}"'] \arrow[rrr, "{r_{i,0}}", bend left=10] & {D^i_{n-i-1}\times_{\langle \mathcal{A}_{i+1},\dots,\mathcal{A}_n\rangle}D^{i+1}_{0}} \arrow[rd, "\lrcorner", phantom, near start] \arrow[r, "\text{triv fib}"'] \arrow[d] & D^i_{n-i-1} \arrow[d] \arrow[r, "{r_{i,n-i-1}}"']      & {\langle \mathcal{A}_{i},\dots,\mathcal{A}_n\rangle} \\
                                                                             & D^{i+1}_{0} \arrow[r, "{r_{i+1,0}}"]                                                                                                                                       & {\langle \mathcal{A}_{i+1},\dots,\mathcal{A}_n\rangle} &                                                     
\end{tikzcd}
\end{equation}

We now show that conditions $i)$ and $ii)$ imply that $(\mathcal{A}_1,\dots,\mathcal{A}_n)$ is a semiorthogonal decomposition of $\mathcal{V}$. If $n=2$, the assertion is obvious. We proceed by induction over $n$. Assume that $(\mathcal{A}_2,\dots,\mathcal{A}_{n})$ is a semiorthogonal decomposition of $\langle \mathcal{A}_2,\dots \mathcal{A}_n\rangle$, meaning that $r_{2,0}$ is a trivial fibration. To show that $(\mathcal{A}_1,\dots,\mathcal{A}_n)$ is a semiorthogonal decomposition of $\mathcal{V}=\langle \mathcal{A}_1,\dots,\mathcal{A}_n\rangle$, we need to show that $r_{1,0}$ is also a trivial fibration. Condition $ii)$ implies that $r_{1,n-2}$ is a trivial fibration. The diagram \eqref{rdiag} for $i=1$ thus shows that $r^1_0$ is also a trivial fibration.
\end{proof}

As it turns out, the functoriality data involved in the definition of semiorthogonal decompositions of length $2$ is redundant.  

\begin{lemma}\label{sod2lem}
Let $\mathcal{V}$ be a stable $\infty$-category and let $\mathcal{A},\mathcal{B}$ be stable subcategories of $\mathcal{V}$. The pair $(\mathcal{A},\mathcal{B})$ forms a semiorthogonal decomposition of length $2$ of $\mathcal{V}$ if and only if 
\begin{enumerate}
\item for all $a\in \mathcal{A}$ and $b\in \mathcal{B}$, the mapping space $\on{Map}_{\mathcal{V}}(b,a)$ is contractible and
\item for every $x\in \mathcal{V}$, there exists a fiber and cofiber sequence $b\rightarrow x\rightarrow a$ in $\mathcal{V}$ with $a\in \mathcal{A}$ and $b\in \mathcal{B}$.
\end{enumerate}
\end{lemma}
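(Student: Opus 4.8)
The plan is to show that the abstract data of Definition \ref{soddef} in the case $n=2$ is equivalent to conditions (1) and (2). Unwinding Definition \ref{soddef} for $n=2$: the category $\mathcal{D}\subset \on{Fun}(\Delta^1,\mathcal{V})$ consists of arrows $D(0)\to D(1)$ with $D(0)\in\mathcal{B}$, $D(1)\in\langle\mathcal{A},\mathcal{B}\rangle$, and cofiber of $D(0)\to D(1)$ lying in $\mathcal{A}$; the tuple $(\mathcal{A},\mathcal{B})$ is a semiorthogonal decomposition exactly when evaluation at the vertex $1$, $\on{ev}_1:\mathcal{D}\to\mathcal{V}$, is a trivial fibration. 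Note first that whenever such an arrow $b\to x$ exists with $b\in\mathcal{B}$ and cofiber $a\in\mathcal{A}$, the object $x$ automatically lies in $\langle\mathcal{A},\mathcal{B}\rangle$, so the condition $D(1)\in\langle\mathcal{A},\mathcal{B}\rangle$ is equivalent to asking that $\on{ev}_1$ be \emph{essentially surjective} onto $\langle\mathcal{A},\mathcal{B}\rangle$; combined with the lemma's hypothesis that $\langle\mathcal{A},\mathcal{B}\rangle$ need not equal $\mathcal{V}$ a priori — wait, actually the statement phrases things with target $\mathcal{V}$, so essential surjectivity of $\on{ev}_1$ onto $\mathcal{V}$ is precisely condition (2). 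The content of the lemma is therefore that, \emph{given} condition (2), the functor $\on{ev}_1:\mathcal{D}\to\mathcal{V}$ is a trivial fibration if and only if condition (1) holds, i.e.\ the fiber/cofiber sequence in (2) is essentially unique and depends functorially on $x$.

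The key steps, in order. First I would show that (1) implies the relevant uniqueness: given $x\in\mathcal{V}$ and two candidate sequences $b\to x\to a$ and $b'\to x\to a'$ with $b,b'\in\mathcal{B}$ and $a,a'\in\mathcal{A}$, the composite $b\to x\to a'$ is null by (1), so $b\to x$ factors through $b'\to x$ (the fiber of $x\to a'$), and symmetrically; a short diagram chase using the stability of $\mathcal{V}$ shows these factorizations are mutually inverse. More precisely, I would identify $\mathcal{D}$ with the full subcategory of $\on{Fun}(\Delta^2,\mathcal{V})$ spanned by cofiber sequences $b\to x\to a$ with $b\in\mathcal{B}$, $a\in\mathcal{A}$ (this reformulation is legitimate since in a stable $\infty$-category the cofiber of $b\to x$ is functorially determined), and then argue that under (1) the evaluation $\on{ev}_1$ to the middle term is fully faithful: a map of cofiber sequences inducing an equivalence on middle terms must induce equivalences on the outer terms, because $\on{Map}_\mathcal{V}(b,a')\simeq\ast$ forces the induced maps to be compatible in the unique way. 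Essential surjectivity is exactly (2). A fully faithful and essentially surjective functor is an equivalence, hence (up to replacing by a fibrant model, or directly via \cite[HTT]{HTT}) a trivial fibration. This gives the direction (1)+(2) $\Rightarrow$ semiorthogonal decomposition.

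For the converse, assume $(\mathcal{A},\mathcal{B})$ is a semiorthogonal decomposition, so $\on{ev}_1:\mathcal{D}\to\mathcal{V}$ is a trivial fibration, hence an equivalence. Essential surjectivity immediately yields condition (2). For condition (1), let $a\in\mathcal{A}$ and $b\in\mathcal{B}$; apply the inverse equivalence to the object $a\in\mathcal{V}$ to get a cofiber sequence $b''\to a\to a''$ with $b''\in\mathcal{B}$, $a''\in\mathcal{A}$, but the constant sequence $0\to a\xrightarrow{\on{id}} a$ is also such a sequence, so by the already-established uniqueness (which only used that $\on{ev}_1$ is faithful, a consequence of it being an equivalence) we get $b''\simeq 0$; now mapping $b$ in and using that $\on{Map}_\mathcal{V}(b,-)$ sends the cofiber sequence $b''\to a\to a''$ to a fiber sequence shows $\on{Map}_\mathcal{V}(b,a)\simeq\on{Map}_\mathcal{V}(b,\Omega a'')$, and iterating/using the analogous vanishing $\on{Map}_\mathcal{V}(b,a'')$-argument would loop — so instead I would run the argument the other way: apply the inverse equivalence to $b\in\mathcal{V}$, obtaining $b\xrightarrow{\on{id}}b\to 0$, and then for any $a\in\mathcal{A}$, the mapping-space long exact sequence associated to $b\to b\to 0$ is not the point. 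The clean route is: full faithfulness of $\on{ev}_1$ says $\on{Map}_\mathcal{D}(\sigma_b,\sigma_x)\simeq\on{Map}_\mathcal{V}(b,x)$ where $\sigma_b=(b\xrightarrow{\on{id}}b\to 0)$ and $\sigma_x$ is the sequence over $x$; taking $x=a\in\mathcal{A}$ with $\sigma_a=(0\to a\xrightarrow{\on{id}}a)$, a map $\sigma_b\to\sigma_a$ in $\on{Fun}(\Delta^2,\mathcal{V})$ is the data of a map $b\to 0$ on middle terms together with compatible maps on outer terms, whose space is $\on{Map}_\mathcal{V}(b,0)\times\cdots\simeq\ast$; hence $\on{Map}_\mathcal{V}(b,a)\simeq\ast$, which is (1).

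\textbf{Main obstacle.} The delicate point is the bookkeeping in identifying $\mathcal{D}$ (defined as a subcategory of $\on{Fun}(\Delta^1,\mathcal{V})$ via the auxiliary diagram category of Definition \ref{soddef}) with the subcategory of genuine cofiber sequences in $\on{Fun}(\Delta^2,\mathcal{V})$, and in checking that "trivial fibration" can be replaced by "categorical equivalence" without losing information — one must either invoke that $\on{ev}_1$ is automatically an \emph{isofibration}/categorical fibration (so that equivalence plus fibration gives trivial fibration, cf.\ \cite[HTT]{HTT}), or argue directly that the relevant restriction functor is already a fibration in the Joyal model structure. The second subtlety is making the uniqueness argument for the fiber/cofiber sequence fully precise at the $\infty$-categorical level rather than just on homotopy categories: this is where condition (1) is used not merely to get that $\on{Map}_\mathcal{V}(b,a)$ is \emph{connected} but that it is \emph{contractible}, which is exactly what forces the comparison maps between two such sequences to be canonical and hence assemble into an equivalence of functors. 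Everything else is a routine manipulation of fiber sequences in a stable $\infty$-category.
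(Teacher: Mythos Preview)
Your approach is correct and follows the standard line of argument for this fact. The paper, however, does not give a proof at all: it simply cites \cite[7.2.0.2]{SAG}, where Lurie establishes exactly this characterization of semiorthogonal decompositions of length~$2$. Your outline is essentially a reconstruction of that argument.

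A couple of remarks on the obstacles you flagged. First, the passage from ``categorical equivalence'' to ``trivial fibration'' is unproblematic here: the restriction $\on{Fun}(\Delta^1,\mathcal{V})\to\mathcal{V}$ is a categorical fibration, and since $\mathcal{D}$ is a full subcategory closed under equivalences (because $\mathcal{A}$ and $\mathcal{B}$ are), the restricted map $\on{ev}_1:\mathcal{D}\to\mathcal{V}$ remains a categorical fibration, so equivalence plus fibration gives trivial fibration. Second, your computation of $\on{Map}_{\mathcal{D}}(\sigma_b,\sigma_a)$ in the converse direction is the right idea, but phrased slightly loosely: one should compute the mapping space in $\on{Fun}(\Delta^1,\mathcal{V})$ between $(b\xrightarrow{\on{id}}b)$ and $(0\to a)$ as the fiber of $\on{Map}_\mathcal{V}(b,a)\to\on{Map}_\mathcal{V}(b,a)$ over the identity (via the end/limit formula for mapping spaces in functor categories), which is $\on{Map}_\mathcal{V}(b,0)\simeq\ast$; full faithfulness of $\on{ev}_1$ then forces $\on{Map}_\mathcal{V}(b,a)\simeq\ast$. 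With these small clarifications your argument goes through.
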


\begin{proof}
This follows from \cite[7.2.0.2]{SAG}.
\end{proof}

A simple source of semiorthogonal decompositions are sequences of functors between stable $\infty$-categories.

\begin{lemma}\label{sodlem3}
Let $D:\Delta^{n-1}\rightarrow \on{Cat}_\infty$ be a diagram taking values in stable $\infty$-categories, corresponding to $n$ composable functors 
\[ \mathcal{A}_1\xlongrightarrow{F_1}\mathcal{A}_2\xlongrightarrow{F_2}\dots\xlongrightarrow{F_{n-1}}\mathcal{A}_n\,.\]
\begin{enumerate}
\item The stable $\infty$-category 
\[\{\mathcal{A}_1,\dots,\mathcal{A}_n\}\coloneqq \on{Fun}_{\Delta^{n-1}}(\Delta^{n-1},\Gamma(D))\]
of sections of the Grothendieck construction $p:\Gamma(D)\rightarrow \Delta^{n-1}$, see \Cref{sec2.1}, admits a semiorthogonal decomposition $(\mathcal{A}_1,\dots,\mathcal{A}_n)$ of length $n$. 
\item Let $R$ be an $\mathbb{E}_\infty$-ring spectrum. If each $F_i$ is an $R$-linear functor between $R$-linear $\infty$-categories, then the $\infty$-category $\{\mathcal{A}_1,\dots,\mathcal{A}_n\}$ further inherits the structure of an $R$-linear $\infty$-category such that each inclusion functor $\iota_i:\mathcal{A}_i\rightarrow \{\mathcal{A}_1,\dots,\mathcal{A}_n\}$ is $R$-linear. 
\end{enumerate}
\end{lemma}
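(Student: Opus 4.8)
The plan is to establish part (1) first, constructing the semiorthogonal decomposition $(\mathcal{A}_1,\dots,\mathcal{A}_n)$ of $\{\mathcal{A}_1,\dots,\mathcal{A}_n\}=\on{Fun}_{\Delta^{n-1}}(\Delta^{n-1},\Gamma(D))$, and then deduce part (2) by tracking $R$-linear structures through the same construction. By \Cref{sodlem1}, it suffices to exhibit the sections category as built from iterated length-$2$ semiorthogonal decompositions, so I would proceed by induction on $n$, with the base case $n=2$ handled directly via \Cref{sod2lem}.

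\textbf{Step 1: identifying the pieces inside the sections category.} For each $1\le i\le n$, the fiber inclusion $\Delta^{\{i-1\}}\hookrightarrow \Delta^{n-1}$ together with the vertex $\mathcal{A}_i=\Gamma(D)_{i-1}$ gives a functor $\iota_i:\mathcal{A}_i\rightarrow \{\mathcal{A}_1,\dots,\mathcal{A}_n\}$ sending $x\in\mathcal{A}_i$ to the $p$-coCartesian pushforward of $x$ along $\Delta^{\{i-1\}}\to\Delta^{n-1}$ (i.e.\ the section which is $0$ on vertices $<i-1$ and equals $F_{j-1}\cdots F_i(x)$ on vertex $j-1\ge i-1$). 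I would check $\iota_i$ is fully faithful, exact, and has image closed under equivalences, using that mapping spaces in a sections category are computed as appropriate limits over $\Delta^{n-1}$ and that the image consists precisely of sections vanishing on the first $i-1$ vertices. Exactness follows since limits of stable $\infty$-categories and exact functors are computed pointwise.

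\textbf{Step 2: the semiorthogonality and generation conditions.} For the base case $n=2$, $\Gamma(D)\to\Delta^1$ is the cocartesian fibration of the functor $F_1:\mathcal{A}_1\to\mathcal{A}_2$, and a section is a triple $(a_1,a_2,\alpha:F_1(a_1)\to a_2)$; I would verify condition (1) of \Cref{sod2lem} — $\on{Map}(\iota_2(b),\iota_1(a))\simeq *$ for $a\in\mathcal{A}_1$, $b\in\mathcal{A}_2$ — by direct computation of the mapping space as a pullback, noting $\iota_1(a)$ is the section $(a,F_1(a),\on{id})$ and $\iota_2(b)$ is $(0,b,0)$, so the mapping space is $\on{Map}_{\mathcal{A}_1}(0,a)\times_{\on{Map}(0,F_1(a))}\on{Map}_{\mathcal{A}_2}(b,F_1(a))\simeq *$. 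For condition (2), given a section $(a_1,a_2,\alpha)$, the fiber/cofiber sequence is $\iota_2(\on{cofib}(\alpha)[-1])\to (a_1,a_2,\alpha)\to\iota_1(a_1)$, where the first map uses the evident section with $0$ in the first slot; I would check this is a fiber and cofiber sequence slotwise. The inductive step then applies \Cref{sodlem1}: condition (i), that the $\iota_i(\mathcal{A}_i)$ generate, follows because every section is built by iterated cofibers as in the length-$2$ case (or: the restriction functor to the last vertex is essentially surjective with the needed filtration), and condition (ii), that $(\iota_i(\mathcal{A}_i),{}^\perp\iota_i(\mathcal{A}_i))$ decomposes $\langle\mathcal{A}_i,\dots,\mathcal{A}_n\rangle$, reduces to the length-$2$ statement for the cocartesian fibration over $\Delta^{\{i-1,\dots,n-1\}}$ after identifying ${}^\perp\iota_i(\mathcal{A}_i)$ with $\{\mathcal{A}_{i+1},\dots,\mathcal{A}_n\}$ embedded as sections vanishing at vertex $i-1$ — this last identification is where I expect to spend the most care.

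\textbf{Step 3: the $R$-linear refinement.} For part (2), I would observe that when the $F_i$ are $R$-linear functors between $R$-linear $\infty$-categories, the Grothendieck construction $\Gamma(D)\to\Delta^{n-1}$ can be promoted to a diagram in $\on{LinCat}_R$ — equivalently, using that the forgetful functor $\on{LinCat}_R\to\mathcal{P}r^L$ preserves all limits (item (iii) of the discussion in \Cref{sec2.1}, citing \cite[4.2.3.1,4.2.3.5]{HA}), the limit computing the sections $\infty$-category already lives in $\on{LinCat}_R$, hence $\{\mathcal{A}_1,\dots,\mathcal{A}_n\}$ is $R$-linear. (One must note the sections category is the limit of $D$ in $\on{Cat}_\infty$ by item (i), and that this limit agrees with the limit taken in $\mathcal{P}r^L$, resp.\ $\mathcal{P}r^L_{\on{St}}$, resp.\ $\on{LinCat}_R$, since all relevant inclusions preserve limits.) Finally, each $\iota_i$ is $R$-linear because it is the composite of the $R$-linear pushforward structure maps of the cocartesian fibration with the inclusion of the limit, both of which respect the left-tensoring over $\on{LMod}_R$. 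The main obstacle, as flagged above, is the clean identification of the orthogonal subcategories ${}^\perp\iota_i(\mathcal{A}_i)$ with smaller sections categories, which is needed to invoke \Cref{sodlem1} inductively; everything else is a matter of unwinding definitions of mapping spaces in sections categories and invoking that limits in the relevant $\infty$-categories are computed compatibly.
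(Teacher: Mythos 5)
Your route --- reducing to iterated length-two decompositions via \Cref{sodlem1} and checking these with the mapping-space criterion of \Cref{sod2lem} --- is genuinely different from the paper's proof, which directly exhibits the trivial fibration $\mathcal{D}\rightarrow\{\mathcal{A}_1,\dots,\mathcal{A}_n\}$ required by \Cref{soddef} via Kan extensions along a staircase-shaped subcomplex of $\Delta^{n-1}\times\Delta^{n-1}$. The strategy could work, but your base case contains two concrete errors whose combined effect is that you establish the decomposition in the \emph{reverse} order. First, the pullback $\on{Map}_{\mathcal{A}_1}(0,a)\times_{\on{Map}(0,F_1(a))}\on{Map}_{\mathcal{A}_2}(b,F_1(a))$ is the pullback of $\ast\rightarrow\ast\leftarrow\on{Map}_{\mathcal{A}_2}(b,F_1(a))$, which is $\on{Map}_{\mathcal{A}_2}(b,F_1(a))$, not a point. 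With your coCartesian embeddings it is $\on{Map}(\iota_1(a),\iota_2(b))$, not $\on{Map}(\iota_2(b),\iota_1(a))$, that is contractible. Second, there is no map of sections $(a_1,a_2,\alpha)\rightarrow\iota_1(a_1)=(a_1,F_1(a_1),\on{id})$ restricting to $\on{id}_{a_1}$ unless $\alpha$ admits a retraction; the natural pointwise cofiber sequence is $\iota_1(a_1)\rightarrow(a_1,a_2,\alpha)\rightarrow\iota_2(\on{cofib}\alpha)$. By \Cref{sod2lem} these two facts exhibit the semiorthogonal decomposition $(\iota_2(\mathcal{A}_2),\iota_1(\mathcal{A}_1))$, i.e., in the conventions of \Cref{soddef} your embeddings produce $(\mathcal{A}_n,\dots,\mathcal{A}_1)$ rather than the claimed $(\mathcal{A}_1,\dots,\mathcal{A}_n)$.

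To get the stated order, the component $\mathcal{A}_i$ must be embedded not coCartesianly but as the ``skyscraper'' sections $a\mapsto(0,\dots,0,a,0,\dots,0)$ supported at the single vertex $i-1$: for these one checks $\on{Map}(\iota_j(b),\iota_i(a))\simeq\ast$ whenever $j>i$, every section is filtered by its restrictions to the terminal segments $\Delta^{\{i-1,\dots,n-1\}}$ extended by zero (these are exactly the subquotients the paper's staircase argument produces), and ${}^{\perp}\iota_i(\mathcal{A}_i)\cap\langle\mathcal{A}_i,\dots,\mathcal{A}_n\rangle$ is the category of sections vanishing on the first $i$ vertices, which is the desired smaller sections category. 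Separately, your Step 3 rests on the claim that $\{\mathcal{A}_1,\dots,\mathcal{A}_n\}$ is the limit of $D$; this is false --- the limit of $D$ is the $\infty$-category of \emph{coCartesian} sections, which over $\Delta^{n-1}$ is just $\mathcal{A}_1$, whereas the sections category is a lax limit. So the ``forgetful functors preserve limits'' argument does not apply directly; one must either rewrite the lax limit as an honest limit (using cotensors $\mathcal{A}_i^{\Delta^1}$) or, as the paper does, construct the left-tensoring over $\on{LMod}_R$ explicitly at the level of $\mathcal{LM}^\otimes$-operads.
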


\begin{proof}
We begin by showing statement 1. Consider the simplicial set 
\[ Z= \left(\Delta^0\times \Delta^{n-1}\right)\amalg_{\Delta^{\{1\}}\times \Delta^{\{1,\dots,n-1\}}} \left( \Delta^1\times \Delta^{n-2}\right)\amalg \dots \amalg_{\Delta^{\{1,\dots,n-1\}}\times \Delta^{\{1\}}}\left(\Delta^{n-1}\times \Delta^0\right)\,.\]
Let $\mathcal{D}'$ be the full subcategory of $\on{Fun}(Z,\Gamma(D))$ spanned by diagrams given by right Kan extensions along the inclusion $\Delta^{n-1}\times \Delta^0\rightarrow Z$ of a diagram in $\{\mathcal{A}_1,\dots,\mathcal{A}_n\}$. By \cite[4.3.2.15]{HTT}, the restriction functor $\mathcal{D}'\rightarrow \{\mathcal{A}_1,\dots\mathcal{A}_n\}$ to $\Delta^0\times \Delta^{n-1}$ is a trivial fibration. We can describe the elements of $\mathcal{D}'$ up to equivalence as diagrams in $\Gamma(D)$ of the form  
\[ 
\begin{tikzcd}
                    &                     &                               & a_1 \arrow[d]   \\
                    &                     & a_2 \arrow[r, "\on{id}"] \arrow[d] & a_2 \arrow[d]   \\
                    & \dots \arrow[r]\arrow[d]     & \dots \arrow[r]\arrow[d]               & \dots \arrow[d] \\
a_n \arrow[r, "\on{id}"] & \dots \arrow[r, "\on{id}"] & a_n \arrow[r, "\on{id}"]           & a_n            
\end{tikzcd}
\]
satisfying that $a_i\in \mathcal{A}_i$. The restriction functor $\mathcal{D}'\rightarrow \{\mathcal{A}_1,\dots,\mathcal{A}_n\}$ corresponds in the above description pointwise to the restriction to the rightmost column. The $\infty$-category $\mathcal{D}$ of \Cref{soddef} can be identified with the full subcategory of $\on{Fun}(\Delta^{n-1}\times\Delta^{n-1},\Gamma(\alpha))$ spanned by left Kan extensions along $Z\rightarrow \Delta^{n-1}\times \Delta^{n-1}$ of diagrams lying in $\mathcal{D}'$. It follows that the restriction functor $\mathcal{D}\rightarrow \mathcal{D}'$ is a trivial fibration and thus that the restriction functor $\mathcal{D}\rightarrow \{\mathcal{A}_1,\dots,\mathcal{A}_n\}$ is a trivial fibration. This shows statement 1.

We now show statement 2. Consider the diagram of $\infty$-operads over $\mathcal{LM}^\otimes$ 
\[ D^{\otimes}: \mathcal{O}_1^\otimes\xrightarrow{F_1^\otimes} \mathcal{O}_2^\otimes\xrightarrow{F_2^\otimes}\dots\xrightarrow{F_{n-1}^\otimes}\mathcal{O}_n^\otimes\]
exhibiting the functors $F_i$ as $R$-linear. The morphism of $\infty$-operad 
\[\on{Fun}_{\Delta^{n-1}}(\Delta^{n-1},\Gamma(D^\otimes))\times_{\on{Fun}(\Delta^{n-1},\mathcal{LM}^\otimes)}\mathcal{LM}^\otimes\rightarrow \mathcal{LM}^\otimes\]
exhibits $\{\mathcal{A}_1,\dots,\mathcal{A}_n\}$ as left tensored over 
\[ \mathcal{M}\coloneqq \on{Fun}_{\Delta^{n-1}}(\Delta^{n-1},\Gamma(D^\otimes))\times_{\on{Fun}(\Delta^{n-1},\mathcal{LM}^\otimes)}\mathcal{LM}^\otimes \times_{\mathcal{LM}^\otimes}\on{Assoc}^\otimes\,.\]
Let $\tilde{D}^\otimes:\Delta^{n-1}\rightarrow \on{Cat}_\infty$ be the constant diagram with value $\on{LMod}_R^\otimes$.
We find $\mathcal{M}$ to be equivalent as a monoidal $\infty$-category to
\begin{equation}\label{meq}
\on{Fun}_{\Delta^{n-1}}(\Delta^{n-1},\Gamma(\tilde{D}^\otimes))\times_{\on{Fun}(\Delta^{n-1},\on{Assoc}^\otimes)}\on{Assoc}^\otimes\,.
\end{equation}
Pulling back along the monoidal functor $\on{LMod}_R^\otimes\rightarrow \mathcal{M}$, assigning to $x\in \on{LMod}_R^\otimes$ the constant section in \eqref{meq}, we obtain a left-tensoring of $\{\mathcal{A}_1,\dots,\mathcal{A}_n\}$ over $\on{LMod}_R$. To show that the left-tensoring provides the structure of an $R$-linear $\infty$-category, it suffices to show that the monoidal product preserves colimits in the second entry. This follows from the observation that colimits in $\{\mathcal{A}_1,\dots,\mathcal{A}_n\}$ are computed pointwise, i.e.~the $n$ restriction functors $\{\mathcal{A}_1,\dots,\mathcal{A}_n\}\rightarrow \mathcal{A}_i$ preserve colimits.
\end{proof}

We also introduce the following notation used later on.

\begin{notation}\label{cocartnot1}
Let $p:\Gamma\rightarrow \Delta^{n}$ be an inner fibration. Given an edge $e:a\rightarrow a'$ in $\Gamma$ we write $e:a\xrightarrow{!}a'$ if $e$ is a $p$-coCartesian edge and $e:a\xrightarrow{\ast}a'$ if $e$ is a $p$-Cartesian edge.
\end{notation}

Our next goal is to describe an analogue of the construction of the semiorthogonal decomposition in \Cref{sodlem3} in the setting of dg-categories and show that the resulting $\infty$-categories with semiorthogonal decompositions are equivalent. For that we recall the notion of gluing functors of semiorthogonal decompositions of length $2$, see also \cite{DKSS19}. 

\begin{definition}\label{chidef}
Let $\mathcal{V}$ be a stable $\infty$-category with a semiorthogonal decomposition $(\mathcal{A},\mathcal{B})$. We define a simplicial set $\chi(\mathcal{A},\mathcal{B})$ by defining an $n$-simplex of $\chi(\mathcal{A},\mathcal{B})$ to correspond to the following data.
\begin{itemize}
\item An $n$-simplex $j:\Delta^n\rightarrow \Delta^1$ of $\Delta^1$.
\item An $n$-simplex $\sigma:\Delta^n\rightarrow \mathcal{V}$ such that $\sigma(\Delta^{j^{-1}(0)})\subset \mathcal{A}$ and  $\sigma({\Delta^{j^{-1}(1)}})\subset \mathcal{B}$.
\end{itemize}
We define the face and degeneracy maps to act on an $n$-simplex $(j,\sigma)\in \chi(\mathcal{A},\mathcal{B})_n$ componentwise.

We denote by $p:\chi(\mathcal{A},\mathcal{B})\rightarrow \Delta^1$ the apparent forgetful functor.
\end{definition}

\begin{definition}\label{cartdef}
Let $\mathcal{V}$ be a stable $\infty$-category with a semiorthogonal decomposition $(\mathcal{A},\mathcal{B})$. We call
\begin{itemize}
\item $(\mathcal{A},\mathcal{B})$ Cartesian if the functor $p:\chi(\mathcal{A},\mathcal{B})\rightarrow \Delta^1$ is a Cartesian fibration. In that case, we call the functor associated to the Cartesian fibration $p$ the right gluing functor associated to $(\mathcal{A},\mathcal{B})$.
\item $(\mathcal{A},\mathcal{B})$ coCartesian if the functor $p:\chi(\mathcal{A},\mathcal{B})\rightarrow \Delta^1$ is a coCartesian fibration. In that case, we call the functor associated to the coCartesian fibration $p$ the left gluing functor associated to $(\mathcal{A},\mathcal{B})$.
\end{itemize}
\end{definition}

\begin{lemma}[$\!\!$\cite{DKSS19}]
Let $\mathcal{V}$ be a stable $\infty$-category with a semiorthogonal decomposition $(\mathcal{A},\mathcal{B})$. 
\begin{enumerate}
\item If $(\mathcal{A},\mathcal{B})$ is Cartesian, the inclusion functor $\mathcal{A}\rightarrow \mathcal{V}$ admits a right adjoint, the restriction of which to $\mathcal{B}$ is the right gluing functor of $(\mathcal{A},\mathcal{B})$.
\item If $(\mathcal{A},\mathcal{B})$ is coCartesian, the inclusion functor $\mathcal{B}\rightarrow \mathcal{V}$ admits a left adjoint, the restriction of which to $\mathcal{A}$ is the left gluing functor of $(\mathcal{A},\mathcal{B})$.
\end{enumerate}
\end{lemma}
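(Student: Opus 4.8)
The plan is to unwind the definition of $\chi(\mathcal{A},\mathcal{B})$ and to identify, for a Cartesian (resp.\ coCartesian) semiorthogonal decomposition, the fibers of $p$ over the two objects of $\Delta^1$ together with the transition functor, and then to recognize the latter as (the restriction to $\mathcal{B}$, resp.\ $\mathcal{A}$, of) an adjoint to the inclusion. First I would observe that the fiber of $p:\chi(\mathcal{A},\mathcal{B})\rightarrow \Delta^1$ over $0\in \Delta^1$ is $\mathcal{A}$ and the fiber over $1$ is $\mathcal{B}$, directly from \Cref{chidef}: an $n$-simplex with $j$ constant at $0$ is precisely an $n$-simplex of $\mathcal{A}$, and similarly for $1$. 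Moreover, an edge of $\chi(\mathcal{A},\mathcal{B})$ lying over the nondegenerate edge $0\to 1$ of $\Delta^1$ is exactly an arrow $b\to a$ in $\mathcal{V}$ with $b\in\mathcal{B}$ and $a\in\mathcal{A}$. Thus $p$ is always an inner fibration whose total space records the arrows from $\mathcal{B}$ to $\mathcal{A}$ inside $\mathcal{V}$; this is a two-object version of the cylinder/correspondence construction.

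Next I would use the general theory of (co)Cartesian fibrations over $\Delta^1$. If $p$ is a Cartesian fibration, it is classified by a functor $\mathcal{B}\to \mathcal{A}$, which is by definition the right gluing functor; I then have to match this with the right adjoint of the inclusion $\iota_\mathcal{A}:\mathcal{A}\hookrightarrow \mathcal{V}$. The key point is the semiorthogonality axiom from \Cref{sod2lem}: for every $x\in\mathcal{V}$ there is a fiber--cofiber sequence $b\to x\to a$ with $a\in\mathcal{A}$, $b\in\mathcal{B}$, and $\on{Map}_{\mathcal{V}}(b',a')$ is contractible for $b'\in\mathcal{B}$, $a'\in\mathcal{A}$. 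Taking $x=b\in\mathcal{B}$, the sequence becomes $b\to b\to 0$; more usefully, for $b\in\mathcal{B}$ one checks that mapping $b$ into the correspondence $p$ computes $\on{Map}_{\mathcal{V}}(b,\iota_\mathcal{A}(a))$, and a $p$-Cartesian lift of $0\to 1$ ending at $a\in\mathcal{A}$ over $b\in\mathcal{B}$ is, by the universal property of Cartesian edges, an arrow $b\to a$ representing the functor $a\mapsto \on{Map}_{\mathcal{V}}(b,a)$ — i.e.\ it exhibits the value at $b$ of a right adjoint to $\iota_\mathcal{A}$. Since such Cartesian lifts exist for all $b$ exactly when $p$ is a Cartesian fibration, this shows simultaneously that $\iota_\mathcal{A}$ admits a right adjoint and that its restriction to $\mathcal{B}$ is the classifying (right gluing) functor of $p$. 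The coCartesian case is entirely dual: a coCartesian lift of $0\to 1$ starting at $a\in\mathcal{A}$ is an arrow $a\to b$ in $\mathcal{V}$ corepresenting $b'\mapsto \on{Map}_{\mathcal{V}}(a,b')$ on $\mathcal{B}$, hence exhibits a left adjoint to $\iota_\mathcal{B}:\mathcal{B}\hookrightarrow\mathcal{V}$ whose restriction to $\mathcal{A}$ is the left gluing functor.

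The cleanest route, and the one I would actually write, is to invoke \cite{DKSS19} as the source of the lemma (it is attributed there) and only sketch the translation, since the statement is a direct repackaging of their treatment of length-$2$ semiorthogonal decompositions; alternatively one can cite \cite[7.2.0.2]{SAG}, already used in \Cref{sod2lem}, which produces the adjoint together with its behavior on the complementary subcategory. The main obstacle is purely bookkeeping: one must be careful that the functor classified by the (co)Cartesian fibration $p$ is identified with the correct one-sided adjoint and on the correct subcategory, and that the fiber sequences of \Cref{sod2lem} are compatible with the unit/counit data — no serious difficulty arises, but the identification of $p$-(co)Cartesian edges with (co)representing arrows in $\mathcal{V}$ has to be spelled out using the semiorthogonality vanishing condition. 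I do not expect any obstruction beyond this routine verification.
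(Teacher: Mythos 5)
The paper itself gives no proof of this lemma; it is stated with an attribution to \cite{DKSS19}, which your "cleanest route" matches. Your proof sketch is the right strategy and would work out, so let me only flag one substantive slip.

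You assert that an edge of $\chi(\mathcal{A},\mathcal{B})$ over the nondegenerate edge $0\to 1$ of $\Delta^1$ is an arrow $b\to a$ with $b\in\mathcal{B}$, $a\in\mathcal{A}$. Reading \Cref{chidef}, a $1$-simplex over $\on{id}_{\Delta^1}$ has $j^{-1}(0)=\{0\}$ and $j^{-1}(1)=\{1\}$, so $\sigma(0)\in\mathcal{A}$ and $\sigma(1)\in\mathcal{B}$; the edge goes $a\to b$, not $b\to a$. Consequently, in the Cartesian case the $p$-Cartesian lift of $0\to 1$ ends at $b\in\mathcal{B}$ (the fiber over $1$) and starts at some $a\in\mathcal{A}$, and the Cartesian property says $\on{Map}_{\mathcal{A}}(a',a)\simeq\on{Map}_{\mathcal{V}}(a',b)$ for all $a'\in\mathcal{A}$ — i.e.\ it exhibits $a$ as a value of a right adjoint to $\iota_\mathcal{A}$ at $b$, and the functor $b\mapsto a$ classified by the Cartesian fibration is $\mathcal{B}\to\mathcal{A}$, as it should be. Your written version ("an arrow $b\to a$ representing the functor $a\mapsto\on{Map}_{\mathcal{V}}(b,a)$") has both the direction of the edge and the variance of the represented functor wrong; if taken at face value it would produce a left adjoint to $\iota_\mathcal{A}$, not a right adjoint. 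You happen to land on the correct conclusion anyway, and your coCartesian paragraph is stated correctly, but the Cartesian paragraph needs the directions corrected to be a valid argument. Since you explicitly identified the bookkeeping of (co)Cartesian edges as the main obstacle, this is exactly the spot that needed more care.
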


The next proposition can be summarized as showing that Cartesian semiorthogonal decompositions of length $2$ are fully determined by their  left gluing functor and dually that coCartesian semiorthogonal decomposition of length $2$ are fully determined by their right gluing functor.

\begin{proposition}[$\!\!$\cite{DKSS19}]\label{sodprop1}
Let $\mathcal{V}$ be a stable $\infty$-category with a semiorthogonal decomposition $(\mathcal{A},\mathcal{B})$. 
\begin{enumerate}
\item If $(\mathcal{A},\mathcal{B})$ is Cartesian with right gluing functor $G$, there exists an equivalence of $\infty$-categories $\mathcal{V}\simeq \on{Fun}_{\Delta^1}(\Delta^1,\chi(G))$, where $\chi(G)\rightarrow \Delta^1$ is the Cartesian fibration classifying $G$ considered as a functor $\Delta^1\rightarrow \on{Cat}_\infty$.
\item If $(\mathcal{A},\mathcal{B})$ is coCartesian with left gluing functor $F$, there exists an equivalence of $\infty$-categories $\mathcal{V}\simeq \on{Fun}_{\Delta^1}(\Delta^1,\Gamma(F))$.
\end{enumerate}
\end{proposition}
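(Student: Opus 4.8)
The plan is to prove the two statements in parallel, exploiting that they are formally dual. Passing to opposite $\infty$-categories exchanges Cartesian and coCartesian semiorthogonal decompositions: under the identification $\chi(\mathcal{A},\mathcal{B})^{op}\simeq\chi(\mathcal{B}^{op},\mathcal{A}^{op})$ over $\Delta^1$, the pair $(\mathcal{A},\mathcal{B})$ is a Cartesian semiorthogonal decomposition of $\mathcal{V}$ with right gluing functor $G$ exactly when $(\mathcal{B}^{op},\mathcal{A}^{op})$ is a coCartesian semiorthogonal decomposition of $\mathcal{V}^{op}$ with left gluing functor $G^{op}$, since a Cartesian fibration over $\Delta^1$ becomes a coCartesian fibration over $(\Delta^1)^{op}\cong\Delta^1$ after passing to opposites. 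As $\on{Fun}_{\Delta^1}(\Delta^1,X)^{op}\simeq\on{Fun}_{\Delta^1}(\Delta^1,X^{op})$ and equivalences are stable under $(-)^{op}$, it suffices to prove statement 2.

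I would then reduce statement 2 to the claim $\mathcal{V}\simeq\on{Fun}_{\Delta^1}(\Delta^1,\chi(\mathcal{A},\mathcal{B}))$. In the coCartesian case $p:\chi(\mathcal{A},\mathcal{B})\to\Delta^1$ is, by \Cref{cartdef}, a coCartesian fibration whose straightening is the left gluing functor $F$; hence $\Gamma(F)\to\Delta^1$ and $p$ are equivalent over $\Delta^1$ and their $\infty$-categories of sections agree. Unwinding \Cref{chidef}, a section of $p$ is a triple $(a,b,\phi:a\to b)$ with $a\in\mathcal{A}$, $b\in\mathcal{B}$ and $\phi$ a morphism of $\mathcal{V}$, so the section category is the full subcategory $\mathcal{E}\subset\on{Fun}(\Delta^1,\mathcal{V})$ spanned by such arrows, and the candidate equivalence is the restriction $\on{fib}:\mathcal{E}\to\mathcal{V}$ of the fiber functor $\on{Fun}(\Delta^1,\mathcal{V})\to\mathcal{V}$.

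To construct an inverse I would extract localization data from the semiorthogonal decomposition. From $\on{Map}_\mathcal{V}(b,a)\simeq\ast$ (\Cref{sod2lem}) together with the canonical fiber sequences of \Cref{sod2lem}(2) one checks, using that shifts of $\mathcal{A}$-objects stay in $\mathcal{A}$, that $\mathcal{A}\hookrightarrow\mathcal{V}$ admits a left adjoint $q_\mathcal{A}$ and $\mathcal{B}\hookrightarrow\mathcal{V}$ a right adjoint $r_\mathcal{B}$, assembled into a natural fiber sequence $\iota_\mathcal{B}r_\mathcal{B}(x)\to x\to\iota_\mathcal{A}q_\mathcal{A}(x)$; equivalently, this repackages the equivalence $\mathcal{V}\simeq\mathcal{D}$ of \Cref{soddef} for $n=2$. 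Rotating and taking the connecting morphism defines $\Phi:\mathcal{V}\to\mathcal{E}$, $x\mapsto\big(\iota_\mathcal{A}q_\mathcal{A}(x)\to(\iota_\mathcal{B}r_\mathcal{B}(x))[1]\big)$, where the required null-homotopy of the composite $\iota_\mathcal{B}r_\mathcal{B}(x)\to\iota_\mathcal{A}q_\mathcal{A}(x)$ exists and is unique up to contractible choice because $\on{Map}_\mathcal{V}(\iota_\mathcal{B}r_\mathcal{B}(x),\iota_\mathcal{A}q_\mathcal{A}(x))\simeq\ast$. Then $\on{fib}\circ\Phi\simeq\on{id}_\mathcal{V}$ since the fiber of the connecting map recovers $x$, and $\Phi\circ\on{fib}\simeq\on{id}_\mathcal{E}$ since for $\phi:a\to b$ the rotated fiber sequence $b[-1]\to\on{fib}(\phi)\to a$ is, by uniqueness of such decompositions (a consequence of the adjunctions above), the canonical one, whence $\Phi(\on{fib}(\phi))\simeq\phi$.

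The main obstacle is coherence rather than content: one must produce $\Phi$ as a genuine functor of $\infty$-categories — i.e. the natural transformations $\iota_\mathcal{B}r_\mathcal{B}\Rightarrow\on{id}_\mathcal{V}\Rightarrow\iota_\mathcal{A}q_\mathcal{A}$ together with a coherent null-homotopy of the composite, plus the verification that the output lands in $\mathcal{E}$ — and then make the homotopies $\on{fib}\circ\Phi\simeq\on{id}$ and $\Phi\circ\on{fib}\simeq\on{id}$ natural. This is handled most cleanly by working throughout with the section-category presentation $\mathcal{D}$ of $\mathcal{V}$ from \Cref{soddef}, in which $\Phi$ becomes the manifestly functorial operation of extending the diagram and passing to the total connecting morphism; the remaining bookkeeping, including tracking the shift $[1]$ in $\Phi$ (harmless, but it must be carried along consistently, or else absorbed into the gluing functor), is carried out in \cite{DKSS19}.
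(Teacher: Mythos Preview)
The paper does not prove this proposition; it is stated with the citation \cite{DKSS19} and no proof is given. Your sketch is a reasonable outline of the kind of argument one expects to find there---the reduction by duality, the identification of the section category of $\chi(\mathcal{A},\mathcal{B})$ with the full subcategory of $\on{Fun}(\Delta^1,\mathcal{V})$ on arrows $a\to b$, and the construction of an inverse to the fiber functor using the localization sequence---and you correctly flag that the substance lies in making $\Phi$ and the two homotopies coherent. Since the paper treats the result as a black box, there is no in-paper proof to compare against; your proposal is consistent with how the statement is used later (e.g.\ in \Cref{sodprop5}).
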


\begin{definition}
Let $\mathcal{V}$ be a stable $\infty$-category and let $(\mathcal{A}_1,\dots,\mathcal{A}_n)$ be a semiorthogonal decomposition of $\mathcal{V}$. We call
\begin{itemize}
\item $(\mathcal{A}_1,\dots,\mathcal{A}_n)$ a Cartesian semiorthogonal decomposition if each semiorthogonal decomposition $(\mathcal{A}_i,\prescript{\perp}{}{\mathcal{A}_i})$ is Cartesian. In that case, we call the right gluing functor of $(\mathcal{A}_i,\prescript{\perp}{}{\mathcal{A}_i})$ the $i$-th right gluing functor of $(\mathcal{A}_1,\dots,\mathcal{A}_n)$. 
\item $(\mathcal{A}_1,\dots,\mathcal{A}_n)$ a coCartesian semiorthogonal decomposition if each semiorthogonal decomposition $(\mathcal{A}_i,\prescript{\perp}{}{\mathcal{A}_i})$ is coCartesian. If $(\mathcal{A}_1,\dots,\mathcal{A}_n)$ is coCartesian,  we call the left gluing functor of $(\mathcal{A}_i,\prescript{\perp}{}{\mathcal{A}_i})$ the $i$-th left gluing functor of $(\mathcal{A}_1,\dots,\mathcal{A}_n)$. 
\end{itemize}
\end{definition}

We now introduce a dg-analogue of \Cref{sodlem3}: semiorthogonal decompositions arising from upper triangular dg-algebras concentrated on the diagonal and upper minor diagonal. 

\begin{definition}\label{tdgalg}
For $1\leq i\leq n$, let $A_i$ be a dg-algebra and for $1\leq i \leq n-1$ let $M_i$ be an $A_{i}$-$A_{i+1}$-bimodule. We denote by 
\[ \bf{A}=
\begin{pmatrix}
A_1 & M_1 &0 & \dots &0 &0\\
0   & A_2 & M_2& \dots &0 &0\\
0   & 0   & A_3& \dots &0 &0\\
\vdots& \vdots & \vdots & \ddots & \vdots & \vdots\\
0   & 0   & 0  & \dots  & A_{n-1} & M_{n-1}\\
0   & 0   & 0  & \dots  & 0 & A_n  
\end{pmatrix}
\] 
be the upper triangular dg-algebra, i.e.~the dg-algebra with underlying chain complex 
\[\bigoplus_{1\leq i \leq n}A_i \oplus \bigoplus_{1\leq i \leq n-1}M_i\]
and multiplication $\cdot$ given by
\begin{align*}
a_i\cdot a_j'& =\delta_{i,j}a_ia_j'\,, & m_i\cdot m_j'&= 0\,,\\
a_i \cdot m_j& = \delta_{i,j}a_i.m_j\,,& m_j\cdot a_i &=\delta_{j+1,i}m_j.a_i\,,
\end{align*}
where $a_i\in A_i,a_j'\in A_j$ and $m_i\in M_i, m_j'\in M_j$ and $\delta_{i,j}$ denotes the Kronecker delta.
\end{definition}

\begin{proposition}\label{sodprop4}
Let $\bf{A}$ be an upper triangular dg-algebra as in \Cref{tdgalg}. Then the stable $\infty$-category $\mathcal{D}(A)$ carries a semiorthogonal decomposition $(\mathcal{D}(A_1),\dots,\mathcal{D}(A_n))$ of length $n$ with $i$-th left gluing functor $\mhyphen \otimes_{A_i} M_i$.
\end{proposition}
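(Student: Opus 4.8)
The plan is to reduce the statement to the case $n = 2$ by an induction, and to treat the $n = 2$ case by exhibiting the module category of the $2 \times 2$ upper triangular dg-algebra $\begin{pmatrix} A_1 & M_1 \\ 0 & A_2 \end{pmatrix}$ as the sections of a Grothendieck construction, so that \Cref{sodlem3} and \Cref{sodprop1} apply. First I would set $A = \mathbf{A}$ and consider the two idempotents $e_1, e_2 \in \on{H}^0(A)$ corresponding to the diagonal blocks $A_1, A_2$; these are orthogonal idempotents summing to the identity, with $e_1 A e_1 \simeq A_1$, $e_2 A e_2 \simeq A_2$, $e_1 A e_2 \simeq M_1$ and $e_2 A e_1 \simeq 0$ (this last vanishing is exactly the upper-triangularity). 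The functors $\mhyphen \otimes_{A_i} e_i A : \mathcal{D}(A_i) \to \mathcal{D}(A)$ are fully faithful with essential images closed under equivalence — one checks fully faithfulness on the compact generator $A_i$, where $\on{RHom}_A(e_i A, e_i A) \simeq e_i A e_i \simeq A_i$ — giving stable subcategories $\mathcal{D}(A_i) \subset \mathcal{D}(A)$. Then I would verify the two conditions of \Cref{sod2lem}: the Hom-vanishing $\on{Map}_{\mathcal{D}(A)}(b, a) \simeq 0$ for $a \in \mathcal{D}(A_1)$, $b \in \mathcal{D}(A_2)$ reduces, via the compact generators, to $\on{RHom}_A(e_2 A, e_1 A) \simeq e_1 A e_2 \cdot {}$ — wait, more precisely to $e_2 A e_1 \simeq 0$; and the fiber–cofiber sequence $b \to x \to a$ is produced by the canonical triangle $x \otimes_A e_2 A e_2 \to$ (suitably) $\to x \otimes_A e_1 A e_1$, concretely $x e_2 \to x \to x e_1$ for a module $x$, which for $A$-modules gives the needed decomposition with $x e_1 \in \mathcal{D}(A_1)$ and $x e_2 \otimes \dots \in \mathcal{D}(A_2)$ after accounting for the differential twisting by $M_1$.

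Next, to identify the gluing functor, I would show directly that $\on{dgMod}(A)$ is equivalent, as a dg-category, to the dg-category of sections of the "arrow category" built from $\mhyphen \otimes_{A_1} M_1 : \on{dgMod}(A_1) \to \on{dgMod}(A_2)$: a strict right $A$-module is precisely the data of a right $A_1$-module $N_1 = N e_1$, a right $A_2$-module $N_2 = N e_2$, and an $A_2$-linear map $N_1 \otimes_{A_1} M_1 \to N_2$ encoding the action of the off-diagonal block $M_1$, with the compatible differential. This is an isomorphism of dg-categories, hence passes to an equivalence of derived $\infty$-categories; comparing with \Cref{sodprop1}(2) and the description of $\{\mathcal{A}_1, \mathcal{A}_2\}$ in \Cref{sodlem3} shows $(\mathcal{D}(A_1), \mathcal{D}(A_2))$ is a coCartesian semiorthogonal decomposition with left gluing functor $\mhyphen \otimes_{A_1} M_1$ (here I use \Cref{mndcomprop} to match $\mhyphen \otimes^{\on{dg}}_{A_1} M_1$ with its $\infty$-categorical incarnation). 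For the inductive step with general $n$, I would factor $\mathbf{A}$ using the $(n{-}1) \times (n{-}1)$ upper triangular block $\mathbf{A}'$ in rows/columns $1, \dots, n{-}1$: there is an idempotent decomposition of $A$ into $\mathbf{A}'$ (the first $n{-}1$ diagonal blocks) and $A_n$, with off-diagonal part $M_{n-1}$ viewed as an $\mathbf{A}'$-$A_n$-bimodule via the projection $\mathbf{A}' \to A_{n-1}$. The $n = 2$ case then gives a coCartesian semiorthogonal decomposition $(\mathcal{D}(\mathbf{A}'), \mathcal{D}(A_n))$ of $\mathcal{D}(A)$; by induction $\mathcal{D}(\mathbf{A}')$ carries the semiorthogonal decomposition $(\mathcal{D}(A_1), \dots, \mathcal{D}(A_{n-1}))$ with the stated gluing functors; and \Cref{sodlem1} assembles these into the length-$n$ semiorthogonal decomposition $(\mathcal{D}(A_1), \dots, \mathcal{D}(A_n))$ of $\mathcal{D}(A)$. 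One must check that the subcategory $\langle \mathcal{D}(A_{i}), \dots, \mathcal{D}(A_n)\rangle$ inside $\mathcal{D}(A)$ matches $\mathcal{D}(\mathbf{A}_{\geq i})$ for the corresponding sub-upper-triangular algebra, so that the $i$-th gluing functor computed in $\mathcal{D}(A)$ agrees with the one computed in $\mathcal{D}(\mathbf{A}_{\geq i})$ — this follows because both are detected on compact generators and the relevant $\on{RHom}$'s are the corresponding blocks of $A$.

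The main obstacle I anticipate is bookkeeping the signs and the differential when writing a strict $A$-module as the triple $(N_1, N_2, N_1 \otimes_{A_1} M_1 \to N_2)$: the differential on $N$ mixes $d_{N_1}$, $d_{N_2}$, $d_{M_1}$, and the connecting map, and one must make sure this matches exactly the differential on sections of the Grothendieck construction $\Gamma(\mhyphen \otimes_{A_1} M_1)$ as built in \Cref{sodlem3} (equivalently, the cone description of \Cref{sodprop1}). A clean way to sidestep ad hoc sign chases is to phrase the $n = 2$ equivalence purely at the level of the $\infty$-categorical gluing: verify the semiorthogonal decomposition abstractly via \Cref{sod2lem} as above, then invoke \Cref{sodprop1}(2), which says the whole category is reconstructed from the left gluing functor alone, and separately identify that left gluing functor — which is the composite $\mathcal{D}(A_1) \xrightarrow{\text{incl}} \mathcal{D}(A) \xrightarrow{\text{left adj}} \mathcal{D}(A_2)$ — with $\mhyphen \otimes_{A_1} M_1$ by computing it on the generator $A_1$, where it returns $A_1 \otimes_{A_1} M_1 \simeq M_1$ with its evident right $A_2$-module structure, and using \Cref{mndcomprop}. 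This isolates the sign issue to a single generator computation rather than an equivalence of dg-categories.
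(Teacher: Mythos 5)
Your overall strategy coincides with the paper's: decompose $\mathbf{A}$ by its diagonal idempotents, verify the two conditions of \Cref{sod2lem} (iterated via \Cref{sodlem1}), describe strict $\mathbf{A}$-modules as glued data, and read off the gluing functor from a left adjoint evaluated on a generator; the reduction to $n=2$ by induction is a harmless repackaging. However, there is a genuine error in your semiorthogonality check, and it is exactly the point where you correct yourself mid-sentence. For right modules one has $\on{RHom}_{\mathbf{A}}(e_2\mathbf{A},e_1\mathbf{A})\simeq e_1\mathbf{A}e_2\simeq M_1$, which is nonzero in general; the Hom that vanishes is $\on{RHom}_{\mathbf{A}}(e_1\mathbf{A},e_2\mathbf{A})\simeq e_2\mathbf{A}e_1\simeq 0$. (Test with the $2\times 2$ upper triangular matrices over a field: the unique nonzero map between the indecomposable projectives goes from the one-dimensional $e_2R$ to the two-dimensional $e_1R$.) Since the convention of \Cref{sod2lem} requires $\on{Map}(b,a)\simeq\ast$ for $b$ in the \emph{second} factor and $a$ in the \emph{first}, the subcategories generated by the projectives $e_i\mathbf{A}$ are semiorthogonal in the order $(\langle e_2\mathbf{A}\rangle,\langle e_1\mathbf{A}\rangle)$, not the order the statement asserts. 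To realize $(\mathcal{D}(A_1),\mathcal{D}(A_2))$ in the stated order, the first factor must be embedded by restriction of scalars along the projection $\mathbf{A}\to A_1$, i.e.\ as the modules $(X_1,0)$ supported in the first slot. Your own triangle $xe_2\to x\to xe_1$ already produces these pieces: its quotient is $X_1$ with \emph{zero} in the second slot, which does not lie in the essential image of $\mhyphen\otimes_{A_1}e_1\mathbf{A}$ (that image consists of the objects $(X_1,X_1\otimes_{A_1}M_1,\on{id})$). So as written, conditions (1) and (2) of \Cref{sod2lem} are being checked for two different pairs of subcategories.

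The same confusion propagates to your identification of the gluing functor. The left adjoint $L$ of $\iota_2$ sends $(X_1,X_2,f)$ to $\on{cof}\bigl(X_1\otimes_{A_1}M_1\xrightarrow{f}X_2\bigr)$; on $e_1\mathbf{A}=(A_1,M_1,\on{id})$ this gives $\on{cof}(\on{id}_{M_1})\simeq 0$, not $M_1$, while on the correct generator $(A_1,0,0)$ of the first SOD-component it gives $M_1[1]$ (a shift of the kind the paper also tracks in \eqref{dadiag}). You are in good company — the paper's own proof uses the embeddings $v^i_!=\mhyphen\otimes_{A_i}e_i\mathbf{A}$ throughout and records the Hom computation \eqref{hom1} with the transposed index placement — but to make the argument airtight you must distinguish the subcategories $\mathcal{D}(A_i)_v$ (images of the projectives) from $\mathcal{D}(A_i)_w$ (modules supported in a single slot), use the latter for $i<n$, and rerun both the orthogonality and the gluing-functor computations for those embeddings; the generation argument via the filtration by the $N_{\geq i}$ and the triangles $(w^i)^*N_i\to v^i_!N_i\to v^{i+1}_!(N_i\otimes_{A_i}M_i)$ then goes through as you and the paper describe.
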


\begin{proof}
The upper triangular dg-algebra $\bf A$ is quasi-isomorphic to the upper triangular dg-algebra obtained from cofibrantly replacing each $M_i$. We thus assume without loss of generality that the $M_i$ are cofibrant bimodules. Consider the morphisms of dg-algebras $v^i:A_i\rightarrow \bf A$ and $w^i:{\bf A}\rightarrow A_i$, given on the underlying chain complexes by the inclusion of the direct summand $A_i$ and the projection to the summand $A_i$, respectively. The dg-functor $v^i_!=\mhyphen \otimes^{\on{dg}}_{A_i}A_i\oplus M_i:\on{dgMod}(A_i)\rightarrow \on{dgMod}(A)$ and the pullback $(w^i)^*$ determine right $A$-modules $v^i_!(A_i)$ and $(w^i)^*(A_i)$ with underlying chain complexes $A_i\oplus M_i$, where we set $M_n=0$, and $A_i$, respectively. The functors $\mathcal{D}(v_!^i)$ and $\mathcal{D}((w^i)^*)$ both exhibit $\mathcal{D}(A_i)\subset \mathcal{D}(\bf A)$ as a stable subcategory. For concreteness, we denote the stable subcategories obtained from $\mathcal{D}(v_!^i)$ by $\mathcal{D}(A_i)_v$ and the stable subcategories obtained from $\mathcal{D}((w^i)^*)$ by $\mathcal{D}(A_i)_w$. We wish to show that $(\mathcal{D}(A_1)_v,\dots,\mathcal{D}(A_n)_v)$ is a semiorthogonal decomposition of $\mathcal{D}(\bf A)$. For that it suffices to show statements $i)$ and $ii)$ of \Cref{sodlem1}. To show statement $ii)$, it suffices to show conditions 1 and 2 of \Cref{sod2lem} for the pairs of stable subcategories $\mathcal{D}(A_i)_v,\langle \mathcal{D}(A_{i+1})_v,\dots,\mathcal{D}(A_{n})_v\rangle \subset \langle \mathcal{D}(A_{i})_v,\dots,\mathcal{D}(A_n)_v\rangle$ for all $1\leq i \leq n$. 

We compute for an $A_i$-module $N_i$ and an $A_j$-modules $N_j$ the mapping complex
\begin{equation}\label{hom1}
\on{Hom}_{\on{dgMod}(\bf A)}(v_!^i(N_i),v_!^j(N_j))\simeq \begin{cases}
\on{Hom}_{\on{dgMod}(A_i)}(N_i,N_j) & \text{if }i=j,\\ 
\on{Hom}_{\on{dgMod}(A_j)}(N_i\otimes_{A_i} M_i,N_j) & \text{if }i+1=j,\\
0 & \text{else}. \end{cases}
\end{equation} 
This shows condition 1 of \Cref{sod2lem}.

We observe that the datum of a right dg-module $N$ over $\bf A$ is equivalent to the datum of a sequence 
\[ N_1\xlongrightarrow{f_1}N_2\xlongrightarrow{f_2} \dots \xlongrightarrow{f_{n-1}}N_n\] 
where $N_i$ is a right $A_i\simeq \on{End}^{\on{dg}}((w^i)^*(A_i))$-module and $f_i\in M_i(N_i,N_{i+1})$. Denote by $N_{\geq i}$ the submodule $N_i\xrightarrow{f_i}\dots\xrightarrow{f_{n-1}}N_n$ of $N$. We thus find distinguished triangles $N_{\geq i+1}\rightarrow N_{\geq i} \rightarrow N_{i}$ in $\on{dgMod}(\bf A)$. As shown in \cite[Theorem 4.3.1]{Fao17}, the image under the dg-nerve of a distinguished triangle in a dg-category can be extended to a fiber and cofiber sequence. We can thus express $N\in \mathcal{D}(\bf A)$ as repeated cofibers of modules $N_i\in \mathcal{D}(A_i)_w\subset \mathcal{D}(\bf A)$ with $1\leq i \leq n$. A simple induction, using that there exist distinguished triangles in $\on{dgMod}(\bf A)$ of the form $N_{i}\rightarrow N_{i}\otimes_{A_i} v_!^i(A_i)\rightarrow N_{i}\otimes_{A_i} M_i$ for $1\leq i\leq n-1$ and $N_n\in \mathcal{D}(A_n)_w=\mathcal{D}(A_n)_v$, thus shows that $N\in \langle \mathcal{D}(A_1)_v,\dots,\mathcal{D}(A_n)_v\rangle$. 
It follows that statement $i)$ of \Cref{sodlem1} is fulfilled. 

Consider the subalgebra ${\bf A}_{\geq i}$ of $\bf A$ with underlying chain complex 
\[\bigoplus_{i\leq k\leq n}A_k\oplus \bigoplus_{i\leq k\leq n-1}M_k\,.\] 
The fully faithful dg-functor $\on{dgMod}({\bf A}_{\geq i})\rightarrow \on{dgMod}(\bf A)$ induces a fully faithful functor of $\infty$-categories $\iota:\mathcal{D}(\bf A_{\geq i})\rightarrow \mathcal{D}(\bf A)$. The above arguments show that the essential image of $\iota$ is $\langle \mathcal{A}_i,\dots,\mathcal{A}_n\rangle$ and can easily be adapted to also show condition $2$ of \Cref{sod2lem}. We have thus proven the existence of the desired semiorthogonal decomposition of $\mathcal{D}(\bf A)$.

We now determine the $i$-th left gluing functor of $(\mathcal{D}(A_1)_v,\dots,\mathcal{D}(A_n)_v)$. Consider the fully-faithful left Quillen functor 
\[ \mhyphen \otimes_{A_i}^{\on{dg}}A_i:\on{dgMod}(A_i)_0\rightarrow \on{dgMod}({\bf A}_{\geq i})_0\,.\]
The right adjoint is given by the Quillen functor $\on{Hom}_{\on{dgMod}{{(\bf A}_{\geq i}})}(A_i,\mhyphen)$,
the restriction of which to $\on{dgMod}({\bf A}_{\geq i+1})$ is given by $\on{Hom}_{\on{dgMod}({\bf A}_{\geq i+1})}(M_i,\mhyphen)$, which in turn is left adjoint to $\mhyphen \otimes^{\on{dg}}_{A_i}M_i$. Passing to the underlying adjunctions of $\infty$-categories of the above Quillen adjunctions shows that the $i$-th left gluing functor of $(\mathcal{D}(A_1),\dots,\mathcal{D}(A_n))$ is given by $\mhyphen \otimes_{A_i} M_i$.
\end{proof}

\begin{remark}
An illuminating discussion of the role of the morphism of dg-algebra $v^i$ and $w^i$ appearing in the proof of \Cref{sodprop4} and the resulting stable subcategories of $\mathcal{D}(\bf A)$ can be found in \cite[Section 2.3.2]{Bar20}.
\end{remark}

\begin{proposition}\label{sodprop5}
For $1\leq i\leq n$, let $A_i$ be a dg-algebra and for $1\leq i \leq n-1$, let $M_i$ be an $A_i$-$A_{i+1}$-bimodule. Denote by $\bf A$ the upper triangular dg-algebra of \Cref{tdgalg}. Consider the diagram $\alpha:\Delta^{n-1}\rightarrow \on{LinCat}_k$ corresponding to 
\[ \mathcal{D}(A_1)\xlongrightarrow{\mhyphen\otimes_{A_1}M_1}\mathcal{D}(A_2)\xlongrightarrow{\mhyphen\otimes_{A_2}M_2}\dots\xlongrightarrow{\mhyphen\otimes_{A_{n-1}}M_{n-1}}\mathcal{D}(A_n)\,.\]
Then there exists an equivalence of $\infty$-categories 
\[ \mathcal{D}({\bf A})\simeq \{\mathcal{D}(A_1),\dots,\mathcal{D}(A_n)\}\]
such that for all $1\leq i\leq n$, the following diagram commutes.
\begin{equation}\label{dadiag}
\begin{tikzcd}
                                        & \mathcal{D}(A_i) \arrow[ld, "{\mathcal{D}(v^i_!)[n-i]}"'] \arrow[rd, "{\iota_i}"] &                                               \\
\mathcal{D}(\bf A) \arrow[rr, "\simeq"] &                                                                   & {\{\mathcal{D}(A_1),\dots,\mathcal{D}(A_n)\}}
\end{tikzcd}
\end{equation}
\end{proposition}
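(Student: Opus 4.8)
The plan is to reduce Proposition~\ref{sodprop5} to the two results already established: Proposition~\ref{sodprop4}, which produces the semiorthogonal decomposition $(\mathcal{D}(A_1)_v,\dots,\mathcal{D}(A_n)_v)$ on $\mathcal{D}(\mathbf{A})$ together with the identification of the $i$-th left gluing functor as $\mhyphen\otimes_{A_i}M_i$; and Lemma~\ref{sodlem3}(1) together with Proposition~\ref{sodprop1}(2), which exhibit the section $\infty$-category $\{\mathcal{D}(A_1),\dots,\mathcal{D}(A_n)\}$ as carrying a semiorthogonal decomposition whose left gluing functors are the structure functors $\mhyphen\otimes_{A_i}M_i$ of the diagram $\alpha$. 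So both $\mathcal{D}(\mathbf{A})$ and $\{\mathcal{D}(A_1),\dots,\mathcal{D}(A_n)\}$ are stable $\infty$-categories equipped with coCartesian semiorthogonal decompositions of length $n$ with the \emph{same} pieces $\mathcal{D}(A_i)$ and the \emph{same} left gluing functors. The strategy is then to upgrade Proposition~\ref{sodprop1}(2) to the statement that a coCartesian semiorthogonal decomposition of arbitrary length $n$ is determined, up to equivalence compatible with the inclusions of the pieces, by the tuple of left gluing functors — i.e.\ by the diagram $\Delta^{n-1}\to\on{Cat}_\infty$ they assemble into.

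First I would verify that the semiorthogonal decomposition $(\mathcal{D}(A_1)_v,\dots,\mathcal{D}(A_n)_v)$ of Proposition~\ref{sodprop4} is coCartesian; this is implicit in that proof, since the $i$-th left gluing functor $\mhyphen\otimes_{A_i}M_i$ is produced there as a genuine left adjoint arising from a Quillen adjunction, which by the lemma preceding Proposition~\ref{sodprop1} is exactly the content of coCartesianness of $(\mathcal{D}(A_i)_v,\prescript{\perp}{}{\mathcal{D}(A_i)_v})$. Next, using Lemma~\ref{sodlem1}, I would peel off the decomposition one step at a time: write $\mathcal{V}=\mathcal{D}(\mathbf{A})$, $\mathcal{V}_{\geq i}=\langle\mathcal{D}(A_i)_v,\dots,\mathcal{D}(A_n)_v\rangle\simeq\mathcal{D}(\mathbf{A}_{\geq i})$ as in the proof of Proposition~\ref{sodprop4}, and apply Proposition~\ref{sodprop1}(2) to the length-$2$ coCartesian decomposition $(\mathcal{D}(A_i)_v,\mathcal{V}_{\geq i+1})$ of $\mathcal{V}_{\geq i}$ to get $\mathcal{V}_{\geq i}\simeq \on{Fun}_{\Delta^1}(\Delta^1,\Gamma(F_i))$ where $F_i=(\mhyphen\otimes_{A_i}M_i)$ viewed as a functor $\mathcal{D}(A_i)\to\mathcal{V}_{\geq i+1}$. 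Inductively, $\mathcal{V}_{\geq i+1}\simeq\{\mathcal{D}(A_{i+1}),\dots,\mathcal{D}(A_n)\}$ compatibly with the pieces, and the restriction of $F_i$ to $\mathcal{D}(A_i)$ followed by this equivalence and then by the inclusion $\iota_{i+1}$ of the pieces agrees — after identifying target categories — with the structure functor $\mhyphen\otimes_{A_i}M_i$ of the diagram $\alpha$. The key compatibility input here is that the trivial fibration $\mathcal{D}^i_j\to \mathcal{D}^i_{n-k}\times_{\langle\mathcal{A}_k,\dots\rangle}\mathcal{D}^k_j$ from the proof of Lemma~\ref{sodlem1}, applied to both $\on{Fun}_{\Delta^1}(\Delta^1,\Gamma(F_i))$-presentations, lets one assemble the length-$2$ identifications into a single equivalence $\mathcal{D}(\mathbf{A})\simeq\on{Fun}_{\Delta^{n-1}}(\Delta^{n-1},\Gamma(D))=\{\mathcal{D}(A_1),\dots,\mathcal{D}(A_n)\}$, where $D=\alpha$.

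The shift $[n-i]$ in the left triangle of~\eqref{dadiag} must then be tracked by hand. Its source is the definition of $\mathcal{D}^i_j$ in the proof of Lemma~\ref{sodlem1}: the section $\infty$-category $\{\mathcal{A}_1,\dots,\mathcal{A}_n\}$ realizes $\mathcal{A}_i$ inside it not via a naive constant section but via the diagram displayed in the proof of Lemma~\ref{sodlem3}, in which an object $a_i\in\mathcal{A}_i$ sits at one corner and the section is built by iterated cofibers; tracing through the $n-i$ cones built from the identity maps $a_i\xrightarrow{\on{id}}a_i$ introduces exactly a shift by $n-i$ relative to the module-theoretic inclusion $\mathcal{D}(v^i_!):\mathcal{D}(A_i)\to\mathcal{D}(\mathbf{A})$, whose image $v^i_!(A_i)$ has underlying complex $A_i\oplus M_i$ with \emph{no} shift. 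Concretely I would check this on the distinguished triangles $N_{\geq i+1}\to N_{\geq i}\to N_i$ in $\on{dgMod}(\mathbf{A})$ from the proof of Proposition~\ref{sodprop4} against the triangles $D(i)\to D(i+1)\to(\text{cofiber in }\mathcal{A}_{n-i-1})$ of Definition~\ref{soddef}, matching the position of the pieces with the grading conventions of Remark~\ref{sgnrl}. I expect this bookkeeping of the shift — and, more subtly, checking that the equivalence can be chosen so that \emph{all} $n$ triangles~\eqref{dadiag} commute simultaneously rather than just one at a time — to be the main obstacle; everything else is a formal consequence of the uniqueness statement for coCartesian semiorthogonal decompositions via their gluing functors, iterated along the filtration $\mathcal{V}_{\geq 1}\supset\mathcal{V}_{\geq 2}\supset\cdots$.
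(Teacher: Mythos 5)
Your proposal is correct and follows essentially the same route as the paper: identify the left gluing functors of the two coCartesian semiorthogonal decompositions, apply Proposition~\ref{sodprop1}(2) iteratively along the filtration $\mathcal{V}_{\geq i}$ to assemble the global equivalence, and track the shift $[n-i]$ arising from the iterated cofiber presentation of the section $\infty$-category. The paper compresses this into "repeated application of Proposition~\ref{sodprop1}" and "up to delooping," but the induction and shift bookkeeping you spell out are exactly what those phrases stand for.
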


\begin{proof}
The left gluing functors of the semiorthogonal decompositions $(\mathcal{D}(A_1),\dots,\mathcal{D}(A_n))$ of $\{\mathcal{D}(A_1),\dots,\mathcal{D}(A_n)\}$ and $\mathcal{D}(\bf A)$ are equivalent. It thus follows from a repeated application of \Cref{sodprop1} that there exists an equivalence of $\infty$-categories $\mathcal{D}({\bf A})\simeq \{\mathcal{D}(A_1),\dots,\mathcal{D}(A_n)\}$. The observation that  \eqref{dadiag} commute, follows from the observation that the equivalences of \Cref{sodprop1} commute with the inclusion functors of the components of the semiorthogonal decomposition, up to delooping.
\end{proof}

\begin{notation}\label{sodnot}
Let $\bf A$ be an upper triangular dg-algebra as in \Cref{tdgalg} and $1\leq i \leq n$. Using the notation from the proof of \Cref{sodprop4}, we denote $p_i{\bf A}=\mathcal{D}(v_!^i)(A_i)\in \mathcal{D}(\bf A)$.
\end{notation}

\section{Parametrized perverse schobers locally}\label{sec3}

Perverse sheaves have their origin in a homology theory of stratified topological spaces called intersection homology. A perverse sheaf is an object in the derived category of constructible sheaves, i.e.~a complex of sheaves which are locally constant on any stratum. The homology of the stratified space is obtained via the sheaf cohomology of the perverse sheaf. Perversity of a complex of constructible sheaves is a condition on its homology with and without compact support. For example, on a complex surface, perversity implies that they the complex is concentrated in degree $0$ away from the singularities and concentrated in degrees $0,1$ at the singularities.

On some nice stratified spaces, there are known descriptions of the abelian category of perverse sheaves in terms of quiver representations, see e.g.~\cite{KS15} for an overview. A way to obtain such a description, is to identify a suitable 'skeleton' of the stratified space and describe the perverse sheaf in terms of certain sheaf cohomology groups with support restrictions related to the skeleton. These homology groups have to fulfill the crucial restriction that they are concentrated in a single degree. The most iconic such description is of the abelian category of perverse sheaves on a disc with a singularity in the center, in terms of the category of diagrams of vector spaces $r_1:V_1\leftrightarrow N_1:s_1$ such that $r_1s_1-\on{id}_{N_1}$ and $s_1r_1-\on{id}_{V_1}$ are equivalences. The vector spaces $N_1$ and $V_1$ are called nearby and vanishing cycles, respectively. 

While it is currently not clear how to categorify constructible sheaves and thus perverse sheaves directly, the remarkable idea of \cite{KS14} is to categorify perverse sheaves using their quiver descriptions, when available. The 'ad-hoc' categorification proposed in \cite{KS14} of the above mentioned quiver description of perverse sheaves on a disc is a spherical adjunction.

Further descriptions of the category of perverse sheaves on a disc with a singularity in the center in terms of quiver representations were given in \cite{KS16}. For each $n\geq 2$, the category of perverse sheaves is equivalent to the abelian category of diagrams of vector spaces $r_i:V_n\leftrightarrow N_i:s_i$ for $1\leq i \leq n$, such that 
\begin{itemize}
\item $r_i\circ s_i= \on{id}_{N_i}$,
\item $r_{i+1}\circ s_i$ (with $i+1$ modulo $n$) is an isomorphism for $1\leq i\leq n$ and
\item $r_i\circ s_j=0$ for $i\neq j,j+1$ mod $n$.
\end{itemize}
The vector spaces $N_i$ are all equivalent, they are the nearby cycles. The vector space $V_n$ describes the sheaf cohomology of the perverse sheaf with support on $n$ outgoing rays starting at the origin, see \Cref{nrays}.

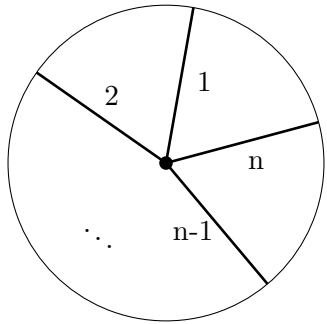
\begin{figure}[h!]
\centering
\begin{tikzpicture}[scale=0.6]
    \node (0) at (0,0){};
    \fill (0) circle (0.15); 
    \draw (0) circle (3.5); 

    \draw [-, line width = 1](0,0) -- (15:3.5);
    \draw [-, line width = 1](0,0) -- (80:3.5); 
    \draw [-, line width = 1](0,0) -- (145:3.5); 
    \draw [-, line width = 1](0,0) -- (-50:3.5);
    
    \node at (2,0){n}; 
    \node at (0.6,-1.5){n-1}; 
    \node at (-1.2,1.5){2}; 
    \node at (0.85,1.8){1}; 
    \node at (-1.5,-1.5){$\ddots$}; 
\end{tikzpicture}
\caption{The complex disc with $n$ outgoing rays with a chosen order.}\label{nrays}
\end{figure}
    
The map $r_i$ is defined as the restriction map to a point on the $i$-th outgoing ray. Note that the restriction maps $r_i$ have a paracyclic symmetry, meaning a cyclic symmetry up to the monodromy of the perverse sheaf, arising from the cyclic symmetry of the $n$ rays given by rotating the disc by $2\pi/n$. In \Cref{sec3.1} we describe an ad-hoc categorification of this quiver description, which will provide a local description of a parametrized perverse schober. The categorification is based on Dyckerhoff's categorified Dold-Kan correspondence, see \cite{Dyc17}. As noted in \textit{loc.~cit.}$\,$, one of the motivations for the categorified Dold-Kan correspondence was the categorification of the local description of perverse sheaves.

\subsection{An ad-hoc categorification}\label{sec3.1}

We begin with briefly recalling the concept of a spherical adjunction. Consider an adjunction of stable $\infty$-categories $F:\mathcal{A}\leftrightarrow \mathcal{B}:G$. We associate the following endofunctors.
\begin{itemize}
\item The twist functor $T_\mathcal{A}$ is defined as the cofiber in the stable $\infty$-category $\on{Fun}(\mathcal{A},\mathcal{A})$ of the unit map $\on{id}_{\mathcal{A}}\rightarrow GF$ of the adjunction $F\dashv G$. 
\item The cotwist functor $T_\mathcal{D}$ is defined as the fiber in the stable $\infty$-category $\on{Fun}(\mathcal{B},\mathcal{B})$ of the counit map $FG\rightarrow \on{id}_{\mathcal{B}}$ of the adjunction $F\dashv G$. 
\end{itemize} 
The adjunction $F\dashv G$ is called spherical if the functors $T_\mathcal{A}$ and $T_\mathcal{B}$ are equivalences. In this case, the functor $F$ is also called spherical. A spherical functor $F$ admits repeated left and right adjoints, each given by the composite of $F$ or $G$ with a power of the twist or cotwist functor. For a treatment of spherical adjunctions in the setting of stable $\infty$-categories, we refer to \cite{DKSS19} and \cite{Chr20}.  

A $2$-simplicial stable $\infty$-category is an $(\infty,2)$-functor $\Delta^{(\on{op},\mhyphen)}\rightarrow \mathcal{S}t$, from the 2-categorical version of the simplex category to the $(\infty,2)$-version $\mathcal{S}t$ of the $\infty$-category $\on{St}$ of stable $\infty$-categories. The categorified Dold-Kan correspondence of \cite{Dyc17} is an adjoint equivalence between the $\infty$-category of bounded below complexes of stable $\infty$-categories and the $\infty$-category of $2$-simplicial stable $\infty$-categories. The right adjoint is called the categorified Dold-Kan nerve $\mathcal{N}$. The categorified Dold-Kan nerve $\mathcal{N}$ generalizes the well known construction from $K$-theory called the Waldhausen $S_\bullet$-construction. More precisely, given a complex of stable $\infty$-categories concentrated in degrees $0,1$, the categorified Dold-Kan nerve recovers Waldhausen's relative $S_\bullet$-construction. We refer to \cite{Dyc17} for further details.

Let $F:\mathcal{A}\leftrightarrow \mathcal{B}$ be a spherical adjunction. We consider the spherical functor $G:\mathcal{B}\rightarrow \mathcal{A}$ as a complex of stable $\infty$-categories concentrated in degrees $0,1$, denoted $G[0]$. We further denote by $\mathcal{B}[1]$ the complex concentrated in degree $1$ with value $\mathcal{B}$. Consider the morphism between bounded below complexes of stable $\infty$-categories $G[0]\rightarrow \mathcal{B}[1]$ depicted as follows.
\[
\begin{tikzcd}
\mathcal{A} \arrow[d] & \mathcal{B} \arrow[l, "G"'] \arrow[d, "\on{id}_\mathcal{B}"'] & 0 \arrow[l] \arrow[d] & \dots \arrow[l] \\
0                     & \mathcal{B} \arrow[l]                                    & 0 \arrow[l]           & \dots \arrow[l]
\end{tikzcd}
\]
Applying the categorified Dold-Kan nerve $\mathcal{N}$, we obtain a morphism $\phi_\ast:\mathcal{N}(G[0])_\ast\rightarrow \mathcal{N}(\mathcal{B}[1])_\ast$ between the simplicial objects in $\on{St}$ underlying the $2$-simplicial objects in $\mathcal{S}t$. Spelling out the definition of the categorified Dold-Kan nerve and the properties of Kan extensions, cf.~\cite[4.3.2.15]{HTT}, we obtain the following.

\begin{lemma}\label{rdklem}
Let $F:\mathcal{A}\leftrightarrow \mathcal{B}:G$ be a spherical adjunction and $\mathcal{N}(G[0])_\ast$ and $\mathcal{N}(\mathcal{B}[1])_\ast$ as above. There exist the following equivalences between $\infty$-categories.
\begin{enumerate}
\item $\mathcal{N}(G[0])_0\simeq \mathcal{A}$.
\item $\mathcal{N}(G[0])_n\simeq \{\mathcal{A},\mathcal{B},\dots,\mathcal{B}\}$ for $n\geq 1$, in the notation of \Cref{sodlem3}, corresponding to the following sequence of $n$ functors. 
\[\mathcal{A}\xlongrightarrow{F}\mathcal{B}\xlongrightarrow{\on{id}}\mathcal{B}\xlongrightarrow{\on{id}}\dots\xlongrightarrow{\on{id}}\mathcal{B}\]
\item $\mathcal{N}(\mathcal{B}[1])_1\simeq \mathcal{B}$.
\end{enumerate}
\end{lemma}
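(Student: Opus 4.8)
The strategy is to unwind the definition of the categorified Dold--Kan nerve $\mathcal{N}$ applied to the complexes $G[0]$ and $\mathcal{B}[1]$, and to recognize the resulting $\infty$-categories of $n$-simplices as the $\infty$-categories of sections of Grothendieck constructions studied in \Cref{sodlem3}. Recall that $\mathcal{N}$ is the right adjoint in the categorified Dold--Kan correspondence, so for a bounded-below complex $\mathcal{E}_\bullet$ of stable $\infty$-categories the value $\mathcal{N}(\mathcal{E}_\bullet)_n$ is computed, degreewise, as a limit (a right Kan extension) over the appropriate diagram encoding the simplicial structure; when $\mathcal{E}_\bullet$ is concentrated in degrees $0,1$ this recovers Waldhausen's relative $\on{S}_\bullet$-construction. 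The plan is therefore:

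First I would treat $\mathcal{N}(\mathcal{B}[1])_\ast$. A complex concentrated in a single degree $1$ with value $\mathcal{B}$ has categorified Dold--Kan nerve given, in each simplicial degree, by a limit of copies of $\mathcal{B}$ indexed by a diagram whose shape is determined by the single nonzero term; in degree $1$ this diagram is contractible (indeed, $\mathcal{N}(\mathcal{E}[1])_1 \simeq \mathcal{E}$ is precisely the normalization relation characterizing Dold--Kan), giving $\mathcal{N}(\mathcal{B}[1])_1\simeq \mathcal{B}$, which is statement (3). Statement (1) is the analogous $n=0$ computation for $G[0]$: the value in simplicial degree $0$ only sees the degree-$0$ term $\mathcal{A}$ of the complex, so $\mathcal{N}(G[0])_0\simeq \mathcal{A}$.

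The substance is statement (2). Here I would spell out $\mathcal{N}(G[0])_n$ for $n\geq 1$ as the relative $\on{S}_\bullet$-construction of the functor $G:\mathcal{B}\to\mathcal{A}$: an $n$-simplex is a diagram consisting of a filtered object of $\mathcal{A}$ together with compatible lifts along $G$ of its successive cofibers into $\mathcal{B}$, which, after passing to left adjoints (replacing $G$ by $F$), is exactly the description of the $\infty$-category of sections $\{\mathcal{A},\mathcal{B},\dots,\mathcal{B}\}$ of the Grothendieck construction $\Gamma(D)\to\Delta^{n-1}$ for the sequence $\mathcal{A}\xrightarrow{F}\mathcal{B}\xrightarrow{\on{id}}\cdots\xrightarrow{\on{id}}\mathcal{B}$, via \Cref{sodlem3}. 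The translation between "a diagram of cofiber sequences with lifts along $G$" and "a coCartesian-section description with gluing functor $F$" is exactly the content of \Cref{sodprop1}: the semiorthogonal decomposition attached to $G[0]$ has left gluing functor $F$ (this is the spherical-adjunction analogue of \Cref{sodprop4}), and $\on{Fun}_{\Delta^1}(\Delta^1,\Gamma(F))$ iterated $n-1$ times yields $\{\mathcal{A},\mathcal{B},\dots,\mathcal{B}\}$. The key technical input is the formula for limits in $\on{Cat}_\infty$ via coCartesian sections recalled in \Cref{sec2.1}(i) together with \cite[4.3.2.15]{HTT} governing restriction along the relevant Kan extensions, exactly as flagged in the sentence preceding the lemma.

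The main obstacle I anticipate is bookkeeping: carefully matching the indexing of the $n$ copies of $\mathcal{B}$ (and the placement of the single $\mathcal{A}$) coming from the combinatorics of the categorified Dold--Kan nerve with the ordering of the subcategories $(\mathcal{A}_1,\dots,\mathcal{A}_n)=(\mathcal{A},\mathcal{B},\dots,\mathcal{B})$ in the semiorthogonal decomposition of \Cref{sodlem3}, and checking that the face and degeneracy maps of $\mathcal{N}(G[0])_\ast$ are compatible with the identity functors between the copies of $\mathcal{B}$ — i.e.\ that all the gluing functors except the first are genuinely $\on{id}_\mathcal{B}$. Everything else is a direct, if somewhat lengthy, unwinding of definitions, which is why the proof in the paper defers the details to the properties of Kan extensions and \cite{Dyc17}.
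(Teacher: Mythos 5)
Your proposal is correct and follows essentially the same route as the paper, which itself only states that the lemma is obtained by "spelling out the definition of the categorified Dold--Kan nerve and the properties of Kan extensions, cf.~\cite[4.3.2.15]{HTT}"; your sketch simply fills in more of that unwinding (the relative $\on{S}_\bullet$-construction picture and the passage to the coCartesian-section description via the gluing functor $F$). No discrepancy to report.
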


We propose that for $n\geq 0$, the $\infty$-category $\mathcal{N}(G[0])_n$ of $n$-simplicies categorifies the vector space $V_{n+1}$ of sections supported on $n+1$-outgoing rays and the $\infty$-category $\mathcal{B}\simeq \mathcal{N}(\mathcal{B}[1])_1$ of $1$-simplicies categorifies the vector spaces $N_i$ of nearby cycles. Accordingly, we call $\mathcal{B}$ the $\infty$-category of nearby cycles and $\mathcal{A}$ the $\infty$-category of vanishing cycles of $F\dashv G$, or simply of $F$.

\begin{notation}\label{rdknot}
Let $F:\mathcal{A}\leftrightarrow \mathcal{B}:G$ be a spherical adjunction. We denote
\begin{itemize}
\item $\mathcal{V}^1_{F}=\mathcal{A}$.
\item $\mathcal{V}^n_{F}=\{\mathcal{A},\underbrace{\mathcal{B},\dots,\mathcal{B}}_{n-1\text{-many}}\}$ for $n\geq 2$. 
\item $\mathcal{N}_F=\mathcal{B}$.
\end{itemize}
\end{notation}

Assume that $n\geq 3$. We propose that the first restriction map $r_1:V_{n}\rightarrow N_1$ is categorified by the functor 
\[ \varrho_1:\mathcal{V}^{n}_{F}\simeq \mathcal{N}(G[0])_{n-1}\xrightarrow{d_0} \mathcal{N}(G[0])_{n-2}\xrightarrow{d_0}\dots \xrightarrow{d_0}\mathcal{N}(G[0])_1\xrightarrow{\phi_1} \mathcal{N}(\mathcal{B}[1])_1 \simeq \mathcal{N}_F\]
obtained from composing $\phi_1$ with repeated $0$th face maps of the simplicial structure of $\mathcal{N}(G[0])_\ast$. The functor $\varrho_1$ can equivalently be described as the projection functor $\pi_n$ to the $n$-th component of the semiorthogonal decomposition $(\mathcal{V}^1_F,\mathcal{N}_F,\dots,\mathcal{N}_F)$ of length $n$ of $\mathcal{V}^{n}_F$. If $n=1$, we propose that the restriction map $r_1$ is categorified by $F:\mathcal{V}^1_F\rightarrow \mathcal{N}_F$ and if $n=2$ we propose that the restriction map $r_1$ is categorified by $\phi_1=\pi_2$. To categorify the further restriction maps, we need to take into account the paracyclic symmetry. The description of the categorification of $V_n$ in terms of $\mathcal{V}^n_F$ however obscures this paracyclic symmetry. One way to solve this is by lifting the simplicial object $\mathcal{N}(G[0])_\ast$ to a paracyclic object. This approach is realized in \cite{DKSS19}. One can then replace $\mathcal{V}^n_F$ by an equivalent $\infty$-category where the paracyclic symmetry is apparent. For now we adopt a more pedestrian approach and simply require that there be a sequence of adjunctions
\begin{equation}\label{adjseq} \varrho_n \dashv \varsigma_n \dashv \varrho_{n-1}\dashv \dots \dashv \varsigma_2 \dashv \varrho_1 \dashv \varsigma_1\,,\end{equation}
where $\varrho_1$ is as above and propose that $\varsigma_i$ categorifies $s_i$ and $\varrho_i$ categorifies $r_i$. We describe the paracyclic symmetry of $\mathcal{V}^n_F$ to justify our proposed categorification in \Cref{sec3.2}. We call the $\varrho_i$ the categorified restriction maps. A direct computation shows that the functors $\varrho_i$ and $\varsigma_i$ are described as follows.
 
\begin{lemma}\label{rdknot2}
Let $F\dashv G$ as above and $n\geq 1$. Consider the following functors $\varrho_i:\mathcal{V}^n_F\rightarrow \mathcal{N}_F$ and $\varsigma_i:\mathcal{N}_F\rightarrow \mathcal{V}^n_F$ for $1\leq i\leq n$.
\begin{enumerate}
\item If $n=1$, we set $\varrho_1=F$ and $\varsigma_1=G$.
\item If $n\geq 2$, we set 
\[ 
\varrho_i=\begin{cases}
\pi_n& \text{for }i=1,\\         
\on{fib}_{n-i,n-i+1}[i-1]& \text{for }2\leq i\leq n-1,\\
\on{rfib}_{1,2}[n-1]& \text{for }i=n.
\end{cases}\]
The functor $\on{rfib}_{1,2}$ denotes the composition of the projection functor to the first two components of the semiorthogonal decomposition with the relative fiber functor that assigns to a vertex $a\rightarrow b \in \{\mathcal{V}^1_F,\mathcal{N}_F\}$ the vertex $\on{fib}(F(a)\rightarrow b)\in \mathcal{N}_F$. The functor $\on{fib}_{i-1,i}[n-i]$ denotes the composition of the projection functor to the $(i-1)$-th and $i$-th component with the fiber functor.
\item If $n\geq 2$, we set $\varsigma_1$ to be the functor that assigns to $b\in \mathcal{N}_F$ the object \mbox{$G(b)\xrightarrow{\ast}b\xrightarrow{\on{id}}\dots\xrightarrow{\on{id}}b$} in $\mathcal{V}^n_{F}$, see also \Cref{cocartnot1}, and set for $2\leq i\leq n$
\[ \varsigma_i= (\iota_{\mathcal{N}_F})_{n-i+2}[-i+2]\,,\]
where $(\iota_{\mathcal{N}_F})_{j}$ is the inclusion of the $j$-th component of the semiorthogonal decomposition.
\end{enumerate}
These functors form the sequence of adjunctions \eqref{adjseq}.
\end{lemma}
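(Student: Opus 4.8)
The plan is to work throughout inside the concrete model for $\mathcal{V}^n_F$ provided by \Cref{rdklem}. For $n=1$ the assertion is immediate: $\varrho_1=F$, $\varsigma_1=G$, and $F\dashv G$ is the given spherical adjunction. For $n\geq 2$ one uses that $\mathcal{V}^n_F=\{\mathcal{A},\mathcal{B},\dots,\mathcal{B}\}$ is the $\infty$-category of sections of the Grothendieck construction $p$ of $\mathcal{A}\xrightarrow{F}\mathcal{B}\xrightarrow{\on{id}}\dots\xrightarrow{\on{id}}\mathcal{B}$, which by \Cref{sodlem3} carries the semiorthogonal decomposition $(\mathcal{V}^1_F,\mathcal{N}_F,\dots,\mathcal{N}_F)=(\mathcal{A}_1,\dots,\mathcal{A}_n)$ with explicit component inclusions; when $\mathcal{A}$ and $\mathcal{B}$ are derived $\infty$-categories of dg-algebras one may equivalently use the dg-model $\mathcal{D}(\mathbf{A})$ of \Cref{sodprop5} (with $\mathbf{A}$ upper triangular, superdiagonal the $F$-bimodule followed by $n-2$ identity bimodules), the inclusions $\iota_j\simeq\mathcal{D}(v^j_!)[n-j]$, the projectives $p_j\mathbf{A}$ of \Cref{sodnot}, and the mapping complexes \eqref{hom1}. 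In either model all functors named in the statement — $\pi_n$, the fiber functors, the relative fiber functor $\on{rfib}_{1,2}$, the component inclusions $(\iota_{\mathcal{N}_F})_j$, and their shifts — are visibly exact functors of stable $\infty$-categories, so the real content is the chain of adjunctions \eqref{adjseq}.

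To verify \eqref{adjseq} I would compute the iterated adjoints of $\varrho_1$ from right to left. The starting point is that, for a Grothendieck construction over $\Delta^{n-1}$, the projection $\varrho_1=\pi_n$ to the last vertex is right adjoint to the inclusion of the last semiorthogonal component and left adjoint to the functor sending $b$ to the Cartesian section of $p$ ending at $b$; this already yields $\varsigma_2=(\iota_{\mathcal{N}_F})_n$ and $\varsigma_1$, the latter being $b\mapsto\big(G(b)\xrightarrow{\ast}b\xrightarrow{\on{id}}\dots\xrightarrow{\on{id}}b\big)$, where $G$ enters as the value at the initial vertex of the Cartesian lift. From there, the explicit mapping-complex formula \eqref{hom1} (or its section-category analogue) together with $\on{cofib}(f)\simeq\on{fib}(f)[1]$ identifies the left adjoint of each semiorthogonal-component inclusion with a fiber functor between two consecutive $\mathcal{N}_F$-components, shifted by one; iterating this moves the recursion one component to the left at each stage and accumulates the shifts, until it reaches the $\mathcal{A}\xrightarrow{F}\mathcal{B}$ junction, where the plain fiber functor is replaced by $\on{rfib}_{1,2}$ — again using $F\dashv G$ — producing $\varrho_n=\on{rfib}_{1,2}[n-1]$ and $\varsigma_n=(\iota_{\mathcal{N}_F})_2[2-n]$. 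Equivalently, and perhaps more transparently, one can check the $2n-1$ individual adjunctions directly: after absorbing the shifts, each reduces to the statement that the inclusion of a semiorthogonal component admits the displayed one-sided adjoint, a length-$2$ semiorthogonal-decomposition computation of the kind in \Cref{sod2lem,sodprop4}; the needed one-sided adjoints all exist because the length-$2$ decompositions in play are simultaneously Cartesian and coCartesian, which holds since $p$ is a bifibration ($F$ having the right adjoint $G$, the remaining structure functors being identities).

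Two remarks on difficulty. First, only $F\dashv G$ (plus identities) enters the finite chain \eqref{adjseq}; the full sphericality of the adjunction is rather what makes the chain continue paracyclically, but that continuation is the content of \Cref{sec3.2}, not of this lemma. Second, I expect the one genuinely delicate point to be the bookkeeping of suspensions: each time the recursion crosses one of the $n-1$ copies of $\mathcal{N}_F$ it picks up a shift, both through $\on{cofib}(f)\simeq\on{fib}(f)[1]$ and through the shift $[n-j]$ built into $\iota_j\simeq\mathcal{D}(v^j_!)[n-j]$, and matching the accumulated shifts with the exponents $i-1$, $2-i$, $n-1$ in the statement — together with pinning down the indexing of consecutive components inside the subscript of $\on{fib}$ — is the part demanding care; the identification of each functor up to shift is a mechanical consequence of the structure of sections of a Grothendieck construction and the Hom-computation \eqref{hom1}.
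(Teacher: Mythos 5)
The paper records \Cref{rdknot2} without proof, saying only ``a direct computation shows''; your proposal is a faithful reconstruction of what that computation must be. Passing to the model of $\mathcal{V}^n_F$ as sections of the Grothendieck construction of $\mathcal{A}\xrightarrow{F}\mathcal{B}\xrightarrow{\on{id}}\cdots\xrightarrow{\on{id}}\mathcal{B}$, observing that this coCartesian fibration is also Cartesian because $F$ has the right adjoint $G$ and the remaining edges are identities, and then iterating one-sided adjoints leftward from the anchor $\varsigma_2\dashv\varrho_1\dashv\varsigma_1$ is exactly the argument the paper alludes to. Your observation that only the bare adjunction $F\dashv G$ is used is correct and worth stressing: the statement of the lemma is phrased in the spherical context inherited from \Cref{rdknot}, but sphericality plays no role in establishing the finite chain \eqref{adjseq} --- it is what makes the chain continue paracyclically in \Cref{sec3.2}. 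You also correctly identify the only real hazard, namely the suspension bookkeeping. To make that concrete: after absorbing the displayed exponents, the $2n-1$ adjunctions in \eqref{adjseq} reduce to the shift-free facts $\pi_n\dashv\varsigma_1$, $\iota_n\dashv\pi_n$, $\iota_j\dashv\on{fib}_{j-1,j}$ for $2\leq j\leq n-1$, $\on{fib}_{j-2,j-1}\dashv\iota_j[1]$ for $3\leq j\leq n$, and $\on{rfib}_{1,2}\dashv\iota_2[1]$, each a short mapping-space computation in the section category (or, dg-algebraically, a consequence of \eqref{hom1}); the asymmetry between the last two families, with the extra $[1]$ on one side, is precisely where a careless computation would go wrong, and your draft correctly flags it without resolving it. So the plan is sound and the diagnosis of where care is needed is accurate; a complete write-up would spell out each of the listed shift-free adjunctions.
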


We are now ready to describe the local model for a parametrized perverse schober at a vertex of valency $n$.

\begin{definition}\label{locmod}
Let $F:\mathcal{V}^1_{F}\leftrightarrow \mathcal{N}_F:G$ be an adjunction of stable $\infty$-categories and $n\geq 1$. Consider the poset\footnote{The left cone $\left(\{1,\dots,n\}\right)^\triangleleft$ is defined as the simplicial join $\Delta^0\ast \{1,\dots,n\}$.} $C_n= \left(\{1,\dots,n\}\right)^\triangleleft$. If $n=1$, we denote by $\mathcal{G}_1(F):C_1\simeq \Delta^1\rightarrow \on{St}$ the functor $F$. If $n\geq 2$, we denote by $\mathcal{G}_n(F)$ the functor $C_n\rightarrow \on{St}$ assigning 
\begin{itemize}
\item to the initial vertex $\ast\in C_n$ the stable $\infty$-category $\mathcal{V}_F^{n}$,
\item to each vertex $i\in C_n$ the stable $\infty$-category $\mathcal{N}_F$,
\item to each edge $\ast\rightarrow i$ the functor $\varrho_i$ from \Cref{rdknot2}.
\end{itemize}
\end{definition}

The adjoint functors $\varsigma_i$ will feature in the local description of duals of parametrized perverse schobers, see \Cref{sec4.3}.

\subsection{The paracyclic structure}\label{sec3.2}

We begin by recalling the definition of the paracyclic 1-category $\Lambda_\infty$. 

\begin{definition}\label{linftydef}
For $n\geq 0$, let $[n]$ denote the set $\{0,\dots,n\}$. The objects of $\Lambda_\infty$ are the \mbox{sets $[n]$}. The morphism in $\Lambda_\infty$ are generated by morphisms
\begin{itemize}
\item $\delta^{0},\dots,\delta^{n}:[n-1]\rightarrow [n]$,
\item $\sigma^{0},\dots,\sigma^{n-1}:[n]\rightarrow [n-1]$,
\item $\tau^{n,i}:[n]\rightarrow [n]$ with $i\in \mathbb{Z}$
\end{itemize}
subject to the simplicial relations and the further relations
\begin{align*} 
\tau^{n,i}\circ \tau^{n,j}=\tau^{n,i+j},\quad\quad & \tau^{n,0}=\on{id}_{[n]},\\
\tau^{n,1}\delta^{i}=\delta^{i-1}\tau^{n-1,1}\text{ for }i>0,\quad\quad & \tau^{n,1}\delta^{0}=\delta^n,\\
\tau^{n,1}\sigma^{i}=\tau^{n+1,1}\sigma^{i-1}\text{ for }i>0,\quad\quad & \tau^{n,1}\sigma^0=\sigma^{n}\tau^{n+1,2}\,.
\end{align*}
\end{definition}

The simplex category $\Delta$ is a subcategory of $\Lambda_\infty$. A paracyclic object in an $\infty$-category $\mathcal{C}$ is a functor $\Lambda_\infty^{op}\rightarrow \mathcal{C}$, where we identify the $1$-category $\Lambda_\infty^{op}$ with its nerve. A paracyclic object in $\mathcal{C}$ is thus a simplicial object $X_\ast\in \on{Fun}(\Delta^{op},\mathcal{C})$ with face maps $d_i$ and degeneracy maps $s_i$ together with a sequence of paracyclic isomorphisms $t_n: X_n\rightarrow X_n$ satisfying
\begin{equation} \label{paracyc1} d_i t_n=t_{n-1}d_{i-1}\text{ for }i>0,\quad d_0 t_n= d_n\quad \text{and}\end{equation}
\begin{equation} \label{paracyc2} s_i t_n =t_{n+1}s_{i-1}\text{ for }i>0,\quad s_0 t_n = t_{n+1}^2s_{n}\,.\end{equation}

Let $F\dashv G$ be a spherical adjunction. As shown in \cite{DKSS19}, the simplicial object $\mathcal{N}(G[0])_\ast$ can be lifted to a paracyclic object. We emphasize that the sphericalness of the adjunction $F\dashv G$ is crucial for showing that the paracyclic isomorphism $t_n$ of this paracyclic structure is really an isomorphism. In this section we give an alternative description of the paracyclic isomorphisms $t_n$ in terms of the twist functor $T_{\mathcal{V}^n_F}$ of a spherical adjunction $F'\dashv G'$ described below in \Cref{paralem3}. We call $T_{\mathcal{V}^n_{F}}$ the paracyclic twist functor. We then proceed to show that this isomorphisms realizes the paracyclic symmetry of the functors $\varrho_i$ and $\varsigma_i$. 

\begin{construction}\label{paraconstr}
Let $F:\mathcal{V}^1_F\leftrightarrow \mathcal{N}_F:G$ be a spherical adjunction. Denote the left adjoint of $F$ by $E$. Consider the full subcategory $\mathcal{M}$ of the $\infty$-category of diagrams $\on{Fun}(\Delta^1\times\Delta^1,\Gamma(F))$ of the form
\[
\begin{tikzcd}
a \arrow[r] \arrow[d, "!"'] \arrow[rd] & a' \arrow[d, "\ast"] \\
b' \arrow[r]                          & b                   
\end{tikzcd}
\]
with $a,a'\in \mathcal{V}^1_F$ and $b,b'\in \mathcal{N}_F$. 
The restriction functor $\on{res}:\mathcal{M}\rightarrow \{\mathcal{V}^1_F,\mathcal{N}_F\}$, given by the projection to the edge $a\rightarrow b$ is a trivial fibration. As shown in \cite{DKSS19}, it follows from the sphericalness of the adjunction $F\dashv G$, that the fiber functor in the horizontal direction $\mathcal{M}\rightarrow \{\mathcal{V}^1_{F},\mathcal{N}_F\}$ is also an equivalence. By choosing a section of the trivial fibration $\on{res}$ and composing with the fiber functor we obtain an autoequivalence $\tau:\{\mathcal{V}^1_F,\mathcal{N}_F\}\rightarrow \{\mathcal{V}^1_F,\mathcal{N}_F\}$, called the \textit{relative suspension functor} in \textit{loc.~cit}. 
\end{construction}

\begin{lemma}\label{paralem2}
Let $F:\mathcal{V}^1_F\leftrightarrow \mathcal{N}_F:G$ be a spherical adjunction with cotwist functor $T_{\mathcal{N}_F}$. Denote the left adjoint of $F$ by $E$. The left adjoint of the functor 
\begin{equation}\label{rcof}
\mathcal{V}^2_F=\{ \mathcal{V}^1_F,\mathcal{N}_F\} \xrightarrow{\on{rfib}} \mathcal{N}_F
\end{equation}
is given by the functor that assigns $E(b)\xrightarrow{\ast}T_{\mathcal{N}_F}^{-1}(b)$ to $b\in \mathcal{N}_F$.
\end{lemma}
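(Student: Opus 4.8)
The plan is to realise $\on{rfib}$ as the fibre of an explicit natural transformation between functors whose left adjoints we already control, and then pass to left adjoints. Write $\mathcal{V}^2_F=\{\mathcal{V}^1_F,\mathcal{N}_F\}$ as $\on{Fun}_{\Delta^1}(\Delta^1,\Gamma(F))$, so an object is the datum of $a\in\mathcal{V}^1_F$, $c\in\mathcal{N}_F$ and a transition morphism $\phi\colon F(a)\to c$. Let $\on{ev}_0\colon\mathcal{V}^2_F\to\mathcal{V}^1_F$ and $\on{ev}_1\colon\mathcal{V}^2_F\to\mathcal{N}_F$ denote restriction along $\{0\}\hookrightarrow\Delta^1$ and $\{1\}\hookrightarrow\Delta^1$, i.e.\ $\on{ev}_0(a,c,\phi)=a$ and $\on{ev}_1(a,c,\phi)=c$. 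The transition morphisms assemble to a natural transformation $\rho\colon F\circ\on{ev}_0\Rightarrow\on{ev}_1$, and $\on{rfib}$ is by construction the pointwise fibre of $\rho$, so that
\[ \on{rfib}\longrightarrow F\circ\on{ev}_0\xrightarrow{\ \rho\ }\on{ev}_1 \]
is a (co)fibre sequence of exact functors $\mathcal{V}^2_F\to\mathcal{N}_F$.

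By left Kan extension along $\{0\}\hookrightarrow\Delta^1$, the functor $\on{ev}_0$ has a left adjoint $j_0$, sending $a$ to the $p$-coCartesian section $a\xrightarrow{!}F(a)$; by left Kan extension along $\{1\}\hookrightarrow\Delta^1$, the functor $\on{ev}_1$ has a left adjoint sending $b$ to the section $0\to b$, which is the inclusion $\iota_{\mathcal{N}_F}$ of the last component of the semiorthogonal decomposition $(\mathcal{V}^1_F,\mathcal{N}_F)$ of $\mathcal{V}^2_F$, cf.\ \Cref{rdknot2} and \Cref{sodprop5}. (Both adjunctions are immediate from the pullback description of mapping spaces in a category of sections.) Since $F\dashv G$ is spherical, $F$ has a left adjoint $E$, so $F\circ\on{ev}_0$ has left adjoint $j_0\circ E$. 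Passing to left adjoints reverses the cofibre sequence above and keeps it a cofibre sequence — for $b\in\mathcal{N}_F$ and $x\in\mathcal{V}^2_F$, mapping $b$ into the fibre sequence $\on{rfib}(x)\to F\on{ev}_0(x)\to\on{ev}_1(x)$ is again a fibre sequence — so $\on{rfib}$ acquires a left adjoint
\[ L\ \coloneqq\ \on{cofib}\bigl(\iota_{\mathcal{N}_F}\xrightarrow{\ \rho^\vee\ }j_0\circ E\bigr)\colon\mathcal{N}_F\longrightarrow\mathcal{V}^2_F\,, \]
$\rho^\vee$ being the mate of $\rho$. Explicitly, combining $j_0\dashv\on{ev}_0$, $E\dashv F$, $\iota_{\mathcal{N}_F}\dashv\on{ev}_1$, the mate identity, and the fact that $\on{Map}_{\mathcal{N}_F}(b,\mhyphen)$ carries the fibre sequence defining $\on{rfib}$ to a fibre sequence, one gets $\on{Map}_{\mathcal{V}^2_F}(L(b),x)\simeq\on{Map}_{\mathcal{N}_F}(b,\on{rfib}(x))$ naturally in $b$ and $x$.

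It remains to identify $L$ with the functor in the statement. Colimits in $\mathcal{V}^2_F$ are computed componentwise by $\on{ev}_0$ and $\on{ev}_1$, so $\on{ev}_0 L(b)=\on{cofib}(0\to E(b))=E(b)$ and $\on{ev}_1 L(b)=\on{cofib}\bigl(b\xrightarrow{\eta_b}FE(b)\bigr)$, the map being the unit $\eta$ of $E\dashv F$. Here I would invoke the structure theory of spherical adjunctions (\cite{DKSS19,Chr20}): one has $E\simeq G\circ T_{\mathcal{N}_F}^{-1}$, and the unit of $E\dashv F$ fits into a fibre sequence $\on{id}_{\mathcal{N}_F}\xrightarrow{\eta}FE\xrightarrow{\ \epsilon T_{\mathcal{N}_F}^{-1}\ }T_{\mathcal{N}_F}^{-1}$ with $\epsilon\colon FG\Rightarrow\on{id}_{\mathcal{N}_F}$ the counit of $F\dashv G$. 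Therefore $\on{ev}_1 L(b)\simeq T_{\mathcal{N}_F}^{-1}(b)$, while the transition morphism of the section $L(b)$ is the counit $FG\bigl(T_{\mathcal{N}_F}^{-1}(b)\bigr)\to T_{\mathcal{N}_F}^{-1}(b)$; since moreover $\on{ev}_0 L(b)\simeq E(b)\simeq G\bigl(T_{\mathcal{N}_F}^{-1}(b)\bigr)$, this transition morphism is a $p$-Cartesian edge. Hence $L(b)\simeq\bigl(E(b)\xrightarrow{\ast}T_{\mathcal{N}_F}^{-1}(b)\bigr)$, which is exactly the assignment in the statement.

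The step I expect to be the main obstacle is the last paragraph: one must match the unit of $E\dashv F$, the cotwist $T_{\mathcal{N}_F}$ and the counit of $F\dashv G$ carefully, and verify that the induced transition morphism of $L(b)$ is genuinely $p$-Cartesian — equivalently, that its $F\dashv G$-adjunct $E(b)\to G\bigl(T_{\mathcal{N}_F}^{-1}(b)\bigr)$ is an equivalence — rather than merely a counit up to the indeterminacies of the Grothendieck construction. This is precisely where sphericalness of $F\dashv G$ is used; everything else is formal manipulation of adjoints, mates and Kan extensions.
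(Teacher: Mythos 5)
Your proof is correct but takes a genuinely different route from the paper. The paper's proof works through the helix of semiorthogonal decompositions $((\mathcal{V}^1_F)^\perp,\mathcal{V}^1_F)$, $(\mathcal{V}^1_F,\mathcal{N}_F)$, $(\mathcal{N}_F,\prescript{\perp}{}{\mathcal{N}_F})$ of $\mathcal{V}^2_F$ recorded in~\cite{Chr20}, observes the chain of adjunctions $\on{rfib}[1]\dashv i_{\mathcal{N}_F}\dashv\pi_0\dashv i_{(\mathcal{V}^1_F)^\perp}$, and conjugates by the relative suspension $\tau^{-1}\dashv\tau$ of \Cref{paraconstr} to slide this chain one notch and read off $i_{(\mathcal{V}^1_F)^\perp}T_{\mathcal{N}_F}^{-1}\dashv\on{rfib}$. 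You instead express $\on{rfib}$ as the pointwise fibre of $\rho\colon F\circ\on{ev}_0\Rightarrow\on{ev}_1$, identify the left adjoints $j_0$ and $\iota_{\mathcal{N}_F}$ of $\on{ev}_0$ and $\on{ev}_1$ by hand, pass to left adjoints by mate calculus (the left adjoint of a fibre of functors is the cofibre of the mate), and unwind the resulting cofibre componentwise using the fibre sequence $\on{id}\xrightarrow{\eta}FE\xrightarrow{\epsilon T_{\mathcal{N}_F}^{-1}}T_{\mathcal{N}_F}^{-1}$. What your route buys is explicitness: you compute the transition morphism of $L(b)$ and check directly that it is the counit $FG(T_{\mathcal{N}_F}^{-1}b)\to T_{\mathcal{N}_F}^{-1}b$, hence Cartesian. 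What the paper's route buys is brevity and the fact that it delivers the whole helix $\varsigma_2\dashv\varrho_1\dashv\varsigma_1$ in one stroke, since $\tau$ is compatible with all the inclusions and projections. Both use sphericalness at the same pressure point — for you it is the identity $E\simeq GT_{\mathcal{N}_F}^{-1}$ and the fibre sequence for the unit of $E\dashv F$; for the paper it is the invertibility of $\tau$, whose construction invokes the cotwist. Two small remarks: the step where you claim passing to left adjoints carries the fibre sequence to a cofibre sequence is fine but could be stated more carefully as the mapping-space argument you sketch; and the paper's proof contains a typo, writing $i_{(\mathcal{V}^1_F)^\perp}(b)=G(b)\xrightarrow{!}b$ where the edge should be Cartesian $G(b)\xrightarrow{\ast}b$, consistent with the formula for $\varsigma_1$ in \Cref{rdknot2} — your computation correctly produces the Cartesian edge.
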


\begin{proof}
As shown in \cite[Lemma 1.30]{Chr20}, the stable subcategories $(\mathcal{V}^1_F)^\perp,\mathcal{V}^1_F,\mathcal{N}_F,\prescript{\perp}{}{\mathcal{N}_F}\subset \mathcal{V}^2_F$ form semiorthogonal decompositions $((\mathcal{V}^1_F)^\perp,\mathcal{V}^1_F),(\mathcal{V}^1_F,\mathcal{N}_F),(\mathcal{N}_F,\prescript{\perp}{}{\mathcal{N}_F})$ of $\mathcal{V}^2$. We denote by $i_{\mathcal{N}_F},i_{(\mathcal{V}^1_F)^\perp}$ the inclusion functors of $\mathcal{N}_F$ and $\mathcal{N}_F\simeq (\mathcal{V}^1_F)^\perp$ into $\mathcal{V}^2_F$, respectively. The functor $i_{(\mathcal{V}^1_F)^\perp}$ assigns to $b\in\mathcal{N}_F\simeq (\mathcal{V}^1_F)^\perp$ the object $G(b)\xrightarrow{!}b\in \mathcal{V}^2_F$. It is easily checked that there is a sequence of adjunctions 
\begin{equation}\label{adjseq2}
 \on{rfib}[1] \dashv i_{\mathcal{N}_F} \dashv \pi_0 \dashv i_{(\mathcal{V}^1_F)^\perp}\,.
\end{equation}
Composing with the adjunction $\tau^{-1}\dashv \tau$, where $\tau$ is the relative suspension functor from \Cref{paraconstr}, with the sequence of adjunction \eqref{adjseq2} yields the sequence of adjunctions 
\[ \pi_0[1]\dashv i_{(\mathcal{V}^1_F)^\perp}[-1]\dashv T_{\mathcal{N}_F}^{-1}\on{rfib}[1] \dashv i_{\mathcal{N}_F}T_{\mathcal{N}_F}\,.\]
We have thus established the desired adjunction $i_{(\mathcal{V}^1_F)^\perp}T_{\mathcal{N}_F}^{-1}\dashv \on{rfib}$. 
\end{proof}

\begin{lemma}\label{paralem3}
Let $F:\mathcal{V}^1_F\leftrightarrow \mathcal{N}_F:G$ be a spherical adjunction with cotwist functor $T_{\mathcal{N}_F}$. Denote the left adjoint of $F$ by $E$. For $n\geq 2$, consider the functor
\[F':\mathcal{V}^{n}_F \longrightarrow \mathcal{N}_F^{\times n} \]
with components $F'=(\varrho_1,\dots,\varrho_n)$.
\begin{enumerate}
\item The functor $F'$ admits left and right adjoints $E'$, respectively, $G'$, given by
\begin{align*} 
E'= &~(\varsigma_2,\dots,\varsigma_{n},\varsigma_1T_{\mathcal{N}_F}^{-1}[1-n])\,,\\
G'= &~(\varsigma_1,\dots,\varsigma_n)\,.
\end{align*}
\item The adjunction $F'\dashv G'$ is spherical.
\end{enumerate} 
\end{lemma}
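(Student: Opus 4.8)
\emph{Part 1 (the adjoints).} The target $\mathcal{N}_F^{\times n}$ is a finite product, and in the stable setting finite products coincide with finite coproducts, so a functor $H=(H_1,\dots,H_n)\colon \mathcal{C}\to\mathcal{N}_F^{\times n}$ admits a right adjoint exactly when each component $H_i$ does, in which case the right adjoint sends $(b_1,\dots,b_n)$ to $\bigoplus_i H_i^R(b_i)$; dually, it admits a left adjoint exactly when each $H_i$ does, given by $(b_1,\dots,b_n)\mapsto\bigoplus_i H_i^L(b_i)$. Applying this to $F'=(\varrho_1,\dots,\varrho_n)$ and reading off the adjunction chain \eqref{adjseq}, we obtain $\varrho_i^R\simeq\varsigma_i$ for all $1\le i\le n$, hence $G'$ is the functor $(b_1,\dots,b_n)\mapsto\bigoplus_i\varsigma_i(b_i)$, i.e.\ $G'=(\varsigma_1,\dots,\varsigma_n)$; and $\varrho_i^L\simeq\varsigma_{i+1}$ for $1\le i\le n-1$. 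It thus remains only to identify $\varrho_n^L$. By \Cref{rdknot2}, $\varrho_n=\on{rfib}_{1,2}[n-1]$, the shift by $[n-1]$ of the composite of the projection to the first two components of the semiorthogonal decomposition with the relative fiber functor $\on{rfib}$. \Cref{paralem2} computes the left adjoint of $\on{rfib}$ as the functor $b\mapsto\bigl(E(b)\xrightarrow{\ast}T_{\mathcal{N}_F}^{-1}(b)\bigr)\in\{\mathcal{V}^1_F,\mathcal{N}_F\}$; composing with the left adjoint of the projection (the inclusion of $\{\mathcal{V}^1_F,\mathcal{N}_F\}$ as the first two components of the SOD), with the shift $[1-n]$, and using the sphericalness of $F\dashv G$ to rewrite $E$ in terms of $G$, $T_{\mathcal{N}_F}$ and a shift, one identifies this with $\varsigma_1 T_{\mathcal{N}_F}^{-1}[1-n]$. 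Equivalently, this last adjunction may be read off from the periodicity of the infinite adjunction chain extending \eqref{adjseq} under the paracyclic (relative suspension) autoequivalence of $\mathcal{V}^n_F$ of \cite{DKSS19}. Assembling the components gives $E'=(\varsigma_2,\dots,\varsigma_n,\varsigma_1 T_{\mathcal{N}_F}^{-1}[1-n])$.

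\emph{Part 2 (sphericalness).} To prove $F'\dashv G'$ spherical I will compute the cotwist $T_{\mathcal{N}_F^{\times n}}=\on{fib}(F'G'\to\on{id})$ and the twist $T_{\mathcal{V}^n_F}=\on{cof}(\on{id}\to G'F')$ and check both are equivalences. The crucial input is the table of composites $\varrho_i\varsigma_j\colon\mathcal{N}_F\to\mathcal{N}_F$: using the semiorthogonal decomposition $(\mathcal{V}^1_F,\mathcal{N}_F,\dots,\mathcal{N}_F)$ of $\mathcal{V}^n_F$ together with the adjunctions of Part 1, one gets $\varrho_i\varsigma_i\simeq\on{id}$, $\varrho_i\varsigma_{i-1}$ an equivalence for $2\le i\le n$, $\varrho_1\varsigma_n$ an equivalence (a shift composed with a power of $T_{\mathcal{N}_F}$, coming from the corner adjoint of Part 1), and $\varrho_i\varsigma_j\simeq 0$ otherwise --- precisely the categorified versions of the relations $r_is_i=\on{id}$, $r_{i+1}s_i$ iso, $r_is_j=0$ from \Cref{sec3.1}. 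Consequently $F'G'(b_1,\dots,b_n)$ has $i$-th component $\varrho_i\varsigma_i(b_i)\oplus\varrho_i\varsigma_{i-1}(b_{i-1})$ (indices mod $n$), the counit projects onto the $\varrho_i\varsigma_i$-summand, and therefore $T_{\mathcal{N}_F^{\times n}}(b_1,\dots,b_n)$ has $i$-th component $\varrho_i\varsigma_{i-1}(b_{i-1})$: this is the cyclic permutation of the $n$ factors post-composed with equivalences, hence an equivalence. The twist $T_{\mathcal{V}^n_F}$ is handled dually: $G'F'(v)\simeq\bigoplus_i\varsigma_i\varrho_i(v)$, the $\varsigma_i\varrho_i$ are the SOD-projections of $\mathcal{V}^n_F$, and the cofiber of the unit is again seen to be an equivalence of $\mathcal{V}^n_F$ (it is the paracyclic twist). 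Alternatively, once Part 1 supplies both adjoints and the cotwist is known to be an equivalence, sphericalness follows from the standard criterion in \cite{DKSS19,Chr20}.

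\emph{Main obstacle.} I expect the bookkeeping in Part 2 to be the only real difficulty: establishing the ``wrap-around'' composite $\varrho_1\varsigma_n$ --- equivalently, checking that the extra left adjoint produced in Part 1 genuinely closes the adjunction chain into a periodic one --- and keeping track of the shift $[1-n]$ and the power of $T_{\mathcal{N}_F}$ occurring there. The off-diagonal vanishing $\varrho_i\varsigma_j\simeq 0$ is an immediate consequence of semiorthogonality, and once the full table of the $\varrho_i\varsigma_j$ is in place the identification of the twist and cotwist as twisted cyclic permutations, and hence as equivalences, is formal; the subtlety is concentrated entirely in the single twisted corner, which rests on the sphericalness of the original adjunction $F\dashv G$ via \Cref{paralem2}.
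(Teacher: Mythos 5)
Part 1 of your proposal is essentially the paper's proof: you decompose $F'$ componentwise via the biproduct structure of $\mathcal{N}_F^{\times n}$, read off $\varrho_i^R = \varsigma_i$ and $\varrho_i^L = \varsigma_{i+1}$ for $i < n$ from the adjunction chain \eqref{adjseq}, and handle the wrap-around component $\varrho_n^L$ by factoring $\varrho_n$ through $\pi_{1,2}$ and $\on{rfib}[n-1]$ and invoking \Cref{paralem2}. This is exactly what the paper does.

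In Part 2 the cotwist computation is sound: the matrix $F'G'$ is the identity plus one cyclic off-diagonal (with a twisted corner), the counit projects onto the diagonal, and the fiber is a cyclic permutation composed with equivalences. (Your index convention has the nonzero off-diagonal on the subdiagonal, $\varrho_i\varsigma_{i-1}$, together with the corner $\varrho_1\varsigma_n$; the paper's matrix has $\varrho_i\varsigma_{i+1}$ and corner $\varrho_n\varsigma_1 = T_{\mathcal{N}_F}[n-1]$. The transposition doesn't affect the conclusion, but do reconcile it with the adjunction chain, which yields the unit $\mathrm{id}\to\varrho_i\varsigma_{i+1}$ rather than $\mathrm{id}\to\varrho_{i}\varsigma_{i-1}$.)

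The genuine gap is the twist computation. You write that $G'F' \simeq \bigoplus_i \varsigma_i\varrho_i$, call the $\varsigma_i\varrho_i$ ``SOD-projections'', and assert the cofiber of the unit ``is again seen to be an equivalence (it is the paracyclic twist)''. This is circular: the paracyclic twist is by definition the twist of this adjunction, so identifying the cofiber with it establishes nothing; and the $\varsigma_i\varrho_i$ are not literally the projection functors of the semiorthogonal decomposition of $\mathcal{V}^n_F$ (the $\varrho_i$ for $2\le i\le n$ are shifted fiber functors, not SOD-projections, so $\varsigma_i\varrho_i$ is not a retraction onto an SOD component). In general, the cofiber of a map $\mathrm{id}\to\bigoplus_i P_i$, even when each $P_i$ is idempotent, has no reason to be an equivalence; the claim would need real work. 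The paper avoids this entirely: it never computes $T_{\mathcal{V}^n_F}$. Instead it applies \cite[Proposition 4.5]{Chr20}, whose hypotheses are (i) the cotwist $T_{\mathcal{N}_F^{\times n}}$ is an equivalence, (ii) the counit $cu$ commutes with the cotwist, $cu\circ T_{\mathcal{N}_F^{\times n}}\simeq T_{\mathcal{N}_F^{\times n}}\circ cu$, and (iii) $E'\simeq G'\circ T_{\mathcal{N}_F^{\times n}}^{-1}$. You have (i); (iii) is an immediate comparison of the formula for $E'$ from Part 1 against $G'$ postcomposed with the inverse cyclic permutation; and (ii) is a direct check from the matrix form. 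Your ``alternative'' last sentence gestures in this direction, but ``both adjoints exist + cotwist equivalence'' is not a sufficient criterion by itself, so you need to identify the precise criterion and verify (ii) and (iii) explicitly. Doing so would close the gap and recover the paper's argument without ever touching the twist directly.
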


\begin{proof}
We begin with showing statement 1. The adjunction $F'\dashv G'$ follows from composing the adjunctions $\varrho_i\dashv \varsigma_i$ with the adjunction $\Delta\dashv \oplus$ between the constant diagram functor $\Delta:\mathcal{N}_F\rightarrow \mathcal{N}_F^{\times n}$ and its right adjoint given by the direct sum functor. Again by composing adjunctions, we obtain that to show that $E'$ is left adjoint to $F'$ it suffices to show that $\varsigma_1T_{\mathcal{N}_F}^{-1}[n]$ is left adjoint to $\varrho_n$. This follows directly from the following observations.
\begin{itemize}
\item The functor $\varrho_n$ factors as 
\[ \mathcal{V}_F^n\xrightarrow{\pi_{1,2}} \mathcal{V}_F^2\xrightarrow{\on{rfib}[n-1]} \mathcal{N}_F\,.\]
\item The left adjoint of $\on{rfib}:\mathcal{V}_F^2\rightarrow \mathcal{N}_F$ was determined in \Cref{paralem2} and is given by the functor that maps $b\in \mathcal{N}_F$ to $E(b)\xrightarrow{\ast}T_{\mathcal{N}_F}^{-1}(b)$.
\item The left adjoint of $\pi_{1,2}$ is given by the functor that maps $E(b)\xrightarrow{\ast}T_{\mathcal{N}_F}^{-1}(b)\in \mathcal{V}^2_F$ to $E(b)\xrightarrow{\ast}T_{\mathcal{N}_F}^{-1}(b)\xrightarrow{\on{id}}\dots \xrightarrow{\on{id}} T_{\mathcal{N}_F}^{-1}(b)\in \mathcal{V}^n_F$. 
\end{itemize}

For statement 2, consider the endofunctor $M=F'G':\mathcal{N}^{\times n}_{F}\rightarrow \mathcal{N}^{\times n}_{F}$ of the adjunction $F'\dashv G'$ with cotwist functor $T_{\mathcal{N}_F^{\times n}}$. We can depict $M$ as the following matrix. 
\[
\begin{pmatrix}
 \on{id}_{\mathcal{N}_F} & \on{id}_{\mathcal{N}_F} & 0 & \dots & 0 & 0\\
 0 & \on{id}_{\mathcal{N}_F} & \on{id}_{\mathcal{N}_F} & \dots & 0 & 0\\
 0 & 0 & \on{id}_{\mathcal{N}_F} &\dots &0 & 0\\
 \vdots & \vdots & \vdots & \ddots & \vdots &\vdots \\
 0 & 0 & 0 & \dots & \on{id}_{\mathcal{N}_F} & \on{id}_{\mathcal{N}_F}\\
 T_{\mathcal{N}_F}[n-1] & 0 & 0&\dots & 0 & \on{id}_{\mathcal{N}_F}
\end{pmatrix}
\]
The counit $cu:M\rightarrow \on{id}_{\mathcal{N}_F^{\times n}}$ is the projection to the diagonal, so that we deduce that the cotwist $T_{\mathcal{N}_F^{\times n}}$ is an equivalence. We further observe that there exists an equivalence $cu\circ T_{\mathcal{N}_F^{\times n}}\simeq T_{\mathcal{N}_F^{\times n}}\circ cu$. The left adjoint $E':\mathcal{N}_F^{\times n}\rightarrow \mathcal{V}^n_F$ clearly satisfies $G'\circ T_{\mathcal{N}_F^{\times n}}^{-1}$. We have shown that all conditions of \cite[Proposition 4.5]{Chr20} are fulfilled and it follows that the adjunction $F'\dashv G'$ spherical. 
\end{proof}

\begin{remark}\label{0bnot}
We highlight the relation of \Cref{paralem3} to other results in the literature. Let $F\dashv G$ be a spherical adjunction. Consider further the (trivially) spherical adjunction $0_{\mathcal{N}}:0\leftrightarrow \mathcal{N}:0_\mathcal{N}'$ and denote by $F'':\mathcal{V}^n_{0_{\mathcal{N}}}\leftrightarrow \mathcal{N}^{\times n}:G''$ the spherical adjunction associated in \Cref{paralem3} to $0_\mathcal{N}\dashv 0_{\mathcal{N}}'$. The adjunction $F''\dashv G''$ appears in the special case $\mathcal{N}=\mathcal{D}(k)^{\on{perf}}$ in \cite[Theorem 5.14]{BD19}, where it is shown that $F''$ carries a left Calabi-Yau structure. The spherical adjunction $F'\dashv G'$ associated to $F\dashv G$ in \Cref{paralem3} can be described as the composition of the spherical adjunctions $F''\dashv G''$ and 
\[(F,0\dots,0):\mathcal{V}^1_F\longleftrightarrow \mathcal{N}_F^{\times n}:(G,0,\dots,0)\]
in the sense of \cite{Bar20}.
\end{remark}

\begin{remark}
Consider the setting of \Cref{paralem3}. Lurie's $\infty$-categorical Barr-Beck theorem implies that the adjunction $F'\dashv G'$ is monadic. Further, if the adjunction $F\dashv G$ is monadic, then the adjunction $F'\dashv G'$ is also comonadic. \Cref{paralem2} thus implies that a monadic spherical adjunction $F\dashv G$ can be recovered from the comonad $M=F'G':\mathcal{N}_F^{\times n}\rightarrow \mathcal{N}_F^{\times n}$ whose underlying endofunctor is determined by the cotwist functor $T_{\mathcal{N}_F}$. This does not imply that the spherical monadic adjunction $F\dashv G$ can be recovered from its twist functor, see also \cite[Section 4.1]{Chr20}. All further data is encoded in the comonad structure of $M$. 
\end{remark}

\begin{proposition}\label{paralem4}
Let $F:\mathcal{V}^1_F\leftrightarrow \mathcal{N}_F:G$ be a spherical adjunction and consider the twist functor $T_{\mathcal{V}^n_F}$ of the spherical adjunction $F'\dashv G'$ described in \Cref{paralem3}. Then there exist equivalences of functors 
\begin{equation}\label{rhoeq}
\varrho_i \circ T_{\mathcal{V}^n_F} =
\begin{cases} 
\varrho_{i+1} & \text{ for }1\leq i\leq n-1\\
T_{\mathcal{N}_F}[n-1]\circ \varrho_1  & \text{ for }i=n
\end{cases}
\end{equation}
and 
\begin{equation}\label{sigmaeq}
T_{\mathcal{V}^n_F}^{-1} \circ \varsigma_i =
\begin{cases} 
\varsigma_{i+1}  & \text{ for }1\leq i\leq n-1\\
\varsigma_1 \circ T_{\mathcal{N}_F}^{-1}[1-n]  & \text{ for }i=n
\end{cases}
\end{equation}
\end{proposition}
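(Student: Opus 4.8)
The plan is to deduce both displayed families of equivalences from a single structural input: \Cref{paralem3}, which identifies $F'=(\varrho_1,\dots,\varrho_n)$ as a spherical functor and describes its left and right adjoints explicitly. The key ingredient is the standard rotation relation for a spherical adjunction $F'\dashv G'$: the left adjoint $E'$ of $F'$ satisfies $E'\simeq T_{\mathcal{V}^n_F}^{-1}\circ G'$, where $T_{\mathcal{V}^n_F}=\on{cofib}(\on{id}_{\mathcal{V}^n_F}\to G'F')$ is the twist functor; see \cite{DKSS19,Chr20}. Granting this, I would first extract \eqref{sigmaeq} by comparing components, and then obtain \eqref{rhoeq} by passing to right adjoints. (For $n=1$ we have $F'=F$ and the statement reduces to the classical intertwining relation $F\circ T_{\mathcal{V}^1_F}\simeq T_{\mathcal{N}_F}\circ F$, so I assume $n\ge 2$.)

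For \eqref{sigmaeq}: unwinding the notation of \Cref{paralem3}, the functor $G'$ sends $(b_1,\dots,b_n)\in\mathcal{N}_F^{\times n}$ to $\bigoplus_{i=1}^n\varsigma_i(b_i)$, while $E'$ sends it to $\bigoplus_{i=1}^{n-1}\varsigma_{i+1}(b_i)\oplus\varsigma_1T_{\mathcal{N}_F}^{-1}[1-n](b_n)$. Since $T_{\mathcal{V}^n_F}$ is an equivalence, $T_{\mathcal{V}^n_F}^{-1}$ is exact and hence commutes with finite coproducts, so the equivalence $E'\simeq T_{\mathcal{V}^n_F}^{-1}\circ G'$ reads
\[\bigoplus_{i=1}^{n-1}\varsigma_{i+1}(b_i)\oplus\varsigma_1T_{\mathcal{N}_F}^{-1}[1-n](b_n)\;\simeq\;\bigoplus_{i=1}^{n}T_{\mathcal{V}^n_F}^{-1}\varsigma_i(b_i)\,.\]
Precomposing this equivalence of functors with the inclusion $\mathcal{N}_F\hookrightarrow\mathcal{N}_F^{\times n}$ of the $j$-th coordinate and using $\varsigma_i(0)\simeq 0$ isolates the summand in position $j$, yielding $T_{\mathcal{V}^n_F}^{-1}\varsigma_j\simeq\varsigma_{j+1}$ for $1\le j\le n-1$ and $T_{\mathcal{V}^n_F}^{-1}\varsigma_n\simeq\varsigma_1T_{\mathcal{N}_F}^{-1}[1-n]$, which is precisely \eqref{sigmaeq}.

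For \eqref{rhoeq}: by the adjunction chain \eqref{adjseq} we have $\varrho_i\dashv\varsigma_i$, and since $T_{\mathcal{V}^n_F}$ is an equivalence with inverse $T_{\mathcal{V}^n_F}^{-1}$, the functor $\varrho_i\circ T_{\mathcal{V}^n_F}$ admits the right adjoint $T_{\mathcal{V}^n_F}^{-1}\circ\varsigma_i$. For $1\le i\le n-1$, equation \eqref{sigmaeq} identifies this right adjoint with $\varsigma_{i+1}$, which is the right adjoint of $\varrho_{i+1}$; as a functor admitting a right adjoint is determined up to equivalence by it, $\varrho_i\circ T_{\mathcal{V}^n_F}\simeq\varrho_{i+1}$. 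For $i=n$, equation \eqref{sigmaeq} identifies the right adjoint of $\varrho_n\circ T_{\mathcal{V}^n_F}$ with $\varsigma_1\circ T_{\mathcal{N}_F}^{-1}[1-n]$; on the other hand $T_{\mathcal{N}_F}[n-1]$ has right adjoint $T_{\mathcal{N}_F}^{-1}[1-n]$ and $\varrho_1$ has right adjoint $\varsigma_1$, so $T_{\mathcal{N}_F}[n-1]\circ\varrho_1$ also has right adjoint $\varsigma_1\circ T_{\mathcal{N}_F}^{-1}[1-n]$, whence $\varrho_n\circ T_{\mathcal{V}^n_F}\simeq T_{\mathcal{N}_F}[n-1]\circ\varrho_1$.

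I expect the only genuinely delicate point to be bookkeeping of conventions: pinning down the precise form of the rotation relation $E'\simeq T_{\mathcal{V}^n_F}^{-1}\circ G'$ with no spurious shift and with $T_{\mathcal{V}^n_F}$ as normalized in \Cref{sec3.1}, so that the resulting formulas come out shift-free and are compatible with the paracyclic relations \eqref{paracyc1}--\eqref{paracyc2}. This can be cross-checked directly against the matrix description of $G'F'$ obtained in the proof of \Cref{paralem3}. Once this is fixed, everything else is a formal manipulation with adjoints together with the explicit adjoint descriptions from \Cref{paralem3}; an alternative, more hands-on route would compute $T_{\mathcal{V}^n_F}$ outright as the cofiber of the unit of $F'\dashv G'$ from that same matrix, but the adjoint-based argument is shorter.
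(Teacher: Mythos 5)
Your proof is correct and takes essentially the same route as the paper: both start from the rotation relation $E'\simeq T_{\mathcal{V}^n_F}^{-1}G'$ (the 2/4 property of spherical adjunctions), extract \eqref{sigmaeq} by matching components against the explicit descriptions of $E'$ and $G'$ from \Cref{paralem3}, and then deduce \eqref{rhoeq} by uniqueness of adjoints. You simply spell out the component-wise decomposition and the adjoint bookkeeping that the paper leaves implicit.
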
 

\begin{proof}
By the 2/4 property of spherical adjunctions there exists an equivalence $T_{\mathcal{V}^n_F}^{-1}G'\simeq E'$, showing the identities \eqref{sigmaeq}. The identities \eqref{rhoeq} follow from passing to left adjoints.
\end{proof}

\section{Parametrized perverse schobers globally}\label{sec4}

In \Cref{sec4.1}, we review background material on marked surfaces, their ideal triangulations and associated ribbon graphs. In \Cref{sec4.2,sec4.3}, we introduce the notion of a perverse schober parametrized by a ribbon graph and define the $\infty$-categories of global sections of such a perverse schober. In \Cref{sec4.4}, we discuss how perverse schobers parametrized by different ribbon graphs can be related.

\subsection{Marked surfaces, ideal triangulations and ribbon graphs}\label{sec4.1}

\begin{definition}
By a surface ${\bf S}$, we mean a smooth, connected surface with possibly empty boundary. We denote by $\partial {\bf S}$ and ${\bf S}^\circ$ the boundary and interior of ${\bf S}$, respectively.

A marked surface is a compact surface ${\bf S}$ together with a finite collection of marked points $M\subset {\bf S}$. We further require that each boundary component of ${\bf S}$ contains at least one marked point and if $\partial {\bf S}=\emptyset$, that $M\neq \emptyset$. 
 
Interior marked points are also called punctures. We denote by $\Sigma={\bf S}\backslash (M\cap {\bf S}^\circ)$ the non-compact surface with these punctures removed.
\end{definition}

We remark that the definition of marked surface does not exclude special cases, such as the twice punctured sphere or the once-punctured monogon.

We proceed by defining an ideal triangulation of a marked surface.

\begin{definition}
Let ${\bf S}$ be a marked surface. A curve $\gamma$ in ${\bf S}$ is called simple if
\begin{itemize}
\item the endpoints of $\gamma$ lie in $M$,
\item $\gamma$ does not intersect $M$ and $\partial {\bf S}$, except at the endpoints,
\item $\gamma$ does not self-intersect, except that its endpoints may be coincide,
\item if $\gamma$ is a closed loop, then $\gamma$ is not contractible onto $M$ or $\partial {\bf S}$.
\end{itemize}  
An arc in ${\bf S}$ is an equivalence class of curves under isotopy and reversal of parametrization. Two arcs are called compatible if there are curves in their respective isotopy classes which do not intersect, except possibly at the endpoints.
\end{definition}

\begin{definition}[see e.g.~{\cite[Definition 2.6]{FST08}}]
Let ${\bf S}$ be a marked surface. An ideal triangulation $\mathcal{T}$ of ${\bf S}$ consists of a maximal collection of distinct pairwise compatible arcs in ${\bf S}$.  
\end{definition}

Any collection of distinct and pairwise compatible arcs can be realized by curves in the respective isotopy classes which do not intersect except for the endpoints, see \cite[Proposition 2.5]{FST08}. Given an ideal triangulation $\mathcal{T}$ of a surface ${\bf S}$, we choose such a collection of non-intersecting curves. These curves cut ${\bf S}$ into ideal triangles. An ideal triangle has three (possibly two identical) sides which each connect two (possibly identical) marked points. The sides of an ideal triangle may lie on $\partial{\bf S}$ and be given by non-simple curves in $\partial{\bf S}$ connecting two marked points. An ideal triangle is called self-folded if it has a side which connects a single marked point with itself, see \Cref{selffoldfig}. Two of the sides of a self-folded ideal triangle are identical. 

\begin{figure}[t!]
\begin{center}
\begin{tikzpicture}[scale=0.5]
    \node (0) at (-6,2){};
    \node (1) at (-6,0){};
    \fill (0) circle (0.13); 
    \fill (1) circle (0.13); 
    \draw [very thick](-6,0.4) circle (1.6); 
    \draw [very thick](-6,2) -- (-6,0);
\end{tikzpicture}
\caption{A self-folded ideal triangle.}\label{selffoldfig}
\end{center}
\end{figure}
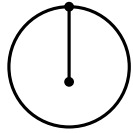

\begin{definition}
Let $\mathcal{T}$ be an ideal triangulation of an oriented marked surface. We choose a collection of non-intersecting simple curves representing $\mathcal{T}$. These simple curves are called the internal edges of $\mathcal{T}$. 

The boundary edges of $\mathcal{T}$ are those sides of the ideal triangles which are non-simple curves lying on the boundary $\partial {\bf S}$. 

The interior ideal triangles of $\mathcal{T}$ are the ideal triangles of $\mathcal{T}$ which are disjoint from the boundary $\partial {\bf S}$.
\end{definition}

In the remainder of this section, we discuss how marked surfaces and their ideal triangulations can be encoded in terms of ribbon graphs. Similar treatments of ribbon graphs can be found in \cite{DK15,DK18}.

\begin{definition}~
\begin{itemize}
\item A graph $\Gamma$ consists of two finite sets $\Gamma_0$ of vertices and $\on{H}_\Gamma$ of halfedges (sometimes simply denoted $\on{H}$) together with an involution $\tau:\on{H}\rightarrow \on{H}$ and a map $\sigma:\on{H}\rightarrow \Gamma_0$.
\item Let $\Gamma$ be a graph. We denote by $\Gamma_1$ the set of orbits of $\tau$. The elements of $\Gamma_1$ are called the edges of $\Gamma$. An edge is called internal if the orbit contains two elements and called external if the orbit contains a single element. An internal edge is called a loop at $v\in \Gamma_0$ if it consists of two halfedges both being mapped under $\sigma$ to $v$. We denote the set of internal edges of $\Gamma$ by $\Gamma_1^\circ$ and the set of external edges by $\Gamma_1^\partial$.
\item A ribbon graph consists of a graph $\Gamma$ together with a choice of a cyclic order on the set $\on{H}(v)$ of halfedges incident to $v$ for each $v\in\Gamma_0$.
\end{itemize}
\end{definition}

\begin{definition}
Let $\Gamma$ be a graph. We denote by $\on{Exit}(\Gamma)$ the category with 
\begin{itemize}
\item the set of elements $\Gamma_0 \amalg \Gamma_1$ and 
\item all non-identity morphisms of the form $v\rightarrow e$ with $v\in \Gamma_0$ a vertex and $e\in \Gamma_1$ an edge incident to $v$. If $e$ is a loop at $v$, then there are two morphisms $v\rightarrow e$.
\end{itemize}
We call $\on{Exit}(\Gamma)$ the exit path category of $\Gamma$.

The geometric realization $|\Gamma|$ of $\Gamma$ is defined as the geometric realization $|\on{Exit}(\Gamma)|$ of $\on{Exit}(\Gamma)$ as a simplicial set. 
\end{definition}

We only consider connected graphs, i.e.~graphs whose geometric realization is connected.

\begin{remark}\label{orrem}
Let $\Gamma$ be a graph and ${\bf S}$ an oriented surface. Any embedding of $|\Gamma|$ into ${\bf S}$ determines a ribbon graph structure on $\Gamma$, where the cyclic order of the halfedges at any vertex is so that the cyclic order in the geometric realization is counter-clockwise with respect to the orientation of ${\bf S}$.
\end{remark}

\begin{notation}\label{ribnot}
We use a graphical notation for ribbon graphs. We denote the vertices by $\cdot$ or sometimes $\times$, and internal edges by a straight line. We denote external edges as follows.
\[ \begin{tikzcd} \cdot& \arrow[l, no head, maps to]{}\end{tikzcd}\]
\end{notation}

\begin{example} 
The following diagram
\[ 
\begin{tikzcd}
                                                                              & \cdot \arrow[d, no head] &                                \\
\cdot \arrow[r, no head] \arrow[no head, loop, distance=2em, in=215, out=145] & \cdot                     & {} \arrow[l, no head, maps to]
\end{tikzcd}
\]
denotes a ribbon graph $\Gamma$ with three vertices, four edges in total, one external edge and one loop and the cyclic order of the halfedges at each vertex going in the counter-clockwise direction. The exit path category of $\Gamma$ can be depicted as follows, with $v,v',v''$ denoting the vertices of $\Gamma$ and $e,e',e'',e'''$ denoting the edges of $\Gamma$.
\[
\on{Exit}(\Gamma)=\begin{tikzcd}
     &                                                                                     &     & v \arrow[d]                      &    \\
     &                                                                                     &     & e                                &    \\
e''' & v'' \arrow[r] \arrow[l, bend left, shift left] \arrow[l, bend right, shift right=2] & e'' & v' \arrow[u] \arrow[r] \arrow[l] & e'
\end{tikzcd}
\]
\end{example}

Each ideal triangulation of an oriented marked surface determines a ribbon graph $\Gamma_\mathcal{T}$ as follows.

\begin{definition}\label{dualdef} 
Let ${\bf S}$ be an oriented surface with an ideal triangulation $\mathcal{T}$. We denote by $\Gamma_\mathcal{T}$ the ribbon graph determined by the following.
\begin{itemize}
\item The set of vertices of $\Gamma_\mathcal{T}$ is the set of ideal triangles of $\mathcal{T}$.
\item The set of internal edges of $\Gamma_\mathcal{T}$ is the set of internal edges of $\mathcal{T}$. An internal edge $e$ represented by an edges $\gamma_e$ is incident to the two vertices of $\Gamma_\mathcal{T}$ corresponding to the two ideal triangles incident to $\gamma_e$. Self-folded triangles give rise to loops in $\Gamma_\mathcal{T}$.
\item The set of external edges of $\Gamma_\mathcal{T}$ is the set of boundary edges of $\mathcal{T}$. Such an external edge is incident to the vertex of $\Gamma_\mathcal{T}$ corresponding to the ideal triangle which it is a side of.
\item Given a vertex $v$ of $\Gamma_\mathcal{T}$, the cyclic order of $\on{H}(v)$ is given by the counter-clockwise cyclic order of the edges of the corresponding ideal triangle of $\mathcal{T}$.
\end{itemize}
We call $\Gamma_\mathcal{T}$ the dual ribbon graph of $\mathcal{T}$. 
\end{definition}

\begin{example}\label{d2ex}
We depict an ideal triangulation of the once-punctured 2-gon and its dual ribbon graph.

\begin{center}
\begin{tikzpicture}[scale=0.7]
    \node (0) at (-6,2){};
    \node (1) at (-6,0){};
    \node (2) at (-6,-2){};
    \fill (0) circle (0.12); 
    \fill (1) circle (0.12); 
    \fill (2) circle (0.12); 
    \draw [very thick](1) circle (2); 
    \draw [very thick](-6,2) -- (-6,0);
    \draw [very thick](-6,0) -- (-6,-2);
    \node (3) at (-7.2,0){\large $\cdot$};
    \node (4) at (-4.8,0){\large $\cdot$};
    \draw (3) to[bend left=40] (4);
    \draw (3) to[bend right=40] (4);    
    \draw [|-](-3,0) to (-4.5,0);
    \draw [|-](-9,0) to (-7.5,0);
\end{tikzpicture}
\end{center}
\end{example}

Ribbon graphs can be glued along their external edges.

\begin{construction}\label{gluerem}
Let $\Gamma'$ and $\Gamma''$ be ribbon graphs and let $I$ be a finite set and $i':I\rightarrow (\Gamma')_1^\partial$ and $i'':I\rightarrow (\Gamma'')_1^\partial$ injective maps. Then there exists a ribbon graph $\Gamma$ satisfying
\begin{itemize}
\item $\Gamma_0=\Gamma'_0\cup \Gamma''_0$,
\item $\Gamma_1= \Gamma'_1 \amalg_I \Gamma''_1$,
\item that the cyclic order of $\on{H}(v)$, with $v\in \Gamma_0'\subset \Gamma_0$ a vertex, is given by the cyclic order determined by the ribbon graph $\Gamma'$. Analogously, the cyclic order of $\on{H}(v)$, with $v\in \Gamma_0''\subset \Gamma_0$, is determined by $\Gamma''$.
\end{itemize}
We call $\Gamma$ the gluing of $\Gamma'$ and $\Gamma''$ along $I$. Note that there exists an equivalence of posets $\on{Exit}(\Gamma)\simeq \on{Exit}(\Gamma')\amalg_{I}\on{Exit}(\Gamma'')$.
\end{construction}

\begin{example}\label{Tglrem}
Let $\mathcal{T}$ be an ideal triangulation of a surface. To each ideal triangle $F_i$ we associate a ribbon graph $\Gamma_i$ as follows.
\begin{itemize}
\item If $F_i$ is not a self-folded triangle, then $\Gamma_i$ is the following ribbon graph.
\[
\begin{tikzcd}
                               & {} \arrow[d, no head, maps to] &                                \\
{} \arrow[r, no head, maps to] & \cdot                         & {} \arrow[l, no head, maps to]
\end{tikzcd}
\]
\item If $F_i$ is a self-folded triangle, then $\Gamma_i$ is the following ribbon graph.
\[
\begin{tikzcd}
{} \arrow[r, no head, maps to] & \cdot \arrow[no head, loop, distance=2em, in=35, out=325]
\end{tikzcd}
\]
\end{itemize}
Then $\Gamma_\mathcal{T}$ is the gluing of the ribbon graphs $\Gamma_i$ along their external edges determined by the incidence of the ideal triangles.
\end{example}

\subsection{Parametrized perverse schobers}\label{sec4.2}

We essentially define a parametrized perverse schober as a collection of the local models of \Cref{locmod} arising from spherical adjunctions which are suitably glued together along a ribbon graph. This approach is a categorification of the description of perverse sheaves on a surface given in \cite{KS16}. 

\begin{definition}\label{rcdef}
A perverse schober parametrized by a ribbon graph $\Gamma$ is defined to be a functor $\mathcal{F}:\on{Exit}(\Gamma)\rightarrow \on{St}$, subject to the following condition. For each vertex $p\in \Gamma_0$ there exists a spherical functor $F_v:\mathcal{V}^1_{F}\rightarrow \mathcal{N}_{F}$ and a choice of equivalence of posets $C_n\simeq \on{Exit}(\Gamma)_{p/}$, respecting the cyclic ordering of $\{1,\dots,n\}$ and $\on{H}(v)$, such that the restriction of $\mathcal{F}$ to the ribbon corolla $C_n\simeq \on{Exit}(\Gamma)_{p/}$ is equivalent to $\mathcal{G}_n(F_v)$ as objects in $\on{Fun}(C_n,\on{St})$.

We denote by $\mathfrak{P}(\Gamma)$ the full subcategory of the functor category $\on{Fun}(\on{Exit}(\Gamma),\on{St})$ spanned by perverse schobers. 
\end{definition}

\begin{remark}
In accordance with \Cref{sec3.1}, we call $\mathcal{V}^1_{F_v}$ the $\infty$-category of vanishing cycles at $v$ and $\mathcal{N}_{F_v}$ the $\infty$-category of nearby cycles at $v$. 

It follows from the definition of parametrized perverse schober and the assumption that the ribbon graph is connected, that for any two vertices $v,v'$ of $\Gamma$ there exists an equivalence of $\infty$-categories $\mathcal{N}_{F_v}\simeq \mathcal{N}_{F_{v'}}$. The corresponding $\infty$-category, specified up to equivalence, is called the generic stalk of $\mathcal{F}$.
\end{remark}

\begin{notation}\label{notation1}
We will use a graphical notation for perverse schobers parametrized by ribbon graphs similar to the graphical notation for ribbon graphs introduced in \Cref{ribnot}. We denote a parametrized perverse schober by specifying the spherical functor at each vertex of the corresponding ribbon graph and specifying the functor associated to each non-identity morphism in the exit path category.
\end{notation}

\begin{example}
Let $F:\mathcal{V}^1_{F}\rightarrow \mathcal{N}_F$ be a spherical functor and $T:\mathcal{N}_F\rightarrow \mathcal{N}_F$ some autoequivalence. The diagram 
\begin{equation}\label{schobpic}
\begin{tikzcd}
                                                                                                                     & F \arrow[d, "{(T\circ \varrho_1,\varrho_1)}", no head] &                                        \\
0_{\mathcal{N}_F} \arrow[r, "{(\varrho_1,\varrho_2)}", no head] \arrow["{(\varrho_3,\varrho_2)}"', no head, loop, distance=2em, in=215, out=145] & 0_{\mathcal{N}_F}                              & {} \arrow[l, "\varrho_3"', no head, maps to]
\end{tikzcd} 
\end{equation}
corresponds to the parametrized perverse schober given by the following $\on{Exit}(\Gamma)$-indexed diagram in $\on{St}$.
\[
\begin{tikzcd}
            &                                                                                                                      &             & \mathcal{V}^1_{F} \arrow[d, "T\circ F"]                                                              &             \\
            &                                                                                                                      &             & \mathcal{N}_F                                                                                    &             \\
\mathcal{N}_F & {\mathcal{V}^3_{0_{\mathcal{N}_F}}} \arrow[r, "\varrho_1"] \arrow[l, "\varrho_3", bend left] \arrow[l, "\varrho_2"', bend right] & \mathcal{N}_F & {\mathcal{V}^3_{0_{\mathcal{N}_F}}} \arrow[u, "\varrho_1"'] \arrow[l, "\varrho_2"'] \arrow[r, "\varrho_3"] & \mathcal{N}_F
\end{tikzcd}
\] 
\end{example}

The next lemma shows that parametrized perverse schobers can be glued along external edges.

\begin{lemma}\label{sgluelem}
Let $\Gamma'$ and $\Gamma''$ be ribbon graphs, $I$ be a finite set and $i':I\rightarrow (\Gamma')_1^\partial$ and $i'':I\rightarrow (\Gamma'')_1^\partial$ be injective maps. Denote by $\Gamma$ the glued ribbon graph described in \Cref{gluerem}. Consider the functors $\on{ev}':\mathfrak{P}(\Gamma')\rightarrow \on{Fun}(I,\on{St})$ and $\on{ev}'':\mathfrak{P}(\Gamma'')\rightarrow \on{Fun}(I,\on{St})$, given by the restriction functors along the inclusions $I\rightarrow \on{Exit}(\Gamma')$ and $I\rightarrow \on{Exit}(\Gamma'')$, respectively. There exists a pullback diagram in $\on{Cat}_\infty$ as follows.
\begin{equation}\label{gluediag}
\begin{tikzcd}
{\mathfrak{P}(\Gamma)} \arrow[r] \arrow[d] \arrow[rd, "\lrcorner", phantom, near start] & {\mathfrak{P}(\Gamma'')} \arrow[d, "\on{ev}''"] \\
{\mathfrak{P}(\Gamma')} \arrow[r, "\on{ev}'"]                                         & {\on{Fun}(I,\on{St})}                     
\end{tikzcd}
\end{equation}
\end{lemma}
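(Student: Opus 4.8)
The plan is to exhibit $\mathfrak{P}(\Gamma)$ as the claimed pullback by unwinding both sides as full subcategories of a functor category and matching them. First I would use the identification of posets $\on{Exit}(\Gamma)\simeq \on{Exit}(\Gamma')\amalg_I \on{Exit}(\Gamma'')$ from \Cref{gluerem}. Since the inclusions $\on{Exit}(\Gamma')\hookrightarrow \on{Exit}(\Gamma)$ and $\on{Exit}(\Gamma'')\hookrightarrow \on{Exit}(\Gamma)$ are fully faithful and cover $\on{Exit}(\Gamma)$ with intersection exactly the discrete poset $I$ (the glued external edges have no vertices of $\Gamma$ mapping to them on the "outside"), restriction along this pushout of $\infty$-categories gives a pullback square
\[
\begin{tikzcd}
{\on{Fun}(\on{Exit}(\Gamma),\on{St})} \arrow[r] \arrow[d] \arrow[rd, "\lrcorner", phantom, near start] & {\on{Fun}(\on{Exit}(\Gamma''),\on{St})} \arrow[d] \\
{\on{Fun}(\on{Exit}(\Gamma'),\on{St})} \arrow[r]                                         & {\on{Fun}(I,\on{St})}
\end{tikzcd}
\]
by the universal property of the pushout (equivalently, $\on{Fun}(-,\on{St})$ sends pushouts of simplicial sets to pullbacks of $\infty$-categories, cf.\ \cite[discussion around 3.2.5]{HTT}).

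**Cutting down to the subcategories.** Next I would check that the defining condition of a parametrized perverse schober -- namely that the restriction to each ribbon corolla $\on{Exit}(\Gamma)_{p/}\simeq C_n$ is equivalent to some $\mathcal{G}_n(F_v)$ -- is \emph{local at the vertices of $\Gamma$}. Every vertex $p\in\Gamma_0$ is either a vertex of $\Gamma'$ or of $\Gamma''$, and in either case its star $\on{Exit}(\Gamma)_{p/}$ is canonically identified with the corresponding star in $\on{Exit}(\Gamma')$ or $\on{Exit}(\Gamma'')$, because gluing only merges pairs of external edges and does not create new incidences. Hence a functor $\mathcal{F}\colon\on{Exit}(\Gamma)\to\on{St}$ lies in $\mathfrak{P}(\Gamma)$ if and only if its restrictions $\mathcal{F}|_{\on{Exit}(\Gamma')}$ and $\mathcal{F}|_{\on{Exit}(\Gamma'')}$ lie in $\mathfrak{P}(\Gamma')$ and $\mathfrak{P}(\Gamma'')$ respectively. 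This exhibits $\mathfrak{P}(\Gamma)$ as the preimage, under the two restriction functors in the displayed pullback square, of $\mathfrak{P}(\Gamma')\times_{\on{Fun}(I,\on{St})}\mathfrak{P}(\Gamma'')$; since $\mathfrak{P}(\Gamma)$ is in turn a full subcategory of $\on{Fun}(\on{Exit}(\Gamma),\on{St})$, taking the pullback of the two fully faithful inclusions $\mathfrak{P}(\Gamma')\hookrightarrow\on{Fun}(\on{Exit}(\Gamma'),\on{St})$ and $\mathfrak{P}(\Gamma'')\hookrightarrow\on{Fun}(\on{Exit}(\Gamma''),\on{St})$ inside the square above yields exactly \eqref{gluediag}. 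Formally, pasting the pullback of full subcategories onto the pullback of functor categories gives the result, using that pullbacks of pullbacks are pullbacks.

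**Main obstacle.** The one genuinely nontrivial point is the locality claim: that membership in $\mathfrak{P}(\Gamma)$ is detected on the stars $\on{Exit}(\Gamma)_{p/}$ and that, after gluing, these stars are unchanged. One must be careful that the equivalence of posets $\on{Exit}(\Gamma)\simeq\on{Exit}(\Gamma')\amalg_I\on{Exit}(\Gamma'')$ really does restrict, for each $p\in\Gamma_0$, to an isomorphism of the relevant coslice posets compatibly with the cyclic orders on halfedges -- here one uses that a glued edge is internal in $\Gamma$ and incident to one vertex on each side, so the cyclic order at each vertex is inherited unchanged from $\Gamma'$ or $\Gamma''$ as stipulated in \Cref{gluerem}. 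Granting this, the rest is a formal manipulation of pullbacks of $\infty$-categories and full subcategories; I would record it as such without further ado.
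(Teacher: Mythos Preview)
Your proposal is correct and follows essentially the same approach as the paper's own proof: apply $\on{Fun}(\mhyphen,\on{St})$ to the pushout $\on{Exit}(\Gamma)\simeq\on{Exit}(\Gamma')\amalg_I\on{Exit}(\Gamma'')$ to obtain a pullback of functor categories, and then observe that the perverse-schober condition is local at the vertices so that $\mathfrak{P}(\Gamma)$ is cut out by exactly the conditions defining $\mathfrak{P}(\Gamma')$ and $\mathfrak{P}(\Gamma'')$. Your write-up is in fact more detailed than the paper's, which simply asserts the locality observation without discussing the stars or the cyclic orders.
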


\begin{proof}
Applying the functor $\on{Fun}(\mhyphen,\on{St})$ to the pushout diagram in $\on{Cat}_\infty$
\[
\begin{tikzcd}
I \arrow[r] \arrow[d] \arrow[rd, "\ulcorner", phantom, near end] & {\on{Exit}(\Gamma'')} \arrow[d] \\
{\on{Exit}(\Gamma')} \arrow[r]                   & {\on{Exit}(\Gamma)}                
\end{tikzcd}
\]
yields the following pullback diagram in $\on{Cat}_\infty$.
\[
\begin{tikzcd}
{\on{Fun}(\on{Exit}(\Gamma),\on{St})} \arrow[r] \arrow[d] \arrow[rd, "\lrcorner", phantom, near start] & {\on{Fun}(\on{Exit}(\Gamma''),\on{St})} \arrow[d] \\
{\on{Fun}(\on{Exit}(\Gamma'),\on{St})} \arrow[r]                                         & {\on{Fun}(I,\on{St})}                                    
\end{tikzcd}
\]
The statement that the diagram \eqref{gluediag} is pullback follows from the following observation: an element $\mathcal{F} \in \on{Fun}(\on{Exit}(\Gamma),\on{St})$ lies in $\mathfrak{P}(\Gamma)$ if and only if its restriction to $\on{Fun}(\on{Exit}(\Gamma'),\on{St})$ and $\on{Fun}(\on{Exit}(\Gamma''),\on{St})$ lie in $\mathfrak{P}(\Gamma')$ and $\mathfrak{P}(\Gamma'')$, respectively.
\end{proof}

\subsection{Global sections and duality}\label{sec4.3}

\begin{definition}\label{glscdef}
Let $\Gamma$ be a ribbon graph and let $\mathcal{F}:\on{Exit}(\Gamma)\rightarrow \on{St}$ be a $\Gamma$-parametrized perverse schober. 
\begin{itemize}
\item We call the stable $\infty$-category $\mathcal{H}(\Gamma,\mathcal{F}):=\on{lim}\mathcal{F}$ the $\infty$-category of global sections of $\mathcal{F}$. Global sections form a functor 
\[\mathcal{H}(\Gamma,\mhyphen):\mathfrak{P}(\Gamma)\rightarrow \on{St}\,.\]
\item For $e\in \Gamma_1$, we denote by $\on{ev}_e:\mathcal{H}(\Gamma,\mathcal{F})\rightarrow \mathcal{F}(e)$ the evaluation functor contained in the limit diagram defining $\mathcal{H}(\Gamma,\mathcal{F})$. Given a sub-ribbon graph $\Gamma'\subset \Gamma$, we denote by $\mathcal{H}_{\Gamma'}(\Gamma,\mathcal{F})$ the full subcategory of $\mathcal{H}(\Gamma,\mathcal{F})$ of global sections $X$ such that $\on{ev}_e(X)=0$ for all edges $e\in \Gamma_1\backslash \Gamma_1'$. We call $\mathcal{H}_{\Gamma'}(\Gamma,\mathcal{F})$ the $\infty$-category of sections of $\mathcal{F}_\mathcal{T}$ with support on $\Gamma'$.
\end{itemize}
\end{definition}

\begin{definition}\label{dsdef}
Let $\Gamma$ be a ribbon graph. We denote $\on{Entry}(\Gamma):=\on{Exit}(\Gamma)^{op}$. Given a $\Gamma$-parametrized perverse schober $\mathcal{F}$, we call the right adjoint, respectively, left adjoint diagrams 
\[ \mathbb{D}^R\mathcal{F},~\mathbb{D}^L\mathcal{F}:\on{Entry}(\Gamma)\longrightarrow \on{St}\] 
the right dual respectively left dual of $\mathcal{F}$.
\end{definition}

\begin{remark}\label{gdrem}
Consider the setup of \Cref{dsdef}. \Cref{paralem3} implies that there exists an equivalence $\mathbb{D}^R\mathcal{F}\simeq \mathbb{D}^L \mathcal{F}$ in $\on{Fun}(\on{Entry}(\Gamma),\on{St})$, which restricts on each vertex $v$ with corresponding spherical adjunction $F_v\dashv G_v$ to the twist functor of the spherical adjunction $F_v'\dashv G_v'$.
\end{remark}

\begin{remark}\label{limcolim}
Let $\mathcal{F}$ be a $\Gamma$-parametrized perverse schober. All functors in the image of $\mathcal{F}$ are part of a spherical adjunction and thus preserve all existing limits and colimits. If $\mathcal{F}:\on{Exit}(\Gamma)\rightarrow \on{St}$ takes as values presentable $\infty$-categories, it thus factors through the two forgetful functors $\mathcal{P}r^L_{\on{St}},\mathcal{P}r^R_{\on{St}}\rightarrow \on{St}$. 

Assume that $\mathcal{F}$ takes as values presentable $\infty$-categories. The left and right duals $\mathbb{D}^L\mathcal{F},\mathbb{D}^R\mathcal{F}:\on{Entry}(\Gamma)\rightarrow \on{St}$ both factor through the forgetful functor $\mathcal{P}r^L_{\on{St}}\rightarrow \on{St}$ and there exist equivalences of $\infty$-categories 
\begin{equation}\label{deq1}
\mathcal{H}(\Gamma,\mathcal{F}) \simeq \underset{{\mathcal{P}r^L}}{\on{colim}}\,\mathbb{D}^R\mathcal{F}\,,
\end{equation}
\begin{equation*}
\mathcal{H}(\Gamma,\mathcal{F}) \simeq \underset{{\mathcal{P}r^L}}{\on{colim}}\,\mathbb{D}^L\mathcal{F}\,.
\end{equation*}
We can thus, under the assumption of presentability, equivalently express parametrized perverse schobers and their global sections via their duals. These two perspectives may be seen as a categorification of the two possible perspectives on perverse sheaves, either in terms of sheaves or in terms of cosheaves interchanged by Verdier duality, see \cite{KS19}. 
\end{remark}

\subsection{Contractions of ribbon graphs}\label{sec4.4}

\begin{definition}
Let $\Gamma$ be a ribbon graph and $\mathcal{F}$ a $\Gamma$-parametrized perverse schober. Let $v\in \Gamma_0$ be a vertex of $\Gamma$ and consider the $\infty$-category $\mathcal{V}^1_{F_v}$ of vanishing cycles of $\mathcal{F}$ at $v$. We call $v$ a singularity of $\mathcal{F}$ if there is no equivalence of $\infty$-categories $\mathcal{V}^1_{F_v}\simeq 0$.

Given a subset $V\subset \Gamma_0$, we denote by $\mathfrak{P}(\Gamma,V)$ the full subcategory of $\mathfrak{P}$ spanned by perverse schobers whose singularities lie $V$.
\end{definition}

The goal of this section is to show that that parametrized perverse schobers can be transported along contractions of ribbon graphs which do not contract any edges joining two singularities, such that the $\infty$-categories of global sections are preserved up to equivalence.

\begin{definition}~
\begin{itemize}
\item Let $\Gamma$ be a ribbon graph and $e\in \Gamma_1$ an edge connecting two distinct vertices $v_1,v_2$. Let $\{e_1,e_2\}$ be the orbit representing the edge $e$. We define a ribbon graph $\Gamma'$ with
\begin{itemize}
\item $\Gamma_0'=\Gamma_0/(v_1\sim v_2)$ is the set obtained from $\Gamma_0$ obtained by identifying $v_1$ and $v_2$,
\item $\on{H}_{\Gamma'}=\on{H}_{\Gamma}\backslash \{e_1,e_2\}$,
\item $\tau:\on{H}_{\Gamma'}\rightarrow \on{H}_{\Gamma'}$ is the restriction of $\tau:\on{H}_{\Gamma}\rightarrow \on{H}_{\Gamma}$.
\item $\sigma:\on{H}_{\Gamma'}\rightarrow \Gamma_0'$ is the composite map $:\on{H}_{\Gamma'}\subset \on{H}_{\Gamma}\xrightarrow{\sigma} \Gamma_0\rightarrow \Gamma_0'$.
\item the cyclic order on $\on{H}_{\Gamma'}(v)$ with $v\in \Gamma_0'\backslash [v_1]$ is identical to the cyclic order on $\on{H}_{\Gamma}(v)$. Choose any two linear orders of the elements of $\on{H}_{\Gamma}(v_1)\backslash \{e_1\}$ and $\on{H}_{\Gamma}(v_2)\backslash \{e_2\}$ compatible with the given cyclic ordering. Consider the total order on 
\[ \on{H}_{\Gamma'}([v_1])=\big(\on{H}_{\Gamma}(v_1)\backslash \{e_1\}\big)\cup \big(\on{H}_{\Gamma}(v_2)\backslash \{e_2\}\big)\] 
which restricts to the given total orders on $\on{H}_{\Gamma}(v_1)\backslash \{e_1\},\on{H}_{\Gamma}(v_2)\backslash \{e_2\}$ and such that all elements of $\on{H}_{\Gamma}(v_2)\backslash \{e_2\}$ follow the elements in $\on{H}_{\Gamma}(v_1)\backslash \{e_1\})$. We let the cyclic order on $\on{H}_{\Gamma'}([v_1])$ to be the cyclic order induced by the above total order in the sense of \Cref{ordrem}.
\end{itemize}

We call $\Gamma'$ the edge contraction of $\Gamma$ at $e$.
\item Let $\Gamma$ and $\Gamma'$ be ribbon graphs. We say that there exists a contraction from $\Gamma$ to $\Gamma'$ if $\Gamma'$ is obtained as a (finitely many times) repeated edge contraction of $\Gamma$. We write $c:\Gamma\rightarrow \Gamma'$. 
\end{itemize}
\end{definition}

\begin{remark}\label{ordrem}
A total order on a finite set $\on{H}$ with cardinality $n$ can be defined as a bijection $\phi:\{1,\dots,n\}\simeq H$. Such a total order induces a cyclic order, where $\phi(i+1)$ follows $\phi(i)$ if $i\neq n$ and $\phi(1)$ follows $\phi(n)$. 
\end{remark}

\begin{lemma}\label{locglem}
Let $F:\mathcal{V}^1_{F}\leftrightarrow \mathcal{N}_F:G$ be a spherical adjunction. Let $m,n\geq 1$ and consider the stable $\infty$-categories $\mathcal{V}^m_{0_{\mathcal{N}_F}}$ and $\mathcal{V}^n_F$ with categorified restriction maps $\varrho^1_i:\mathcal{V}^m_{0_{\mathcal{N}_F}}\rightarrow \mathcal{N}_F$, with $i=1,\dots,m$, respectively $\varrho^2_j:\mathcal{V}^n_{F}\rightarrow \mathcal{N}_F$, with $j=1,\dots,n$. 
\begin{enumerate}
\item There exists a pullback diagram in $\on{Cat}_\infty$ as follows. 
\begin{equation}\label{Vpbsq1}
\begin{tikzcd}
\mathcal{V}_F^{n+m-2} \arrow[r, "\alpha"] \arrow[d, "\beta"] \arrow[rd, "\lrcorner", phantom, near start] & \mathcal{V}_F^{n} \arrow[d, "\varrho^2_{1}"] \\
\mathcal{V}_{0_{\mathcal{N}_F}}^{m} \arrow[r, "\varrho^1_m"]                                            & \mathcal{N}_F                         
\end{tikzcd}
\end{equation}
\item Denote by $\varrho_1,\dots,\varrho_{n+m-2}:\mathcal{V}_F^{n+m-2}\rightarrow \mathcal{N}_F$ the categorified restriction maps. There exist equivalences of functors $\varrho_j\simeq \varrho^1_j\circ \beta$ and $\varrho_{i+m-2}\simeq \varrho^2_i\circ \alpha$ for $j=1,\dots,m-1$ and $i=2,\dots n$.
\end{enumerate}
\end{lemma}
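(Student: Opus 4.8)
The plan is to recognize that both categories in question are built out of iterated sequences of functors via the Grothendieck construction, so that the pullback square \eqref{Vpbsq1} and the identification of the restriction maps both reduce to the combinatorics of gluing such sequences together. Recall from \Cref{rdknot} that $\mathcal{V}^n_F = \{\mathcal{A}, \mathcal{B}, \dots, \mathcal{B}\}$ (with $n-1$ copies of $\mathcal{B} = \mathcal{N}_F$) is the $\infty$-category of sections of the Grothendieck construction of the sequence $\mathcal{A} \xrightarrow{F} \mathcal{B} \xrightarrow{\on{id}} \cdots \xrightarrow{\on{id}} \mathcal{B}$, while $\mathcal{V}^m_{0_{\mathcal{N}_F}} = \{0, \mathcal{N}_F, \dots, \mathcal{N}_F\}$ is the $\infty$-category of sections of the sequence $0 \to \mathcal{N}_F \xrightarrow{\on{id}} \cdots \xrightarrow{\on{id}} \mathcal{N}_F$ with $m-1$ copies of $\mathcal{N}_F$; equivalently, since the map out of $0$ is trivial, this is just $\on{Fun}(\Delta^{m-1}, \mathcal{N}_F)$ with a final extra vertex, i.e.\ $\{\mathcal{N}_F, \dots, \mathcal{N}_F\}$ of length $m-1$. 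The key point is that gluing the last vertex of the first sequence to the first vertex of the second along $\varrho^1_m$ and $\varrho^2_1$ should produce the sections of the concatenated sequence $\mathcal{A} \xrightarrow{F} \mathcal{B} \xrightarrow{\on{id}} \cdots \xrightarrow{\on{id}} \mathcal{B}$ of length $n+m-2$, which is precisely $\mathcal{V}^{n+m-2}_F$.

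First I would make precise the statement that for a diagram $D : \Delta^{p+q} \to \on{Cat}_\infty$ split as the concatenation of $D_1 : \Delta^p \to \on{Cat}_\infty$ and $D_2 : \Delta^q \to \on{Cat}_\infty$ sharing the common endpoint vertex, there is a pullback square
\[
\begin{tikzcd}
\{D\} \arrow[r] \arrow[d] \arrow[rd, "\lrcorner", phantom, near start] & \{D_2\} \arrow[d] \\
\{D_1\} \arrow[r] & D(\text{common vertex})
\end{tikzcd}
\]
where the two maps to the common stalk are the appropriate restriction-to-a-vertex functors. This is a routine consequence of the description of sections of a Grothendieck construction: a section over $\Delta^{p+q}$ is the same as a pair of sections over $\Delta^p$ and $\Delta^q$ agreeing over the shared vertex, and restriction commutes with the relevant limits. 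With $D_1$ the sequence computing $\mathcal{V}^n_F$ (viewed with its vertices in the order so that $\varrho^2_1$ is restriction to the appropriate terminal vertex) and $D_2$ the sequence of identities computing $\mathcal{V}^m_{0_{\mathcal{N}_F}}$, restriction-to-the-shared-vertex on the $\mathcal{V}^m$ side is $\varrho^1_m$ (which by \Cref{rdknot2} is indeed, up to shift, the projection/fiber functor landing in $\mathcal{N}_F$ attached to the last vertex) and on the $\mathcal{V}^n$ side is $\varrho^2_1 = \pi_n$. One must check that concatenating a sequence ending in $\on{id}$-maps with another such yields, after collapsing redundant identity vertices, exactly the sequence defining $\mathcal{V}^{n+m-2}_F$; the count $(n-1) + (m-1) - 1 + 1 = n+m-2$ works out because the shared vertex is counted once.

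For statement 2, once the pullback square is set up via these Grothendieck-construction models, the functors $\alpha$ and $\beta$ are literally the restriction functors along the inclusions of $\Delta^{n}$, respectively $\Delta^{m-1}$ (suitably indexed), into $\Delta^{n+m-2}$. The categorified restriction maps $\varrho_1, \dots, \varrho_{n+m-2}$ on $\mathcal{V}^{n+m-2}_F$ are, by \Cref{rdknot2}, built from projections and (relative) fiber functors attached to specified vertices and edges of the defining $\Delta^{n+m-2}$-diagram. The claimed equivalences $\varrho_j \simeq \varrho^1_j \circ \beta$ for $j \le m-1$ and $\varrho_{i+m-2} \simeq \varrho^2_i \circ \alpha$ for $i \ge 2$ then follow because the vertices and edges involved in defining $\varrho_j$ (resp.\ $\varrho_{i+m-2}$) all lie in the sub-simplex along which $\beta$ (resp.\ $\alpha$) restricts, and the fiber/projection functors are computed pointwise, hence commute with restriction to a sub-simplex; here one uses \cite[4.3.2.15]{HTT} and the fact that in all these section categories colimits and fibers are computed vertexwise. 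The shifts in the formulas for $\varrho_i$ in \Cref{rdknot2} match up because the position of the $i$-th edge within the longer diagram $\Delta^{n+m-2}$ differs from its position in the shorter diagram by exactly $m-2$.

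The main obstacle I anticipate is bookkeeping the indexing conventions: making sure that the order in which the vertices of $\Delta^{n+m-2}$ are identified with the concatenation of the two shorter simplices matches the convention under which $\varrho^1_m$ and $\varrho^2_1$ are the correct restriction functors, and that the resulting diagram is genuinely $\mathcal{A} \xrightarrow{F} \mathcal{B} \to \cdots \to \mathcal{B}$ rather than, say, its reverse. There is a genuine subtlety in that the restriction map $\varrho_n$ on $\mathcal{V}^n_F$ is the projection $\pi_n$ but $\varrho^1_m$ on $\mathcal{V}^m_{0_{\mathcal{N}_F}}$ is $\on{rfib}_{1,2}[m-1]$ (or its analogue), and one must verify that these two functors into $\mathcal{N}_F$ are indeed the ones whose pullback gives the middle-length section category — this is essentially the content of the paracyclic symmetry (\Cref{paralem4}), which shows that the various $\varrho_i$ are all "the same" up to the paracyclic twist, so that gluing along any adjacent pair of them is consistent. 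Once the correct dictionary between vertices of the glued simplex and the indices $1, \dots, n+m-2$ is fixed, everything else is a formal manipulation of Grothendieck constructions and pointwise-computed functors.
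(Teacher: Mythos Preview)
Your approach is essentially the same as the paper's: both model $\mathcal{V}^n_F$, $\mathcal{V}^m_{0_{\mathcal{N}_F}}$, and $\mathcal{V}^{n+m-2}_F$ as $\infty$-categories of sections of Grothendieck constructions over simplices, and obtain the pullback square by concatenating the underlying simplices at the shared vertex. The paper makes your ``routine consequence'' precise by observing that the inclusion $\Delta^{\{0,\dots,n-1\}}\amalg_{\Delta^{\{n-1\}}}\Delta^{\{n-1,\dots,n+m-3\}}\hookrightarrow \Delta^{n+m-3}$ is inner anodyne, so restriction of sections along it is a trivial fibration; this is exactly the mechanism you gesture at. For part~2 the paper, like you, identifies $\alpha$ and $\beta$ (up to a shift --- the paper records $\alpha[2-m]$) with restriction to sub-simplices and then checks the formulas for $\varrho_i$ directly.

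One correction: your appeal to the paracyclic symmetry (\Cref{paralem4}) at the end is unnecessary here and slightly misleading. For the specific pair $(\varrho^1_m,\varrho^2_1)$ in the statement, no paracyclic twist is needed --- $\varrho^2_1=\pi_n$ is literally projection to the last component, and $\varrho^1_m$ on $\mathcal{V}^m_{0_{\mathcal{N}_F}}$ reduces (since the first component is $0$) to a shifted projection, so both are honest restriction-to-a-vertex functors up to shift. The paracyclic symmetry enters only in the subsequent \Cref{locgrem}, where one wants to glue along \emph{arbitrary} $\varrho^1_i$ and $\varrho^2_j$; there one first rotates back to the case $(i,j)=(m,1)$ treated in this lemma. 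So the ``genuine subtlety'' you anticipate is not present in \Cref{locglem} itself but is deferred to the construction that follows.
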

\begin{proof}
Let $D_1:\Delta^{m-2}\rightarrow \on{St}$ be the constant diagram with value $\mathcal{N}_F$ and $D_2:\Delta^{n-1}\rightarrow \on{St}$, $D:\Delta^{n+m-3}\rightarrow \on{St}$ be the diagrams obtained from the sequences of composable functors
\[ \mathcal{V}^1_F\xlongrightarrow{G}\mathcal{N}_F\xlongrightarrow{\on{id}}\dots\xlongrightarrow{\on{id}}\mathcal{N}_F\,.\] 
The diagram $D$ restricts to the diagrams $D_1$ and $D_2$ on $\Delta^{\{0,\dots,n-1\}}$ and $\Delta^{\{n-1,\dots,n+m-3\}}$, respectively. The inclusion functor $\Delta^{\{0,\dots,n-1\}}\amalg_{\Delta^{\{n-1\}}}\Delta^{\{n-1,\dots,n+m-3\}}\rightarrow \Delta^{n+m-3}$ is inner anodyne. It follows that the restriction functor 
\[ \on{res}:\on{Fun}(\Delta^{n+m-3},\Gamma(D))\rightarrow \on{Fun}(\Delta^{n-1},\Gamma(D_1))\times_{\mathcal{N}_F} \on{Fun}(\Delta^{m-2},\Gamma(D_2))\]
is a trivial fibration, from which we obtain a further trivial fibration
\[ \on{Fun}_{\Delta^{n+m-3}}(\Delta^{n+m-3},\Gamma(D))\rightarrow \on{Fun}_{\Delta^{n-1}}(\Delta^{n-1},\Gamma(D_1))\times_{\mathcal{N}_F} \on{Fun}_{\Delta^{m-2}}(\Delta^{m-2},\Gamma(D_2))\,.\]
Using the equivalences of $\infty$-categories
\begin{align*} 
\mathcal{V}_F^n & \simeq \on{Fun}_{\Delta^{n-1}}(\Delta^{n-1},\Gamma(D_2))\,,\\
\mathcal{V}_{0_{{\mathcal{N}_F}}}^m & \simeq \on{Fun}_{\Delta^{m-2}}(\Delta^{m-2},\Gamma(D_1))\,,\\
\mathcal{V}^{n+m-3}_F & \simeq \on{Fun}_{\Delta^{n+m-3}}(\Delta^{n+m-3},\Gamma(D))\,,
\end{align*} 
it follows that there exists a pullback diagram of the form \eqref{Vpbsq1}. The functors $\alpha[2-m]$ and $\beta$ in this pullback diagram are given by the restriction functors to the first $m-1$ and last $n$ components, respectively. The description of the categorified restriction maps can thus be checked directly.
\end{proof}

\begin{construction}\label{locgrem}
Consider the setup of \Cref{locglem} and the following diagram,
\begin{equation}\label{Vpbsq2}
\begin{tikzcd}
                                                 & \mathcal{V}^n_F \arrow[d, "\varrho^2_j"] \\
\mathcal{V}^m_{0_{\mathcal{N}_F}} \arrow[r, "\varrho^1_i"] & \mathcal{N}_F                       
\end{tikzcd}
\end{equation}
where $1\leq i\leq m$ and $1\leq j\leq n$ are arbitrary. We can use the paracyclic twist functors $\big(T_{\mathcal{V}^n_F}\big)^{1-j}$ and $\big( T_{\mathcal{V}^m_{0_{\mathcal{N}_F}}}\big)^{m-i}$, see \Cref{sec3.2} and \Cref{paralem4} in particular, to find a natural equivalence between the diagram \eqref{Vpbsq2} and the following diagram. 

\begin{equation}\label{Vpbsq3}
\begin{tikzcd}
                                                 & \mathcal{V}^n_F \arrow[d, "\varrho^2_1"] \\
\mathcal{V}^m_{0_{\mathcal{N}_F}} \arrow[r, "\varrho^1_m"] & \mathcal{N}_F                       
\end{tikzcd}
\end{equation}

The limits of the diagrams \eqref{Vpbsq2} and \eqref{Vpbsq3} are therefore both equivalent to $\mathcal{V}^{n+m-2}_F$. \Cref{paralem4} also shows that under this equivalence the resulting categorified restriction maps $\varrho_i:\mathcal{V}^{n+m-2}_F\rightarrow \mathcal{N}_F$ are cyclically permuted and may each further change by post-composition with an autoequivalence of the form $(T_{\mathcal{N}_F}[n-1])^l$ for some $l\in \mathbb{Z}$. 
\end{construction}

\begin{proposition}\label{conprop}
Let $c:\Gamma\rightarrow \Gamma'$ be a contraction of ribbon graphs and let $V\subset \Gamma_0$ be a subset such that no two vertices in $V$ are contracted to a single vertex by $c$. There is a functor of $\infty$-categories $c_*:\mathfrak{P}(\Gamma,V)\rightarrow \mathfrak{P}(\Gamma')$ making the following diagram commute. 
\begin{equation}\label{glsecdiag}
\begin{tikzcd}
{\mathfrak{P}(\Gamma,V)} \arrow[rr, "c_*"] \arrow[rd, "{\mathcal{H}(\Gamma,\mhyphen)}"'] &         & {\mathfrak{P}(\Gamma')} \arrow[ld, "{\mathcal{H}(\Gamma',\mhyphen)}"] \\
                                                                & \on{St} &                                                  
\end{tikzcd}
\end{equation}
\end{proposition}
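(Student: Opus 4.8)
The plan is to reduce the statement about a general contraction $c:\Gamma\to\Gamma'$ to the case of a single edge contraction, and then analyze that case locally using \Cref{locglem} and \Cref{locgrem}. Since any contraction is a finite composition of edge contractions, and the condition that no two vertices of $V$ are identified is preserved at each intermediate step (we may need to enlarge the intermediate support sets appropriately, but the key point is that no edge joining two vertices of $V$ is ever contracted), it suffices to construct $c_*$ and verify commutativity of \eqref{glsecdiag} when $c$ contracts a single edge $e$ joining two distinct vertices $v_1,v_2$ of $\Gamma$, at least one of which lies outside $V$.

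First I would treat the purely local picture at the contracted edge. The edge $e$ has endpoints $v_1,v_2$ of valencies $n$ and $m$ respectively, and after contraction the new vertex $[v_1]$ has valency $n+m-2$. Because at least one endpoint, say $v_2$, is not a singularity, the $\infty$-category of vanishing cycles $\mathcal{V}^1_{F_{v_2}}$ is zero, so the local model at $v_2$ is $\mathcal{G}_m(0_{\mathcal{N}})$ with all categorified restriction maps being the projections $\varrho^1_i$ of \Cref{locglem}; the local model at $v_1$ is $\mathcal{G}_n(F_{v_1})$. The functor $\mathcal{F}$ restricted to the subposet $\on{Exit}(\Gamma)_{v_1/}\cup_{e}\on{Exit}(\Gamma)_{v_2/}$ assembles, after using the paracyclic twist to bring the relevant restriction maps into the normal form (i.e.\ replacing $\varrho^2_j,\varrho^1_i$ by $\varrho^2_1,\varrho^1_m$ as in \Cref{locgrem}), into exactly the cospan appearing in \eqref{Vpbsq1}. \Cref{locglem} then identifies its limit with $\mathcal{V}^{n+m-2}_{F_{v_1}}$ together with categorified restriction maps that are (up to cyclic permutation and post-composition with autoequivalences of the form $(T_{\mathcal{N}}[n-1])^l$) the correct $\varrho_1,\dots,\varrho_{n+m-2}$ for a ribbon corolla $C_{n+m-2}$. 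This is the content that makes $c_*\mathcal{F}$ a well-defined perverse schober on $\Gamma'$: away from $[v_1]$ we leave $\mathcal{F}$ unchanged, and at $[v_1]$ we declare the local model to be $\mathcal{G}_{n+m-2}(F_{v_1})$ with the restriction maps just identified. Functoriality of this assignment in $\mathcal{F}$ follows since everything is built from limits and the canonical natural transformations of \Cref{locgrem}, giving the functor $c_*:\mathfrak{P}(\Gamma,V)\to\mathfrak{P}(\Gamma')$.

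Next I would verify that $\mathcal{H}(\Gamma',c_*\mathcal{F})\simeq\mathcal{H}(\Gamma,\mathcal{F})$ naturally in $\mathcal{F}$. The global sections are limits of the respective diagrams over $\on{Exit}(\Gamma')$ and $\on{Exit}(\Gamma)$. Using the pushout description of exit path categories along external edges from \Cref{gluerem} — or more directly by a cofinality/Kan extension argument — one writes $\mathcal{H}(\Gamma,\mathcal{F})$ as an iterated limit: first glue the two local corollas at $v_1,v_2$ along the edge $e$, which by \Cref{locglem} yields precisely $\mathcal{V}^{n+m-2}_{F_{v_1}}$, i.e.\ the value that $c_*\mathcal{F}$ assigns (as a limit of its local corolla) at $[v_1]$; then take the limit over the rest of the graph, which is literally the same diagram for $\Gamma$ and $\Gamma'$ since $c$ only alters the neighborhood of $e$. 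Formally: the contracted-edge square \eqref{Vpbsq1} exhibits the limit cone over $\on{Exit}(\Gamma)_{v_1/}\cup_e\on{Exit}(\Gamma)_{v_2/}$ as the limit cone over $\on{Exit}(\Gamma')_{[v_1]/}$, and since limits commute with limits, pasting this with the (identical) remaining part of the diagram identifies $\on{lim}\mathcal{F}\simeq\on{lim}(c_*\mathcal{F})$. The naturality comes for free from the functoriality of all the limit constructions.

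The main obstacle I anticipate is bookkeeping the paracyclic twist. The identification in \Cref{locglem}(2) holds only after one has used the paracyclic twist functors to rotate the restriction maps $\varrho^2_j$ and $\varrho^1_i$ at the actual incident halfedges of $e$ into the $j=1$, $i=m$ normal form — and \Cref{locgrem} then tells us the \emph{other} categorified restriction maps get cyclically permuted and twisted by powers of $T_{\mathcal{N}}[n-1]$. One must check that these twisted, permuted maps still fit the axioms of \Cref{rcdef} for the corolla $C_{n+m-2}$, i.e.\ that the resulting functor on $\on{Exit}(\Gamma')_{[v_1]/}$ is genuinely equivalent to $\mathcal{G}_{n+m-2}(F_{v_1})$ for the \emph{same} spherical functor $F_{v_1}$, with the cyclic order inherited from the edge-contraction rule (which interleaves $\on{H}(v_1)\setminus\{e_1\}$ before $\on{H}(v_2)\setminus\{e_2\}$, matching the indexing in \Cref{locglem}(2)). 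This is essentially a compatibility-of-conventions check between \Cref{paralem4}, the definition of edge contraction of ribbon graphs, and \Cref{rdknot2}; it is routine but must be done carefully to make sure no stray shift or monodromy is unaccounted for. The condition that $v_2$ is not a singularity is what makes the cospan \eqref{Vpbsq1} rather than a genuine gluing of two nontrivial spherical functors, and it is exactly this that guarantees the limit is again a corolla of the expected form; without it the contracted local model would not in general be a perverse schober.
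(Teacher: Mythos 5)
Your proposal is correct and follows the same mathematical skeleton as the paper: reduce to a single edge contraction and use \Cref{locglem} and \Cref{locgrem} to identify the local limit along the contracted edge with a corolla of the same spherical functor. The genuine difference lies in how $c_*$ is packaged and how the global-sections equivalence is established. You describe $c_*\mathcal{F}$ pointwise (unchanged away from the new vertex, the pullback of \eqref{Vpbsq1} at it) and then argue via an iterated limit over a pushout decomposition of exit-path categories. The paper instead introduces the mapping-cylinder poset $\mathcal{E}$, characterized as the total space of the Cartesian fibration over $\Delta^1$ classifying $\on{Exit}(c)$, and defines $c_*$ in one stroke as right Kan extension along $\on{Exit}(\Gamma)\rightarrow \mathcal{E}$ followed by restriction to $\on{Exit}(\Gamma')$; the equivalence $\mathcal{H}(\Gamma',c_*\mathcal{F})\simeq\mathcal{H}(\Gamma,\mathcal{F})$ then drops out from the transitivity of right Kan extensions together with the observation that $\on{Exit}(\Gamma')\rightarrow \mathcal{E}$ preserves limits (each slice over $\mathcal{E}$ has a terminal object). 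The Kan extension formulation buys two things over your approach: functoriality of $c_*$ in $\mathcal{F}$ is automatic rather than something to supply by hand, and the pointwise right Kan extension formula recovers exactly the cospan of \eqref{Vpbsq1} at the merged vertex while leaving all other values untouched (those comma categories having terminal objects), so one never has to make the decomposition of $\on{Exit}(\Gamma)$ precise. Your diagnosis that the delicate point is the compatibility between the paracyclic twist normalizations of \Cref{locgrem}/\Cref{paralem4} and the interleaving convention in the definition of edge contraction is accurate; the paper also delegates this to the cited lemmas without writing out all the details.
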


\begin{proof}
It suffices to show the statement in the case that $c$ is the contraction of an edge $e\in \Gamma_1$ connecting two vertices $v_1,v_2$ such that $v_1\notin V$. The edge contraction $c$ induces a functor $\on{Exit}(c):\on{Exit}(\Gamma)\rightarrow \on{Exit}(\Gamma')$ determined by mapping
\begin{itemize}
\item $x\in \Gamma_0\backslash \{v_1,v_2\}\subset \on{Exit}(\Gamma)$ to $x\in \Gamma_0\backslash \{v_1,v_2\}\subset \on{Exit}(\Gamma')$,
\item $v_1,v_2\in \Gamma_0 \subset \on{Exit}(\Gamma)$ to $[v_1]$,
\item $f\in \Gamma_1\backslash \{e\}\subset \on{Exit}(\Gamma)$ to $f\in \Gamma_1\backslash \{e\}\subset \on{Exit}(\Gamma')$ and 
\item $e\in \Gamma_1\subset \on{Exit}(\Gamma)$ to $[v_1]\in \Gamma_0'\subset \on{Exit}(\Gamma')$.
\end{itemize}
We define $\mathcal{E}$ to be the poset determined by the following properties.
\begin{itemize}
\item There exist fully faithful functors $\on{Exit}(\Gamma'),\on{Exit}(\Gamma)\rightarrow \mathcal{E}$.
\item The induced functor $\on{Exit}(\Gamma')\amalg\on{Exit}(\Gamma)\rightarrow \mathcal{E}$ is bijective on objects
\item For $x'\in \on{Exit}(\Gamma')$ and $x\in \on{Exit}(\Gamma)$, there exists a unique morphism from $x'$ to $x$ in $\mathcal{E}$ if and only if there exists a morphism $x'\rightarrow \on{Exit}(c)(x)$. There are no morphisms from $x$ to $x'$. 
\end{itemize} 
Note that the poset $\mathcal{E}$ can be equivalently described as the total space of a Cartesian fibration classifying the functor $\on{Exit}(c):\Delta^1\rightarrow \on{Cat}_\infty$.

We define $c_*:\on{Fun}(\on{Exit}(\Gamma),\on{St})\rightarrow \on{Fun}(\on{Exit}(\Gamma),\on{St})$ as the composition of the right Kan extension functor along the inclusion $\on{Exit}(\Gamma)\rightarrow \mathcal{E}$ with the restriction functor to $\on{Exit}(\Gamma')$. It follows from \Cref{locglem} and \Cref{locgrem} that $c_*$ maps $\mathfrak{P}(\Gamma,V)$ to $\mathfrak{P}(\Gamma')$. The commutativity of the diagram \eqref{glsecdiag} follows from right Kan extensions commuting with right Kan extensions.
\end{proof}

\section{Algebraic descriptions of \texorpdfstring{$\mathcal{V}^n_{f^*}$}{Vn}}\label{sec5}

This section provides auxiliary computations to be used in \Cref{sec6}. In \Cref{sec5.1} we study the $\infty$-category $\on{Fun}(S^n,\mathcal{D}(k))$ of local systems on the $n$-sphere with values in the derived $\infty$-category of a commutative ring $k$ and the spherical adjunction $f^*:\mathcal{D}(k)\leftrightarrow \on{Fun}(S^n,\mathcal{D}(k)):f_*$. We show that there is an equivalence of $\infty$-categories $\on{Fun}(S^n,\mathcal{D}(k))\simeq \mathcal{D}(k[t_{n-1}])$, where $k[t_{n-1}]$ denotes the polynomial algebra with generator in degree $|t_{n-1}|=n-1$.
In \Cref{sec5.2} we describe the perverse schober on the disc obtained from the spherical adjunction $f^*\dashv f_*$.

\subsection{Local systems on spheres}\label{sec5.1}
In \cite{Chr20}, we showed the following.

\begin{proposition}\label{locprop1}
For $n\geq 0$ let $S^n$ denote the singular set of the topological $n$-sphere and consider the map $f:S^n\rightarrow \ast$. Let further $\mathcal{D}$ be a stable $\infty$-category and 
\begin{equation}\label{fstar} 
f^*:\mathcal{D}\rightarrow \on{Fun}(S^n,\mathcal{D}) 
\end{equation}
be the pullback functor with right adjoint $f_*$, given by the limit functor. The adjunction $f^*\dashv f_*$ is spherical with twist functor $T_\mathcal{D}\simeq [-n]$.
\end{proposition}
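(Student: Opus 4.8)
The statement to prove is \Cref{locprop1}: for $f\colon S^n\to\ast$ and $\mathcal{D}$ stable, the adjunction $f^*\dashv f_*$ is spherical with twist $T_\mathcal{D}\simeq[-n]$. Since $S^n$ is connected with $\pi_1(S^n)$ trivial for $n\geq 2$ (and for $n=0,1$ the argument simplifies or is handled separately), the plan is to reduce everything to an explicit computation of the two relevant endofunctors: the twist $T_\mathcal{D}=\on{cofib}(\on{id}_\mathcal{D}\to f_*f^*)$ on $\mathcal{D}$ and the cotwist $T_{\on{Fun}(S^n,\mathcal{D})}=\on{fib}(f^*f_*\to\on{id})$ on $\on{Fun}(S^n,\mathcal{D})$. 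The first step is to identify $f_*f^*$: since $f^*X$ is the constant local system at $X$, $f_*f^*X=\lim_{S^n}X=X^{S^n}$, the cotensor of $X$ by the space $S^n$. Using the cofiber sequence of spaces $\ast\sqcup\ast\to\ast\to\ast$ — more precisely the cofiber sequence $S^{n-1}\to\ast\to S^n$ (i.e. $S^n\simeq \Sigma S^{n-1}$, with $S^0=\ast\sqcup\ast$) — together with the fact that cotensoring sends colimits of spaces to limits of $\mathcal{D}$, one gets $X^{S^n}\simeq \on{fib}(X\to X^{S^{n-1}})\times_{\text{\dots}}$; iterating, or more cleanly using $X^{S^n}\simeq X\times X[-n]$ coming from the stable splitting of the suspension spectrum of $S^n$ after smashing with the sphere, one identifies $f_*f^*X\simeq X\oplus X[-n]$ with the unit map being the inclusion of the first summand. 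Hence $T_\mathcal{D}X\simeq X[-n]$, which is an equivalence, proving the twist is an equivalence and computing it.

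For the cotwist, the key step is to understand $f^*f_*$ on $\on{Fun}(S^n,\mathcal{D})$. Here I would use the identification, established earlier or standard, that $\on{Fun}(S^n,\mathcal{D})\simeq \mathcal{D}(k[t_{n-1}])$ when $\mathcal{D}=\mathcal{D}(k)$ — but to stay general I would instead argue directly: $f^*f_*$ applied to a local system $\mathcal{L}$ is the constant local system at $\lim_{S^n}\mathcal{L}$, and the counit $f^*f_*\mathcal{L}\to\mathcal{L}$ is the canonical comparison. One computes the fiber $T_{\on{Fun}(S^n,\mathcal{D})}\mathcal{L}=\on{fib}(f^*f_*\mathcal{L}\to\mathcal{L})$. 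The most efficient route is to invoke \cite[Proposition 4.5]{Chr20} or the general criterion for sphericalness: an adjunction $F\dashv G$ is spherical as soon as the twist $T$ is an equivalence and $G$ is conservative (or some such), since then the cotwist is automatically an equivalence. In fact the cleanest argument: $f^*$ has a further left adjoint $f_!$ (the colimit functor $\on{colim}_{S^n}$), and by the stable splitting $f_!\simeq f_*[n]$ — this is precisely Poincaré–Lefschetz duality for $S^n$, or the ambidexterity-type statement that for a finite space the left and right adjoints of the constant-diagram functor agree up to a shift given by the "Euler characteristic local system", which for $S^n$ is trivial of degree $n$. Given $f_!\simeq f_*[n]$, the general theory of spherical adjunctions (e.g. \cite{DKSS19}) with a two-sided adjoint $f_!\dashv f^*\dashv f_*$ related by a shift immediately yields that $f^*\dashv f_*$ is spherical, with twist $[-n]$ and cotwist also a shift.

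The step I expect to be the main obstacle is the clean proof of the relation $f_!\simeq f_*[n]$ — i.e. that the constant-diagram functor $f^*\colon\mathcal{D}\to\on{Fun}(S^n,\mathcal{D})$ has left and right adjoints differing by the shift $[n]$. For $\mathcal{D}=\on{Sp}$ or any stable $\mathcal{D}$ this is the statement that the dualizing complex of $S^n$ (as a finite space / constant local system) is $\mathbb{S}[n]$, which can be proven by induction on $n$ using the pushout $S^n=\ast\cup_{S^{n-1}}\ast$ and the base case $S^0=\ast\sqcup\ast$ where $f_!=f_*=(\text{id},\text{id})^\vee$ is self-dual, or by citing the fact that $S^n$ is a compact manifold (in the topological/smooth sense) of dimension $n$ with trivial tangent bundle up to the relevant identification. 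I would present the inductive computation of $f_*f^*X\simeq X\oplus X[-n]$ directly since it is short, deduce the twist, and then either invoke the two-sided-adjoint criterion for sphericalness or dualize the twist computation to obtain the cotwist, thereby concluding. Throughout, the cases $n=0$ (where $S^0$ is two points, $f_*f^*X=X\oplus X$, twist $=[0]=\on{id}$) and $n=1$ should be checked to agree with the formula, serving as a sanity check on signs and shifts.
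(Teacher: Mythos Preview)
The paper does not give its own proof of this proposition: it is stated with the preamble ``In \cite{Chr20}, we showed the following'' and simply cited from the author's earlier work. So there is no in-paper argument to compare against.

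Your outline is essentially a correct proof sketch. The identification $f_*f^*X\simeq X^{S^n}\simeq X\oplus X[-n]$ via the stable splitting $\Sigma^\infty_+S^n\simeq \mathbb{S}\oplus\mathbb{S}[n]$, with the unit being the first inclusion, cleanly yields $T_\mathcal{D}\simeq[-n]$. Your second route for sphericalness, via the further left adjoint $f_!=\on{colim}_{S^n}$ together with $f_!\simeq f_*[n]$, is the right one and can indeed be established by induction on $n$ from the pushout $S^n\simeq\ast\amalg_{S^{n-1}}\ast$.

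Two cautions. First, the throwaway criterion you float, ``spherical as soon as the twist is an equivalence and $G$ is conservative,'' is false in general; you correctly abandon it, but do not leave it in a final write-up. Second, for the argument via $f_!$ you need slightly more than an abstract equivalence $f_!\simeq f_*[n]$: what the standard criteria (e.g.\ the 2-out-of-4 type conditions in \cite{Chr20,DKSS19}) require is that the \emph{canonical} comparison map $f_!\to f_*\circ T_\mathcal{D}^{-1}=f_*[n]$, built from the (co)units, be an equivalence. This is what your inductive computation actually proves, but you should say so explicitly rather than only asserting that the two functors happen to agree.
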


We call the $\infty$-category $\on{Fun}(S^n,\mathcal{D})$ the $\infty$-category of local systems on $S^n$ with values in $\mathcal{D}$. It is well known that if $\mathcal{D}=\mathcal{D}(k)$ for some commutative ring $k$, the $\infty$-category of local systems on $S^1$ with values in $\mathcal{D}(k)$ is equivalent to the $\infty$-category $\mathcal{D}(k[t,t^{-1}])$, where $k[t,t^{-1}]$ is the ring of Laurent polynomials. In this section we show the existence of an equivalence of $k$-linear $\infty$-categories $\on{Fun}(S^n,\mathcal{D}(k))\simeq \mathcal{D}(k[t_{n-1}])$ for $n>2$, where $k[t_{n-1}]$ the polynomial algebra with generator in degree $|t_{n-1}|=n-1$. In \Cref{specsec}, we will show that this description also generalizes to $\mathcal{D}=\on{RMod}_R$ for $R$ an $\mathbb{E}_\infty$-ring spectrum. We will end this section by an explicit description of the cotwist functor of the adjunction $f^*\dashv f_*$.\medskip

We begin with the following observation.
\begin{remark}\label{rlinrem}
Let $Z$ be a simplicial set. The $\infty$-category $\on{Fun}(Z,\mathcal{D}(k))$ admits a symmetric monoidal structure, such that the pullback functor $h^*$ along $h:Z\rightarrow \ast$ is a symmetric monoidal functor, see for example \cite[Section 3.3]{Chr20}. We can thus consider $\on{Fun}(Z,\mathcal{D}(k))$ as a left module in $\mathcal{P}r^L_{\on{St}}$ over itself and the functor $h^*$ as a morphism of algebra objects in $\mathcal{P}r^L$. Pulling back along $h^*$ provides $\on{Fun}(Z,\mathcal{D}(k))$ with the structure of a left module over $\mathcal{D}(k)$ and thus with the structure of a left-tensoring over $\mathcal{D}(k)$. This shows that $\on{Fun}(Z,\mathcal{D}(k))$ is a $k$-linear $\infty$-category such that the functor $h^*$ is $k$-linear.
\end{remark}

We let $L$ denote the simplicial set consisting of a single vertex and a single non-degenerate $1$-simplex. We use \Cref{rlinrem} to lift $\on{Fun}(L,\mathcal{D}(k))$ to a $k$-linear $\infty$-category. Denote by $k[t_0]$ the polynomial algebra with $|t_0|=0$.

\begin{lemma}\label{l1lem}
Consider the pullback functor $g^*:\mathcal{D}(k)\rightarrow \on{Fun}(L,\mathcal{D}(k))$ along $g:L\rightarrow \ast$. There exists an equivalence of $k$-linear $\infty$-categories $\on{Fun}(L,\mathcal{D}(k))\simeq \mathcal{D}(k[t_0])$ such that the following diagram commutes.
\begin{equation}\label{tri1}
\begin{tikzcd}
                                            & \mathcal{D}(k) \arrow[ld, "g^*"'] \arrow[rd, "\phi^*"] &                      \\
{\on{Fun}(L,\mathcal{D}(k))} \arrow[rr, "\simeq"] &                                                     & {\mathcal{D}(k[t_0])}
\end{tikzcd}
\end{equation}
Here $\phi^*$ denotes the pullback functor along the morphism of dg-algebras $k[t_0]\xrightarrow {t_0\mapsto 1} k$.
\end{lemma}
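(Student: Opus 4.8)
The plan is to identify $\on{Fun}(L,\mathcal{D}(k))$ with the derived $\infty$-category of an explicit dg-algebra by exhibiting a compact generator and computing its endomorphism algebra, using the machinery of \Cref{sec2.2,sec2.3,sec2.4}. First I would observe that $L$ is weakly homotopy equivalent to a point (it is a cone, hence contractible), so $g^*:\mathcal{D}(k)\rightarrow \on{Fun}(L,\mathcal{D}(k))$ is fully faithful; however it is not an equivalence, since $L$ has a nondegenerate $1$-simplex detecting a nontrivial path. Concretely, $\on{Fun}(L,\mathcal{D}(k))$ is the $\infty$-category of pairs $(X, \phi)$ where $X\in\mathcal{D}(k)$ and $\phi:X\rightarrow X$ is an endomorphism — this is exactly the derived $\infty$-category of modules over the dg-algebra $k[t_0]=k[x]$ with $x$ in degree $0$, where $x$ acts via $\phi$. (Equivalently, $k[t_0]$ is the endomorphism algebra of the free module $k[\mathbb{Z}_{\geq 0}]$-style generator; more precisely it is $\on{End}$ of the pushforward along $L\to\ast$ of the unit, or one can use that $\Delta^1/\partial\Delta^1 \simeq S^1$ relates $L$ to the $S^1$ case with Laurent polynomials replaced by polynomials.)

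The key steps, in order: (1) Show $g^*$ preserves compact objects and that $g^*(k)$ is a compact generator of $\on{Fun}(L,\mathcal{D}(k))$ — compactness follows because $L$ is a finite simplicial set so $g^*$ has a colimit-preserving right adjoint $g_*$ (the limit functor, which here is a finite limit), and generation follows from the fact that the evaluation functor at the vertex is conservative and its left adjoint is built from $g^*(k)$ by colimits. (2) Compute the $k$-linear endomorphism algebra $\on{End}_k(g^*(k))$ in $\on{Fun}(L,\mathcal{D}(k))$ using \Cref{genlem1}: by adjunction $\on{End}_k(g^*k)\simeq g_*g^*(k) \simeq \on{Map}(L,k)$ computed in $\mathcal{D}(k)$, and since $|L|\simeq \ast$ as a space but $L$ carries a nontrivial simplicial structure, a direct computation of this limit (a pullback/equalizer over the two endpoint maps of the $1$-simplex) shows the underlying complex is $k\oplus k[0]$ with the algebra structure making it $k[t_0]=k[x]/(\text{nothing})$; more carefully the relevant cosimplicial object gives $\on{End}\simeq k[x]$ with $x$ nilpotence-free in degree $0$. (3) Apply \Cref{genlem1} to get $\on{Fun}(L,\mathcal{D}(k))\simeq \on{RMod}_{\on{End}_k(g^*k)}\simeq \mathcal{D}(k[t_0])$ as $k$-linear $\infty$-categories. (4) Check that under this equivalence $g^*$ corresponds to the pullback $\phi^*$ along $k[t_0]\xrightarrow{t_0\mapsto 1}k$: both are the functors sending $X$ to $X$ with $t_0$ (resp.\ $\phi$) acting by the identity, and this matches because the unit $k\to g^*(k)$ corresponds under the identification to the canonical map realizing $k$ as the $k[t_0]$-module with $t_0$ acting as $1$; functoriality of the equivalence in \Cref{genlem1} together with \Cref{mndcomprop} translates the statement about $g^*$ into the statement about $\mhyphen\otimes_{k[t_0]}^{\on{dg}}k \simeq \phi^*$ after identifying left with right adjoints.

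The main obstacle I anticipate is step (2): pinning down not just the homotopy type but the \emph{algebra structure} on $\on{End}_k(g^*k)$, i.e.\ verifying it is the polynomial algebra $k[t_0]$ (a \emph{free} commutative/associative algebra on one degree-$0$ generator) rather than, say, $k[x]/(x^2)$ or $k\times k$. This requires being careful about the simplicial model of $L$: the endomorphism $t_0$ should be the class of the nondegenerate $1$-simplex, and one must check that its powers $t_0^n$ are all nonzero and linearly independent, which amounts to understanding $\on{Map}_{\on{Fun}(L,\mathcal{D}(k))}(g^*k, g^*k)$ as a simplicial set — this should follow from writing $\on{Fun}(L,\mathcal{D}(k))$ as a limit over the two degeneracy-and-face structure of $L$, or alternatively by recognizing $k[t_0]$-modules directly as functors out of the one-object category freely generated by an endomorphism and invoking the straightening equivalence. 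A clean alternative, avoiding the explicit computation, is to note that $L$ is the nerve of the free-monoid-on-one-generator category $B\mathbb{N}$ truncated appropriately — but since $L$ is a $1$-dimensional simplicial set rather than a nerve, the safest route is the endomorphism-algebra computation via \Cref{genlem1} combined with a cofibrant-resolution argument as in \Cref{dgrem}.
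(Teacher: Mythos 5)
Your step (1) is where the argument breaks: $g^*(k)$ is \emph{not} a compact generator of $\on{Fun}(L,\mathcal{D}(k))$. Under the equivalence $\on{Fun}(L,\mathcal{D}(k))\simeq\mathcal{D}(k[t_0])$ you are aiming for, an object of $\on{Fun}(L,\mathcal{D}(k))$ is a pair $(X,\phi)$ and the corresponding $k[t_0]$-module has $t_0$ acting via $\phi$; thus $g^*(k)=(k\xrightarrow{\on{id}}k)$ corresponds to the module $k[t_0]/(t_0-1)$. This module fails to generate: for example, $\on{RHom}_{k[t_0]}\bigl(k[t_0]/(t_0-1),\,k[t_0]/(t_0)\bigr)=\on{fib}\bigl((t_0-1)\colon k\to k\bigr)$, where $t_0$ acts by $0$, so $t_0-1$ is invertible and the mapping object vanishes. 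Correspondingly your step (2) also gives the wrong answer: the $k$-linear endomorphism object is $g_*g^*(k)=\on{fib}(\on{id}-\on{id}\colon k\to k)\simeq k\oplus k[1]$, an exterior algebra on a degree-one class, not the polynomial algebra $k[t_0]$. (Your tentative computation of $k\oplus k[0]$ with algebra $k[t_0]$ conflates the $1$-categorical equalizer with the homotopy equalizer and in any case would still not be the infinite-dimensional polynomial algebra.)

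The paper takes a different and correct compact generator: $X=(k[t_0]\xrightarrow{\cdot t_0}k[t_0])\in\on{Fun}(L,\mathcal{D}(k))$, which corresponds under the sought equivalence to the free module $k[t_0]\in\mathcal{D}(k[t_0])$. This is a generator for the usual reason, and $\on{End}_k(X)$ has underlying complex $k[t_0]$ in degree $0$ (hence is formal) with the composition giving the polynomial multiplication; \Cref{genlem1} then yields the equivalence. The commutativity of the triangle is checked just as you describe at the end of (4): $g^*$ and $\phi^*$ are both $k$-linear and hence determined by their value on $k$, and $\epsilon g^*(k)\simeq\phi^*(k)$ because both are the module $k$ with $t_0$ acting as $1$. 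Your overall strategy (compact generator plus \Cref{genlem1} plus determining a $k$-linear functor by its value on $k$) is exactly the paper's, but you chose the wrong generator; replace $g^*(k)$ by $X=(k[t_0]\xrightarrow{\cdot t_0}k[t_0])$ and the proof goes through.
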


\begin{proof}
We observe that $\on{Fun}(L,\mathcal{D}(k))$ admits a compact generator $X$, given by the diagram $k[t_0]\xrightarrow{\cdot  t_0}k[t_0]$ in $\mathcal{D}(k)$. The homology of the $k$-linear endomorphism algebra $\on{End}_k(X)$ is concentrated in degree $0$ and a direct computation shows that it is equivalent to the $k$-vector space $k[t_{0}]$. It follows that $\on{End}_k(X)$ is formal. The composition of morphisms induces the polynomial algebra structure on $k[t_{0}]$. In total, this shows that $\on{End}_k(X)$ is quasi-isomorphic as a dg-algebra to $k[t_{0}]$. The existence of the equivalence of $k$-linear $\infty$-categories $\epsilon:\on{Fun}(L,\mathcal{D}(k))\simeq \mathcal{D}(k[t_0])$ thus follows from \Cref{genlem1}. The $k$-linear functors $g^*,\phi^*$ are fully determined by $g^*(k)=(k\xrightarrow{id}k)$, respectively, $\phi^*(k)$, see \cite[Section 4.8.4]{HA}. The apparent equivalence $\epsilon g^*(k)\simeq \phi^*(k)$ thus implies the commutativity of the diagram \eqref{tri1}. 
\end{proof}

For $n\geq 1$, we denote by $i:\ast\rightarrow S^n$ the inclusion functor of any vertex and by $i^*:\on{Fun}(S^n,\mathcal{D}(k))\rightarrow \mathcal{D}(k)$ the associated pullback functor. We use \Cref{rlinrem} to lift $i^*$ and the functor $f^*$ from \eqref{fstar} to $k$-linear functors. We further denote by $g_!,f_!$ and $i_!$ the left adjoints of $g^*,f^*$, respectively, $i^*$.

\begin{lemma}\label{pbpulem}~
\begin{enumerate}
\item There exists a pushout diagram in $\on{LinCat}_k$ as follows.
\begin{equation}\label{pupbeq1}
\begin{tikzcd}
\on{Fun}(L,\mathcal{D}(k))        \arrow[r, "g_!"] \arrow[d, "g_!"'] \arrow[rd, "\ulcorner", phantom, near end] & \mathcal{D}(k) \arrow[d, "i_!"] \\
\mathcal{D}(k) \arrow[r, "i_!"']                                               & \on{Fun}(S^2,\mathcal{D}(k))              
\end{tikzcd}
\end{equation}
\item Let $n\geq 2$. There exists a pushout diagram in $\on{LinCat}_k$ as follows.
\begin{equation}\label{pupbeq2}
\begin{tikzcd}
\on{Fun}(S^{n-1},\mathcal{D}(k)) \arrow[r, "f_!"] \arrow[d, "f_!"'] \arrow[rd, "\ulcorner", phantom, near end] & \mathcal{D}(k)\arrow[d, "i_!"] \\
\mathcal{D}(k) \arrow[r, "i_!"']                                               & \on{Fun}(S^n,\mathcal{D}(k))              
\end{tikzcd}
\end{equation}
\end{enumerate}
\end{lemma}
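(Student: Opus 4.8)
The plan is to realize both squares as instances of a single topological fact: the $n$-sphere $S^n$ is the homotopy pushout of $\ast \leftarrow S^{n-1} \rightarrow \ast$ (and $S^2$ in particular is the pushout $\ast \leftarrow S^1 \rightarrow \ast$, while $L \simeq S^1$ up to the relevant equivalence, since $L$ is a model for the circle). More precisely, write $S^{n} \simeq \ast \amalg_{S^{n-1}} \ast$ as a pushout of simplicial sets (or spaces), exhibited by the two cone inclusions. I would first reduce statement (1) to statement (2): by \Cref{l1lem} the functor $g^*:\mathcal{D}(k)\to \on{Fun}(L,\mathcal{D}(k))$ is identified with a pullback functor coming from a map of spaces realizing the circle, and $L$ is weakly equivalent to $S^1$; one checks that under this identification $g_!$ corresponds to $f_!$ for $S^1$, so the square \eqref{pupbeq1} is the $n=2$ case of \eqref{pupbeq2}. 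So the real content is \eqref{pupbeq2}.

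For \eqref{pupbeq2}, the key step is the following: the functor $Z \mapsto \on{Fun}(Z,\mathcal{D}(k))$, sending a space (simplicial set) $Z$ to the $k$-linear $\infty$-category of local systems on $Z$, together with the pullback (restriction) functoriality in the \emph{wrong} variance, sends homotopy colimits of spaces to limits in $\on{LinCat}_k$; equivalently, passing to left adjoints everywhere, it sends homotopy colimits of spaces to \emph{colimits} in $\on{LinCat}_k$. The cleanest way to see this is: $\on{Fun}(Z,\mathcal{D}(k)) = \lim_{Z}\mathrm{const}_{\mathcal{D}(k)}$ is the cotensor of $\mathcal{D}(k)$ by $Z$ in $\on{LinCat}_k$ (using that $\on{LinCat}_k$, being $\mathcal{P}r^L_{\mathrm{St}}$-enriched and bicomplete, is cotensored over spaces), and cotensoring sends colimits of spaces to limits of $\infty$-categories. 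Then apply the equivalence $\on{radj}:\mathcal{P}r^L \simeq (\mathcal{P}r^R)^{op}$ from part ii) of \Cref{sec2.1}, which exchanges limits and colimits and replaces each pullback functor $f^*$ by its left adjoint $f_!$; since this equivalence restricts to $R$-linear categories and $\on{LinCat}_k \hookrightarrow \mathcal{P}r^L$ preserves limits and colimits (point iii of \Cref{sec2.1}), we conclude that the pushout square of spaces
\[
\begin{tikzcd}
S^{n-1} \arrow[r] \arrow[d] & \ast \arrow[d] \\
\ast \arrow[r] & S^n
\end{tikzcd}
\]
is sent to the pushout square \eqref{pupbeq2} in $\on{LinCat}_k$, with the left adjoints $f_!$ of the restriction functors $f^*$ in the top-left corner and the left adjoints $i_!$ of $i^*$ along the other two legs. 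One has to make sure that the functors $f_!$, $i_!$ appearing in the square really are the left adjoints of the pullback functors that $\on{radj}/\on{ladj}$ produce; this is immediate from the naturality of the $\mathcal{P}r^L \simeq (\mathcal{P}r^R)^{op}$ equivalence, since the right-hand diagram of restriction functors is exactly $\on{Fun}(-,\mathcal{D}(k))$ applied to the pushout square, living in $\mathcal{P}r^R$.

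I would carry out the argument in this order: (a) recall/record that $\on{Fun}(Z,\mathcal{D}(k))$ is the cotensor of $\mathcal{D}(k)$ by $Z$ in $\on{LinCat}_k$ and hence $Z \mapsto \on{Fun}(Z,\mathcal{D}(k))$ (with restriction functoriality) carries homotopy colimits of spaces to limits in $\mathcal{P}r^R_{\mathrm{St}}$, in fact in $\on{LinCat}_k$; (b) invoke $\on{radj}\dashv\on{ladj}$ to turn this into: the left-adjoint diagram sends colimits of spaces to colimits in $\on{LinCat}_k$; (c) apply this to the standard pushout presentation $S^n \simeq \ast \amalg_{S^{n-1}} \ast$ to get \eqref{pupbeq2}; (d) deduce \eqref{pupbeq1} by identifying $(L, g^*)$ with $(S^1, f^*)$ via \Cref{l1lem} (or directly, noting $L \simeq S^1$ as simplicial sets up to weak equivalence and that $g^*$ is the corresponding pullback). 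The main obstacle I anticipate is the bookkeeping in step (a): verifying carefully that $\on{Fun}(Z,\mathcal{D}(k))$, with its $k$-linear structure from \Cref{rlinrem}, is genuinely a cotensor in $\on{LinCat}_k$ (not merely in $\on{Cat}_\infty$ or $\mathcal{P}r^R$), so that the resulting pushout is a pushout of $k$-linear categories and the functors are $k$-linear. An alternative, more hands-on route avoiding cotensor language: compute directly that $\on{Fun}(S^n,\mathcal{D}(k))$ has compact generator corresponding under $\on{Fun}(S^{n-1},-)\otimes$-type gluing to the pushout, i.e. use \Cref{genlem1} and the explicit generator $f^*(k)$ together with the cofiber sequence computing $\on{End}_k(f^*(k)) \simeq k[t_{n-1}]$ — but the cotensor/adjoint argument is cleaner and I would present that, relegating the generator computation to the companion \Cref{sec5.1} results already cited.
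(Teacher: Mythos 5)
Your proposal is correct and follows essentially the same route as the paper: present $S^n$ as the pushout $\ast\amalg_{S^{n-1}}\ast$ of spaces (which is also a pushout in $\on{Cat}_\infty$), apply $\on{Fun}(-,\mathcal{D}(k))$ to obtain a pullback square of pullback functors in $\mathcal{P}r^R$, and pass to left adjoints to get the pushout in $\on{LinCat}_k$; part (1) is then handled by identifying $L$ with $S^1$ up to weak equivalence so that $g^*$ factors through $\on{Fun}(S^1,\mathcal{D}(k))$. The paper phrases step (a) simply as "$\on{Fun}(-,\mathcal{D}(k))$ preserves limits as a functor $\on{Cat}_\infty^{op}\to\on{Cat}_\infty$" rather than via the cotensor structure, which sidesteps the bookkeeping you flag, but the argument is the same.
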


\begin{proof}
We begin by showing statement 2. Consider the following pushout diagram of spaces.
\[
\begin{tikzcd}
S^{n-1} \arrow[r, "f"] \arrow[d, "f"'] \arrow[rd, "\ulcorner", phantom, near end] & \ast \arrow[d, "i"] \\
\ast \arrow[r, "i"']                                           & S^n                
\end{tikzcd}
\]
The above diagram is also pushout in $\on{Cat}_\infty$. Applying the limit preserving functor $\on{Fun}(\mhyphen,\mathcal{D}(k)):\on{Cat}_\infty^{op}\rightarrow \on{Cat}_\infty$ maps this pushout diagram to the following pullback diagram in $\mathcal{P}r^R$.
\[
\begin{tikzcd}
\on{Fun}(S^{n},\mathcal{D}(k)) \arrow[r, "i^*"] \arrow[d, "i^*"'] \arrow[rd, "\lrcorner", phantom, near start] & \mathcal{D}(k)\arrow[d, "f^*"] \\
\mathcal{D}(k) \arrow[r, "f^*"']                                               & \on{Fun}(S^{n-1},\mathcal{D}(k))              
\end{tikzcd}
\]
The left adjoint diagram is the diagram \eqref{pupbeq2} and thus pushout in $\on{LinCat}_k$.

We now show statement 1. The geometric realization of $L$ is equivalent to the topological $1$-sphere. There thus exists a morphism of simplicial sets $L\rightarrow S^1$ such that the limit functor $g_*=\on{lim}:\on{Fun}(L,\mathcal{D}(k))\rightarrow \mathcal{D}(k)$ restricts via the pullback functor $i^*:\on{Fun}(S^1,\mathcal{D}(k))\rightarrow \on{Fun}(L,\mathcal{D}(k))$ to the limit functor $f_*$. The left adjoint $g^*:\mathcal{D}(k)\rightarrow \on{Fun}(L,\mathcal{D}(k))$ thus factors through $\on{Fun}(S^1,\mathcal{D}(k))$. It thus follows from the explicit model for limits in $\on{Cat}_\infty$ that the right adjoint diagram of diagram \eqref{pupbeq1} is pullback in $\mathcal{P}r^R$. It follows that the diagram \eqref{pupbeq1} is pushout in $\on{LinCat}_k$.
\end{proof}

\begin{proposition}\label{genlem}
Let $n\geq 2$. There exists an equivalence of $k$-linear $\infty$-categories 
\begin{equation}\label{loceq}
\on{Fun}(S^n,\mathcal{D}(k))\simeq \mathcal{D}(k[t_{n-1}])\,,
\end{equation} 
such that the following diagram in $\on{LinCat}_k$ commutes. 
\begin{equation}\label{eqdiag1}
\begin{tikzcd}[row sep=small]
{\on{Fun}(S^n,\mathcal{D}(k))} \arrow[rr, "\simeq"] \arrow[rd, "i^*"] &                                                     & {\mathcal{D}(k[t_{n-1}])} \arrow[ld, "G"] \\
                                                                           & \mathcal{D}(k)\arrow[ld, "f^*"'] \arrow[rd, "\phi^*"] &                                          \\
{\on{Fun}(S^n,\mathcal{D}(k))} \arrow[rr, "\simeq"]                   &                                                     & {\mathcal{D}(k[t_{n-1}])}                
\end{tikzcd}
\end{equation}
Here $G$ denoted the monadic functor and $\phi^*$ the pullback functor along the morphism of dg-algebras $\phi:k[t_{n-1}]\xrightarrow{t_{n-1}\mapsto 0} k$.
\end{proposition}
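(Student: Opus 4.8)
The plan is to prove \Cref{genlem} by induction on $n\geq 2$, using the pushout diagrams from \Cref{pbpulem} to compute the relevant endomorphism algebras, and combining this with \Cref{l1lem} for the base of the induction. More precisely, the strategy is to exhibit a compact generator $X_n\in\on{Fun}(S^n,\mathcal{D}(k))$, compute its $k$-linear endomorphism algebra $\on{End}_k(X_n)$, show it is formal and quasi-isomorphic to $k[t_{n-1}]$, and then invoke \Cref{genlem1} to produce the equivalence \eqref{loceq}. The commutativity of \eqref{eqdiag1} will then follow by tracking the compact generator through the equivalence, exactly as in the proof of \Cref{l1lem}.

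First I would set up the compact generator. The unit object $i_!(k)\in\on{Fun}(S^n,\mathcal{D}(k))$, i.e.\ the image of $k$ under the left adjoint of a point inclusion $i^*$, is a natural candidate; one checks compactness and generation using that $i^*$ is conservative and preserves colimits (being a left adjoint in $\mathcal{P}r^R$ as well, cf.\ \Cref{locprop1} and \Cref{limcolim}). The key computation is then $\on{End}_k(i_!(k))$. Using the pushout square \eqref{pupbeq2} in $\on{LinCat}_k$, the endomorphism algebra of $i_!(k)$ in $\on{Fun}(S^n,\mathcal{D}(k))$ can be computed via a relative tensor product / base change formula: the pushout of $\on{LinCat}_k$-modules corresponds under the identifications of \Cref{sec2.5} and \Cref{sec2.2} to a relative tensor product of the corresponding ring spectra (or dg-algebras), so that $\on{End}_k(i_!(k))\simeq k\otimes_{\on{End}_k(f_!(k)\text{ in }\on{Fun}(S^{n-1},\mathcal{D}(k)))}k$, with the two maps to $k$ both being augmentations. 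By the inductive hypothesis the middle algebra is $k[t_{n-2}]$ (or, for $n=2$, we instead use \eqref{pupbeq1} together with \Cref{l1lem} which identifies $\on{Fun}(L,\mathcal{D}(k))\simeq\mathcal{D}(k[t_0])$ and $g_!$ with the relevant functor). A standard Koszul-type computation then gives $k\otimes_{k[t_{n-2}]}^{\mathbb{L}}k\simeq k[t_{n-1}]$ as a graded algebra, with the generator $t_{n-1}$ in degree $(n-2)+1=n-1$; one then argues formality from degree reasons (the homology is a polynomial algebra on a single generator, and a polynomial algebra on one generator in positive even degree is intrinsically formal, while the odd case $n-1$ odd, i.e.\ $t_{n-1}^2=0$ up to sign, is handled similarly — note this is precisely the exterior-vs-polynomial subtlety, but over a general commutative ring $k$ the relevant statement $\pi_*\simeq k[t_{n-1}]$ as stated in the proposition is what we verify directly from the bar/Koszul complex).

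Once $\on{End}_k(i_!(k))\simeq k[t_{n-1}]$ is established, \Cref{genlem1} yields the $k$-linear equivalence $\on{Fun}(S^n,\mathcal{D}(k))\simeq \on{RMod}_{k[t_{n-1}]}\simeq\mathcal{D}(k[t_{n-1}])$ sending $i_!(k)$ to the free module $k[t_{n-1}]$. To identify the functors in \eqref{eqdiag1}: the functor $i^*$ is right adjoint to $i_!$, hence corresponds under the equivalence to $\on{RHom}_{k[t_{n-1}]}(k[t_{n-1}],\mhyphen)$ composed with restriction of scalars along $k\to k[t_{n-1}]$ (the unit map) — but one must check this agrees with the monadic functor $G$ associated to the adjunction $f^*\dashv f_*$, which follows since $f^*$ corresponds to $\mhyphen\otimes_k k[t_{n-1}]$ precisely when $i_!(k)$ maps to the free module, and $G$ is by definition the right adjoint of that. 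Finally the lower triangle, $G\simeq (\text{forget along }\phi)$ with $\phi:k[t_{n-1}]\to k$ the augmentation $t_{n-1}\mapsto 0$: this is because the monad $GF$ on $\mathcal{D}(k)$ has underlying functor $\mhyphen\otimes_k k[t_{n-1}]$ (as $\on{End}(i_!k)$ has underlying complex $k[t_{n-1}]$), and the $k[t_{n-1}]$-module structure on $f^*(k)=i^*i_!(k)$ reading off from \Cref{locprop1} (where the twist $T_{\mathcal{D}(k)}\simeq[-n]$ governs the action) forces the generator $t_{n-1}$ to act by zero on $f^*(k)$, which is exactly the defining property of $\phi^*$ applied to $k$; since $k$-linear functors out of $\mathcal{D}(k)$ are determined by their value on $k$ (see \cite[Section 4.8.4]{HA}), this pins down $G\simeq\phi^*$.

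I expect the main obstacle to be the careful bookkeeping in the base change computation of $\on{End}_k(i_!(k))$ as a relative tensor product, i.e.\ making precise that the pushout in $\on{LinCat}_k$ of \eqref{pupbeq2} translates into the $\mathbb{E}_1$-relative-tensor-product of endomorphism algebras (using \Cref{modcor}, the functor $\theta$ of \Cref{sec2.2} and its colimit-preservation on pushouts), together with computing $k\otimes^{\mathbb{L}}_{k[t_{n-2}]}k$ and verifying formality with the correct grading and multiplicative structure over an arbitrary commutative ring $k$. The remaining identifications of functors in \eqref{eqdiag1} are, by contrast, formal consequences of the adjoint and monadic calculus once the generator is tracked correctly, and the $n=2$ base case is immediate from \Cref{l1lem} and \eqref{pupbeq1}.
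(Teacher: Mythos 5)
Your high-level strategy — exhibit $i_!(k)$ as a compact generator, compute its $k$-linear endomorphism algebra, invoke \Cref{genlem1}, and then identify the functors in \eqref{eqdiag1} by tracking the generator — is reasonable and matches the paper's approach in outline, and your treatment of the base case and of the lower triangle is essentially what the paper does. However, the central computation contains a genuine error.

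You claim that the pushout in $\on{LinCat}_k$ of \eqref{pupbeq2} translates into the balanced relative tensor product $\on{End}_k(i_!(k))\simeq k\otimes^{\mathbb{L}}_{k[t_{n-2}]}k$, and that a Koszul-type computation identifies this with $k[t_{n-1}]$. Neither step holds. The functor $\theta:\on{Alg}(\on{RMod}_R)\to\on{LinCat}_R$ of \Cref{sec2.2} preserves pushouts, so the pushout of module $\infty$-categories corresponds to the pushout of the underlying \emph{associative} ($\mathbb{E}_1$-)algebras, i.e.\ the amalgamated free product $k\amalg_{k[t_{n-2}]}k$, which is \emph{not} the balanced tensor product of modules. (The identity ``pushout $=$ relative tensor product'' is a special feature of $\mathbb{E}_\infty$-algebras and fails for $\mathbb{E}_1$-algebras, which are what the module-category picture records.) Moreover, the balanced derived tensor product $k\otimes^{\mathbb{L}}_{k[t_{n-2}]}k$ is readily computed: $k$ has the two-step free resolution $k[t_{n-2}]\xrightarrow{\cdot t_{n-2}}k[t_{n-2}]$ over $k[t_{n-2}]$, so $k\otimes^{\mathbb{L}}_{k[t_{n-2}]}k$ has homology $k$ concentrated in degrees $0$ and $n-1$ only — the trivial square-zero extension $k\oplus k[n-1]$, not the free tensor algebra $k[t_{n-1}]$, which has $k$ in every degree $m(n-1)$ for $m\geq 0$. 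The correct answer $k[t_{n-1}]$ does \emph{not} come out of the Koszul computation you invoke. The paper instead computes the homotopy pushout of dg-algebras directly: it cofibrantly replaces $k[t_{n-2}]\to k$ by the cellular attachment $k[t_{n-2}]\hookrightarrow(k[t_{n-2},t_{n-1}],d)$ with $d(t_{n-1})=t_{n-2}$, whereupon the ordinary pushout along this cofibration visibly yields $k[t_{n-1}]$. To repair your argument, you would need to replace the bar/Koszul step with precisely such a cofibrant-replacement computation of the amalgamated free product of dg-algebras (or appeal independently to, say, the James splitting to compute $C_*(\Omega S^n;k)$).
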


\begin{proof}
We observe that the composition of the autoequivalence of dg-algebras $k[t_0]\xrightarrow{t_0\mapsto t_0-1}k[t_0]$ with the morphism of dg-algebras $k[t_0]\xrightarrow{t_0\mapsto 1}k[t_0]$ is given by $k[t_0]\xrightarrow{t_0\mapsto 0}k$. It therefore follows from \Cref{l1lem}, that for $n=2$ the pushout square \eqref{pupbeq2} is equivalent to the image under $\mathcal{D}(\mhyphen)$ of the following homotopy pushout diagram of dg-categories with a single object.
\[
\begin{tikzcd}
{k[t_0]} \arrow[r, "t_0\mapsto 0"] \arrow[d, "t_0\mapsto t_0"'] \arrow[rd, "\ulcorner", phantom, near end] & k \arrow[d] \\
{(k[t_0,t_1],d)} \arrow[r]                                                   & {k[t_1]}                  
\end{tikzcd}
\]
Above $k[t_0,t_1]$ denotes the dg-algebra freely generated by $t_0$ and $t_1$ in degrees $0$ and $1$ and differential $d$ determined by $d(t_1)=t_0$, $d(t_0)=0$. It follows that there exists an equivalence of $k$-linear $\infty$-categories $\on{Fun}(S^2,\mathcal{D}(k))\simeq \mathcal{D}(k[t_1])$, making the the upper half of \eqref{eqdiag1} commute. The $k$-linear functors $\phi^*:\mathcal{D}(k)\rightarrow \mathcal{D}(k[t_{1}])$ and $f^*:\mathcal{D}(k)\rightarrow \on{Fun}(S^2,\mathcal{D}(k))$ are determined by $\phi^*(k)$ and $f^*(k)$, respectively. The homology of the chain complex underlying the right $k[t_1]$-module $\phi^*(k)$ is concentrated in degree $0$, given by $k$. Using that the upper half of \eqref{eqdiag1} commutes and that $i^*f^*(k)=k$, it follows that $f^*(k)$ is also mapped under the equivalence $\on{Fun}(S^2,\mathcal{D}(k))\simeq \mathcal{D}(k[t_{1}])$ to a right $k[t_1]$-module with homology $k$. There exists but a unique right $k[t_1]$-module, up to quasi-isomorphism, with homology $k$. This right $k[t_1]$-module $k$ is equivalent to the module determined by the morphism of dg-algebras $k[t_{1}]\xrightarrow{t_1\mapsto 0}k$. It follows that the lower half of \eqref{eqdiag1} commutes for $n=2$. For $n>3$, one can argue analogously and by induction. The pushout square \eqref{pupbeq2} is equivalent to the image under $\mathcal{D}(\mhyphen)$ of the following homotopy pushout diagram of dg-categories.
\[
\begin{tikzcd}
{k[t_{n-2}]} \arrow[r, "t_{n-2}\mapsto 0"] \arrow[d, "t_{n-2}\mapsto t_{n-2}"'] \arrow[rd, "\ulcorner", phantom, near end] & k \arrow[d] \\
{(k[t_{n-2},t_{n-1}],d)} \arrow[r]                                                   & {k[t_{n-1}]}                  
\end{tikzcd}
\]
Here again $k[t_{n-2},t_{n-1}]$ denotes the freely generated dg-algebra with two generators in degrees $n-2$ and $n-1$ and differential determined by $d(t_{n-1})=t_{n-2}$, $d(t_{n-2})=0$. We find the desired equivalence of $k$-linear $\infty$-categories $\on{Fun}(S^n,\mathcal{D}(k))\simeq \mathcal{D}(k[t_{n-1}])$ making the upper half of \eqref{eqdiag1} commute. Showing that the lower half of \eqref{eqdiag1} commutes is analogous to the case $n=2$.
\end{proof}

\begin{remark}\label{sphrem}
The composite functor $\ast \xrightarrow{i} S^n\xrightarrow{f} \ast $ is an equivalence of spaces. The functor 
\[ \mathcal{D}(k)\xrightarrow{i_!}\on{Fun}(S^n,\mathcal{D}(k))\xrightarrow{f_!}\mathcal{D}(k)\] 
is thus equivalent to $\on{id}_{\mathcal{D}(k)}$. By the sphericalness of $f^*\dashv f_*$, there exists an equivalence $f_*\simeq f_![-n]$ and therefore $f_*i_!(k)\simeq k[-n]$. Note that \Cref{genlem} shows that $i_!(k)$ is  equivalent to the compact generator $k[t_{n-1}]\in \mathcal{D}(k[t_{n-1}])\simeq \on{Fun}(S^n,\mathcal{D}(k))$. 
\end{remark}

We now describe the cotwist functor $T_{\on{Fun}(S^n,\mathcal{D}(k))}$ of the spherical adjunction $f^*\dashv f_*$ for $n\geq 2$. Consider the morphism of dg-algebras $\varphi:k[t_{n-1}]\rightarrow k[t_{n-1}]$ determined by $\varphi(t_{n-1})= (-1)^{n-1}t_{n-1}$. Let $\varphi^*:\mathcal{D}(k[t_{n-1}])\rightarrow \mathcal{D}(k[t_{n-1}])$ be the pullback functor. 

\begin{proposition}\label{ctwprop}
Let $n\geq 2$ and let $k$ be a commutative ring. There exists a commutative diagram in $\on{LinCat}_k$ as follows.
\[
\begin{tikzcd}[column sep=60]
{\on{Fun}(S^n,\mathcal{D}(k))} \arrow[r, "{T_{\on{Fun}(S^n,\mathcal{D}(k))}}"] \arrow[d, "\simeq"]\arrow[d, "\eqref{loceq}"'] & {\on{Fun}(S^n,\mathcal{D}(k))} \arrow[d, "\simeq"]\arrow[d, "\eqref{loceq}"'] \\
{\mathcal{D}(k[t_{n-1}])} \arrow[r, "{\varphi^*[-n]}"]                    & {\mathcal{D}(k[t_{n-1}])}                      
\end{tikzcd}
\]
\end{proposition}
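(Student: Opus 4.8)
The plan is to identify the cotwist functor $T_{\on{Fun}(S^n,\mathcal{D}(k))}$ of the spherical adjunction $f^*\dashv f_*$ explicitly under the equivalence \eqref{loceq}, using the description of the generic stalk and the compact generator from \Cref{genlem}. Recall from \Cref{locprop1} that $f^*\dashv f_*$ is spherical with twist functor $T_{\mathcal{D}(k)}\simeq[-n]$; hence by the $2/4$-property of spherical adjunctions (see \cite{Chr20}) the cotwist $T_{\on{Fun}(S^n,\mathcal{D}(k))}$ is an autoequivalence, and since $f^*$ commutes with the twist and cotwist we already know $T_{\on{Fun}(S^n,\mathcal{D}(k))}$ agrees with $[-n]$ after composing with $f^*$ on either side. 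So the content of the proposition is precisely the extra automorphism $\varphi^*$ of $\mathcal{D}(k[t_{n-1}])$ that measures the failure of $T_{\on{Fun}(S^n,\mathcal{D}(k))}$ to be literally $[-n]$.

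First I would work $k$-linearly: since $\on{Fun}(S^n,\mathcal{D}(k))\simeq \mathcal{D}(k[t_{n-1}])\simeq \on{RMod}_{k[t_{n-1}]}$ with compact generator $P\coloneqq k[t_{n-1}]$ (identified with $i_!(k)$ by \Cref{sphrem}), it suffices to compute the $k$-linear endomorphism algebra $\on{End}_k(T_{\on{Fun}(S^n,\mathcal{D}(k))}(P))$ together with the induced $k[t_{n-1}]$-action, and show the resulting dg-algebra map is $\varphi$. Concretely: the counit $f^*f_*\to\on{id}$ has fiber $T_{\on{Fun}(S^n,\mathcal{D}(k))}$, so I would compute $T_{\on{Fun}(S^n,\mathcal{D}(k))}(P)=\on{fib}(f^*f_*(P)\to P)$. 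Using $f_*i_!(k)\simeq k[-n]$ from \Cref{sphrem}, $f^*f_*(P)\simeq f^*(k)[-n]$; identifying $f^*(k)$ with the $k[t_{n-1}]$-module $k$ (the one coming from $\phi:k[t_{n-1}]\xrightarrow{t_{n-1}\mapsto 0}k$, by the lower triangle of \eqref{eqdiag1}), the map $f^*(k)[-n]\to P$ is, up to a unit, the augmentation-shift, and its fiber is again a free module of rank one, i.e. equivalent to $P$ as an object — but the $k[t_{n-1}]$-module structure is twisted. The sign $(-1)^{n-1}$ in $\varphi(t_{n-1})$ will emerge from the Koszul sign rules: shifting a bimodule by $[1]$ twists the left action by $(-1)^{\deg}$ (see \Cref{sgnrl}), and iterating this $n$ times together with the $S^n\simeq S^{n-1}\cup e^n$ cell structure / inductive pushout \eqref{pupbeq2} used to build the generator produces exactly $\varphi^*[-n]$.

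The cleanest route to pin down the sign is probably induction on $n$ via the pushout square \eqref{pupbeq2}: for $n=2$ compute $T$ directly on the generator $k[t_1]$ of $\on{Fun}(S^2,\mathcal{D}(k))$ — here $\varphi(t_1)=-t_1$ — using the Koszul-signed shift of the bimodule realizing the cotwist, and then propagate through the homotopy pushout of dg-algebras $k[t_{n-2}]\to k,\ k[t_{n-2}]\to(k[t_{n-2},t_{n-1}],d)$ appearing in the proof of \Cref{genlem}, tracking how the degree-$(n-1)$ generator is produced as (a suspension of) the degree-$(n-2)$ one and hence how its sign flips. Alternatively, and perhaps more transparently, one can argue that $T_{\on{Fun}(S^n,\mathcal{D}(k))}$ must be $k$-linear and send the compact generator to a compact generator with endomorphism algebra quasi-isomorphic to $k[t_{n-1}]$, so it is $\varphi^*[m]$ for some graded automorphism $\varphi$ (necessarily $t_{n-1}\mapsto \lambda t_{n-1}$, $\lambda\in k^\times$) and some shift $m$; composing with $f^*$ forces $m=-n$, and evaluating the cotwist on a single explicit class (e.g. the fundamental class detected by $f_*i_!$) forces $\lambda=(-1)^{n-1}$.

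The main obstacle I expect is bookkeeping the Koszul signs: identifying the $k[t_{n-1}]$-module underlying $\on{fib}(f^*f_*(P)\to P)$ as a \emph{strict} dg-module (so that \Cref{sgnrl} and \Cref{dgrem} apply) and verifying that the natural map realizing the cotwist counit is, on the nose, the one whose shifted-fiber twists $t_{n-1}$ by $(-1)^{n-1}$ rather than by $\pm1$ ambiguously. This is precisely the kind of sign that, chosen differently, would change the differentials of the Ginzburg algebra, so it has to be nailed down; everything else (stability, $k$-linearity, the shift $[-n]$) is formal from \Cref{locprop1}, \Cref{genlem} and the $2/4$-property.
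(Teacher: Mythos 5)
Your overall strategy---identify the cotwist under \eqref{loceq} by computing it on the compact generator $P=k[t_{n-1}]$ as a bimodule and extract the sign from the shift rule of \Cref{sgnrl}---is essentially the paper's, but the proposal has a gap exactly where the content of the proposition lies, and the one concrete sign mechanism you offer gives the wrong answer. Shifting a bimodule by $[n]$ twists the left action of $t_{n-1}$ by $(-1)^{n(n-1)}=+1$, so ``iterating the shift rule $n$ times'' produces no sign at all. In the paper the sign comes from a \emph{single} shift by $[n-1]$: one resolves the augmentation bimodule $k$ by the two-term Koszul complex $\on{cone}\big(\alpha\colon\widehat{k[t_{n-1}]}[n-1]\to k[t_{n-1}]\big)$, where $\widehat{k[t_{n-1}]}$ has its left action pre-twisted by $(-1)^{\on{deg}}$; this pre-twist is forced because the shift $[n-1]$ alone would twist the left action of $t_{n-1}$ by $(-1)^{(n-1)^2}=(-1)^{n-1}$ and $\alpha(1)=1$ must be left-linear. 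The twist functor is then read off as $\on{cof}\big(k[t_{n-1}]\to\on{cone}(\alpha)\big)\simeq\widehat{k[t_{n-1}]}[n]$, i.e.\ $\varphi^*[n]$.

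Second, working with the counit $f^*f_*\to\on{id}$ directly is harder than necessary: on $P$ it is the essentially unique nonzero map $k_\phi[-n]\to P$ (a Gorenstein-duality class, not an ``augmentation-shift''), and to run the strict bimodule computation you would have to produce an explicit model of this map, which you do not. The paper sidesteps this by computing the twist $T'=\on{cof}(\on{id}\to\phi^*\phi_!)$ of the adjoint adjunction $\phi_!\dashv\phi^*$ (corresponding to $f_!\dashv f^*$ via \eqref{eqdiag1}), whose unit on $P$ is just the augmentation $P\to k_\phi$; the cofiber computation above gives $T'\simeq\varphi^*[n]$, and then $T_{\on{Fun}(S^n,\mathcal{D}(k))}=(T')^{-1}\simeq\varphi^*[-n]$ since $\varphi^2=\on{id}$. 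Your rigidity reduction (any $k$-linear autoequivalence carrying $P$ to a shift of $P$ is $\psi^*[m]$ with $\psi(t_{n-1})=\lambda t_{n-1}$, $\lambda\in k^\times$, and compatibility with $f^*$ forces $m=-n$) is correct and genuinely useful, but determining $\lambda$ by ``evaluating the cotwist on a single explicit class'' is precisely the functorial bimodule computation you have deferred, and the proposed induction on $n$ through \eqref{pupbeq2} is neither carried out nor is it clear how it would propagate the sign.
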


\begin{remark}\label{signrem}
Note that only if $n$ is odd there exists an equivalence of functors $T_{\on{Fun}(S^n,\mathcal{D}(k))}\simeq [-n]$. The functor $T_{\on{Fun}(S^n,\mathcal{D}(k))}[n]$ is otherwise the involution reversing the sign of $t_{n-1}$. 
\end{remark}

\begin{proof}[Proof of \Cref{ctwprop}]
The dg-category of $k[t_{n-1}]$-bimodules is equivalent to the dg-category $\on{dgMod}(k[t_{n-1}]\otimes_k k[t_{n-1}]^{\on{op}})$. The former dg-category thus inherits a model structure from the projective model structure of the latter, whose underlying $\infty$-category is equivalent to the $\infty$-category of $k$-linear endofunctors of $\mathcal{D}(k[t_{n-1}])$. Let $\odot$ denote the multiplication in $k[t_{n-1}]$. We denote by $\widehat{{k[t_{n-1}]}}$ the $k[t_{n-1}]$-bimodule $\widehat{{k[t_{n-1}]}}$ with
\begin{itemize}
\item underlying chain complex $k[t_{n-1}]$, 
\item left action on $a\in \widehat{k[t_{n-1}]}$ determined by $t_{n-1}^i.a=(-1)^{i(n-1)}t_{n-1}^i\odot a$ and 
\item right action on $a\in \widehat{k[t_{n-1}]}$ determined by $a.t_{n-1}^i=a\odot t_{n-1}^i\,$.
\end{itemize}
Note that $\varphi^*\simeq \mhyphen \otimes_{k[t_{n-1}]} \widehat{k[t_{n-1}]}$. We can thus prove the Proposition by showing that the composite of the twist functor $T_{\on{Fun}(S^n,\mathcal{D}(k))}'$ of the spherical adjunction $f_!\dashv f^*$ with  the equivalence \eqref{loceq} is equivalent to $\mhyphen \otimes \widehat{k[t_{n-1}]}[n]$. Using the commutativity of the lower part of diagram \eqref{eqdiag1} in \Cref{genlem}, it suffices to show that the twist functor $T$ of the spherical adjunction $\phi_!\dashv \phi^*$ is equivalent to $\mhyphen \otimes \widehat{k[t_{n-1}]}[n]$. 

Using \Cref{sphrem}, it follows that $\phi^*\phi_!(k[t_{n-1}])\simeq k\in \mathcal{D}(k[t_{n-1}])$ with $k$ the $k[t_{n-1}]$-bimodule determined by the morphism of dg-algebras $\phi$. The $k$-linear functor $\phi^*\phi_!$ is thus equivalent to the functor $\mhyphen \otimes_{k[t_{n-1}]}k$, for a $k[t_{n-1}]$-bimodule $k$. There is but a unique such bimodule, which carries the action $t_{n-1}.1=0=1.t_{n-1}\in k$. A cofibrant replacement of the $k[t_{n-1}]$-bimodule $k$ is given the cone of the morphism of bimodules 
\[ \alpha:\widehat{k[t_{n-1}]}[n-1]\rightarrow k[t_{n-1}]\,.\]
\[ t_{n-1}^i\mapsto t_{n-1}^i\]
To see that $\alpha$ indeed exists, note that by the definition of $\widehat{k[t_{n-1}]}$ and the sign rule for the shift of left modules, see \Cref{sgnrl}, the left action of $k[t_{n-1}]$ on $\widehat{k[t_{n-1}]}[n-1]$ is determined by $t_{n-1}.1=(-1)^{(n-1)+(n-1)}t_{n-1}=t_{n-1}$.  We deduce that the twist functor $T$ is equivalent to the functor given by tensoring with the homotopy pushout in the following diagram of cofibrant $k[t_{n-1}]$-bimodules.
\[
\begin{tikzcd}
{k[t_{n-1}]} \arrow[r] \arrow[d] \arrow[rd, "\ulcorner", phantom, near end] & \on{cone}(\alpha) \arrow[d] \\
0 \arrow[r]                                                       & \widehat{k[t_{n-1}]}[n]                      
\end{tikzcd}
\]
We have shown $T\simeq \mhyphen \otimes_{k[t_{n-1}]}\widehat{k[t_{n-1}]}[n]$, finishing the proof.
\end{proof}

\subsection{\texorpdfstring{$\mathcal{V}^2_{f^*}$}{V2} and \texorpdfstring{$\mathcal{V}^3_{f^*}$}{V3}}\label{sec5.2}

For $n\geq 2$, we consider the spherical adjunction $f^*:\mathcal{D}(k)\leftrightarrow \on{Fun}(S^n,\mathcal{D}(k)):f_*$ of \Cref{locprop1}. The goal of this section is to prove \Cref{algmod1,algmod2} below, describing the parametrized perverse schober on a $2$-gon and $3$-gon obtained from the spherical adjunction $f^*\dashv f_*$.

\begin{proposition}\label{algmod1}
Let $k$ be a commutative ring and $n\geq 2$. Let $D_2$ be the freely generated dg-category with two objects $x,z$ depicted as follows, 
\[
\begin{tikzcd}
z \arrow[rr, "c", bend left] &  & x \arrow[ll, "c^*"]
\end{tikzcd}
\]
with morphisms in degrees $|c|=0$ and $|c^*|=n-1$ and vanishing differentials. Let further $D_3$ be the freely generated dg-category with three objects $x,y,z$ depicted as follows
\[
\begin{tikzcd}
                                               & y \arrow[ld, "a^*"] \arrow[rd, "b", bend left] &                                                \\
x \arrow[ru, "a", bend left] \arrow[rr, "c^*"] &                                                & z \arrow[lu, "b^*"] \arrow[ll, "c", bend left]
\end{tikzcd}
\]
with morphisms in degrees $|b|=|c|=0,~|a|=n-2,~|b^*|=|c^*|=n-1,~|a^*|=1$ and differential determined by 
\[ d(a)=d(b)=d(c)=0\,,\]
\[ d(a^*)=cb,~d(b^*)=ac,~d(c^*)=ba\,.\] 
There exist equivalences of $\infty$-categories
\begin{align} 
\mathcal{N}_{f^*} & \simeq \mathcal{D}(k[t_{n-1}]) \,,\label{model1}\\
\mathcal{V}^2_{f^*} & \simeq \mathcal{D}(D_2)\,,\label{model3}\\
\mathcal{V}^3_{f^*} &\simeq  \mathcal{D}(D_3)\,.\label{model4}
\end{align}
\end{proposition}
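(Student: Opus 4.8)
The plan is to prove the three equivalences by producing, in each case, an explicit compact generator of the relevant $\infty$-category and identifying its (derived) endomorphism algebra with the dg-algebra $\operatorname{End}^{\operatorname{dg}}$ of a suitable direct sum of objects in the free dg-categories $D_2$, $D_3$, then invoking \Cref{genlem1} (together with \Cref{dgmodlem} and \Cref{dgrem} to pass between dg-categories with finitely many objects and their endomorphism dg-algebras). The equivalence \eqref{model1} is immediate: $\mathcal{N}_{f^*}=\operatorname{Fun}(S^n,\mathcal{D}(k))$ by definition, and this is $\mathcal{D}(k[t_{n-1}])$ by \Cref{genlem}.

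For \eqref{model3}: recall $\mathcal{V}^2_{f^*}=\{\mathcal{V}^1_{f^*},\mathcal{N}_{f^*}\}=\{\mathcal{D}(k),\operatorname{Fun}(S^n,\mathcal{D}(k))\}$, the $\infty$-category of sections of the Grothendieck construction of the single functor $f^*:\mathcal{D}(k)\to\operatorname{Fun}(S^n,\mathcal{D}(k))$, by \Cref{sodlem3} and \Cref{rdklem}(2). By \Cref{sodprop5} this $\infty$-category is equivalent to $\mathcal{D}(\mathbf{A})$ for the upper triangular dg-algebra $\mathbf{A}=\left(\begin{smallmatrix} k & M \\ 0 & k[t_{n-1}]\end{smallmatrix}\right)$, where $M$ is the $k$--$k[t_{n-1}]$-bimodule representing $f^*$, i.e.\ (after the identification \eqref{loceq}) the image $f^*(k)\simeq k$ with its $k[t_{n-1}]$-module structure via $\phi:k[t_{n-1}]\xrightarrow{t_{n-1}\mapsto 0}k$, as computed in the proof of \Cref{genlem}. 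A cofibrant replacement of this bimodule is the two-term complex $k[t_{n-1}][n-1]\xrightarrow{\cdot t_{n-1}} k[t_{n-1}]$, which already exhibits $\mathbf{A}$ as quasi-isomorphic to $\operatorname{End}^{\operatorname{dg}}(x\oplus z)$ in $D_2$: the object $z$ contributes $k[t_{n-1}]$ (its endomorphism algebra, generated by $c^*c$ in degree $n-1$), the object $x$ contributes $k$, and $\operatorname{Hom}(x,z)$ is spanned by $c$ in degree $0$ with $\operatorname{Hom}(z,x)$ spanned by $c^*$ in degree $n-1$, matching the off-diagonal bimodule $M$. Hence $\mathcal{D}(\mathbf{A})\simeq\mathcal{D}(D_2)$ by \Cref{dgmodlem} and \eqref{psieq}.

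For \eqref{model4}: by \Cref{locmod} the perverse schober on the $3$-gon assigns to the central vertex the $\infty$-category $\mathcal{V}^3_{f^*}=\{\mathcal{V}^1_{f^*},\mathcal{N}_{f^*},\mathcal{N}_{f^*}\}$, the sections of the Grothendieck construction of $\mathcal{D}(k)\xrightarrow{f^*}\operatorname{Fun}(S^n,\mathcal{D}(k))\xrightarrow{\operatorname{id}}\operatorname{Fun}(S^n,\mathcal{D}(k))$. Again by \Cref{sodprop5} this is $\mathcal{D}(\mathbf{A}')$ with $\mathbf{A}'=\left(\begin{smallmatrix} k & M & 0\\ 0 & k[t_{n-1}] & k[t_{n-1}]\\ 0 & 0 & k[t_{n-1}]\end{smallmatrix}\right)$, the middle bimodule being the identity. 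The remaining task is to identify $\mathbf{A}'$, after cofibrantly replacing the bimodules, with $\operatorname{End}^{\operatorname{dg}}(x\oplus y\oplus z)$ in $D_3$ up to quasi-isomorphism; equivalently, to check that the Koszul-type resolution of the degenerate $3\times 3$ upper triangular dg-algebra is quasi-isomorphic to the path algebra of $D_3$ with the stated differential. Here I would spell out the mapping complexes of $D_3$ explicitly: $\operatorname{Hom}(x,x)=k$, $\operatorname{Hom}(y,y)\simeq\operatorname{Hom}(z,z)\simeq k[t_{n-1}]$ (the degree-$(n-1)$ generator being $c^*c$, resp.\ $b^*b$), $\operatorname{Hom}(x,y)$ generated by $a$ (degree $n-2$) with $\operatorname{Hom}(y,x)$ by $a^*$ (degree $1$, $d(a^*)=cb$), $\operatorname{Hom}(y,z)$ by $b$ (degree $0$) and $\operatorname{Hom}(z,y)$ by $b^*$ (degree $n-1$, $d(b^*)=ac$), and $\operatorname{Hom}(x,z)$ by $c$ (degree $0$) and $\operatorname{Hom}(z,x)$ by $c^*$ (degree $n-1$, $d(c^*)=ba$), and verify that the resulting dg-algebra has homology concentrated so that it computes the mapping complexes of $\mathcal{D}(\mathbf{A}')$ — in particular that the subalgebra on $y,z$ already recovers $\{\mathcal{N}_{f^*},\mathcal{N}_{f^*}\}$ and the $x$-part glues on $f^*$ correctly. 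The main obstacle I anticipate is precisely this last bookkeeping: getting all the signs and degrees in the differential of $D_3$ to match the cofibrant replacement of $\mathbf{A}'$ (equivalently, the correct signs in the relative Ginzburg-algebra-type relations $d(a^*)=cb$, $d(b^*)=ac$, $d(c^*)=ba$), since these are the data the whole gluing construction is sensitive to; the structural input (\Cref{sodprop5}, \Cref{genlem1}, \Cref{genlem}) is routine once the algebra is pinned down. I would carry it out by computing $\operatorname{End}^{\operatorname{dg}}(x\oplus y\oplus z)$ against a fibrant-cofibrant model and checking the quasi-isomorphism degreewise, possibly organizing $D_3$ as an iterated upper triangular dg-algebra built from two copies of the $D_2$-type construction so that \Cref{sodprop5} can be applied twice.
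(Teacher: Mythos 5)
There is a genuine gap in your identification step. You propose to show that, after cofibrantly replacing the off-diagonal bimodules, the upper triangular dg-algebra $\mathbf{A}$ (resp.\ $\mathbf{A}'$) is quasi-isomorphic to $\operatorname{End}^{\operatorname{dg}}_{D_2}(x\oplus z)$ (resp.\ $\operatorname{End}^{\operatorname{dg}}_{D_3}(x\oplus y\oplus z)$), i.e.\ to the path algebras of $D_2$ and $D_3$. This is false already at the level of homology. Since $D_2$ is freely generated with vanishing differentials, $\operatorname{Hom}_{D_2}(x,x)=k[cc^*]\cong k[t_{n-1}]$, not $k$ as you claim; and in $D_3$, \Cref{ginzhomlem} gives $H_*\operatorname{Hom}_{D_3}(x,x)\cong k[t_{n-1}]$ and $H_*\operatorname{Hom}_{D_3}(z,x)\cong k[t_{n-1}]$, whereas the triangular algebra has $\operatorname{End}(p_1\mathbf{A}')=k$ and $\operatorname{Hom}(p_3\mathbf{A}',p_1\mathbf{A}')=0$. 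So $\mathbf{A}'$ and the path algebra of $D_3$ are Morita equivalent but not quasi-isomorphic, and no cofibrant replacement of the bimodules alone can fix this: the projectives $p_i\mathbf{A}'$ simply do not correspond to the objects $x,y,z$ of $D_3$. The missing idea is a change of compact generators. The paper's proof keeps $y=p_2\mathbf{A}_3$ and $z=p_3\mathbf{A}_3$ but replaces $v=p_1\mathbf{A}_3[-n]$ first by $w=\operatorname{cone}(e\colon v\to y)$ and then by $x=\operatorname{cone}(-\gamma\colon w\to z)$; only for this twisted generating set does the endomorphism dg-category $\langle x,y,z\rangle$ become quasi-equivalent to $D_3$, and verifying this is exactly the content of the dg-functor $\mu\colon D_3\to C'$ and the homology computation in \Cref{ginzhomlem}. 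This iterated cone construction is not ``bookkeeping'' on top of your plan; it is the step your plan lacks. (Incidentally, you also have $c\in\operatorname{Hom}_{D_2}(z,x)$ and $c^*\in\operatorname{Hom}_{D_2}(x,z)$, reversing what you wrote, and the off-diagonal bimodule in \Cref{auxlem1} is $k[-n]$ rather than $k$, because the embeddings $\iota_i$ of \Cref{sodlem3} are related to $\mathcal{D}(v^i_!)$ by the shift $[n-i]$ in \eqref{dadiag}, which you dropped.)

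For \eqref{model3} the paper also proceeds along a different route from yours: rather than attempting a direct generator computation in $\mathcal{V}^2_{f^*}$, it deduces \eqref{model3} from \eqref{model4} by the pushout in $\mathcal{P}r^L$ that kills the object $y$, i.e.\ by localizing $D_3$ along $i_3^{(-)^n}\colon k[t_{n-1}]\to D_3$, which collapses to $D_2$. If you instead want to argue directly, you would again have to perform a cone construction on the projectives of $\mathbf{A}_2$ before comparing with $D_2$, for the same reason as above.
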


\begin{remark}
\Cref{algmod1} shows that dimension $n=2$ is distinguished. Only in dimension $n=2$ do we find the morphisms $a,b,c$ in $D_3$ to all be in degree $0$. 
\end{remark}

\begin{notation}\label{ifn}~
For $\tau=+,-$, denote by $i_1^\tau,i_2^\tau,i_3^\tau:k[t_{n-1}]\rightarrow D_3$ the dg-functors determined by mapping $\ast$ to $x,z$, respectively, $y$, and $t_{n-1}$ to $\tau(cc^*-a^*a)$, $\tau(bb^*-c^*c)$ and $\tau((-1)^{n}aa^*-b^*b)$, respectively. We further set $(-)^n=+$ if $n$ is even and $(-)^n=-$ if $n$ is odd.
\end{notation}

\begin{proposition}\label{algmod2}~
\begin{enumerate}
\item Under the equivalences \eqref{model1} and \eqref{model4}, the functors $\varsigma_2,\varsigma_3:\mathcal{N}_{f^*}\rightarrow \mathcal{V}^3_{f^*}$ are equivalent to the image under $\mathcal{D}(\mhyphen)$ of the dg-functors $i_2^{(-)^{n-1}}$ and $i_3^{(-)^{n}}$, respectively. If $n=2$, then $\varsigma_1$ is equivalent $\mathcal{D}(i_1^{+})$.
\item Suppose that $n=2$. Under the equivalences \eqref{model1} and \eqref{model3}, the functors $\varsigma_1,\varsigma_2:\mathcal{N}_{f^*}\rightarrow \mathcal{V}^2_{f^*}$ are equivalent to the image under $\mathcal{D}(\mhyphen)$ of the dg-functors $k[t_{1}]\rightarrow D_2$ determined by mapping $t_{1}$ to $cc^*$ and $c^*c$, respectively.
\end{enumerate}
\end{proposition}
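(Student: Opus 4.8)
The plan is to compute the functors $\varsigma_i$ explicitly by unwinding the description given in \Cref{rdknot2} in terms of semiorthogonal decompositions, and then to match them with the dg-functors $i_j^\tau$ via the equivalences of \Cref{algmod1}. First I would recall that under \eqref{model4}, the stable subcategories $\mathcal{N}_{f^*}\subset \mathcal{V}^3_{f^*}$ constituting the semiorthogonal decomposition $(\mathcal{V}^1_{f^*},\mathcal{N}_{f^*},\mathcal{N}_{f^*})$ are, by \Cref{sodprop4} and \Cref{sodprop5}, realized as $\mathcal{D}(A_i)_v$-type subcategories for the upper triangular dg-algebra presentation of $D_3$; concretely, the equivalence \eqref{model4} should be built (as in \Cref{sec5} computations, which we may invoke) so that the three vertices $x,z,y$ of $D_3$ correspond to the three components of the decomposition, each a copy of $\mathcal{D}(k[t_{n-1}])\simeq \mathcal{N}_{f^*}$. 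Then $\varsigma_i = (\iota_{\mathcal{N}_F})_{n-i+2}[-i+2]$ for $2\leq i\leq n$ specializes at $n=3$ to $\varsigma_2 = (\iota_{\mathcal{N}_F})_3[0]$ and $\varsigma_3 = (\iota_{\mathcal{N}_F})_2[-1]$, i.e.\ inclusions of the third and second components of the semiorthogonal decomposition, up to a shift.

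Next I would identify the compact generator of each such subcategory as an object of $\mathcal{D}(D_3)$ and compute its $k$-linear endomorphism dg-algebra, using \Cref{dgrem}: for a cofibrant replacement of the module supported at vertex $x$ (resp.\ $z$, resp.\ $y$), the endomorphism dg-algebra is generated by the element $cc^*-a^*a$ (resp.\ $bb^*-c^*c$, resp.\ $(-1)^n aa^*-b^*b$), because the differential relations $d(a^*)=cb$, $d(b^*)=ac$, $d(c^*)=ba$ force precisely these combinations to be the cycles generating a polynomial subalgebra isomorphic to $k[t_{n-1}]$. Tracking the equivalence \Cref{genlem1}/\Cref{dgmodlem} then shows that the inclusion of the $i$-th component sends the generator $t_{n-1}$ to the corresponding element up to sign; the signs $(-)^{n-1}$ and $(-)^n$ in the statement are dictated by the sign appearing in the cotwist functor computation \Cref{ctwprop} and \Cref{paralem4}, which governs how the semiorthogonal projections are twisted relative to one another. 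For the case $n=2$, $\varsigma_1$ is by \Cref{rdknot2} the functor $b\mapsto (G(b)\xrightarrow{\ast}b\xrightarrow{\on{id}}b)$, whose image is the subcategory $(\mathcal{V}^1_{f^*})^\perp$; this is again a copy of $\mathcal{N}_{f^*}$ whose generator's endomorphism algebra is generated by $cc^*-a^*a$, but since in dimension $n=2$ the morphism $a^*$ has degree $1$ and $a$ has degree $0$, a direct recomputation identifies $\varsigma_1$ with $\mathcal{D}(i_1^+)$. Statement 2 is the analogous but simpler computation with $D_2$ in place of $D_3$, where the two generators of $\mathcal{N}_{f^*}$-subcategories are the idempotent-completed modules at $z$ and $x$ with endomorphism generators $cc^*$ and $c^*c$ respectively; here there are no $a,b$ arrows so no sign subtleties arise, and the claim follows directly from \Cref{algmod1}'s proof together with \Cref{rdknot2}.

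The main obstacle I expect is the careful bookkeeping of signs: matching the sign of the generator $t_{n-1}$ under each inclusion functor requires tracking (i) the sign conventions in the shift functors on bimodules (\Cref{sgnrl}), (ii) the sign in the cotwist functor $\varphi^*[-n]$ from \Cref{ctwprop}, and (iii) the interplay of the shifts $[-i+2]$ in the formula for $\varsigma_i$ with the internal degrees of the arrows $a,a^*,b,b^*,c,c^*$. A secondary, more mechanical obstacle is verifying that the equivalence \eqref{model4} can be arranged to be compatible with the semiorthogonal decomposition $(\mathcal{V}^1_{f^*},\mathcal{N}_{f^*},\mathcal{N}_{f^*})$ in the precise sense needed — i.e.\ that the specific dg-presentation $D_3$ arises as the upper-triangular-type algebra of \Cref{tdgalg} with the stated bimodules — which should follow from the computation establishing \Cref{algmod1} but must be stated carefully enough to pin down which vertex corresponds to which component. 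Once these are in place, the identification of $\varsigma_i$ with $\mathcal{D}(i_j^\tau)$ reduces, via \Cref{dgmodlem} and \Cref{dgrem}, to checking equality of dg-functors on the single generating object, which is routine.
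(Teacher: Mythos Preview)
Your proposal is broadly correct and shares the essential starting point with the paper: both arguments begin by recognizing from \Cref{rdknot2} that $\varsigma_2$ and $\varsigma_3[1]$ are the inclusions of the third and second components of the semiorthogonal decomposition of $\mathcal{V}^3_{f^*}$, and then identify these inclusions concretely via the equivalence \eqref{model4}.

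Where the paper's proof differs from your plan is in how it extracts the signs and handles $\varsigma_1$ and statement 2. You propose to determine the signs by invoking the cotwist computation \Cref{ctwprop} and the paracyclic structure \Cref{paralem4}; the paper instead reads them off directly from the explicit chain of quasi-equivalences $D_3 \to C' \to \langle x,y,z\rangle \to C \to \mathbf{A_3}$ built in the proof of \Cref{algmod1}. Concretely, in that chain the morphism $c^*$ in $C'$ maps to $(0,(-1)^n t_z)$, so the inclusion of the third component corresponds to the dg-functor $k[t_{n-1}]\to C'$ sending $t_{n-1}\mapsto (-1)^n c^*c$; transferring along $\mu:D_3\to C'$ (which kills $b^*$) immediately gives $t_{n-1}\mapsto (-1)^{n-1}(bb^*-c^*c)$, i.e.\ $i_2^{(-)^{n-1}}$. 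This avoids the sign bookkeeping you flag as the main obstacle. For $\varsigma_1$ at $n=2$, rather than a direct recomputation the paper uses the adjunction sequence $\varsigma_3\dashv\varrho_2\dashv\varsigma_2\dashv\varrho_1\dashv\varsigma_1$ together with the rotational $\mathbb{Z}/3$-symmetry of $D_3$ (which only exists for $n=2$) to deduce the model for $\varsigma_1$ from those for $\varsigma_2,\varsigma_3$. For statement 2, rather than an independent computation in $D_2$, the paper observes that the equivalence \eqref{model3} was itself constructed via the projection $\pi_{1,3}:\mathcal{V}^3_{f^*}\to\mathcal{V}^2_{f^*}$, modeled by the dg-functor $D_3\to D_2$ killing $y$ and $a,b,a^*,b^*$; composing this with the already-established models for $\varsigma_1,\varsigma_2:\mathcal{N}_{f^*}\to\mathcal{V}^3_{f^*}$ gives the result at once. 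Your route would work, but the paper's is shorter precisely because it reuses the explicit constructions from the proof of \Cref{algmod1} rather than recomputing from scratch.
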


\begin{lemma}\label{auxlem1}
Let $n\geq 2$.
\begin{enumerate}
\item There exists an equivalence of $\infty$-categories 
\begin{equation}\label{ke1} 
\mathcal{V}^2_{f^*}\simeq \mathcal{D}(\bf A_2)\,,
\end{equation} 
where 
\[ \bf{A_2}=\begin{pmatrix} k & k[-n] \\ 0 & k[t_{n-1}]\end{pmatrix}\] 
is the upper triangular dg-algebra. 
\item There exists an equivalence of $\infty$-categories 
\begin{equation}\label{ke2}
\mathcal{V}^3_{f^*}\simeq \mathcal{D}(\bf A_3)\,,
\end{equation}
where 
\[ \bf{A_3}=\begin{pmatrix} k & k[-n] & 0 \\ 0 & k[t_{n-1}] & k[t_{n-1}] \\ 0 & 0 & k[t_{n-1}]\end{pmatrix}\]
is the upper triangular dg-algebra.
\end{enumerate}
\end{lemma}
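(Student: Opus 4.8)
The plan is to read off both sides as $\infty$-categories of sections of Grothendieck constructions and to match the underlying diagrams, using \Cref{genlem} to identify $\mathcal{N}_{f^*}$ and \Cref{sodprop5} to identify $\mathcal{D}(\mathbf{A_2})$ and $\mathcal{D}(\mathbf{A_3})$. By \Cref{rdklem} and \Cref{rdknot}, writing $F=f^*$, we have $\mathcal{V}^2_{f^*}\simeq\{\mathcal{D}(k),\mathcal{N}_{f^*}\}$ associated to the functor $f^*\colon\mathcal{D}(k)\to\mathcal{N}_{f^*}$, and $\mathcal{V}^3_{f^*}\simeq\{\mathcal{D}(k),\mathcal{N}_{f^*},\mathcal{N}_{f^*}\}$ associated to the sequence $\mathcal{D}(k)\xrightarrow{f^*}\mathcal{N}_{f^*}\xrightarrow{\on{id}}\mathcal{N}_{f^*}$, where $\{-\}$ is as in \Cref{sodlem3}. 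On the other hand, \Cref{sodprop5} applied to $\mathbf{A_2}$ and $\mathbf{A_3}$ gives $\mathcal{D}(\mathbf{A_2})\simeq\{\mathcal{D}(k),\mathcal{D}(k[t_{n-1}])\}$ for the left gluing functor $\mhyphen\otimes_k k[-n]$, and $\mathcal{D}(\mathbf{A_3})\simeq\{\mathcal{D}(k),\mathcal{D}(k[t_{n-1}]),\mathcal{D}(k[t_{n-1}])\}$ for $\mathcal{D}(k)\xrightarrow{\mhyphen\otimes_k k[-n]}\mathcal{D}(k[t_{n-1}])\xrightarrow{\mhyphen\otimes_{k[t_{n-1}]}k[t_{n-1}]}\mathcal{D}(k[t_{n-1}])$. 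Since $\{-\}$ sends equivalences of diagrams to equivalences of $\infty$-categories --- the Grothendieck construction and its $\infty$-category of sections over a fixed base are functorial in such equivalences --- it is enough to identify the two pairs of diagrams.

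For this I would use the $k$-linear equivalence $\mathcal{N}_{f^*}=\on{Fun}(S^n,\mathcal{D}(k))\xrightarrow{\ \sim\ }\mathcal{D}(k[t_{n-1}])$ obtained by composing the equivalence of \Cref{genlem} with the shift autoequivalence $[-n]$. Under the equivalence of \Cref{genlem} alone, $f^*$ corresponds to the pullback functor $\phi^*$ along $\phi\colon k[t_{n-1}]\to k$, $t_{n-1}\mapsto 0$; thus under the shifted equivalence $f^*$ corresponds to $[-n]\circ\phi^*$. Now $\phi^*$ sends a complex $M$ to $M$ with the zero action of $t_{n-1}$, so $[-n]\circ\phi^*$ sends $M$ to $M[-n]$ with zero $t_{n-1}$-action; since $n\geq 2$ the right $k[t_{n-1}]$-action on the chain complex $k[-n]$ is forced to vanish, and $M[-n]$ with the zero action is exactly $M\otimes_k k[-n]$. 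Hence under the shifted equivalence the first leg $f^*$ is carried to $\mhyphen\otimes_k k[-n]$, and the identity of $\mathcal{N}_{f^*}$ (the second leg in the $\mathcal{V}^3$ case) is carried to the identity of $\mathcal{D}(k[t_{n-1}])$, which agrees with $\mhyphen\otimes_{k[t_{n-1}]}k[t_{n-1}]$ for the diagonal bimodule. This yields equivalences of the relevant diagrams, and passing to sections gives \eqref{ke1} and \eqref{ke2}.

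The only delicate point is the degree bookkeeping: one has to verify that it is the shifted bimodule $k[-n]$ occurring in $\mathbf{A_2}$ and $\mathbf{A_3}$ --- and not the degree-$0$ bimodule $k$ with vanishing $t_{n-1}$-action --- that matches $f^*$, which is why the equivalence of \Cref{genlem} must be post-composed with $[-n]$; and one should check that this post-composition is compatible with the $k$-linear structure and with the identity leg in the $\mathcal{V}^3$-case. Everything else is immediate from \Cref{genlem} and \Cref{sodprop5}.
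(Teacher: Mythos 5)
Your proof is correct and follows the same overall route as the paper's (very terse) proof: identify $\mathcal{V}^2_{f^*}$, $\mathcal{V}^3_{f^*}$ as $\{-\}$-constructions via \Cref{rdklem}/\Cref{rdknot}, identify $\mathcal{D}(\mathbf{A_2})$, $\mathcal{D}(\mathbf{A_3})$ as $\{-\}$-constructions via \Cref{sodprop5}, and then match the defining diagrams using the equivalence of \Cref{genlem}. The only point where you diverge from the paper's cited ingredients is in how the degree shift is produced: the paper appeals to \Cref{sphrem} (which records $f_*i_!(k)\simeq k[-n]$), whereas you post-compose the equivalence of \Cref{genlem} with $[-n]$ and then check directly that $[-n]\circ\phi^*\simeq \mhyphen\otimes_k k[-n]$. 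Both encode the same $[-n]$ twist, and your degree bookkeeping is carried out correctly (including the observation that for $n\geq 2$ the right $t_{n-1}$-action on $k[-n]$ is forced to vanish, and that the identity leg in the $\mathcal{V}^3$-diagram is preserved under the shifted identification). So this is a legitimate, if slightly more hands-on, version of the paper's argument.
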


\begin{proof}
This follows from \Cref{sodprop5} and the characterization of $f^*$ in \Cref{genlem} as well as \Cref{sphrem}.
\end{proof}

\begin{proof}[Proof of \Cref{algmod1}]
We have constructed the equivalence \eqref{model1} in \Cref{genlem}.

Before we continue with the construction of the equivalences \eqref{model3},\eqref{model4} we collect the following sign rules used implicitly in the following. Let $C$ be any dg-category, $x,y\in \on{dgMod}(C)$ two dg-modules and consider a morphism $\alpha:x\rightarrow y$. The cone $\on{cone}(\alpha)=x[1]\oplus y$ has the differential $d_n(x,y)=(-d(x),d(y)-\alpha(x))$. The morphism complex $\on{Hom}(x,y)$ has differential $d(f)=d_y\circ f - (-1)^{\on{deg}(f)}f\circ d_x$.

We proceed with the construction of the equivalence \eqref{model4}.

Consider the compact generators 
\[ v=p_1{\bf A_3}[-n],~y=p_2{\bf A_3},~z=p_3{\bf A_3}\]
of $\mathcal{D}(\bf A_3)$. Let $w=\on{cone}(e)$ with $e$ being given by $1\in k[-n]\simeq \on{Hom}_{\on{dgMod}(\bf A_3)}(v,y)$. Note that $w,y,z$ are also compact generators of $\mathcal{D}({\bf A_3})$. A direct inspection shows that the full dg-subcategory $\langle w,y,z\rangle\subset\on{dgMod}(\bf A_3)$ spanned by $w,y,z$ is quasi-equivalent to the dg-category $C$ with objects $w,y,z$ and morphisms generated by the morphisms depicted in the following  
\[
\begin{tikzcd}
                                                     & y \arrow[ld, "\epsilon"', bend left] &                                                      \\
w \arrow[ru, "\eta", bend left] \arrow[rr, "\gamma"] &                                      & z \arrow["t_z"', loop, distance=2em, in=35, out=325]
\end{tikzcd}
\]
in the degrees $|\epsilon|=|\gamma|=0,\,|t_z|=|\eta|=n-1,\,|\delta|=n$ and with vanishing differentials, subject to the relation $t_z\circ \gamma=\gamma\circ\epsilon\circ\eta$. We list the images of the generating morphisms in $C$ under the quasi-equivalence $C\rightarrow \langle w,y,z\rangle$.
\begin{itemize}
\item The morphism $\epsilon$ maps to $(\on{id}_y,0)\in \on{Hom}(y,y)\oplus \on{Hom}(v[1],y)\simeq \on{Hom}(w,y)$ (the splitting here and in the following neglects the differentials).
\item The morphism $\eta$ maps to $t_{n-1}\in k[t_{n-1}]\simeq \on{Hom}(y,y)\subset \on{Hom}(w,y)$.
\item The morphism $\gamma$ maps to $1\in k[t_{n-1}]\simeq \on{Hom}(y,z)\subset \on{Hom}(w,z)$.
\item The morphism $t_z$ maps to $t_{n-1}\in k[t_{n-1}]\simeq \on{Hom}(z,z)$.
\end{itemize}

We denote by $x$ the cone of $-\gamma:w\rightarrow z$ in $\on{dgMod}(C)$ and by $\langle x,y,z\rangle$ the full dg-subcategory of $\on{dgMod}(C)$ spanned by $x,y,z$. There exists a quasi-equivalence of dg-categories $C'\rightarrow \langle x,y,z\rangle$, where $C'$ is the dg-category with objects $x,y,z$ and generating morphisms given by
\[
\begin{tikzcd}
                                               & y \arrow[rd, "b", bend left] \arrow[ld, "a^*"] &                              \\
x \arrow[ru, "a", bend left] \arrow[rr, "c^*"] &                                     & z \arrow[ll, "c", bend left]
\end{tikzcd}
\]
in the degrees $|a|=n-2$, $|b|=|c|=0$, $|a^*|=1$ and $|c^*|=n-1$ subject to the relation $ac=0$. The differentials are determined on the generators by $d(a^*)=cb$ and $d(c^*)=ba$.
We collect the images under the dg-functor $C'\rightarrow \langle x,y,z\rangle$ of the generating morphisms in $C'$. 
\begin{itemize}
\item The morphism $b$ maps to $\gamma\circ \epsilon$.
\item The morphism $a$ maps to $(\eta,0)\in \on{Hom}_{C}(w[1],y)\oplus \on{Hom}_{C}(z,y)\simeq \on{Hom}_{\langle x,y,z\rangle}(x,y)$ (the splitting again neglects the differentials).
\item The morphism $c$ maps to $(0,\on{id}_z)\in \on{Hom}_{C}(z,w[1])\oplus \on{Hom}_C(z,z)\simeq \on{Hom}_{\langle x,y,z\rangle}(z,x)$.
\item The morphisms $a^*$ maps to $(\epsilon,0)\in \on{Hom}_{C}(y,w[1])\oplus \on{Hom}_C(y,z)\simeq \on{Hom}_{\langle x,y,z\rangle}(y,x)$.
\item the morphisms $c^*$ maps to $(0,(-1)^{n}t_z)\in \on{Hom}_C(w[1],z)\oplus \on{Hom}_{C}(z,z)\simeq \on{Hom}_{\langle x,y,z\rangle}(x,z)$.
\end{itemize}

A direct inspection reveals the homology of the mapping complexes in $C'$.
\[ H_\ast\on{Hom}_{C'}(x,x)\simeq H_\ast\on{Hom}_{C'}(y,y)\simeq H_\ast\on{Hom}_{C'}(z,z)\simeq k[t_{n-1}]\]
\[ H_\ast\on{Hom}_{C'}(x,y)\simeq k[t_{n-1}][n-1]\]
\[ H_\ast\on{Hom}_{C'}(y,z)\simeq H_\ast\on{Hom}_{C'}(z,x)\simeq k[t_{n-1}]\]
\[ H_\ast\on{Hom}_{C'}(x,z)\simeq H_\ast\on{Hom}_{C'}(y,x)\simeq H_\ast\on{Hom}_{C'}(z,y)\simeq 0\]
We define a dg-functor $\mu:D_3\rightarrow C'$ by mapping $x,y,z$ to $x,y,z$, $a,b,c$ to $a,b,c$ and $a^*,b^*,c^*$ to $a^*,0,c^*$. \Cref{ginzhomlem} implies that $\mu$ is a quasi-equivalence. In total we obtain equivalences of $\infty$-categories 
\[ \mathcal{D}(D_3)\simeq \mathcal{D}(C')\simeq \mathcal{D}(\langle x,y,z\rangle) \simeq \mathcal{D}(C)\simeq \mathcal{D}({\bf A_3})\overset{\eqref{ke2}}{\simeq} \mathcal{V}^3_{f^*}\,, \]
which yields the desired equivalence \eqref{model4}.

For the construction of the equivalence \eqref{model3}, we consider the pushout diagram in $\mathcal{P}r^L$
\[
\begin{tikzcd}
\mathcal{N}_{f^*} \arrow[d] \arrow[rd, "\ulcorner", phantom, near end] \arrow[r, "\varsigma_3"] & \mathcal{V}^3_{f^*} \arrow[d, "{\pi_{1,3}}"] \\
0 \arrow[r]                                                                                     & \mathcal{V}^2_{f^*}                         
\end{tikzcd}
\]
where $\pi_{1,3}$ denotes the projection to the first and third component of the semiorthogonal decomposition. The upper part of the diagram is equivalent to the image under $\mathcal{D}(\mhyphen)$ of the following diagram of dg-categories,
\[
\begin{tikzcd}
{k[t_1]} \arrow[d] \arrow[r, "i_3^{(-)^n}"] & D_3 \\
0                                           &    
\end{tikzcd}
\]
whose homotopy colimit is easily seen to be quasi-equivalent to $D_2$. This observation provides us with the equivalence of $\infty$-categories $\mathcal{D}(D_2)\simeq \mathcal{V}^2_{f^*}$.
\end{proof}

\begin{lemma}\label{ginzhomlem}
There exist isomorphisms of graded $k$-modules
\[ H_\ast\on{Hom}_{D_3}(x,x)\simeq H_\ast\on{Hom}_{D_3}(y,y)\simeq H_\ast\on{Hom}_{D_3}(z,z)\simeq k[t_{n-1}]\]
\[ H_\ast\on{Hom}_{D_3}(x,y)[1-n]\simeq H_\ast\on{Hom}_{D_3}(y,z)\simeq H_\ast\on{Hom}_{D_3}(z,x)\simeq k[t_{n-1}]\]
and 
\[ H_\ast\on{Hom}_{D_3}(x,z)\simeq H_\ast\on{Hom}_{D_3}(y,x)\simeq H_\ast\on{Hom}_{D_3}(z,y)\simeq 0\,.\]
\end{lemma}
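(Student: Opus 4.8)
The plan is to compute the homology of the path complexes $\on{Hom}_{D_3}(p,q)$ directly, using the combinatorics of paths in the graded quiver underlying $D_3$, and to reduce the amount of work by means of the cyclic symmetry of $D_3$. The graded quiver of $D_3$, together with its differential, admits an order-three automorphism $\psi$ that cyclically permutes the objects by $x\mapsto y\mapsto z\mapsto x$, the arrows $a,b,c$ by $a\mapsto b\mapsto c\mapsto a$, and the arrows $a^*,b^*,c^*$ by $a^*\mapsto b^*\mapsto c^*\mapsto a^*$; one checks at once that this is compatible with the relations $d(a^*)=cb$, $d(b^*)=ac$, $d(c^*)=ba$. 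The automorphism $\psi$ does not preserve the homological grading, since $|a|=n-2$ while $|b|=|c|=0$ and $|a^*|=1$ while $|b^*|=|c^*|=n-1$, but it carries $\on{Hom}_{D_3}(p,q)$ isomorphically, up to a regrading, onto $\on{Hom}_{D_3}(\psi p,\psi q)$. It therefore suffices to compute one representative of each of the three orbits $\{(x,x),(y,y),(z,z)\}$, $\{(x,y),(y,z),(z,x)\}$, $\{(x,z),(y,x),(z,y)\}$; the regrading on the middle orbit is what accounts for the shift $[1-n]$ in the statement.

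For each chosen representative I would build $D_3$ up from its three-cycle subcategory. Write $Q$ for the subquiver on $x,y,z$ with arrows $a,b,c$; then $kQ\subseteq D_3$ carries the zero differential and has the well-understood mapping complexes of the cyclic path algebra, for instance $\on{End}_{kQ}(x)=k[u_x]$ with $u_x=cba$ of degree $n-2$, and each $\on{Hom}_{kQ}(p,q)$ free of rank one over the appropriate polynomial endomorphism algebra. One now recovers $D_3$ by freely adjoining, in turn, the generators $c^*$ (with $dc^*=ba$), $a^*$ (with $da^*=cb$) and $b^*$ (with $db^*=ac$); up to weak equivalence each step is the operation of coning off a morphism of the previously constructed dg-category in $\on{dgCat}_k$, and the mapping complexes of the result carry a filtration by the number of occurrences of the newly adjoined generator, whose associated spectral sequence controls the change in homology. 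The facts to extract are: (i) after all three stars have been adjoined every positive power $u_p^{\,k}$ has become a boundary, so the polynomial tower inside $\on{End}_{kQ}(p)$ collapses; (ii) a single new cycle survives in $\on{End}_{D_3}(p)$ in degree $n-1$, represented at $p=x$ by $cc^*-a^*a$ (compare \Cref{ifn}) and at $y,z$ by its $\psi$-images, and its powers span $H_\ast\on{End}_{D_3}(p)$, which is thereby identified with $k[t_{n-1}]$; (iii) the representatives of exactly one of the two off-diagonal orbits survive, with homology $k[t_{n-1}]$ up to the shift of the previous paragraph, while those of the other orbit are acyclic.

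Combining (i)--(iii) and transporting along $\psi$ yields the nine stated isomorphisms. The main obstacle is the bookkeeping in the inductive step: the mapping complexes of the intermediate dg-categories $kQ\langle c^*\rangle$ and $kQ\langle c^*,a^*\rangle$ are already infinite-dimensional, with bases of arbitrarily long paths, and one must produce the contracting data explicitly --- equivalently, show that the relevant spectral sequences degenerate --- in order to rule out spurious surviving classes. A more elementary alternative that sidesteps the intermediate dg-categories is to exhibit explicit cycles generating each homology module and explicit primitives bounding a complementary family of paths, and then to verify exhaustiveness by a direct filtration argument on path length; either way the computation is elementary, but the combinatorics needs to be handled carefully.
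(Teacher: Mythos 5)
Your reduction via the cyclic map $\psi$ is an attractive idea, and your implicit observation that the degree change under $\psi$ depends only on the source and target of a path (not on the path itself) is correct: it holds because the functional $w \mapsto w_c + w_{a^*} - w_a - w_{c^*}$ on occurrence vectors vanishes on every cycle of the underlying quiver, hence is determined by $q-p$. However, $\psi$ is \emph{not} a chain map when $n$ is odd, and this breaks the argument. Checking compatibility with $d$ on generators is not enough; the Koszul sign in the Leibniz rule $d(fg) = (df)g + (-1)^{|f|}f(dg)$ sees the degree of $f$, and since $|\psi f| - |f|$ has mixed parity among the generators when $n$ is odd ($0$ for $b,b^*$ but $\pm(n-2)$ for $a,c,a^*,c^*$), the signs do not match on composites. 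Concretely, $\psi(cc^* - a^*a) = aa^* - b^*b$, and for $n$ odd one computes $d(aa^*) = -acb$, $d(b^*b) = acb$, so $d(aa^* - b^*b) = -2acb \ne 0$: the $\psi$-image of your degree-$(n-1)$ generator in $\operatorname{End}(x)$ is not even a cycle in $\operatorname{End}(y)$. The correct generator there is $b^*b - (-1)^{n-2}aa^*$, which is precisely the sign modification the paper records in item iv) of its induction. The discrepancy cannot be repaired by rescaling the generators by $\pm 1$, since it is a parity obstruction rather than a sign-convention mismatch; you would need to work with the shift-corrected version (objectwise shifts in $\operatorname{dgMod}(D_3)$ with the attendant Koszul signs) or, more simply, carry the sign corrections along with the cyclic permutation as the paper does.

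Beyond that, the core computation is left as a plan rather than a proof. Your facts (i)--(iii) are the assertions to be established: that $cba = d(a^*a)$ and hence all powers of the $3$-cycle bound; that $cc^*-a^*a$ generates $H_*\operatorname{End}(x)\simeq k[t_{n-1}]$; and that exactly one off-diagonal orbit survives. The spectral-sequence-of-attachments route and the filtration-by-path-length alternative are both sketched without the contracting data, degeneration arguments, or exhaustiveness checks you yourself flag as the main obstacle. The paper's actual proof is a fully executed version of your ``more elementary alternative'': an explicit simultaneous induction on path length showing that cycles $x\to x$ are homologous to $\lambda(a^*a-cc^*)^i$, cycles $x\to y$ to $a\lambda(a^*a-cc^*)^i$, and cycles $z\to y$ are null-homologous, together with their sign-corrected cyclic rotations. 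To complete your proposal you would need to (a) repair the symmetry reduction with the correct signs, and (b) write out the path-length induction in full, supplying the primitives and ruling out spurious classes at each step.
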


\begin{proof}
The morphisms $a,b,c$ and $a^*,b^*,c^*$ freely generate $D_3$. Any morphism in $D_3$ is thus given by a $k$-linear sum of composites of these generating morphisms. Given a morphism in $D_3$, we call the maximal number of generating morphisms appearing in any one of its summands the length of the morphism. We show by induction over the length of a morphism $u$ in $D_3$ the following statements: 
\begin{enumerate}[i)]
\item Any cycle $u:x\rightarrow x$ (in the morphism complex of $D_3$) is homologous to $\lambda(a^*a-cc^*)^i$ with $\lambda\in k$ and $i\geq 0$.
\item Any cycle $u:x\rightarrow y$ is homologous to $a\circ \lambda (a^*a-cc^*)^i$.
\item Any cycle $u:z\rightarrow y$ is null homologous.
\item Further, the three statements above also hold for $x,y,z$, $a,b,c$ and $a^*,b^*,c^*$ cyclically permuted (and replacing $b^*b-aa^*$ by $b^*b-(-1)^{n-2}aa^*$ because of the grading).
\end{enumerate}

The base case is clear. For the induction step, we note that iv) can be shown in the same way as i),ii) and iii) are shown in the following. We begin with i). Consider a cycle $u:x\rightarrow x$ with summands up to a given length. Using that $D_3$ is freely generated by $a,b,c,a^*,b^*,c^*$, we can decompose $u$ into $u=a^*u_1+cu_2$ for two chains $u_1,u_2$. The condition $d(u)=0$ implies that $d(u_1)=0$. By the induction assumption, we find a chain $v$ satisfying $u_1=d(v)+a\lambda(a^*a-cc^*)^i$. It follows that $u$ is homologous to the cycle 
\[ u'=u+d(a^*v)=a^*a\lambda(a^*a-cc^*)^i+c(u_2+bv)\,.\]
From $d(u')=0$ it follows that $d(u_2+bv)=-ba\lambda(a^*a-cc^*)^i$. Note that $\lambda(a^*a-cc^*)^i$ is not a boundary unless $\lambda=0$, in which case $u$ is nullhomologous. We thus assume that $\lambda\neq 0$ and find that there exists a chain $v'$ with  
\[ u_2+bv= -c^*\lambda(a^*a-cc^*)^i+v'\,.\]
By the induction assumption it follows that $v'$ is a boundary, so that we find that $u$ is homologous to $\lambda (aa^*-cc^*)^{i+1}$, completing the induction step for i).

We continue with the induction step for ii). Consider a cycle $u:x\rightarrow y$. Write $u=a\circ u_1+b^*\circ u_2$ for some morphisms $u_1$ and $u_2$ with $d(u_2)=0$. By the induction assumption we find a chain $v$ with $u_2=d(v)$. We hence find 
\[ u+(-1)^{n}d(b^*v)=a\circ u_1+(-1)^{n}acv\]
with $d(u_1+(-1)^ncv)=0$. By the induction assumption, we find $u_1+(-1)^ncv=d(v')+\lambda(a^*a-cc^*)^i$, so that 
\[ u+(-1)^{n}d(b^*v)-(-1)^nd(av')=\lambda a\circ (a^*a-cc^*)^i\,,\]
finishing the induction step for ii).

For iii), we consider a cycle $u:z\rightarrow y$. We decompose $u$
into $u=a\circ u_1+ b^*\circ u_2$ with $d(u_2)=0$. Using the induction assumption we thus find $v$ with $u_2=\lambda (bb^*-c^*c)^i+d(v)$. It follows that $u$ is homologous to 
\[a\circ (u_1+c\circ v)+\lambda b^*(bb^*-c^*c)^i\,.\] 
The condition $d(u)=0$ implies that $(-1)^{n-1}d(u_1+c\circ v)=\lambda c(bb^*-c^*c)^l$. Since $c(bb^*-c^*c)^l$ is a nonzero homology class unless $\lambda=0$, we find that $\lambda=0$ and $d(u_1+c\circ v)=0$. Applying the induction assumption once more, we find that $u$ is homologous to $ac\circ \lambda'(bb^*-c^*c)^j$, which is the boundary of $b^*\lambda'(bb^*-c^*c)^j$.  We conclude that $u$ is nullhomologous, completing the induction step for iii) and the proof. 
\end{proof}

\begin{proof}[Proof of \Cref{algmod2}]
We begin with the proof of statement 1. The functors $\varsigma_2,\varsigma_3[1]:\mathcal{N}_{f^*}\rightarrow \mathcal{V}^3_{f^*}$ are the inclusions of the third, respectively, second component of the semiorthogonal decomposition of $\mathcal{V}^3_{f^*}$. Using \Cref{sodprop5} and the notation from the proof of \Cref{algmod1}, we obtain that the functor $\varsigma_2$ is modeled by the dg-functor $\sigma_2:k[t_{n-1}]\rightarrow C'$ determined by mapping the unique object $\ast$ of $k[t_{n-1}]$ to $z$ and $t_{n-1}$ to $(-1)^nc^*c$ (the sign arises from the sign of the image of $c^*$ under the dg-functor $C'\rightarrow \langle x,y,z\rangle$). Let further $\sigma_2':k[t_{n-1}]\rightarrow D_3$ be the dg-functor determined by mapping $\ast$ to $z$ and $t_{n-1}$ to $(-1)^{n}(c^*c-bb^*)=(-1)^{n-1}(bb^*-c^*c)$. The commutative diagram of dg-categories
\[ 
\begin{tikzcd}
                        & {k[t_{n-1}]} \arrow[ld, "\sigma_2"'] \arrow[rd, "\sigma_2'"] &                \\
{C'} &                                                            & D_3 \arrow[ll]
\end{tikzcd}
\]
shows that $\varsigma_2$ is also modeled by $\sigma_2'$. An analogous argument shows that $\varsigma_3$ is modeled by the dg-functor $\sigma_3:k[t_{n-1}]\rightarrow D_3$ determined by mapping $\ast$ to $y'$ and $t_{n-1}$ to $aa^*-(-1)^{n-2}b^*b=(-1)^n((-1)^{n}aa^*-b^*b)$. In the case that $n=2$, we can use the sequence of adjunctions 
\[ \varsigma_3\dashv \varrho_2\dashv \varsigma_2\dashv \varrho_1\dashv \varsigma_1 \]
and the rotational symmetry of $D_3$ (which only exists for $n=2$),  to deduce that $\varsigma_1$ is modeled by the dg-functor $k[t_{1}]\rightarrow D_3$ determined by mapping $\ast$ to $x'$ and $t_{1}$ to $(cc^*-a^*a)$.

We now show statement 2. To clarify notation, we denote the right adjoints of the categorified restrictions maps by $\varrho_1,\varrho_2:\mathcal{N}_{f^*}\rightarrow \mathcal{V}^2_{f^*}$ by $\tilde{\varsigma}_1$ and $\tilde{\varsigma}_2$, respectively, instead of $\varsigma_1$ and $\varsigma_2$. We note that there are commutative diagrams in $\mathcal{P}r^L$ for $i=1,2$ as follows
\[
\begin{tikzcd}
\mathcal{N}_{f^*} \arrow[d] \arrow[rd, "\ulcorner", phantom, near end] \arrow[r, "\varsigma_3"] & \mathcal{V}^3_{f^*} \arrow[d, "{\pi_{1,3}}"] & \mathcal{N}_{f^*} \arrow[l, "\varsigma_i"'] \arrow[ld, "\tilde{\varsigma}_i"] \\
0 \arrow[r]                                                                           & \mathcal{V}^2_{f^*}                          &                                                                              
\end{tikzcd}
\]
where $\pi_{1,3}$ denotes the projection to the first and third component of the semiorthogonal decomposition. The functor $\pi_{1,3}$ is modeled by the dg-functor $D_3\rightarrow D_2$ given by mapping $x,z$ to $x,z$ and $c,c^*$ to $c,c^*$ and all other morphisms to zero. The desired models for $\tilde{\varsigma}_1$ and $\tilde{\varsigma}_2$ thus follow from the models for $\varsigma_1$ and $\varsigma_2$. 
\end{proof}

\section{Gluing Ginzburg algebras via perverse schobers}\label{sec6}

\subsection{The main result}\label{sec6.1}

We fix a commutative ring $k$. Given an ideal triangulation $\mathcal{T}$ of an oriented marked surface, we defined the relative Ginzburg algebra $\mathscr{G}_\mathcal{T}$ in \Cref{sec1.1}. Consider also the dual ribbon graph $\Gamma_\mathcal{T}$ of $\mathcal{T}$ of \Cref{dualdef}. In this section we construct a $\Gamma_\mathcal{T}$-parametrized perverse schober $\mathcal{F}_\mathcal{T}$ with the property that the spherical adjunction at each vertex is given by $f^*:\mathcal{D}(k)\leftrightarrow \on{Fun}(S^{2},\mathcal{D}(k)):f_*$ and whose $\infty$-category $\mathcal{H}(\Gamma_\mathcal{T},\mathcal{F}_\mathcal{T})$ of global sections is equivalent to the derived $\infty$-category of the relative Ginzburg algebra $\mathscr{G}_\mathcal{T}$. As we show in the proof of \Cref{thm2}, we can achieve this by making for each vertex of $\Gamma_\mathcal{T}$ a choice of total order of the halfedges at the vertex compatible with given the cyclic order and then defining $\mathcal{F}_\mathcal{T}$ via a gluing of the local model of a parametrized perverse schober from \Cref{locmod}. To match the signs in the differential of the Ginzburg algebra, some care is required in the choices of the gluing diagrams. This construction depends on the choices of total orders. However, we discuss in \Cref{spinsec} an interpretation of all involved choices in terms of spin structures on the surface without the interior marked points $\Sigma={\bf S}\backslash (M\cap {\bf S}^\circ)$ and why the resulting parametrized perverse schober is up to equivalence independent of the choices made.

\begin{theorem}\label{thm2}
Let $\mathcal{T}$ be an ideal triangulation of an oriented marked surface. There exists a $\Gamma_\mathcal{T}$-parametrized perverse schober $\mathcal{F}_\mathcal{T}$ and an equivalence of $\infty$-categories 
\[ \mathcal{H}(\Gamma_\mathcal{T},\mathcal{F}_\mathcal{T})\simeq \mathcal{D}(\mathscr{G}_\mathcal{T})\,.\]
\end{theorem}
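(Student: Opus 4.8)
The plan is to construct $\mathcal{F}_\mathcal{T}$ by gluing the local models $\mathcal{G}_3(f^*)$ over the dual ribbon graph $\Gamma_\mathcal{T}$, using \Cref{sgluelem}, and then to identify the $\infty$-category of global sections with $\mathcal{D}(\mathscr{G}_\mathcal{T})$ by an explicit comparison of colimit presentations. First I would reduce to the case where $\mathcal{T}$ has no self-folded triangles (the self-folded case requires a loop in $\Gamma_\mathcal{T}$ and can be treated by the same machinery applied to $\mathcal{V}^1_{f^*}$ with a loop; alternatively one may pass to a covering). In the generic case, every vertex of $\Gamma_\mathcal{T}$ is trivalent, so by \Cref{Tglrem} the ribbon graph $\Gamma_\mathcal{T}$ is a gluing of copies of the tripod $\Gamma_i$ along external edges. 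For each ideal triangle we place the local perverse schober $\mathcal{G}_3(f^*)$ of \Cref{locmod} with spherical adjunction $f^*:\mathcal{D}(k)\leftrightarrow \on{Fun}(S^2,\mathcal{D}(k)):f_*$, after choosing a total order of the three halfedges compatible with the cyclic order; \Cref{sgluelem} then produces a well-defined $\Gamma_\mathcal{T}$-parametrized perverse schober $\mathcal{F}_\mathcal{T}$, and the gluing data along each internal edge is specified by a choice of identification $\mathcal{N}_{f^*}\simeq \mathcal{N}_{f^*}$, i.e. by an autoequivalence of $\mathcal{D}(k[t_1])$; following \Cref{ctwprop} and \Cref{signrem} the relevant choices are signs, matching the assertion in the introduction that the total choice is a spin structure on $\Sigma$.

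Next I would compute $\mathcal{H}(\Gamma_\mathcal{T},\mathcal{F}_\mathcal{T})=\lim \mathcal{F}_\mathcal{T}$. By \Cref{limcolim} (all functors in the image of $\mathcal{F}_\mathcal{T}$ are parts of spherical adjunctions between presentable $\infty$-categories, after $\on{Ind}$-completion), the limit is equivalently the colimit in $\mathcal{P}r^L$ of the dual diagram $\mathbb{D}^R\mathcal{F}_\mathcal{T}$ on $\on{Entry}(\Gamma_\mathcal{T})$. Since $\on{Exit}(\Gamma_\mathcal{T})$ is the gluing of the $\on{Exit}(\Gamma_i)$ along the sets of external edges (\Cref{gluerem}), this colimit decomposes as an iterated pushout of the local pieces along the categories $\mathcal{D}(k[t_1])\simeq \on{Fun}(S^2,\mathcal{D}(k))$ assigned to internal edges. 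Using \Cref{algmod1}, the local vertex contribution $\mathcal{V}^3_{f^*}$ is $\mathcal{D}(D_3)$ with $D_3$ the relative Ginzburg dg-category of the triangle (which is exactly the quiver \eqref{quiv1} at $n=2$), the edge contribution is $\mathcal{D}(k[t_1])$, and by \Cref{algmod2} the structure functors $\varsigma_i$ gluing edges into vertices are modeled by the explicit dg-functors $i_j^{\pm}:k[t_1]\to D_3$ sending the generator to the appropriate signed expression $cc^*-a^*a$ etc. Transporting this colimit into $\on{dgCat}_k[W^{-1}]$ via the colimit-preserving functor $\mathcal{D}(-)$ of \eqref{psieq}, I would exhibit $\mathcal{H}(\Gamma_\mathcal{T},\mathcal{F}_\mathcal{T})$ as $\mathcal{D}$ of a homotopy colimit of dg-categories: copies of $D_3$ (one per triangle) glued along copies of $k[t_1]$ (one per internal edge) via the maps $i_j^\pm$.

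The remaining step, and the main obstacle, is to identify this homotopy colimit of dg-categories with (a dg-category Morita-equivalent to) $\mathscr{G}_\mathcal{T}$, \emph{on the nose with the correct differential}, including all signs. Concretely, $\mathscr{G}_\mathcal{T}=k\tilde{Q}_\mathcal{T}$ has a generator for each edge of $\mathcal{T}$; its endomorphism dg-algebra restricted to the idempotents of the internal triangles should reassemble from the $D_3$-pieces, while the loop generators $l_i$ in degree $2$ and the relation $d(l_i)=\sum p_i[a,a^*]p_i$ arise precisely from the pushout identification: gluing two triangles along an edge $i$ forces the two local degree-$1$ elements $cc^*-a^*a$ (coming from the two sides) to become homotopic in the colimit, and the chosen homotopy is the generator $l_i$, whose differential is then their difference — which, after unwinding \Cref{algmod2}'s signs, is exactly $\sum_{a\in (Q_\mathcal{T})_1}p_i[a,a^*]p_i$. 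I would carry this out by: (i) writing the homotopy colimit explicitly as a dg-category $B$ with generators and relations, using a cofibrant (semifree) model of each $i_j^\pm$; (ii) constructing a dg-functor $\mathscr{G}_\mathcal{T}\to B^{\on{perf}}$ (or its inverse) on generators, matching $a\mapsto a$, $a^*\mapsto a^*$, $l_i\mapsto$ the gluing homotopy; (iii) checking it is a quasi-equivalence using a filtration/spectral-sequence argument on word length, analogous to \Cref{ginzhomlem}. The delicate bookkeeping is the sign compatibility across all edges simultaneously; I expect this is where the choice of total orders (equivalently the spin structure of \Cref{spinsec}) is genuinely used, and where one must verify that different admissible choices give equivalent $\mathcal{F}_\mathcal{T}$, hence a well-defined equivalence $\mathcal{H}(\Gamma_\mathcal{T},\mathcal{F}_\mathcal{T})\simeq \mathcal{D}(\mathscr{G}_\mathcal{T})$.
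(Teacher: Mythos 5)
Your architecture matches the paper's: build $\mathcal{F}_\mathcal{T}$ by gluing copies of $\mathcal{G}_3(f^*)$ over $\Gamma_\mathcal{T}$ via \Cref{sgluelem}, compute $\mathcal{H}(\Gamma_\mathcal{T},\mathcal{F}_\mathcal{T})$ as a colimit in $\mathcal{P}r^L$ of the dual diagram, transport to $\on{dgCat}_k$ using \Cref{algmod1,algmod2} and \eqref{psieq}, and identify the result with $\mathscr{G}_\mathcal{T}$. However, there are two substantive gaps in the execution.

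First, you plan to write out the colimit of $D_3$'s glued along $k[t_1]$'s "using a cofibrant (semifree) model of each $i_j^{\pm}$" and then compare to $\mathscr{G}_\mathcal{T}$ by a hand-built dg-functor and a word-length filtration. This underestimates the technical problem: the maps $k[t_1]\rightarrow D_3$ sending $t_1$ to $cc^*-a^*a$ and its cyclic friends are \emph{not} cofibrations, so one cannot compute the iterated homotopy pushouts naively, and the final object must be a concrete dg-category one can compare with $\mathscr{G}_\mathcal{T}$ on the nose. The paper's device is the auxiliary dg-category $C_\mathcal{T}$: at each boundary vertex of the current subtriangulation one adjoins free generators $l'_{e,1}$ (degree $1$) and $l'_{e,2}$ (degree $2$) with $d(l'_{e,2})=l'_{e,1}-\sum p_e[a,a^*]p_e$. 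The maps $i_e^{\pm}:k[t_1]\rightarrow C_\mathcal{T}$ send $t_1\mapsto\pm l'_{e,1}$, which are now honest cofibrations, and the pushouts can be computed strictly. Your phrase "cofibrant model" gestures toward this but does not provide the model, and without it the inductive computation does not close up. Relatedly, in the glued object the degree-$2$ generator $l_e$ of $\mathscr{G}_\mathcal{T}$ is recovered as $-(l'_{e,2})^1-(l'_{e,2})^n$, a combination of the two local free homotopies, not directly as "a homotopy between the two sides' degree-$1$ elements" as you describe; the two local homotopies each relate a side's $cc^*-a^*a$-type element to the shared free generator $l'_{e,1}$, and their (signed) sum kills the $l'_{e,1}$-terms and produces $\sum p_e[a,a^*]p_e$ only after arranging $i_e^1(t_1)=-i_e^n(t_1)$. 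This is precisely where the twist $T_e$ enters.

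Second, the induction in the paper carries a strengthened hypothesis that your proposal omits: in addition to $\mathcal{H}(\Gamma_\mathcal{T},\mathcal{F}_\mathcal{T})\simeq\mathcal{D}(C_\mathcal{T})$, one must know that the left adjoint of the boundary evaluation functor at each external edge $e$ is $\mathcal{D}(i_e^+)$ or $\mathcal{D}(i_e^-)$. This is what tells you which sign the next gluing step brings and whether to twist by $T_e$ on that edge. Your plan of assembling everything at once and then chasing signs with a spectral sequence is where the proof would most likely bog down, because tracking the sign choice across all internal edges simultaneously — rather than one gluing at a time — is exactly the bookkeeping the paper's inductive formulation is designed to localize. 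Finally, your handling of self-folded triangles ("pass to a covering") is not obviously workable and is not what the paper does; the self-folded base case is treated directly via the schober on a one-vertex graph with a loop and a coequalizer computation.
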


From \Cref{thm2} we will deduce the following.

\begin{corollary}\label{thm1}
Let $\mathcal{T}$ be an ideal triangulation of an oriented marked surface and let $\Gamma_\mathcal{T}^\circ$ be the ribbon graph obtained from $\Gamma_\mathcal{T}$ by removing all external edges. Consider the associated quiver $Q_\mathcal{T}^\circ$ with $3$-cyclic potential $W'_\mathcal{T}$  containing a $3$-cycle for each interior ideal triangle of $\mathcal{T}$ from \Cref{sec1.1}. There exists an equivalence of $\infty$-categories
\[ \mathcal{H}_{\Gamma_\mathcal{T}^\circ}(\Gamma_\mathcal{T},\mathcal{F}_\mathcal{T}) \simeq \mathcal{D}(\mathscr{G}(Q_\mathcal{T}^\circ,W'_\mathcal{T}))\,.\]
\end{corollary}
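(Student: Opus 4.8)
The plan is to deduce \Cref{thm1} from \Cref{thm2} by comparing the two Ginzburg algebras and identifying the support condition on the perverse-schober side with a support/quotient statement on the algebra side. Recall from \Cref{sec6.1} that \Cref{thm2} gives $\mathcal{H}(\Gamma_\mathcal{T},\mathcal{F}_\mathcal{T})\simeq \mathcal{D}(\mathscr{G}_\mathcal{T})$, where $\mathscr{G}_\mathcal{T}$ is the relative Ginzburg algebra whose ice quiver is $Q_\mathcal{T}$ with frozen vertices the boundary edges of $\mathcal{T}$; and that $\mathcal{H}_{\Gamma_\mathcal{T}^\circ}(\Gamma_\mathcal{T},\mathcal{F}_\mathcal{T})$ is the full subcategory of global sections $X$ with $\on{ev}_e(X)=0$ for every external edge $e$ of $\Gamma_\mathcal{T}$. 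The external edges of $\Gamma_\mathcal{T}$ are in bijection with the boundary edges of $\mathcal{T}$, i.e.\ with the frozen vertices of $Q_\mathcal{T}$.

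First I would make precise the relation between the two algebras: $Q_\mathcal{T}^\circ$ is the full subquiver of $Q_\mathcal{T}$ on the internal edges, $W'_\mathcal{T}$ is obtained from $\overline W'_\mathcal{T}$ by discarding the $3$-cycles passing through boundary vertices, and $\tilde Q_\mathcal{T}^\circ$ (the quiver of $\mathscr{G}(Q_\mathcal{T}^\circ,W'_\mathcal{T})$) is correspondingly the full subquiver of $\tilde Q_\mathcal{T}$ on the internal vertices together with their $l_i$-loops. Concretely, writing $e_\partial=\sum_{i\ \mathrm{boundary}}p_i$ for the idempotent on boundary vertices and $e^\circ=1-e_\partial$, one has an isomorphism of dg-algebras $e^\circ\,\mathscr{G}_\mathcal{T}\,e^\circ\cong \mathscr{G}(Q_\mathcal{T}^\circ,W'_\mathcal{T})$ after checking that no differential of a degree-$1$ arrow between internal vertices, nor of any $l_i$ with $i$ internal, produces a term factoring through a boundary vertex — this holds because arrows incident to a boundary vertex of $Q_\mathcal{T}$ are never part of a $3$-cycle lying entirely among internal vertices, and the commutator sum defining $d(l_i)$ for internal $i$ only involves arrows at $i$. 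Thus $\mathcal{D}(\mathscr{G}(Q_\mathcal{T}^\circ,W'_\mathcal{T}))\simeq \mathcal{D}(e^\circ\mathscr{G}_\mathcal{T}e^\circ)\simeq \on{RMod}_{e^\circ\mathscr{G}_\mathcal{T}e^\circ}$.

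Next I would identify this with the support subcategory. Under the equivalence $\mathcal{D}(\mathscr{G}_\mathcal{T})\simeq \on{RMod}_{\mathscr{G}_\mathcal{T}}$ and the computation in \Cref{sec1.3} that $\mathcal{F}_\mathcal{T}$ assigns to the edge $e$ of $\Gamma_\mathcal{T}$ the category $\on{Fun}(S^2,\mathcal{D}(k))\simeq\mathcal{D}(k[t_1])$, the evaluation functor $\on{ev}_e\colon \mathcal{D}(\mathscr{G}_\mathcal{T})\to \mathcal{D}(k[t_1])$ should be identified — up to the equivalences of \Cref{thm2} — with the functor $\on{RMod}_{\mathscr{G}_\mathcal{T}}\to \on{RMod}_{p_{e}\mathscr{G}_\mathcal{T}p_{e}}$, $M\mapsto M\cdot p_e$ (restriction along the inclusion of the vertex $e$), because $p_e\mathscr{G}_\mathcal{T}p_e\cong k[t_e]$ when $e$ is a boundary edge (there the only surviving self-map is the $l_e$-loop — wait, boundary vertices carry no $l_e$; rather for a boundary edge the relevant local algebra is $k$ in degree $0$, and the edge category is still $\mathcal{D}(k[t_1])$ via the generic stalk). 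The cleanest route is: the global-sections limit presents $\mathcal{H}(\Gamma_\mathcal{T},\mathcal{F}_\mathcal{T})$ as $\{$tuples of modules over the local relative Ginzburg algebras of triangles, glued over the edge algebras$\}$, and an explicit generator-by-generator bookkeeping shows $\mathscr{G}_\mathcal{T}$-modules $M$ with $M\cdot p_e=0$ for all boundary $e$ are exactly $e^\circ\mathscr{G}_\mathcal{T}e^\circ$-modules (viewed via the quotient $\mathscr{G}_\mathcal{T}\to \mathscr{G}_\mathcal{T}/(e_\partial)$, which is quasi-isomorphic to $e^\circ\mathscr{G}_\mathcal{T}e^\circ$ since $e_\partial$ generates no new relations among internal arrows). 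This gives a fully faithful functor $\on{RMod}_{e^\circ\mathscr{G}_\mathcal{T}e^\circ}\hookrightarrow \on{RMod}_{\mathscr{G}_\mathcal{T}}$ whose essential image is precisely the modules vanishing on the boundary idempotents, i.e.\ $\mathcal{H}_{\Gamma_\mathcal{T}^\circ}(\Gamma_\mathcal{T},\mathcal{F}_\mathcal{T})$.

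The main obstacle I expect is the careful identification, through the chain of equivalences in the proof of \Cref{thm2} (which goes through \Cref{sodprop5}, \Cref{algmod1}, \Cref{algmod2} and the gluing/limit formalism of \Cref{sec4.3}), of the abstract evaluation functors $\on{ev}_e$ with the concrete idempotent-truncation functors $M\mapsto M\cdot p_e$ on the module category — one must track which compact generator of each edge category corresponds to which vertex-projective, and verify the vanishing $M\cdot p_e=0$ is stable under the relevant base changes. A secondary technical point is confirming that $e^\circ\mathscr{G}_\mathcal{T}e^\circ \to \mathscr{G}_\mathcal{T}$ does not lose homological information, i.e.\ that $\mathscr{G}_\mathcal{T}$ is, as a right $e^\circ\mathscr{G}_\mathcal{T}e^\circ$-module, a projective generator of the subcategory in question; this follows from the freeness of the path algebra but should be spelled out. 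Once these identifications are in place, \Cref{thm1} follows formally: $\mathcal{H}_{\Gamma_\mathcal{T}^\circ}(\Gamma_\mathcal{T},\mathcal{F}_\mathcal{T})\simeq \on{RMod}_{e^\circ\mathscr{G}_\mathcal{T}e^\circ}\simeq \mathcal{D}(\mathscr{G}(Q_\mathcal{T}^\circ,W'_\mathcal{T}))$.
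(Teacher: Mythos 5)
Your proposal takes a genuinely different route from the paper's, and it has a gap that the paper's argument is specifically designed to avoid.

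The paper does not identify $\mathcal{H}_{\Gamma^\circ_\mathcal{T}}(\Gamma_\mathcal{T},\mathcal{F}_\mathcal{T})$ with modules over an idempotent truncation or an ideal quotient of $\mathscr{G}_\mathcal{T}$. Instead, it observes that $\mathcal{H}_{\Gamma^\circ_\mathcal{T}}$ is, by definition, the \emph{pullback} of $\mathcal{H}(\Gamma_\mathcal{T},\mathcal{F}_\mathcal{T}) \to \mathcal{N}^{\times |\partial\Gamma_\mathcal{T}|}\leftarrow 0$ in $\mathcal{P}r^R$; by the adjoint-functor duality $\mathcal{P}r^L\simeq (\mathcal{P}r^R)^{op}$ this same $\infty$-category is the \emph{pushout} of the left-adjoint diagram in $\mathcal{P}r^L$, which is then modelled by a homotopy pushout of cofibrant dg-categories, namely $C_\mathcal{T}\amalg_{\coprod k[t_1]}\coprod 0$. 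That dg-pushout freely makes the boundary objects contractible, and is straightforwardly Morita equivalent to $\mathscr{G}(Q^\circ_\mathcal{T},W'_\mathcal{T})$. The homotopy pushout automatically computes the correct derived tensor product, so no flatness or epimorphism conditions need to be checked.

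Your argument instead tries to realize $\mathcal{H}_{\Gamma^\circ_\mathcal{T}}$ as $\mathcal{D}$ of an algebra built directly from $\mathscr{G}_\mathcal{T}$, and this has two concrete problems. First, the idempotent truncation $e^\circ\mathscr{G}_\mathcal{T}e^\circ$ is \emph{not} isomorphic, nor obviously quasi-isomorphic, to $\mathscr{G}(Q^\circ_\mathcal{T},W'_\mathcal{T})$: as a graded algebra it contains paths from an internal vertex to an internal vertex that factor through boundary vertices (e.g.\ $a^*a$ or $cc^*$ when $a\colon i\to e$ with $e$ a boundary vertex), and such paths are absent in $k\tilde Q^\circ_\mathcal{T}$. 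Your check that the differential of a degree-$1$ arrow does not introduce boundary terms misses this, since the issue is with the underlying graded algebra, not the differential. What you actually want is the two-sided quotient $\mathscr{G}_\mathcal{T}/(e_\partial)$, which \emph{is} isomorphic to $\mathscr{G}(Q^\circ_\mathcal{T},W'_\mathcal{T})$; but $\mathscr{G}_\mathcal{T}/(e_\partial)$ and $e^\circ\mathscr{G}_\mathcal{T}e^\circ$ are not the same object, and the parenthetical remark in your write-up conflates them. Second, and more seriously, even after replacing the truncation by the quotient, identifying $\mathcal{D}(\mathscr{G}_\mathcal{T}/(e_\partial))$ with the full subcategory of $\mathcal{D}(\mathscr{G}_\mathcal{T})$ of modules $M$ with $\on{RHom}(P_e,M)\simeq 0$ for boundary $e$ requires the surjection $\mathscr{G}_\mathcal{T}\to \mathscr{G}_\mathcal{T}/(e_\partial)$ to be a homological epimorphism, i.e.\ $\mathscr{G}_\mathcal{T}/(e_\partial)\otimes^L_{\mathscr{G}_\mathcal{T}}\mathscr{G}_\mathcal{T}/(e_\partial)\simeq \mathscr{G}_\mathcal{T}/(e_\partial)$. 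This is not automatic for a quotient by a two-sided ideal generated by idempotents, and you do not prove it. In fact, that statement is essentially equivalent to the conclusion of the corollary, so assuming it would be circular. The paper's route via the $\mathcal{P}r^L$-pushout of dg-categories is precisely what lets one avoid this homological-epimorphism analysis.
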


\begin{proof}[Proof of \Cref{thm2}.]
We proof the statement by an induction on the number of ideal triangles of $\mathcal{T}$ and a comparison of the computation of $\mathcal{H}(\Gamma_\mathcal{T},\mathcal{F}_\mathcal{T})$ with an explicit computation of homotopy colimits in $\on{dgCat}_k$ with the quasi-equivalence model structure. 

We define a dg-category $C_\mathcal{T}$ with objects the vertices of $Q_\mathcal{T}$ and morphisms freely generated by the graded arrows of $\tilde{Q}_\mathcal{T}$, see \Cref{sec1.1}, and two further endomorphisms $l_{e,1}',l_{e,2}'$ of degrees $1,2$ at each vertex given by a boundary edge $e$. To clarify notation, we denote in this proof the degree 2 loop $l_e:e\rightarrow e$ for $e\in (Q_\mathcal{T})_0$ by $l_{e,2}$. The differential $d$ of the morphism complexes is determined on the generators. It acts nontrivially on the generators $l_{e,2}'$, $l_{e,2}$ and $a^*_{p,i}$, on the latter two it acts as the differential of $\mathscr{G}_{\mathcal{T}}$ and acts on $l_{e,2}'$ as 
\[l_{e,2}'\mapsto l_{e,1}'-\sum_{a\in (Q_\mathcal{T})_1} p_e[a,a^*]p_e\,.\]
Note that the dg-category $C_\mathcal{T}$ is Morita equivalent to $\mathscr{G}_\mathcal{T}$. For each vertex $e$ corresponding to a boundary edge, we denote by $i_e^{+},i_e^{-}:k[t_1]\rightarrow C_\mathcal{T}$ the dg-functors determined by mapping the unique vertex $\ast$ of $k[t_1]$ to  $e\in C_\mathcal{T}$ and the generator $t_1$ to $l_{e,1}'$ and $-l_{e,1}'$, respectively. 

We deduce the theorem from the following statements, which we prove by an induction on the number of ideal triangles of $\mathcal{T}$.

\textit{
Let $\mathcal{T}$ be an ideal triangulation of an oriented marked surface with dual ribbon graph $\Gamma_\mathcal{T}$. There exists a $\Gamma_\mathcal{T}$-parametrized perverse schober $\mathcal{F}_\mathcal{T}$ with the following properties.
\begin{enumerate}
\item There exists an equivalence of $\infty$-categories 
\[\mathcal{H}(\Gamma_\mathcal{T},\mathcal{F}_\mathcal{T})\simeq \mathcal{D}(C_\mathcal{T})\,.\]
\item For each boundary edge $e$ of $\mathcal{T}$, we denote by $\on{ev}_e:\mathcal{H}(\Gamma_\mathcal{T},\mathcal{F}_\mathcal{T})\rightarrow \mathcal{F}_\mathcal{T}(e)$ the evaluation functor at $e\in \on{Exit}(\mathcal{T})$. The left adjoint of the functor
\begin{equation}\label{evfuneq} \mathcal{D}(C_\mathcal{T})\simeq \mathcal{H}(\Gamma_\mathcal{T},\mathcal{F}_\mathcal{T})\xrightarrow{\on{ev}_e} \mathcal{F}_\mathcal{T}(e)=\on{Fun}(S^2,\mathcal{D}(k))\simeq \mathcal{D}(k[t_1])
\end{equation} 
is equivalent to $\mathcal{D}(i_e^+)$ or $\mathcal{D}(i_e^-)$.
\end{enumerate}  }
In the following we will refer to \eqref{evfuneq} also as the evaluation functor.\medskip

\noindent \textbf{Base case of the induction}\nopagebreak

Let $\mathcal{T}_1$ be the ideal triangulation consisting of a single not self-folded ideal triangle. We choose a total order on the edges of $\mathcal{T}_1$ compatible with the cyclic order induced by the counter-clockwise orientation. This provides us with an equivalence of posets $\on{Exit}(\Gamma_{\mathcal{T}_1})\simeq C_3$. We can thus define $\mathcal{F}_{\mathcal{T}_1}$ as the parametrized perverse schober $\mathcal{G}_3(f^*)$ described in \Cref{locmod} corresponding to the spherical adjunction $f^*\dashv f_*$ (with $n=2$). The existence of the desired equivalence $\mathcal{H}(\mathcal{T}_1,\mathcal{F}_{\mathcal{T}_1})\simeq \mathcal{D}(C_{\mathcal{T}_1})$ is shown in \Cref{algmod1,algmod2}.

Let now  $\mathcal{T}_1$ be the ideal triangulation consisting of a single self-folded triangle. We define $\mathcal{F}_{\mathcal{T}_1}$ using \Cref{notation1} as follows.
\begin{equation}\label{sfs}
\begin{tikzcd}
f^* \arrow["{(\varrho_2,\varrho_3)}"', no head, loop, distance=2em, in=215, out=145] & {} \arrow[l, "\varrho_1", no head, maps to]
\end{tikzcd}
\end{equation}
The limit $\mathcal{H}(\Gamma_{\mathcal{T}_1},\mathcal{F}_{\mathcal{T}_1})$ is equivalent to the colimit of the left dual $\underset{\mathcal{P}r^L}{\on{colim}} \mathbb{D}^L \mathcal{F}_{\mathcal{T}_1}$, which is equivalent to the following coequalizer in $\mathcal{P}r^L$, where $T=T_{\on{Fun}(S^2,\mathcal{D}(k))}[2]$ denotes the suspension of the cotwist functor of the spherical adjunction $f^*\dashv f_*$, also described in \Cref{ctwprop}.
\begin{equation}\label{coeq0}
\begin{tikzcd}
{\mathcal{N}_{f^*}} \arrow[r, shift right=0.5ex, "\varsigma_3"'] \arrow[r, shift left=0.5ex, "\varsigma_1\circ T"] & \mathcal{V}^3_{f^*}
\end{tikzcd} 
\end{equation}
The colimit of \eqref{coeq0} is in turn equivalent to the pushout of the following span in $\mathcal{P}r^L$.
\begin{equation}\label{slfdiag1}
\begin{tikzcd}[column sep=large]
{\mathcal{N}_{f^*}^{\times 2}} \arrow[d, "\on{id}\times \on{id}"] \arrow[r, "{(\varsigma_3,\varsigma_1\circ T)}"] & \mathcal{V}^3_{f^*} \\
{\mathcal{N}_{f^*}}                                                                   &                    
\end{tikzcd}
\end{equation}
Let $\mathcal{T}'$ be the ideal triangulation consisting of a single not self-folded ideal triangle and denote by $e_1,e_3$ two boundary edges of $\mathcal{T}'$. Diagram \eqref{slfdiag1} is equivalent to the image under $\mathcal{D}(\mhyphen)$ of the following diagram in $\on{dgCat}_k$.
\begin{equation}\label{pudiag}
\begin{tikzcd}[column sep=large]
{k[t_1]\amalg k[t_1]} \arrow[d, "\on{id}\amalg \on{id}"] \arrow[r, "{i_{e_3}^+\amalg i_{e_1}^-}"] & C_{\mathcal{T}'} \\
{k[t_1]}                                                                                  &                  
\end{tikzcd}
\end{equation}
The morphism $i_{e_3}^+\amalg i_{e_1}^-$ in \eqref{pudiag} is a cofibration, we can thus compute the homotopy colimit of \eqref{pudiag} as the (ordinary) colimit in the $1$-category of $k$-linear dg-categories. This colimit is quasi-equivalent to $C_{\mathcal{T}_1}$, showing the desired equivalence $\mathcal{H}(\Gamma_{\mathcal{T}_1},\mathcal{F}_{\mathcal{T}_1})\simeq \mathcal{D}(C_{\mathcal{T}_1})$. The left adjoint of the evaluation functor $\mathcal{D}(C_{\mathcal{T}_1})\rightarrow \mathcal{D}(k[t_1])$ to $e$ factors as
\[ \mathcal{D}(k[t_1])\xlongrightarrow{\mathcal{D}(i_{e_2}^-)}\mathcal{D}(C_{\mathcal{T}'})\longrightarrow \mathcal{D}(C_{\mathcal{T}_1})\,,\]
where $e_2\neq e_1,e_3$ is the remaining boundary edge of $\mathcal{T}'$, and is thus equivalent to $\mathcal{D}(i_e^-)$. This completes the base case of the induction where the ideal triangulation consists of a single ideal triangle.\medskip

\noindent \textbf{Induction step}

Assume now that the statement has been shown for all ideal triangulations with at most $n$ ideal triangles. Let $\mathcal{T}_{n+1}$ be an ideal triangulation with $n+1$ ideal triangles. We choose any subtriangulation $\mathcal{T}_n$ consisting of $n$ connected ideal triangles of $\mathcal{T}_{n+1}$. The complement of $\mathcal{T}_n$ in $\mathcal{T}_{n+1}$ consists of a single ideal triangle and is denoted by $\mathcal{T}_1$. Denote by $S$ the set of edges along which $\mathcal{T}_1$ and $\mathcal{T}_n$ are glued in $\mathcal{T}_{n+1}$ and $s=|S|$. To define the $\Gamma_{\mathcal{T}_{n+1}}$-parametrized perverse schober $\mathcal{F}_{\mathcal{T}_{n+1}}$ we wish to glue $\mathcal{F}_{\mathcal{T}_{1}}$ and $\mathcal{F}_{\mathcal{T}_{n}}$ in the sense of \Cref{sgluelem}. We however first modify the diagram $\mathcal{F}_{\mathcal{T}_{n}}:\on{Exit}(\Gamma_{\mathcal{T}_n})\rightarrow \on{St}$ by postcomposing the functor $\mathcal{F}_{\mathcal{T}_{n}}(v\shortrightarrow e)$, for each edge $v\rightarrow e$ in $\on{Exit}(\Gamma_{\mathcal{T}_n})$ with $e\in S$, with an autoequivalence $T_e$ of $\on{Fun}(S^2,\mathcal{D}(k))$ described as follows. Consider the left adjoints of 
\[\mathcal{D}(\mathscr{G}_{\mathcal{T}_1})\simeq \mathcal{H}(\Gamma_{\mathcal{T}_1},\mathcal{F}_{\mathcal{T}_1})\xrightarrow{\on{ev}_e} \on{Fun}(S^2,\mathcal{D}(k))\simeq \mathcal{D}(k[t_1])\] 
and
\[ \mathcal{D}(\mathscr{G}_{\mathcal{T}_{n}})\simeq \mathcal{H}(\Gamma_{\mathcal{T}_n},\mathcal{F}_{\mathcal{T}_n}) \xrightarrow{\on{ev}_e} \on{Fun}(S^2,\mathcal{D}(k))\simeq \mathcal{D}(k[t_1])\,.\] 
The induction assumption implies that these two left adjoint are each equivalent to either $\mathcal{D}(i^+_e)$ or $\mathcal{D}(i^-_e)$ (abusing notation). If both superscripts, each $+$ or $-$, in this description are identical, we choose $T_e=T$, where $T$ is as in the base case of the induction, otherwise we choose $T_e=\on{id}_{\on{Fun}(S^2,\mathcal{D}(k))}$. We denote the $\Gamma_{\mathcal{T}_n}$-parametrized perverse schober obtained from the modification of $\mathcal{F}_{\mathcal{T}_{n}}$ by $\mathcal{F}^\ast_{\mathcal{T}_n}$. We define $\mathcal{F}_{\mathcal{T}_{n+1}}$ as the gluing of $\mathcal{F}_{\mathcal{T}_1}$ and $\mathcal{F}^\ast_{\mathcal{T}_n}$. 

We observe that $\mathcal{H}\big(\Gamma_{\mathcal{T}_n}, \mathcal{F}^\ast_{\mathcal{T}_n}\big) \simeq \mathcal{H}\big(\Gamma_{\mathcal{T}_{n}},\mathcal{F}_{\mathcal{T}_n}\big)$. Using the induction assumption we thus find equivalences of $\infty$-categories
\[ \mathcal{H}\big(\Gamma_{\mathcal{T}_n},\mathcal{F}_{\mathcal{T}_{n+1}}|_{\on{Exit}(\Gamma_{\mathcal{T}_{n}})}\big)=\mathcal{H}\big(\Gamma_{\mathcal{T}_n},\mathcal{F}^\ast_{\mathcal{T}_n}\big)\simeq \mathcal{D}(C_{\mathcal{T}_n})\,,\]
\[ \mathcal{H}\big(\Gamma_{\mathcal{T}_1},\mathcal{F}_{\mathcal{T}_{n+1}}|_{\on{Exit}(\Gamma_{\mathcal{T}_{1}})}\big)=\mathcal{H}\big(\Gamma_{\mathcal{T}_1},\mathcal{F}_{\mathcal{T}_1}\big)\simeq \mathcal{D}(C_{\mathcal{T}_1})\,.\]
A standard result on the decomposition results of colimits, see \cite[4.2.3.10]{HTT}, shows that global sections commute with gluing in the sense that there is a pullback diagram in $\mathcal{P}r^L$ as follows.
\[
\begin{tikzcd}
\mathcal{H}\big(\Gamma_{\mathcal{T}_{n+1}},\mathcal{F}_{\mathcal{T}_{n+1}}\big) \arrow[r] \arrow[d] \arrow[rd, "\lrcorner", phantom, near start] & \mathcal{H}\big(\Gamma_{\mathcal{T}_n},\mathcal{F}^\ast_{\mathcal{T}_n}\big) \arrow[d] \\
\mathcal{H}\big(\Gamma_{\mathcal{T}_1},\mathcal{F}_{\mathcal{T}_1}\big) \arrow[r]                                              & {\on{Fun}(S^2,\mathcal{D}(k))^{\times s}}
\end{tikzcd}
\]
The left adjoint diagram is equivalent to the image under $\mathcal{D}(\mhyphen)$ of the following homotopy pushout diagram in $\on{dgCat}_k$.
\begin{equation}\label{rpu}
\begin{tikzcd}
k[t_1]^{\amalg s} \arrow[r, "\alpha_n"] \arrow[d, "\alpha_1"] \arrow[rd, "\ulcorner", phantom, near end] & C_{\mathcal{T}_n} \arrow[d, "i_C"] \\
C_{\mathcal{T}_1} \arrow[r]                                     & C
\end{tikzcd}
\end{equation}
The dg-functors $\alpha_1$ and $\alpha_n$ restrict on the component of $k[t_1]^{\amalg s}$ indexed by a given edge $e\in S$ each to a dg-functor of the form $i_e^+$ or $i_e^-$ (again by abuse of notation); we denote these functors by $i_e^1$ and $i_e^{n}$, respectively. We show below that $\alpha_1$ and $\alpha_n$ are cofibrations. The diagram \eqref{rpu} is thus also pushout and we find $C$ to be given as follows. The number of objects of $C$ is $|(Q_{\mathcal{T}_n})_0|+ |(Q_{\mathcal{T}_1})_0|-s$. The morphism are freely generated by the edges of $Q_{\mathcal{T}_n}$ and the edges of $Q_{\mathcal{T}_{1}}$ and for each edge $e\in S$ two endomorphisms $(l_{e,2}')^1,(l_{e,2}')^n:e\rightarrow e$ in degree $2$ and one endomorphism $l_{e,1}':e\rightarrow e$ in degree $1$ satisfying 
\[ d((l_{e,2}')^1)=i_{e}^1(t_1)-\sum_{a\in (Q_{\mathcal{T}_1})_1} p_e[a,a^*]p_e\] 
and 
\[ d((l_{e,2}')^n)=i_{e}^n(t_1)-\sum_{a\in (Q_{\mathcal{T}_n})_1} p_e[a,a^*]p_e\,.\] 
The above choice of $T_e$ ensures that $i_{e}^1(t_1)=-i_{e}^n(t_1)=\pm l_{e,1}'$, it follows that 
\[ d(-(l_{e,2}')^1-(l_{e,2}')^n)=\sum_{a\in (Q_{\mathcal{T}_{n+1}})_1} p_e[a,a^*]p_e\] 
is a boundary. We deduce the existence of a quasi-equivalence $C_{\mathcal{T}_{n+1}}\rightarrow C$ mapping $l_{e,2}$ to $-(l_{e,2}')^1-(l_{e,2}')^n$. This shows the first part of the induction step.

Let $e$ be a boundary edge of ${\mathcal{T}_{n+1}}$. Assume that $e$ lies in $\mathcal{T}_{n}$. The left adjoint of the evaluation functor to $e$ factors as 
\[ \mathcal{D}(k[t_1])\xrightarrow{\mathcal{D}(i_{e}^{\pm})} \mathcal{D}(C_{\mathcal{T}_{n}})\xrightarrow{\mathcal{D}(i_C)} \mathcal{D}(C)\simeq \mathcal{D}(C_{\mathcal{T}_{n+1}})\]
and thus equivalent to $\mathcal{D}(i_{e}^\pm)$. If $e$ lies in $\mathcal{T}_1$, we can argue analogously. This shows the second part of the induction step, completing the induction.\medskip

\noindent {\bf $\alpha_1$ and $\alpha_n$ are cofibrations.}\\
To see that $\alpha_1$ is a cofibration, one can directly check the lifting property with respect to the fibrations in the quasi-equivalence model structure. To see that $\alpha_n$ is a cofibration, we argue by induction on $n$ and decompose $\mathcal{T}_n$ into two ideal triangulations $\mathcal{T}_i,\mathcal{T}_j$ with $i,j<n$ such that $\mathcal{T}_n$ is obtained by gluing $\mathcal{T}_i$ and $\mathcal{T}_j$ along $q$ edges. One then finds the following commutative diagram in $\on{dgCat}_k$
\begin{equation}\label{alphaneq}
\begin{tikzcd}[row sep=small, column sep=small]
0 \arrow[rr] \arrow[dd] \arrow[rd]   &                                    & {k[t_1]^{s_2}} \arrow[dd] \arrow[rd, "\alpha_i"] &                              \\
                                     & {k[t_1]^{q}} \arrow[dd] \arrow[rr] &                                      & C_{\mathcal{T}_i} \arrow[dd] \\
{k[t_1]^{s_1}} \arrow[rr] \arrow[rd, "\alpha_j"] &                                    & {k[t_1]^{s}} \arrow[rd, "\alpha_n"]  &                              \\
                                     & C_{\mathcal{T}_j} \arrow[rr]       &                                      & C_{\mathcal{T}_n}           
\end{tikzcd}
\end{equation}
whose front and back squares are pushout and where $s_1,s_2$ are the number of edges in $S$ lying in $\mathcal{T}_j$ and $\mathcal{T}_i$, respectively. Since we know all morphisms except $\alpha_n$ in \eqref{alphaneq} to be cofibrations, it follows that $\alpha_n$ is also a cofibration. This completes the argument and thus the proof.
\end{proof}

\begin{proof}[Proof of \Cref{thm1}]
The $\infty$-category $\mathcal{H}_{\Gamma^\circ_{\mathcal{T}}}(\Gamma,\mathcal{F}_\mathcal{T})$ is equivalent to the pullback of the diagram of $\infty$-categories
\[
\begin{tikzcd}[column sep=large]
                                                                                                         & 0 \arrow[d]                                              \\
{\mathcal{H}(\Gamma,\mathcal{F}_\mathcal{T})} \arrow[r, "\on{ev}^{\times |\partial\Gamma_\mathcal{T}|}"] & \mathcal{N}_{f^*}^{\times |\partial \Gamma_\mathcal{T}|}
\end{tikzcd}
\]
where $|\partial \Gamma_{\mathcal{T}}|$ denotes the number of external edges of $\Gamma_\mathcal{T}$ (or equivalently the number of boundary edges of $\mathcal{T}$) and $\on{ev}^{\times |\partial\Gamma_\mathcal{T}|}$ is the product of the evaluation functors at the external edges. The left adjoint of the above diagram lies in $\mathcal{P}r^L$ and is modeled by the diagram of cofibrant dg-categories 
\begin{equation}\label{ginzdiag1}
\begin{tikzcd}
{\underset{{|\partial \Gamma_\mathcal{T}|}}{\coprod} k[t_1]} \arrow[d, "\alpha"] \arrow[r] & 0 \\
C_{\mathcal{T}}                                                                       &  
\end{tikzcd}
\end{equation}
where $\alpha$ restricts on each component corresponding to an external edge $e$ to the dg-functor $i^+_e$ or $i^-_e$. The pushout of \eqref{ginzdiag1} is easily seen to be Morita equivalent to the Ginzburg algebra $\mathscr{G}(Q_\mathcal{T}^\circ,W_\mathcal{T}')$. We thus find the desired equivalence of $\infty$-categories $\mathcal{H}_{\Gamma^\circ_{\mathcal{T}}}(\Gamma,\mathcal{F}_\mathcal{T})\simeq \mathcal{D}(\mathscr{G}(Q_\mathcal{T}^\circ,W_\mathcal{T}'))$.
\end{proof}

We end this section by describing how the perverse schober $\mathcal{F}_\mathcal{T}$ can be modified so that its global sections describe only the perfect or only the finite modules over the relative Ginzburg algebra $\mathscr{G}_\mathcal{T}$. 

\begin{notation}
We denote
\begin{itemize}
\item by $\mathcal{D}(k)^{\on{perf}}\subset \mathcal{D}(k)$ the full subcategory spanned by perfect complexes of $k$-vector spaces.
\item by $\on{Fun}(S^2,\mathcal{D}(k))^{\on{perf}}\subset \on{Fun}(S^2,\mathcal{D}(k))$ the full subcategory spanned by compact objects. Note that $\on{Fun}(S^2,\mathcal{D}(k)^{\on{perf}})$ is contained in the full subcategory of $\on{Fun}(S^2,\mathcal{D}(k))$ spanned by compact objects, denoted $\on{Fun}(S^2,\mathcal{D}(k))^{\on{perf}}$.
\item by 
\[ (f^*)^{\on{perf}}:\mathcal{D}(k)^{\on{perf}}\longleftrightarrow \on{Fun}(S^2,\mathcal{D}(k))^{\on{perf}}:f_*^{\on{perf}}\]
and 
\[ (f^*)^{\on{fin}}:\mathcal{D}(k)^{\on{perf}}\longleftrightarrow \on{Fun}(S^2,\mathcal{D}(k)^{\on{perf}}):f_*^{\on{fin}}\]
the restrictions of the adjunction $f^*:\mathcal{D}(k)\leftrightarrow \on{Fun}(S^2,\mathcal{D}(k))$. For the well-definedness of $f_*^{\on{perf}}$, note that $f^*$ preserves filtered colimits, so that $f_*$ carries compact objects to compact objects.
\end{itemize}
\end{notation}

Adapting the construction in the proof of \Cref{thm2}, we define the $\Gamma_\mathcal{T}$-parametrized perverse schobers
\[\mathcal{F}_\mathcal{T}^{\on{fin}},\mathcal{F}_\mathcal{T}^{\on{perf}}:\on{Exit}(\Gamma_\mathcal{T})\rightarrow \on{St}\]
by replacing in $\mathcal{F}_{\mathcal{T}}$ at each vertex of $\Gamma_\mathcal{T}$ the spherical functor $f^*$ by $(f^*)^{\on{fin}}$ and $(f^*)^{\on{perf}}$, respectively. Note that $\mathcal{F}_\mathcal{T}^{\on{perf}},\mathcal{F}_\mathcal{T}^{\on{fin}}$ take values in the $\infty$-category $\on{St}^{\on{idem}}\subset \on{St}$ of idempotent complete, stable $\infty$-categories.

\begin{proposition}\label{finprop}
Consider the morphisms $\mathcal{F}_\mathcal{T}^{\on{fin}},\mathcal{F}_\mathcal{T}^{\on{perf}}\rightarrow \mathcal{F}_\mathcal{T}$ in $\mathfrak{P}(\Gamma_\mathcal{T})$, pointwise given by the fully faithful functors. Passing to global sections and colimits, respectively, yields the following commutative diagram of $\infty$-categories,
\[
\begin{tikzcd}
\underset{\on{St}^{\on{idem}}}{\on{colim}}\,\mathbb{D}^L\mathcal{F}_\mathcal{T}^{\on{perf}} \arrow[d, "\simeq"] \arrow[r] & \underset{\mathcal{P}r^L}{\on{colim}}\, \mathbb{D}^L \mathcal{F}_\mathcal{T} \arrow[d, "\simeq"] \arrow[r, "\simeq", no head] & \mathcal{H}(\Gamma_\mathcal{T},\mathcal{F}_\mathcal{T}) \arrow[d, "\simeq"] & \mathcal{H}(\Gamma_\mathcal{T},\mathcal{F}_\mathcal{T}^{\on{fin}}) \arrow[l] \arrow[d, "\simeq"] \\
{\mathcal{D}(\mathscr{G}_\mathcal{T})^{\on{perf}}} \arrow[r, hook]                                                         & {\mathcal{D}(\mathscr{G}_\mathcal{T})} \arrow[r, "=", no head]                     & {\mathcal{D}(\mathscr{G}_\mathcal{T})}     & {\mathcal{D}(\mathscr{G}_\mathcal{T})^{\on{fin}}} \arrow[l, hook]      
\end{tikzcd}
\]
where
\begin{itemize}
\item $\mathcal{D}(\mathscr{G}_\mathcal{T})^{\on{perf}}\subset \mathcal{D}(\mathscr{G}_\mathcal{T})$ denotes the full subcategory spanned by the compact objects and 
\item $\mathcal{D}(\mathscr{G}_\mathcal{T})^{\on{fin}}\subset \mathcal{D}(\mathscr{G}_\mathcal{T})$ denotes the full subcategory spanned by the modules with finite total homological dimension.
\end{itemize}
In particular, we obtain that the finite $\mathscr{G}_\mathcal{T}$-modules can be characterized as the global sections whose pointwise values on the edges of $\mathcal{T}$ in $\on{Exit}(\Gamma_\mathcal{T})$ lie in $\mathcal{N}_{(f^*)^{\on{fin}}}$.
\end{proposition}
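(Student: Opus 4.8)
The plan is to handle the left and right halves of the displayed diagram separately: the perfect, idempotent-complete side by a formal $\on{Ind}$-completion argument, and the finite side by identifying the essential image of a fully faithful functor between $\infty$-categories of global sections.

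\emph{The perfect side.} Since $f^*$ preserves filtered colimits, so does $f_*\simeq f_![-2]$ (\Cref{locprop1}, \Cref{sphrem}); hence all functors $\varrho_i,\varsigma_i$ associated to $f^*\dashv f_*$, which possess both adjoints and so preserve all limits and colimits (\Cref{limcolim}), have right adjoints preserving filtered colimits, so they restrict to the subcategories of compact objects. It follows that $\mathcal{F}_\mathcal{T}^{\on{perf}}$ is pointwise the subcategory of compact objects of $\mathcal{F}_\mathcal{T}$, that $\mathbb{D}^L\mathcal{F}_\mathcal{T}^{\on{perf}}$ is the restriction of $\mathbb{D}^L\mathcal{F}_\mathcal{T}$ to compact objects, and that $\on{Ind}(\mathbb{D}^L\mathcal{F}_\mathcal{T}^{\on{perf}})\simeq\mathbb{D}^L\mathcal{F}_\mathcal{T}$ pointwise. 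Because $\on{Ind}\colon\on{St}^{\on{idem}}\to\mathcal{P}r^L_{\on{St}}$ preserves colimits and satisfies $\on{Ind}(\mathcal{C})^c\simeq\mathcal{C}$ (\Cref{sec2.1}), applying $\on{Ind}$ to the colimit and invoking \eqref{deq1} and \Cref{thm2} gives
\[ \on{Ind}\Big(\underset{\on{St}^{\on{idem}}}{\on{colim}}\,\mathbb{D}^L\mathcal{F}_\mathcal{T}^{\on{perf}}\Big)\simeq \underset{\mathcal{P}r^L}{\on{colim}}\,\mathbb{D}^L\mathcal{F}_\mathcal{T}\simeq \mathcal{H}(\Gamma_\mathcal{T},\mathcal{F}_\mathcal{T})\simeq \mathcal{D}(\mathscr{G}_\mathcal{T})\,, \]
and passing to compact objects identifies $\underset{\on{St}^{\on{idem}}}{\on{colim}}\,\mathbb{D}^L\mathcal{F}_\mathcal{T}^{\on{perf}}$ with $\mathcal{D}(\mathscr{G}_\mathcal{T})^c=\mathcal{D}(\mathscr{G}_\mathcal{T})^{\on{perf}}$, compatibly with the inclusion into $\mathcal{D}(\mathscr{G}_\mathcal{T})$. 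Commutativity of the left square is naturality of the comparison $\on{Ind}\circ\on{colim}\simeq\on{colim}\circ\on{Ind}$ together with naturality of \Cref{thm2}.

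\emph{The finite side.} The morphism $\mathcal{F}_\mathcal{T}^{\on{fin}}\to\mathcal{F}_\mathcal{T}$ is pointwise fully faithful, so the induced functor on limits $\mathcal{H}(\Gamma_\mathcal{T},\mathcal{F}_\mathcal{T}^{\on{fin}})\to\mathcal{H}(\Gamma_\mathcal{T},\mathcal{F}_\mathcal{T})\simeq\mathcal{D}(\mathscr{G}_\mathcal{T})$ is fully faithful with essential image the global sections $X$ with $\on{ev}_p(X)\in\mathcal{F}_\mathcal{T}^{\on{fin}}(p)$ for every $p\in\on{Exit}(\Gamma_\mathcal{T})$. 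First I would reduce this to a condition on edges. Every vertex $v$ of $\Gamma_\mathcal{T}$ is trivalent, so by \Cref{algmod1} we have $\mathcal{F}_\mathcal{T}(v)\simeq\mathcal{D}(D_3)$, and by \Cref{algmod2} the categorified restriction maps $\varrho_1,\varrho_2,\varrho_3$ correspond to restriction along the dg-functors $k[t_1]\to D_3$ sending the generator to the three objects $x,y,z$; hence $\varrho_i(\on{ev}_v X)$ has underlying $k$-complex the value of $\on{ev}_v X$ at the $i$-th object, and $\on{ev}_v X\in\mathcal{V}^3_{(f^*)^{\on{fin}}}$ iff these three $k$-complexes are perfect, iff the three incident edge-values $\on{ev}_e X$ lie in $\on{Fun}(S^2,\mathcal{D}(k)^{\on{perf}})$. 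So the essential image is $\{X:\on{ev}_e(X)\in\on{Fun}(S^2,\mathcal{D}(k)^{\on{perf}})\text{ for every edge }e\}$. Finally, by the proof of \Cref{thm2} the left adjoint of $\on{ev}_e$ is $\mathcal{D}(i_e^{\pm})$, so $\on{ev}_e$ is restriction along $i_e^\pm\colon k[t_1]\to C_\mathcal{T}$ and has underlying $k$-complex $p_e X$; under $\on{Fun}(S^2,\mathcal{D}(k))\simeq\mathcal{D}(k[t_1])$ taking the stalk is restriction along $k\hookrightarrow k[t_1]$ (\Cref{genlem}), so $\on{ev}_e(X)$ has perfect stalk iff $p_e X$ is perfect over $k$. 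Ranging over all edges $e$ of $\mathcal{T}$ (equivalently, the vertices of $Q_\mathcal{T}$), this says $X\simeq\bigoplus_e p_e X$ is perfect over $k$, i.e.\ has finite total homological dimension; under the Morita equivalence $C_\mathcal{T}\simeq\mathscr{G}_\mathcal{T}$ this is exactly $\mathcal{D}(\mathscr{G}_\mathcal{T})^{\on{fin}}$. The last sentence of the proposition is the edge-description of the essential image obtained above, and commutativity of the right square is naturality of $\mathcal{H}(\Gamma_\mathcal{T},-)$ applied to $\mathcal{F}_\mathcal{T}^{\on{fin}}\to\mathcal{F}_\mathcal{T}$ together with \Cref{thm2}.

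The main obstacle is the reduction to edge-conditions in the finite case, namely keeping the bookkeeping straight when matching the explicit dg-models $D_3$ and the functors $\varsigma_2,\varsigma_3$ of \Cref{algmod1} and \Cref{algmod2} with the perfect-stalk subcategories, including the signs from \Cref{ctwprop}. A cleaner but less elementary alternative is to observe that $(\varrho_1,\dots,\varrho_n)$ is the monadic and comonadic functor $F'$ of \Cref{paralem3}, whose comonad $F'G'$ has underlying endofunctor the explicit matrix built from $\on{id}$ and $T_{\mathcal{N}_F}[n-1]$; since both preserve $\on{Fun}(S^2,\mathcal{D}(k)^{\on{perf}})^{\times n}$, the coalgebra description of $\mathcal{V}^n_{f^*}$ restricts and yields $\mathcal{V}^n_{(f^*)^{\on{fin}}}=\{V:F'(V)\in\on{Fun}(S^2,\mathcal{D}(k)^{\on{perf}})^{\times n}\}$ directly.
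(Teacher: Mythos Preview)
Your approach is essentially the paper's: Ind-completion for the perfect side, and identifying the essential image via the edge-evaluations for the finite side. One correction on the finite side: the claim that ``by the proof of \Cref{thm2} the left adjoint of $\on{ev}_e$ is $\mathcal{D}(i_e^{\pm})$'' is only stated and proved there for \emph{boundary} edges $e$ (the dg-functors $i_e^{\pm}:k[t_1]\to C_\mathcal{T}$ are defined via the loop $l'_{e,1}$, which exists only at boundary vertices). For internal edges no such $i_e^\pm$ is available, so your identification of the underlying $k$-complex of $\on{ev}_e(X)$ with $p_eX$ needs a different justification. The paper instead invokes \Cref{glgenprop}, valid for \emph{all} edges $e$, which gives $\on{ev}_e^*i_!(k)\simeq P_e=p_e\mathscr{G}_\mathcal{T}$ and hence $i^*\on{ev}_e(M)\simeq\on{RHom}_{\mathscr{G}_\mathcal{T}}(p_e\mathscr{G}_\mathcal{T},M)$; this is exactly the input you need, and with it your argument goes through unchanged. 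Your explicit vertex-to-edge reduction via the $D_3$ model and the alternative monadic/comonadic argument through \Cref{paralem3} are both correct and more detailed than what the paper records, but not needed once you use \Cref{glgenprop}.
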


\begin{proof}
Note that the morphism $\mathcal{F}_\mathcal{T}^{\on{perf}}\rightarrow \mathcal{F}_\mathcal{T}$ is pointwise given by $\on{Ind}$-completion. It thus follows from discussion on the computation of colimits in $\on{St}^{\on{idem}}$ in \Cref{sec2.1} that the colimit of $\mathbb{D}^L(\mathcal{F}_\mathcal{T})^{\on{perf}}$ describes the $\infty$-category of compact $\mathscr{G}_\mathcal{T}$-modules.

A $\mathscr{G}_\mathcal{T}$-module $M$ is by definition finite if and only if $\on{RHom}_{\mathscr{G}_\mathcal{T}}(\mathscr{G}_\mathcal{T},M)\in \mathcal{D}(k)$ is a finite $k$-module. Note that $\mathscr{G}_\mathcal{T}=\bigoplus_{e}p_e\mathscr{G}_\mathcal{T}$, where the sum runs over all edges of $\mathcal{T}$. Using \Cref{glgenprop}, we thus obtain that $M$ is finite if and only if $\on{RHom}_{\mathscr{G}_\mathcal{T}}(p_e\mathscr{G}_\mathcal{T},M)\simeq i^*\on{ev}_e(M)\in \mathcal{D}(k)$ is a finite $k$-module for all $e$. The latter is fulfilled if and only if the pointwise values of the coCartesian section corresponding to $M$ lies in $\mathcal{N}_{(f^*)^{\on{fin}}}$. 
\end{proof}

We illustrate how finite $\mathscr{G}_\mathcal{T}$-modules can be described by locally finite global sections in examples in \Cref{sec6.3}.

\subsection{Two examples}\label{sec6.2}

In this section we illustrate the computation of the proof of \Cref{thm2} in the case of the once-punctured torus and the unpunctured $4$-gon. 

\begin{example}\label{ex1}
Let ${\bf S}$ be the once-punctured torus and consider the ideal triangulation $\mathcal{T}$ of ${\bf S}$ consisting of two ideal triangles glued together at all three edges. Using the graphical \Cref{ribnot}, the dual ribbon graph $\Gamma_\mathcal{T}$  can be depicted as follows (the crossing has to do with the cyclic orderings of the halfedges induced by the orientation of ${\bf S}$).

\begin{center}
\begin{tikzpicture}[scale=1]
\node (a) at (-1,0)  {$\cdot$};
\node (b) at (1,0) {$\cdot$} ;
\draw (a) to[bend right=90] (b);
\draw (-0.8,-0.2) to[controls={(0,-1) and (0,1)}] (0.8,0.2);
\draw (-0.8,0.2) to[controls={(0,1) and (0,-1)}] (0.8,-0.2);
\end{tikzpicture}
\end{center}

In \Cref{sec5.2}, we introduced the dg-category $D_3$ with three objects $x,y,z$, freely generated by the following morphisms.
\[ 
\begin{tikzcd}
                                               & y \arrow[ld, "a^*"] \arrow[rd, "b", bend left] &                                                \\
x \arrow[ru, "a", bend left] \arrow[rr, "c^*"] &                                                & z \arrow[lu, "b^*"] \arrow[ll, "c", bend left]
\end{tikzcd}
\] 
The morphisms are in degrees $|a|=|b|=|c|=0$ and $|a^*|=|b^*|=|c^*|=1$. The differential is determined by $d(a^*)=cb,~d(b^*)=ac,~d(c^*)=ba$. The dg-category $D_3$ is Morita equivalent to the relative Ginzburg algebra of a single triangle. To describe the Ginzburg algebra of $\mathcal{T}$, we consider the following span of dg-categories, where the morphisms are defined in \Cref{ifn}.
\begin{equation}\label{exdiag1}
\begin{tikzcd}[row sep=small, column sep=small]
                  & k[t_1]^{\amalg 3} \arrow[rd, "{(i^-_1,i^+_2,i^-_3)}"] \arrow[ld, "{(i^+_1,i^-_2,i^+_3)}"'] &                   \\
D_3 &                                                                                                                         & D_3
\end{tikzcd}
\end{equation}
Informally, the above span describes a gluing of two copies of the dg-category $D_3$
at all vertices with matching orientations. To compute the homotopy colimit of \eqref{exdiag1}, we consider the cofibrant replacement of the diagram \eqref{exdiag1}, which consists in replacing $D_3$ with the following dg-category. 
\begin{equation}\label{exdiag2}
\begin{tikzcd}[row sep=large, column sep=large]
                                                                                                                                                           & y \arrow[ld, "a^*"] \arrow[rd, "b", bend left] \arrow["l_{y,1}"', loop, distance=2em, in=125, out=55] \arrow["l_{y,2}"', loop, distance=4em, in=125, out=55] &                                                                                                                                                           \\
x \arrow[ru, "a", bend left] \arrow[rr, "c^*"] \arrow["l_{x,1}"', loop, distance=2em, in=305, out=235] \arrow["l_{x,2}"', loop, distance=4em, in=215, out=145] &                                                                                                                                                          & z \arrow[lu, "b^*"] \arrow[ll, "c", bend left] \arrow["l_{z,2}"', loop, distance=2em, in=305, out=235] \arrow["l_{z,1}"', loop, distance=4em, in=35, out=325]
\end{tikzcd}
\end{equation}
Here $|l_{x,1}|=|l_{y,1}|=|l_{z,1}|=1$ and $|l_{x,2}|=|l_{y,2}|=|l_{z,2}|=2$ and the differential acts on the additional generators as follows.
\begin{align*}
d(l_{x,1})=d(l_{y,1})=d(l_{z,1})=& 0\\
d(l_{x,2})=& l_{x,1}-(cc^*-a^*a)\\
d(l_{y,2})=& l_{y,1}-(aa^*-b^*b)\\
d(l_{z,2})=& l_{z,1}-(bb^*-c^*c)
\end{align*}
The colimit of the cofibrant replacement of \eqref{exdiag1} is up to quasi-equivalence the freely generated dg-category of the following form.
\[
\begin{tikzcd}[row sep=huge, column sep=huge]
                                                                                                                                                                                                                 & y \arrow[ld, "a_1^*" description, shift right=2, near end] \arrow[rd, "b_1" description, shift left=5, near end] \arrow[ld, "a_2^*"] \arrow[rd, "b_2", shift left=7] \arrow["l_y"', loop, distance=2em, in=125, out=55] &                                                                                                                                                                                                                    \\
x \arrow[ru, "a_1" description, shift left=4, near end] \arrow[rr, "c^*_1" description, shift right, near end] \arrow["l_x"', loop, distance=2em, in=215, out=145] \arrow[ru, "a_2", shift left=6] \arrow[rr, "c_2^*", shift left] &                                                                                                                                                                                                                   & z \arrow[lu, "b^*_1" description, shift right=3, near end] \arrow[ll, "c_1" description, shift left=3, near end] \arrow["l_z"', loop, distance=2em, in=35, out=325] \arrow[lu, "b_2^*", shift right] \arrow[ll, "c_2", shift left=5]
\end{tikzcd}
\]
A direct computation shows that the differentials match the differentials of the Ginzburg algebra $\mathscr{G}(Q_\mathcal{T},W'_\mathcal{T})$ of the quiver
\begin{equation}\label{span3}
\begin{tikzcd}[row sep=large, column sep=large]
                            & \cdot \arrow[rd, shift left=0.5ex, "b_1"] \arrow[rd, shift right=0.5ex, "b_2"'] &                             \\
\cdot \arrow[ru, shift left=0.5ex, "a_1"] \arrow[ru, shift right=0.5ex, "a_2"'] &                             & \cdot \arrow[ll, shift left=0.5ex, "c_1"] \arrow[ll, shift right=0.5ex, "c_2"']
\end{tikzcd}
\end{equation}
with potential $W'_\mathcal{T}=c_1b_1a_1+c_2b_2a_2$. Note that the once-punctured torus has no boundary, so that the relative Ginzburg algebra $\mathscr{G}_\mathcal{T}$ is identical to $\mathscr{G}(Q_\mathcal{T},W'_\mathcal{T})$.

The above gluing description of the Ginzburg algebra is caputured by the perverse schober $\mathcal{F}_\mathcal{T}$. Denote by $T$ the functor $\on{Fun}(S^2,\mathcal{D}(k))\overset{\eqref{loceq}}{\simeq} \mathcal{D}(k[t_1])\xrightarrow{\varphi^*} \mathcal{D}(k[t_1])\overset{\eqref{loceq}}{\simeq} \on{Fun}(S^2,\mathcal{D}(k))$, where $\varphi^*$ is the pullback functor along the morphism of dg-algebras $\varphi:k[t_1]\xrightarrow{t_1\mapsto -t_1} k[t_1]$ or equivalently the suspended cotwist $T_{\on{Fun}(S^{n-1},\mathcal{D}(k))}[2]$ of the adjunction $f^*\dashv f_*$, see \Cref{ctwprop}. The $\Gamma_\mathcal{T}$-parametrized perverse schober $\mathcal{F}_\mathcal{T}:\on{Exit}(\Gamma_\mathcal{T})\rightarrow \on{St}$ is given by the following diagram, see \Cref{sec3.1} for the notation.
\begin{equation}\label{exdiag0}
\begin{tikzcd}[row sep=small, column sep=small]
\mathcal{V}^3_{f^*} \arrow[rd, "T\circ \varrho_2"] \arrow[rr, "T\circ \varrho_3"] \arrow[dd, "T\circ \varrho_1"] &                   & \mathcal{N}_{f^*}                                                                           \\
                                                                                            & \mathcal{N}_{f^*} &                                                                                             \\
\mathcal{N}_{f^*}                                                                           &                   & \mathcal{V}^3_{f^*} \arrow[ll, "\varrho_1"] \arrow[lu, "\varrho_2"] \arrow[uu, "\varrho_3"]
\end{tikzcd}
\end{equation}
The limit of this diagram is equivalent to the colimit in $\mathcal{P}r^L$ of the right adjoint diagram (or the left adjoint, the adjoint diagrams are equivalent). A standard argument, for example using the decomposition of colimits \cite[4.2.3.10]{HTT}, shows that the colimit of the left adjoint diagram is equivalent to the colimit of the following span in $\mathcal{P}r^L$.
\begin{equation}\label{exdiag-1}
\begin{tikzcd}[row sep=small, column sep=small]
                    & \mathcal{N}_{f^*}^{\times 3} \arrow[rd, "{(\varsigma_1,\varsigma_2,\varsigma_3)}"] \arrow[ld, "{(\varsigma_1\circ T,\varsigma_2\circ T,\varsigma_3\circ T)}"'] &                     \\
\mathcal{V}^3_{f^*} &                                                                                                                                           & \mathcal{V}^3_{f^*}
\end{tikzcd}
\end{equation}
The above span is modeled in terms of dg-categories by the span \eqref{exdiag1}, so that the colimit of \eqref{exdiag-1} in $\mathcal{P}r^L$ is equivalent to the image under $\mathcal{D}(\mhyphen)$ of the homotopy colimit of \eqref{exdiag1} and thus equivalent to the derived category of the Ginzburg algebra $\mathscr{G}_\mathcal{T}$. Finally, we wish to emphasize that the appearance of the autoequivalence $T$ in the diagram \eqref{exdiag0} serves to fix the correct signs in the differential of the Ginzburg algebra. 
\end{example}

In next example of the unpunctured $4$-gon the surface has a non-empty boundary, so that the associated relative Ginzburg algebra $\mathscr{G}_\mathcal{T}$ contains more information than the non-relative Ginzburg algebra $\mathscr{G}(Q_\mathcal{T}^\circ,W_\mathcal{T}')$.

\begin{example}\label{ex2}
Consider the unpunctured square  ${\bf S}$ and the ideal triangulation $\mathcal{T}$ depicted as follows. 

\begin{center}
\begin{tikzpicture}[scale=1.2]
  \draw[very thick]
    (0, 2) -- (0, 0)
    (2, 0) -- (0, 0)
    (2, 2) -- (2, 0)
    (0, 2) -- (2, 2)
    (0, 0) -- (2, 2);
  \node (2) at (0.67,1.33){\large $\cdot$};
  \node (3) at (1.33,0.67){\large $\cdot$};  
  \draw
   (1.22,0.78) -- (0.78, 1.22)
   (1.33,0.57) -- (1.33,-0.3)
   (1.43,0.67) -- (2.3,0.67)
   (0.57, 1.33) -- (-0.3, 1.33)
   (0.67, 1.43) -- (0.67, 2.3);
\end{tikzpicture}
\end{center}

The dual ribbon graph $\Gamma_\mathcal{T}$ is included in the above depiction. The left dual of the $\Gamma_\mathcal{T}$-parametrized perverse schober $\mathcal{F}_\mathcal{T}:\on{Exit}(\Gamma_\mathcal{T})\rightarrow \on{St}$ is modeled by the following diagram of dg-categories.
\begin{equation}\label{exdiag3}
\begin{tikzcd}
                            & {k[t_1]} \arrow[d, "i_x^+"] &                                                &                             &                             \\
{k[t_1]} \arrow[r, "i_z^-"] & D_3                         & {k[t_1]} \arrow[l, "i_y^+"] \arrow[r, "i_y^-"] & D_3                         & {k[t_1]} \arrow[l, "i_z^-"] \\
                            &                             &                                                & {k[t_1]} \arrow[u, "i_x^+"] &                            
\end{tikzcd}
\end{equation}
To compute the homotopy colimit of the above diagram, we again consider the cofibrant replacement, which consists in replacing $D_3$ with the dg-category depicted in \eqref{exdiag2}. The colimit of the cofibrant replacement of \eqref{exdiag3} is quasi-equivalent to the freely generated dg-category with five objects, which can be depicted as follows.
\begin{equation}\label{exdiag4}
\begin{tikzcd}[row sep=huge, column sep=huge]
z_2 \arrow[r, "c_2", bend left] \arrow[d, "b_2^*"]  & x \arrow["l_x"', loop, distance=2em, in=125, out=55] \arrow[rd, "c_1^*"] \arrow[ld, "a_2", bend left] \arrow[l, "c_2^*"] \arrow[r, "a_1", bend left] & y_1 \arrow[d, "b_1", bend left] \arrow[l, "a_1^*"]  \\
y_2 \arrow[ru, "a_2^*"] \arrow[u, "b_2", bend left] &                                                                                                                                                        & z_1 \arrow[lu, "c_1", bend left] \arrow[u, "b_1^*"]
\end{tikzcd}
\end{equation}
The nonzero differentials are given by 
\[ d(l_x)=c_1c_1^*+c_2c_2^*-a_1^*a_1-a_2^*a_2\]
and for $i=1,2$
\[ d(a_i^*)=c_ib_i,~d(b_i^*)=a_ic_i,~d(c_i^*)=b_ia_i\,.\]
It follows that the dg-category \eqref{exdiag4} is Morita equivalent to the relative Ginzburg algebra $\mathscr{G}_\mathcal{T}$. 

The $\infty$-category of global sections of $\mathcal{F}_\mathcal{T}$ with support on the sub-ribbon graph $\Gamma_\mathcal{T}^\circ$ of internal edges can be described as the colimit of the diagram in $\mathcal{P}r^L$,
\begin{equation}\label{exdiag5}
\begin{tikzcd}
                    & {\mathcal{N}_{f^*}} \arrow[rd, "f_!"] \arrow[ld, "f_!"'] &                     \\
\mathcal{V}^1_{f^*} &                                                            & \mathcal{V}^1_{f^*}
\end{tikzcd}
\end{equation}
where $f_!:\mathcal{N}_{f^*}=\on{Fun}(S^2,\mathcal{D}(k))\rightarrow \mathcal{D}(k)=\mathcal{V}^1_{f^*}$ is the left adjoint of the pullback functor $f^*$ along $f:S^2\rightarrow \ast$. The diagram \eqref{exdiag5} is modeled by the following diagram in $\on{dgCat}_k$ 
\begin{equation}\label{exdiag6}
\begin{tikzcd}
  & {k[t_1]} \arrow[rd, "\phi"] \arrow[ld, "\phi"'] &   \\
k &                                                 & k
\end{tikzcd}
\end{equation}
where $\phi:k[t_1]\xrightarrow{t_1\mapsto 0} k$. The homotopy colimit of \eqref{exdiag6} is given by the polynomial algebra $k[t_2]$ with $|t_2|=2$, which is the Ginzburg algebra of the $A_1$-quiver. 

The gluing construction of the relative Ginzburg algebra can be adapted to describe global sections of $\mathcal{F}_\mathcal{T}$ which vanish on any fixed subset of external edges of $\Gamma_\mathcal{T}$. The constructions of $\mathscr{G}_\mathcal{T}$ and $\mathscr{G}(Q_\mathcal{T}^\circ,W_{\mathcal{T}}')$ correspond to the case of sections which vanishing on none, respectively, all external edges. For example, the $\infty$-category $\mathcal{C}$ of sections of $\mathcal{F}_\mathcal{T}$ which vanish on all but one external edge is equivalent to the colimit of the following diagram in $\mathcal{P}r^L$.
\begin{equation}\label{exdiag7}
\begin{tikzcd}
                    & {\mathcal{N}_{f^*}} \arrow[rd, "\varsigma_1"] \arrow[ld, "f_!"'] &                     \\
\mathcal{V}^1_{f^*} &                                                            & \mathcal{V}^2_{f^*}
\end{tikzcd}
\end{equation}
The $\infty$-category $\mathcal{V}^2_{f^*}$ is modeled by the dg-category $D_2$, which is the freely generated dg-category with two objects $y,z$ and morphisms of the following form,
\[
\begin{tikzcd}
y \arrow[rr, "b", bend left] &  & z \arrow[ll, "b^*"]
\end{tikzcd}
\]
with $|b|=0,~|b^*|=1$, see \Cref{algmod1}. The $\infty$-category $\mathcal{C}$ is thus equivalent to the derived $\infty$-category of the path algebra of the graded quiver
\[
\begin{tikzcd}
y \arrow[rr, "b", bend left] &  & z \arrow[ll, "b^*"] \arrow["l_z"', loop, distance=2em, in=125, out=55]
\end{tikzcd}
\]
with differential $d(l_z)=bb^*$.
\end{example}

\subsection{Spherical and projective modules}\label{sec6.3}

Let $\mathcal{T}$ be an ideal triangulation of an oriented marked surface ${\bf S}$. The $\infty$-category $\mathcal{D}(\mathscr{G}_\mathcal{T})$ contains 
\begin{itemize}
\item for each edge $e$ of $\mathcal{T}$ a module $S_e$ which is $3$-spherical if $e$ is an internal edge and exceptional if $e$ is a boundary edge.
\item for each interior marked point $v$ of ${\bf S}$ a module $C_v$ with its $k$-linear endomorphism object quasi-isomorphic to $H^*(S^1\times S^2,k)$.
\item for each edge $e$ of $\mathcal{T}$ the projective module $P_e=p_e\mathscr{G}_\mathcal{T}$, where $p_e$ is the lazy path.
\end{itemize}
The goal of this section is to identify the above listed modules in $\mathcal{H}(\Gamma_\mathcal{T},\mathcal{F}_\mathcal{T})\simeq \mathcal{D}(\mathscr{G}_\mathcal{T})$. In terms Smith's description of the $\infty$-category $\mathcal{D}^{\on{fin}}(\mathscr{G}(Q_\mathcal{T}^\circ,W_\mathcal{T}'))\subset \mathcal{D}(\mathscr{G}_\mathcal{T})$ as a full subcategory of an (untwisted) Fukaya-category of a Calabi-Yau threefold $Y$, see \cite[Section 5.4]{Smi15}, the $S_e$, where $e$ is an internal edge, correspond to Lagrangian matching spheres and the $C_v$ correspond to Lagrangian embeddings of $S^1\times S^2$ in $Y$.

We can use the limit descriptions of the $\infty$-categories of global sections to geometrically describe the objects $S_e$ and $C_v$, as coCartesian sections of the Grothendieck construction $\Gamma(\mathcal{F}_{\mathcal{T}})\rightarrow \on{Exit}(\mathcal{T})$, see also \Cref{sec2.1} at i). We begin with the case where $e$ is an internal edge. Locally at $e$, we find $\mathcal{F}_\mathcal{T}$ to be up to natural equivalence of the following form:
\[ \mathcal{V}^{3}_{f^*} \xlongrightarrow{\varrho_3} \on{Fun}(S^{2},\mathcal{D}(k)) \xlongleftarrow{T\circ \varrho_3}\mathcal{V}^{3}_{f^*}\]
Let $\iota_1:\mathcal{D}(k)\rightarrow \mathcal{V}^3_{f^*}$ denote the inclusion of the first component of the semiorthogonal decomposition. We find a coCartesian section of $\Gamma(\mathcal{F}_\mathcal{T})$, which is locally at $e$ of the form
\[\iota_1(k)\xlongrightarrow{!} f^*(k) \xlongleftarrow{!} \iota_1(k)\,, \]
and vanishes otherwise, see also \Cref{cocartnot1}. We define $S_e$ as this coCartesian section. Suppose now that $e$ is a boundary edge. Locally at $e$, $\mathcal{F}_\mathcal{T}$ is given up to natural equivalence as follows:
\[\on{Fun}(S^{2},\mathcal{D}(k)) \xlongleftarrow{\varrho_3}\mathcal{V}^{3}_{f^*}\]
We define $S_e$ to be the coCartesian section of $\Gamma(\mathcal{F}_\mathcal{T})$ which is at $e$ of the form
\[ f^*(k) \xlongleftarrow{!} \iota_1(k)\,, \] and vanishes otherwise.

For the definition of $C_v$, we consider an interior marked point of ${\bf S}$. Locally around $v$, $\mathcal{F}_\mathcal{T}$ is up to natural equivalence of the following from.
\begin{equation}\label{loccirceq}
\begin{tikzcd}[column sep=small]
                               &                                                                                          & {\on{Fun}(S^2,\mathcal{D}(k))}                                                           &                                                                                          &                                \\
                               & {\on{Fun}(S^2,\mathcal{D}(k))}                                                           & \mathcal{V}^3_{f^*} \arrow[l, "\varrho_1"] \arrow[r, "\varrho_2"] \arrow[u, "\varrho_3"] & {\on{Fun}(S^2,\mathcal{D}(k))}                                                           &                                \\
{\on{Fun}(S^2,\mathcal{D}(k))} & \mathcal{V}^3_{f^*} \arrow[d, "\varrho_1"] \arrow[u, "\varrho_2"] \arrow[l, "\varrho_3"] &                                                                                          & \mathcal{V}^3_{f^*} \arrow[u, "\varrho_1"] \arrow[d, "\varrho_2"] \arrow[r, "\varrho_3"] & {\on{Fun}(S^2,\mathcal{D}(k))} \\
                               & {\on{Fun}(S^2,\mathcal{D}(k))}                                                           & \dots \arrow[l]                                                                          & \dots                                                                                    &                               
\end{tikzcd}
\end{equation}
The corresponding object $C_v$ is given by the coCartesian section which locally at $v$ is given as follows and vanishes everywhere else. Below, $\iota_3$ is the inclusion of the third component of the semiorthogonal decomposition of $\mathcal{V}^3_{f^*}$.
\[
\begin{tikzcd}
  &                                                                                      & 0                                                                                    &                                                                                      &   \\
  & f^*(k)                                                                               & \iota_3(f^*(k)) \arrow[l, "\varrho_1"] \arrow[r, "\varrho_2"] \arrow[u, "\varrho_3"] & f^*(k)                                                                               &   \\
0 & \iota_3(f^*(k)) \arrow[d, "\varrho_1"] \arrow[u, "\varrho_2"] \arrow[l, "\varrho_3"] &                                                                                      & \iota_3(f^*(k)) \arrow[u, "\varrho_1"] \arrow[d, "\varrho_2"] \arrow[r, "\varrho_3"] & 0 \\
  & f^*(k)                                                                               & \dots \arrow[l]                                                                      & \dots                                                                                &  
\end{tikzcd}
\]

We now describe the projective modules $P_e$. To simplify notation, we do not distinguish in notation between $P_e\in \mathcal{D}(\mathscr{G}_\mathcal{T})$ and the corresponding element in the equivalent $\infty$-category $\mathcal{H}(\Gamma_\mathcal{T},\mathcal{F}_\mathcal{T})$. We denote by $\on{ev}_e:\mathcal{H}(\Gamma_\mathcal{T},\mathcal{F}_\mathcal{T})\rightarrow \on{Fun}(S^{2},\mathcal{D}(k))$ the evaluation functor at the vertex $e\in\on{Exit}(\Gamma_\mathcal{T})$. Note that limits and colimits in $\mathcal{H}(\Gamma_\mathcal{T},\mathcal{F}_\mathcal{T})$ are computed pointwise, so that the functor $\on{ev}_e$ preserves all limits and colimits and thus admits a left adjoint, denoted $\on{ev}_e^*$. Consider further the adjoint functors $i_!:\mathcal{D}(k)\leftrightarrow \on{Fun}(S^2,\mathcal{D}(k)):i^*$ defined in the discussion preceding \Cref{pbpulem}.

\begin{proposition}\label{glgenprop}
The left adjoint $\on{ev}_e^*i_!$ of the functor  
\[i^* \on{ev}_e:\mathcal{H}(\Gamma_\mathcal{T},\mathcal{F}_\mathcal{T})\rightarrow \mathcal{D}(k)\]
satisfies $\on{ev}_e^*i_!(k)\simeq P_e$.
\end{proposition}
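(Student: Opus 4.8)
The plan is to identify both sides as images of the compact generator $k\in\mathcal{D}(k)$ under adjoint functors and then to exploit the explicit dg-model for $\mathcal{H}(\Gamma_\mathcal{T},\mathcal{F}_\mathcal{T})$ built in the proof of \Cref{thm2}. Recall from that proof that $\mathcal{H}(\Gamma_\mathcal{T},\mathcal{F}_\mathcal{T})\simeq \mathcal{D}(C_\mathcal{T})$, where $C_\mathcal{T}$ is Morita equivalent to $\mathscr{G}_\mathcal{T}$, and — crucially — that the evaluation functor $\on{ev}_e$ at a vertex $e\in\on{Exit}(\Gamma_\mathcal{T})$ fits into the identification \eqref{evfuneq}, so that its left adjoint $\on{ev}_e^*$ is, under the equivalences $\mathcal{D}(C_\mathcal{T})\simeq \mathcal{H}(\Gamma_\mathcal{T},\mathcal{F}_\mathcal{T})$ and $\on{Fun}(S^2,\mathcal{D}(k))\simeq \mathcal{D}(k[t_1])$, modeled by $\mathcal{D}(i_e^{\pm})$ for an appropriate sign. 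So the first step is: rephrase the claim as computing $\on{ev}_e^*i_!(k)$, where $i_!$ is the left adjoint of $i^*\colon\on{Fun}(S^2,\mathcal{D}(k))\to\mathcal{D}(k)$, i.e.\ the left adjoint of $\mathcal{D}(\phi)$ for $\phi\colon k[t_1]\xrightarrow{t_1\mapsto 0}k$ under \eqref{loceq} (this uses the commuting triangle of \Cref{genlem} which identifies $i^*$ with $\phi^*$, hence $i_!$ with $\phi_!=\mhyphen\otimes_k^{\on{dg}}k[t_1]$, i.e.\ extension of scalars along $\phi$).

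Second step: compute $i_!(k)$. By \Cref{sphrem}, $i_!(k)$ is equivalent to the compact generator $k[t_1]\in\mathcal{D}(k[t_1])\simeq\on{Fun}(S^2,\mathcal{D}(k))$ — that is, $\phi_!(k)\simeq k[t_1]$ as one expects for base change of the ground ring along $k[t_1]\to k$ followed by the free module. Then $\on{ev}_e^*i_!(k)\simeq \on{ev}_e^*(k[t_1])$, and since $\on{ev}_e^*$ is modeled by $\mathcal{D}(i_e^{\pm})=\mhyphen\otimes_{k[t_1]}(\prescript{}{i_e^\pm}{C_\mathcal{T}})$, i.e.\ extension of scalars along the dg-functor $i_e^{\pm}\colon k[t_1]\to C_\mathcal{T}$, we get $\on{ev}_e^*(k[t_1])\simeq k[t_1]\otimes_{k[t_1]}\prescript{}{i_e^\pm}{C_\mathcal{T}}\simeq \prescript{}{i_e^\pm}{C_\mathcal{T}}$, which is precisely the right $C_\mathcal{T}$-module $p_e C_\mathcal{T}$ (the free rank-one module at the object $e$), since $i_e^{\pm}$ sends the unique object $\ast$ of $k[t_1]$ to the object $e\in C_\mathcal{T}$. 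The final step is to transport this back along the Morita equivalence $C_\mathcal{T}\simeq\mathscr{G}_\mathcal{T}$: under any Morita equivalence the representable module $p_eC_\mathcal{T}$ corresponds to $p_e\mathscr{G}_\mathcal{T}=P_e$ (the extra generators $l_{e,1}',l_{e,2}'$ at boundary vertices of $C_\mathcal{T}$ are killed in the homotopy category by the contracting homotopy witnessing the Morita equivalence, so $p_eC_\mathcal{T}$ and $p_e\mathscr{G}_\mathcal{T}$ agree in $\mathcal{D}(\mathscr{G}_\mathcal{T})$). Hence $\on{ev}_e^*i_!(k)\simeq P_e$.

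The main obstacle I anticipate is bookkeeping rather than conceptual: one must verify carefully that the equivalence $\mathcal{D}(C_\mathcal{T})\simeq\mathcal{H}(\Gamma_\mathcal{T},\mathcal{F}_\mathcal{T})$ constructed inductively in \Cref{thm2} is compatible with $\on{ev}_e$ in the strong sense that $\on{ev}_e^*$ really is computed by extension of scalars along $i_e^{\pm}$ — the statement of \Cref{thm2} only records the left adjoint of \eqref{evfuneq} up to equivalence with $\mathcal{D}(i_e^+)$ or $\mathcal{D}(i_e^-)$, and one needs that this equivalence of functors is the relevant one (the sign ambiguity is harmless here since $i_e^+$ and $i_e^-$ send $\ast$ to the same object $e$, so $p_eC_\mathcal{T}$ is insensitive to it). A clean way to organize this is to note that $\on{ev}_e^*$ and $i_!$ are both colimit-preserving functors between compactly generated $k$-linear $\infty$-categories and hence are determined by their value on the respective compact generators; then the whole argument reduces to the chain of identifications $k\mapsto i_!(k)\simeq k[t_1]\mapsto \on{ev}_e^*(k[t_1])\simeq p_eC_\mathcal{T}\simeq P_e$, each link of which is supplied by \Cref{sphrem}, \Cref{genlem}, \Cref{thm2} (property 2), and the Morita equivalence $C_\mathcal{T}\sim\mathscr{G}_\mathcal{T}$ respectively.
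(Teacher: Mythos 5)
Your proposal works correctly \emph{only for boundary edges} of $\mathcal{T}$, and this is a genuine gap: the proposition is stated and used (in \Cref{finprop} and \Cref{mutprop1}) for \emph{all} edges $e$, including internal ones. The key step in your argument — "$\on{ev}_e^*$ is modeled by $\mathcal{D}(i_e^{\pm})$" — relies on statement 2 in the proof of \Cref{thm2}, which is formulated only for boundary edges of $\mathcal{T}$; indeed the dg-functors $i_e^{\pm}\colon k[t_1]\to C_\mathcal{T}$ are \emph{defined} only when $e$ is a boundary edge (they send $t_1$ to $\pm l'_{e,1}$, and the generators $l'_{e,1},l'_{e,2}$ are only adjoined at boundary vertices of $C_\mathcal{T}$). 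For an internal edge $e$, no such $i_e^\pm$ exists in $C_\mathcal{T}$, and your chain of identifications $\on{ev}_e^*(k[t_1])\simeq k[t_1]\otimes_{k[t_1]}\prescript{}{i_e^\pm}{C_\mathcal{T}}\simeq p_eC_\mathcal{T}$ has no meaning. You flag a "bookkeeping" worry at the end, but the issue is not a sign ambiguity — it is that your argument does not reach internal edges at all.

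The paper resolves this by an induction on the number of ideal triangles, exactly because every edge $e$ of $\mathcal{T}$ \emph{is} a boundary edge of some sub-triangulation. In the base case (a single triangle) every edge is a boundary edge, and \Cref{algmod2} identifies $\on{ev}_e^*i_!$ with the dg-functor $k\to D_3$ picking out the object $e$, giving $P_e$. In the induction step one shows that $\on{ev}_e^*i_!$ factors as $\mathcal{D}(k)\xrightarrow{\on{ev}_e^*i_!}\mathcal{H}(\Gamma_{\mathcal{T}_n},\mathcal{F}_{\mathcal{T}_n})\xrightarrow{\alpha}\mathcal{H}(\Gamma_{\mathcal{T}_{n+1}},\mathcal{F}_{\mathcal{T}_{n+1}})$ where $\alpha$ is modeled by the dg-functor $C_{\mathcal{T}_n}\to C_{\mathcal{T}_{n+1}}$, which carries $p_eC_{\mathcal{T}_n}$ to $p_eC_{\mathcal{T}_{n+1}}$; the induction hypothesis (applied within $\mathcal{T}_n$, where $e$ may now be a boundary edge even if it is internal in $\mathcal{T}_{n+1}$) then gives the result. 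Your approach essentially reproduces the base case of this induction and the computations going into statement 2 of \Cref{thm2}, but to cover internal edges you still need the inductive factorization argument — the direct model via $i_e^\pm$ is not available globally.
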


\begin{proof}
We prove the statement via an induction on the number of ideal triangles of $\mathcal{T}$. Let $\mathcal{T}_1$ be the ideal triangulation consisting of a single not self-folded triangle. \Cref{algmod2} shows that $\on{ev}_e^*i_!$ is modeled by the dg-functor $\alpha:k\rightarrow D_{3}$ determined by mapping the unique object of $k$ to any one of the objects of $D_{3}$. This shows that $\on{ev}_e^*i_!(k)\simeq P_e$, as desired. The case of the triangulation consisting of a single self-folded triangle is treated similarly and left to the reader. 

Suppose the statement has been shown for all ideal triangulations $\mathcal{T}$ with at most $n$ ideal triangles. The setup is as in the induction step in the proof of \Cref{thm2}: we consider an ideal triangulation $\mathcal{T}_{n+1}$ with $n+1$ ideal triangles obtained via the gluing of ideal triangulations $\mathcal{T}_{n}$ and $\mathcal{T}_1$ with $n$, respectively, $1$ ideal triangles along $s$ boundary edges. If $e$ is an edge of $\mathcal{T}_{n}$, the functor $\mathcal{D}(k)\xrightarrow{\on{ev}_e^* i_!} \mathcal{H}(\Gamma_{\mathcal{T}_{n+1}},\mathcal{F}_{\mathcal{T}_{n+1}})$ factors as 
\[ \mathcal{D}(k)\xrightarrow{\on{ev}_e^*i_!}\mathcal{H}(\Gamma_{\mathcal{T}_n},\mathcal{F}_{\mathcal{T}_n})\xlongrightarrow{\alpha} \mathcal{H}(\Gamma_{\mathcal{T}_{n+1}},\mathcal{F}_{\mathcal{T}_{n+1}})\,,\]
where $\alpha$ is modeled by a dg-functor $C_{\mathcal{T}_n}\rightarrow C_{\mathcal{T}_{n+1}}$ and thus satisfies $\alpha(p_e\mathscr{G}_{\mathcal{T}_{n}})\simeq p_e\mathscr{G}_{\mathcal{T}_{n+1}}=P_e$. Using the induction assumption, it thus follows that $\on{ev}_e^*i_!(k)\simeq P_e$. If $e$ is an edge of $\mathcal{T}_1$, we can argue analogously. This completes the induction.
\end{proof}

\subsection{Derived equivalences arising from flips of the triangulation}\label{sec6.4}

We consider two ideal triangulations $\mathcal{T}$ and $\mathcal{T}'$ of a surface related by the flip of internal edge $e$ of $\mathcal{T}$ which is not self-folded. Locally at $e$, the change in the triangulations can be depicted as follows.

\begin{center}
\begin{tikzpicture}[scale=1]
  \draw
    (0, 0)
    (2, 2)
    (0, 2)
    (2, 0) 
    (0, 2) -- (0, 0)
    (2, 0) -- (0, 0)
    (2, 2) -- (2, 0)
    (0, 2) -- (2, 2)
    (0, 0) -- (2, 2); 
  \node (0) at (0, 0){};
  \node (1) at (0, 2){};
  \node (2) at (2, 0){};
  \node (3) at (2, 2){};   
  \node (4) at (1.2,0.9){$e$};
  \fill (0) circle (0.06);
  \fill (1) circle (0.06);
  \fill (2) circle (0.06);
  \fill (3) circle (0.06); 

  \node (A) at (3, 1){};
  \node (B) at (4, 1){};
  \node at (3.5, 1.25){flip};
  \draw [->] (A) edge (B);  
  
  \draw
    (5, 0)
    (7, 2)
    (5, 2)
    (7, 0) 
    (5, 2) -- (5, 0)
    (7, 0) -- (5, 0)
    (7, 2) -- (7, 0)
    (5, 2) -- (7, 2)
    (7, 0) -- (5, 2); 
  \node (5) at (5, 0){};
  \node (6) at (5, 2){};
  \node (7) at (7, 0){};
  \node (8) at (7, 2){};    
  \node (9) at (6.2,1.2){$e'$}; 
  \fill (5) circle (0.06);
  \fill (6) circle (0.06);
  \fill (7) circle (0.06);
  \fill (8) circle (0.06);   
\end{tikzpicture}
\end{center}

As shown in \cite{Lab09} the flip at the edge $e$ corresponds on the associated quivers with potential to the quiver mutation at the vertex of the quiver corresponding to $e$. It is shown in \cite[Section 7.6]{Kel11} that there exists an associated equivalence between the derived categories of the $3$-CY Ginzburg algebras $\mathscr{G}(Q_\mathcal{T}^\circ,W_\mathcal{T})$ and $\mathscr{G}(Q^\circ_{\mathcal{T}'},W_{\mathcal{T}'})$. The goal of this section is to associate to the flip an equivalence between the derived $\infty$-categories of the relative Ginzburg algebras $\mathscr{G}_\mathcal{T}$ and $\mathscr{G}_{\mathcal{T}'}$, thus extending the combined result of \cite{Lab09},\cite{Kel11} to relative Ginzburg algebras. 

From the perspective of dual ribbon graphs, the flip at $e$ relates the two ribbon graphs $\Gamma_{\mathcal{T}}$ and $\Gamma_{\mathcal{T}'}$, which locally differ as follows:
\begin{equation}\label{rgflip}
\begin{tikzcd}[row sep=small, column sep=small]
                      & {} \arrow[d, no head]     &                                             &    \\
{} \arrow[r, no head] & \cdot \arrow[rd, no head] &                                             &    \\
                      &                           & \cdot \arrow[r, no head] \arrow[d, no head] & {} \\
                      &                           & {}                                          &   
\end{tikzcd}
\overset{flip}{\leftrightsquigarrow} 
\begin{tikzcd}[row sep=small, column sep=small]
   &                                             & {} \arrow[d, no head]     &                       \\
   &                                             & \cdot \arrow[ld, no head] & {} \arrow[l, no head] \\
{} & \cdot \arrow[d, no head] \arrow[l, no head] &                           &                       \\
   & {}                                          &                           &                      
\end{tikzcd}
\end{equation}
The flip \eqref{rgflip} can be described by the pair of spans of contractions of ribbon graphs \eqref{span1} and \eqref{span2} below. 

\begin{equation}\label{span1}
\begin{tikzcd}[row sep=small, column sep=small]
                               & {} \arrow[d, no head, maps to] &                                &                                \\
{} \arrow[r, no head, maps to] & \cdot \arrow[rd, no head]     &                                &                                \\
                               &                                & \cdot                         & {} \arrow[l, no head, maps to] \\
                               &                                & {} \arrow[u, no head, maps to] &                               
\end{tikzcd} 
\xlongleftarrow{c_1} 
\begin{tikzcd}[row sep=small, column sep=small]
                               &                           & {} \arrow[d, no head, maps to] &                                &                           &                                \\
{} \arrow[r, no head, maps to] & \cdot \arrow[r, no head] & \cdot \arrow[rd, no head]      &                                &                           &                                \\
                               &                           &                                & \cdot                          & \cdot \arrow[l, no head] & {} \arrow[l, no head, maps to] \\
                               &                           &                                & {} \arrow[u, no head, maps to] &                           &                               
\end{tikzcd} 
\xlongrightarrow{c_2} 
\begin{tikzcd}[row sep=small, column sep=small]
                               &                           & {} \arrow[d, no head, maps to] &        &                                \\
{} \arrow[r, no head, maps to] & \cdot \arrow[r, no head] & \cdot \arrow[r, no head]       & \cdot & {} \arrow[l, no head, maps to] \\
                               &                           & {} \arrow[u, no head, maps to] &        &                               
\end{tikzcd}
\end{equation}
\begin{equation}\label{span2}
\begin{tikzcd}[row sep=small, column sep=small]
                               &                           & {} \arrow[d, no head, maps to] &        &                                \\
{} \arrow[r, no head, maps to] & \cdot \arrow[r, no head] & \cdot \arrow[r, no head]       & \cdot & {} \arrow[l, no head, maps to] \\
                               &                           & {} \arrow[u, no head, maps to] &        &                               
\end{tikzcd}
\xlongleftarrow{c_3} 
\begin{tikzcd}[row sep=small, column sep=small]
                               &                           &                                & {} \arrow[d, no head, maps to]               &        &                                \\
                               &                           &                                & \cdot \arrow[r, no head] \arrow[ld, no head] & \cdot & {} \arrow[l, no head, maps to] \\
{} \arrow[r, no head, maps to] & \cdot \arrow[r, no head] & \cdot                          &                                              &        &                                \\
                               &                           & {} \arrow[u, no head, maps to] &                                              &        &                               
\end{tikzcd} 
\xlongrightarrow{c_4}
\begin{tikzcd}[row sep=small, column sep=small]
                               &                                & {} \arrow[d, no head, maps to] &                                \\
                               &                                & \cdot \arrow[ld, no head]     & {} \arrow[l, no head, maps to] \\
{} \arrow[r, no head, maps to] & \cdot                         &                                &                                \\
                               & {} \arrow[u, no head, maps to] &                                &                               
\end{tikzcd}
\end{equation}

Denote by 
\[ T:\mathcal{N}_{f^*}\overset{\eqref{model1}}{\simeq} \mathcal{D}(k[t_1])\xrightarrow{\varphi^*} \mathcal{D}(k[t_1])\overset{\eqref{model1}}{\simeq} \mathcal{N}_{f^*}\] 
the autoequivalence, with $\varphi:k[t_{1}]\xrightarrow{t_1\mapsto -t_1} k[t_{1}]$. The contractions \eqref{span1} and \eqref{span2} give rise to the following arrangement of perverse schobers below, related via equivalences and pushforwards along the contractions. Note that no edges joining two singularities of the perverse schobers are contracted. Each step below preserves the $\infty$-category of global sections up to a canonical equivalence of $\infty$-categories.

\begin{adjustwidth}{-0.5in}{-0.5in}

\begin{equation}\label{schobers0}
\begin{tikzcd}[row sep=12, column sep=small]
                                            & {} \arrow[d, "\varrho_2", no head, maps to]              &                                              &                                              \\
{} \arrow[r, "\varrho_3", no head, maps to] & f^* \arrow[rd, "{(\varrho_1,T\circ\varrho_1)}", no head] &                                              &                                              \\
                                            &                                                          & f^*                                          & {} \arrow[l, "\varrho_3"', no head, maps to] \\
                                            &                                                          & {} \arrow[u, "\varrho_2"', no head, maps to] &                                             
\end{tikzcd} \simeq 
\end{equation}

\begin{equation}\label{schobers1}
\simeq
\begin{tikzcd}[row sep=12, column sep=small]
                                      & {} \arrow[d, "\varrho_2", no head, maps to]  &                                        &                                        \\
{} \arrow[r, "\varrho_3", no head, maps to] & f^* \arrow[rd, "{(\varrho_1,\varrho_1)}", no head] &                                        &                                        \\
                                      &                                        & f^*                                    & {} \arrow[l, "T\circ\varrho_3"', no head, maps to] \\
                                      &                                        & {} \arrow[u, "T\circ \varrho_2"', no head, maps to] &                                       
\end{tikzcd}
\xlongleftarrow{~(c_1)_*~}
\begin{tikzcd}[row sep=12, column sep=small]
                                            &                                                   & [10pt] {} \arrow[d, "\varrho_2", no head, maps to]                          &                                                    &                                                    &                                                    \\
{} \arrow[r, "\varrho_2", no head, maps to] & f^* \arrow[r, "{(\varrho_1,\varrho_3)}", no head] & 0_{\mathcal{N}_{f^*}} \arrow[rd, "{(\varrho_1,\varrho_1)}", no head] &                                                    &                                                    &                                                    \\
                                            &                                                   &                                                                      & 0_{\mathcal{N}_{f^*}}                              & f^* \arrow[l, "{(\varrho_3,\varrho_1)}"', no head] & {} \arrow[l, "T\circ\varrho_2"', no head, maps to] \\
                                            &                                                   &                                                                      & {} \arrow[u, "T\circ\varrho_2"', no head, maps to] &                                                    &                                                   
\end{tikzcd}
\simeq
\end{equation}

\begin{equation}\label{schobers2} 
\simeq
\begin{tikzcd}[row sep=12, column sep=small]
                                      &      [-5pt]                               & [15pt]{} \arrow[d, "{\varrho_1[1]}", no head, maps to]                    &                                        &                                        &                                       \\
{} \arrow[r, "\varrho_2", no head, maps to] & f^* \arrow[r, "{(\varrho_1,\varrho_2[1])}"', no head] & 0_{\mathcal{N}_{f^*}} \arrow[rd, "{(\varrho_3,\varrho_1)}", no head] &                                        &                                        &                                       \\
                                      &                                       &                                                          & 0_{\mathcal{N}_{f^*}}                  & f^* \arrow[l, "{(\varrho_3,\varrho_1)}"', no head] & {} \arrow[l, "T\circ\varrho_2"', no head, maps to] \\
                                      &                                       &                                                          & {} \arrow[u, "T\circ\varrho_2"', no head, maps to] &                                        &                                      
\end{tikzcd}
\xlongrightarrow{~(c_2)_*~}
\begin{tikzcd}[row sep=12, column sep=small]
                                      &                                      [-5pt]     & [15pt]{} \arrow[d, "{\varrho_1[1]}", no head, maps to]             &  [5pt]   &                                        \\
{} \arrow[r, "\varrho_2", no head, maps to] & f^* \arrow[r, "{(\varrho_1,\varrho_2[1])}"', no head] & 0_{\mathcal{N}_{f^*}} \arrow[r, "{(\varrho_4,\varrho_1)}", no head] & f^* & {} \arrow[l, "T\circ\varrho_2"', no head, maps to] \\
                                      &                                           & {} \arrow[u, "T\circ\varrho_3"', no head, maps to]                  &     &                                       
\end{tikzcd}
\simeq 
\end{equation}

\begin{equation}\label{schobers3}
\simeq
\begin{tikzcd}[row sep=12, column sep=small]
                                    &                                         [-5pt]  &  [15pt]{} \arrow[d, "{\varrho_4[1]}", no head, maps to]              &  [20pt]   &                                       [-5pt] \\
{} \arrow[r, "\varrho_2", no head, maps to] & f^* \arrow[r, "{(\varrho_1,\varrho_1[3])}", no head] & 0_{\mathcal{N}_{f^*}} \arrow[r, "{(\varrho_3[2],\varrho_1)}", no head] & f^* & {} \arrow[l, "T\circ \varrho_2"', no head, maps to] \\
                                      &                                          & {} \arrow[u, "{T\circ \varrho_2[2]}"', no head, maps to]             &     &                                       
\end{tikzcd}
\xlongleftarrow{~(c_3)_*~}
\begin{tikzcd}[row sep=12, column sep=small]
                                      &   [-5pt]                                       &                                                         [15pt]  & [-5pt]{} \arrow[d, "{\varrho_3[1]}", no head, maps to]              &   [10pt]  &                                        \\
                                      &                                          &                                                          & 0_{\mathcal{N}_{f^*}} \arrow[r, "{(\varrho_2[2],\varrho_1)}", no head] & f^* & {} \arrow[l, "T\circ \varrho_2"', no head, maps to] \\
{} \arrow[r, "\varrho_2", no head, maps to] & f^* \arrow[r, "{(\varrho_1,\varrho_1[3])}", no head] & 0_{\mathcal{N}_{f^*}} \arrow[ru, "{(\varrho_3,\varrho_1)}", no head] &                                                         &     &                                        \\
                                      &                                          & {} \arrow[u, "{T\circ \varrho_2[2]}", no head, maps to]               &                                                         &     &                                       
\end{tikzcd}
\simeq
\end{equation}

\begin{equation}\label{schobers4}
\simeq
\begin{tikzcd}[row sep=12, column sep=small]
                                      &                                       &  [20pt]                                                        & {} \arrow[d, "{\varrho_1[2]}", no head, maps to]                   &  [20pt]   &                                        \\
                                      &                                       &                                                          & 0_{\mathcal{N}_{f^*}} \arrow[r, "{(\varrho_3[2],\varrho_1)}", no head] & f^* & {} \arrow[l, "T\circ \varrho_2"', no head, maps to] \\
{} \arrow[r, "\varrho_2", no head, maps to] & f^* \arrow[r, "{(\varrho_1,\varrho_3[3])}", no head] & 0_{\mathcal{N}_{f^*}} \arrow[ru, "{(\varrho_2[1],\varrho_2)}", no head] &                                                         &     &                                        \\
                                      &                                       & {} \arrow[u, "{T\circ\varrho_1[3]}", no head, maps to]                    &                                                         &     &                                       
\end{tikzcd}
\xlongrightarrow{~(c_4)_*~}
\begin{tikzcd}[row sep=12, column sep=small]
                                      &                                        & {} \arrow[d, "{\varrho_1[2]}", no head, maps to] &                                       \\
                                      &                                        & f^*                                   & {} \arrow[l, "{T\circ\varrho_3[2]}", no head, maps to] \\
{} \arrow[r, "{\varrho_3[3]}", no head, maps to] & f^* \arrow[ru, "{(\varrho_2[1],\varrho_2)}", no head] &                                       &                                       \\
                                      & {} \arrow[u, "{T\circ\varrho_1[3]}", no head, maps to]  &                                       &                                      
\end{tikzcd}
\simeq 
\end{equation}

\begin{equation}\label{schobers5}
\simeq 
\begin{tikzcd}[row sep=12, column sep=small]
                                      &                                        & {} \arrow[d, "{T\circ\varrho_1}", no head, maps to] &                                       \\
                                      &                                        & f^*                                   & {} \arrow[l, "{\varrho_3}", no head, maps to] \\
{} \arrow[r, "{\varrho_3}", no head, maps to] & f^* \arrow[ru, "{(\varrho_2,T\circ \varrho_2)}", no head] &                                       &                                       \\
                                      & {} \arrow[u, "{T\circ \varrho_1}", no head, maps to]  &                                       &                                      
\end{tikzcd}
\end{equation}
\end{adjustwidth}

Each of the above equivalences of parametrized perverse schober is nontrivial only at one or two vertices with label $0_{\mathcal{N}_{f^*}}$, where it is given by the paracyclic twist functor $T_{\mathcal{V}^n_{0_{\mathcal{N}_{f^*}}}}$ of \Cref{sec3.2}, see also \Cref{paralem4}, except for the equivalence between the parametrized perverse schober in \eqref{schobers0} and the left parametrized perverse schober in \eqref{schobers1} and the equivalence between the right parametrized perverse schober of \eqref{schobers4} and the parametrized perverse schober of \eqref{schobers5}. The former is nontrivial only at the lower vertex labeled $f^*$, where it is given by the autoequivalence $\epsilon$ of $\mathcal{V}^3_{f^*}$ which restricts on both the components $\on{Fun}(S^2,\mathcal{D}(k))$ of the semiorthogonal decomposition to $T$ and on the component $\mathcal{D}(k)$ of the semiorthogonal decomposition to the identity functor. The latter equivalence of parametrized perverse schobers is nontrivial at three objects of the exit path category corresponding to the two vertices and the edge connecting them. At the lower vertex, the equivalence is given by $[3]$, at the upper vertex by $\epsilon \circ [2]$ and at the object of $\on{Exit}(\mathcal{T})$ corresponding to the diagonal edge by $[-2]$. 

We assume for the moment that neither $e$ nor its flip $e'$ are the outer edge of a self-folded triangle. The perverse schober $\mathcal{F}_{\mathcal{T}}$ from \Cref{thm2} can be chosen to restrict locally at $e$ to the perverse schober \eqref{schobers0}. We can thus describe $\mathcal{F}_{\mathcal{T}}$ as the gluing (in the sense of \Cref{sgluelem}) of the parametrized perverse schober of \eqref{schobers0} and its complement in $\mathcal{F}_{\mathcal{T}}$.  Similarly, $\mathcal{F}_{\mathcal{T}'}$ can be chosen to be the gluing of the complement of the parametrized perverse schober \eqref{schobers0} in $\mathcal{F}_{\mathcal{T}}$ and the parametrized perverse schober \eqref{schobers5}. 

We can thus glue the complement of the parametrized perverse schober \eqref{schobers0} in $\mathcal{F}_\mathcal{T}$ with the parametrized perverse schobers \eqref{schobers0}-\eqref{schobers5} and use that the global sections are in each step preserved up to equivalence of $\infty$-categories, see \Cref{conprop}, to obtain an equivalence of $\infty$-categories
\begin{equation}\label{muteqeq}
\mu_e:\mathcal{H}(\Gamma_{\mathcal{T}},\mathcal{F}_{\mathcal{T}})\rightarrow \mathcal{H}(\Gamma_{\mathcal{T}'},\mathcal{F}_{\mathcal{T}'})
\end{equation}
which we call the mutation equivalence at $e$. 

If $e$ or $e'$, say $e$, is the outer edge of a self-folded triangle, we cannot take the naive  complement of \eqref{schobers0} in $\mathcal{F}_\mathcal{T}$ because the underlying ribbon graph would have an edge without any endpoints. We change $\Gamma_\mathcal{T}$ by replacing the edge $e$ by the ribbon graph 
\[
\begin{tikzcd}
{} \arrow[r, no head, maps to] & \cdot & {} \arrow[l, "f", no head, maps to]
\end{tikzcd}
\]
and obtain a parametrized perverse schober $\tilde{\mathcal{F}}_\mathcal{T}$ by changing $\mathcal{F}_\mathcal{T}$ at $e$ to 
\[\begin{tikzcd}
{} \arrow[r, no head, maps to] & 0_{\mathcal{N}_{f^*}} & {} \arrow[l, no head, maps to]
\end{tikzcd}\,.\]
We also change $\mathcal{F}_{\mathcal{T}'}$ accordingly to $\tilde{\mathcal{F}}_{\mathcal{T}'}$. The contraction of the edge $f$ then induces equivalences between global sections of $\mathcal{F}_\mathcal{T}$ and $\tilde{\mathcal{F}}_\mathcal{T}$ as well as $\mathcal{F}_{\mathcal{T}'}$ and $\tilde{\mathcal{F}}_{\mathcal{T}'}$. Composing with the equivalence on global sections as in \eqref{muteqeq} applied to $\tilde{\mathcal{F}}_\mathcal{T}$, we obtain the equivalence of $\infty$-categories \eqref{muteqeq}.

\begin{remark}
The mutation equivalence $\mu_e$ maps global sections of $\mathcal{F}_{\mathcal{T}}$ with support on $\Gamma_{\mathcal{T}}^\circ$ to global sections of $\mathcal{F}_{\mathcal{T}'}$ which support on $\Gamma_{\mathcal{T}'}^\circ$. We thus obtain an equivalence of $\infty$-categories
\[\mathcal{D}(Q_\mathcal{T}^\circ,W_{\mathcal{T}}')\simeq \mathcal{H}_{\Gamma_\mathcal{T}^\circ}(\Gamma_{\mathcal{T}},\mathcal{F}_{\mathcal{T}})\xrightarrow{\mu_e} \mathcal{H}_{\Gamma_{\mathcal{T}'}^\circ}(\Gamma_{\mathcal{T}'},\mathcal{F}_{\mathcal{T}'})\simeq \mathcal{D}(Q_{\mathcal{T}'}^\circ,W_{\mathcal{T}'}')\,.\]
\end{remark}

\begin{proposition}\label{mutprop1}
Let $\mathcal{T},\mathcal{T}'$ be two ideal triangulations of a surface, differing by a flip of an internal edge $e\in \mathcal{T}$ to $e'\in \mathcal{T}'$. Given an edge $f\neq e$ of $\mathcal{T}$, we denote by $f'$ the corresponding edge of $\mathcal{T}'$.
\begin{enumerate} 
\item There exist an equivalence in $\mathcal{H}(\Gamma_{\mathcal{T}'},\mathcal{F}_{\mathcal{T}'})$
\begin{equation}\label{peq1} 
\mu_e(P_e)\simeq \on{cof}\big(P_{e'}\rightarrow \bigoplus_{\alpha\in (Q_{\mathcal{T}})_1,\,s(\alpha)=e} P_{t(\alpha)'}\big)\,.
\end{equation}
\item Let $f\neq e$ be an edge of $\mathcal{T}$. There exists an equivalence in $\mathcal{H}(\Gamma_{\mathcal{T}'},\mathcal{F}_{\mathcal{T}'})$
\[ \mu_e(P_f)\simeq P_{f'}\,.\]
\end{enumerate}
\end{proposition}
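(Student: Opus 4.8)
The plan is to prove both statements by an induction on the number of ideal triangles of $\mathcal{T}$, entirely parallel to the proof of \Cref{glgenprop}, so that the content is concentrated in a single base case. Fix $\mathcal{T},\mathcal{T}'$ differing by the flip at $e$; they agree outside a $4$-gon neighbourhood of $e$, and we may build up $\mathcal{T}$ (equivalently $\mathcal{T}'$) from the $4$-gon core by attaching the remaining triangles one at a time away from the flip region. Each attachment is a gluing of parametrized perverse schobers in the sense of \Cref{sgluelem}, modelled on global sections by a dg-functor $C_{\mathcal{T}_n}\to C_{\mathcal{T}_{n+1}}$ which sends $p_v\mathscr{G}_{\mathcal{T}_n}$ to $p_v\mathscr{G}_{\mathcal{T}_{n+1}}$; since the equivalence $\mu_e$ of \eqref{muteqeq} is, by its very construction, the identity on the complement of the core in $\mathcal{F}_\mathcal{T}$ and the composite of \eqref{schobers0}--\eqref{schobers5} on the core, the inductive step for both statements is formal once the base case — $\mathcal{T}$ the $4$-gon with diagonal $e$ — is settled. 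In the base case $\mathcal{H}(\Gamma_\mathcal{T},\mathcal{F}_\mathcal{T})\simeq\mathcal{D}(\mathscr{G}_\mathcal{T})$ and $\mathcal{H}(\Gamma_{\mathcal{T}'},\mathcal{F}_{\mathcal{T}'})\simeq\mathcal{D}(\mathscr{G}_{\mathcal{T}'})$ are the explicit relative Ginzburg algebras of \Cref{ex2}, and all projectives $P_\bullet$ are, via \Cref{glgenprop}, the coCartesian sections $\on{ev}_\bullet^*i_!(k)$ of the corresponding Grothendieck constructions, described locally in \Cref{sec6.3}.

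\textbf{Proof of statement 2.} For $f\neq e$ the object $f$ of the exit path category is, at every stage of \eqref{schobers0}--\eqref{schobers5}, one of the external edges through which the core is glued to the rest, and none of the contractions $c_1,\dots,c_4$ nor the intervening autoequivalences act at $f$. Using that the pushforward equivalences of \Cref{conprop} are built from right Kan extensions which restrict to the identity away from the contracted edge, one gets a commuting triangle $\on{ev}_{f'}\circ\mu_e\simeq\on{ev}_f$ of functors to $\on{Fun}(S^2,\mathcal{D}(k))$. Passing to left adjoints gives $\mu_e\circ\on{ev}_f^*\simeq\on{ev}_{f'}^*$, and composing with $i_!$ and applying \Cref{glgenprop} on both sides yields $\mu_e(P_f)\simeq\on{ev}_{f'}^*i_!(k)\simeq P_{f'}$, as claimed.

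\textbf{Proof of statement 1.} Represent $P_e$ as the coCartesian section $\on{ev}_e^*i_!(k)$ of the Grothendieck construction of the core of $\mathcal{F}_\mathcal{T}$ and transport it successively through the steps of \eqref{schobers0}--\eqref{schobers5}. The autoequivalence steps act by the explicit formulas already available: the paracyclic twists of \Cref{paralem4}, the suspended cotwist $T=T_{\on{Fun}(S^2,\mathcal{D}(k))}[2]$ of \Cref{ctwprop} (i.e.\ the sign reversal $\varphi^*$ on $\mathcal{D}(k[t_1])$), the shifts $[1],[2],[3]$, and the functor $\epsilon$ on $\mathcal{V}^3_{f^*}$ described after \eqref{schobers5}. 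The steps $(c_i)_*$ act by the right-Kan-extension recipe of \Cref{conprop}, whose effect on a section supported near the (un)contracted edge can be computed from the local descriptions of the $\varrho_i$ and $\varsigma_i$ in \Cref{rdknot2} and \Cref{locgrem}; note that after $(c_2)_*$ the edge $e$ has become a $0_{\mathcal{N}_{f^*}}$-vertex, so the section acquires a genuine two-term shape there. Tracking the section through the remaining steps $(c_3)_*,(c_4)_*$ and the final equivalences produces a fiber-and-cofiber sequence in $\mathcal{H}(\Gamma_{\mathcal{T}'},\mathcal{F}_{\mathcal{T}'})$ whose two outer terms are recognised, once more via \Cref{glgenprop}, as $P_{e'}$ and $\bigoplus_{\alpha\in(Q_\mathcal{T})_1,\,s(\alpha)=e}P_{t(\alpha)'}$, and a comparison of the connecting map with the morphisms of $P$'s induced by the arrows of $Q_{\mathcal{T}'}$ into $e'$ (which are exactly the reverses of the arrows of $Q_\mathcal{T}$ out of $e$, by \cite{Lab09}) identifies it with the map in \eqref{peq1}. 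Feeding this back through the inductive step of the first paragraph completes the proof.

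\textbf{Expected main obstacle.} The only real difficulty is the bookkeeping in the last paragraph: keeping track of the accumulated shifts, the sign-reversal $T$ and the autoequivalence $\epsilon$ across the six intermediate parametrized perverse schobers, while simultaneously controlling the explicit action of the pushforward functors $(c_i)_*$ on the particular section representing $P_e$. The cleanest way to make this mechanical — and the way I would actually carry it out — is to work throughout with dg-models, replacing each intermediate $\mathcal{H}(\Gamma,\mathcal{F})$ by the homotopy colimit of an explicit diagram of dg-categories as in the proof of \Cref{thm2} and \Cref{ex2}, and each equivalence or pushforward by an induced zig-zag of dg-functors; then $P_e$ is at every stage an honest projective module $p_vC$, the sequence \eqref{peq1} becomes a short exact sequence of such modules, and the sign comparison reduces to inspection of the differentials of the dg-categories involved.
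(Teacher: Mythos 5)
Your overall strategy for statement 1 --- tracking the projective $P_e$ as a coCartesian section through the zigzag \eqref{schobers0}--\eqref{schobers5} --- is the left-adjoint dual of what the paper actually does: the paper fixes the \emph{evaluation} functor and tracks how $\on{ev}_e$ transforms under the inverses of the mutation equivalences, producing a functor $R:\mathcal{H}(\Gamma_{\mathcal{T}'},\mathcal{F}_{\mathcal{T}'})\to\on{Fun}(S^2,\mathcal{D}(k))$ with $R\circ\mu_e\simeq\on{ev}_e$, and only at the very end passes to left adjoints and evaluates at $k$. Both points of view are valid, and your argument for statement 2 is essentially the paper's (which is only sketched there), made a touch more explicit.

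Your reduction to the $4$-gon base case is, however, not what the paper does and has a wrinkle you do not address: the four boundary edges of the flip square may be identified in $\Gamma_\mathcal{T}$ --- this happens for \emph{every} flip in the once-punctured torus of \Cref{ex1} --- so the base case and the gluing step are not as cleanly separable as you assert. The paper sidesteps this by never leaving the global schober: it defines intermediate perverse schobers $\mathcal{G}_1,\dots,\mathcal{G}_4$ as gluings of the \emph{complement} $\mathcal{F}^c$ of the flip square with the successive core schobers, and carries out the entire computation there. The inert data on $\mathcal{F}^c$ then handles identifications and the multiplicity in $\bigoplus_\alpha P_{t(\alpha)'}$ automatically.

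The more substantial issue is that for statement 1 you describe the structure of the computation but do not perform it, and you flag this yourself. The content of the proof is precisely the identification of the transported functor after each step $(c_i)_*$; the paper does this not by rectifying to dg-models but by means of two explicit natural transformations between the categorified restriction maps on $\mathcal{V}^3_{0_{\mathcal{N}_{f^*}}}$, namely $a:\varrho_3[-1]=\pi_1\Rightarrow\pi_2=\varrho_1$, given at $x\xrightarrow{\alpha}y$ by $\alpha$ itself, and $b=a\circ T_{\mathcal{V}^3_{0_{\mathcal{N}_{f^*}}}}[-1]:\varrho_2=\on{cof}_{1,2}\Rightarrow\pi_1[1]=\varrho_3$. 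These compose to exhibit $R\simeq\on{fib}(\on{ev}_{f_1}\oplus\on{ev}_{f_2}\to\on{ev}_{e'})$, and passing to left adjoints via $i^*\circ\on{ev}_g\simeq\on{Mor}(P_g,\mhyphen)$ gives $\mu_e(P_e)\simeq\on{cof}(P_{e'}\to P_{f_1}\oplus P_{f_2})$. Your alternative of passing to dg-models is probably feasible, but it would require a dg-model for $\mathcal{V}^4_{0_{\mathcal{N}_{f^*}}}$ (not supplied by the paper) and exactly the shift-and-sign bookkeeping you worry about is what the natural-transformation calculus keeps under control. As written, the proposal correctly identifies the shape of the argument and handles statement 2, but stops short of the computation that is the heart of statement 1.
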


\begin{proof}
We begin by showing statement $1$. The idea of the proof is to trace through the equivalences on global sections induced by \eqref{schobers0}-\eqref{schobers5} and describe the composition of the inverses of these equivalences with the evaluation at a point functor $\mathcal{H}(\Gamma_{\mathcal{T}},\mathcal{F}_{\mathcal{T}})\xrightarrow{\on{ev}_e}\on{Fun}(S^2,\mathcal{D}(k))\xrightarrow{i^*}\mathcal{D}(k)$. Passing to left adjoints and evaluating at $k\in \mathcal{D}(k)$ yields the image of $P_e$ under $\mu_e$.

We denote the complement of the parametrized perverse schober of \eqref{schobers0} in $\mathcal{F}_{\mathcal{T}}$ by $\mathcal{F}^c$. 
 
We denote the gluing of $\mathcal{F}^c$ with the parametrized perverse schober
\begin{itemize}
\item on the right of \eqref{schobers2} by $\mathcal{G}_1$.
\item on the left of \eqref{schobers3} by $\mathcal{G}_2$.
\item on the right of \eqref{schobers3} by $\mathcal{G}_3$. 
\item on the left of \eqref{schobers4} by $\mathcal{G}_4$.
\end{itemize}
The evaluation of $\mathcal{G}_1$ at the central vertex yields the $\infty$-category 
\[ \mathcal{V}^4_{0_{\mathcal{N}_{f^*}}}\simeq \{\on{Fun}(S^2,\mathcal{D}(k)),\on{Fun}(S^2,\mathcal{D}(k)),\on{Fun}(S^2,\mathcal{D}(k))\}\,.\]
A direct computation shows that the composite of the equivalence $\lim \mathcal{G}_1\simeq \mathcal{H}(\Gamma_{\mathcal{T}},\mathcal{F}_{\mathcal{T}})$ with $\on{ev}_{e}$ is given by the composite functor $R_1$ of the evaluation at the central vertex (labeled $\mathcal{V}^4_{0_{\mathcal{N}_{f^*}}}$) with the restriction functor to the second component of the semiorthogonal decomposition of $\mathcal{V}^4_{0_{\mathcal{N}_{f^*}}}$. Precomposing the functor $R_1:\lim \mathcal{G}_1\rightarrow \on{Fun}(S^2,\mathcal{D}(k))$ with the equivalence $\lim \mathcal{G}_2\simeq \lim \mathcal{G}_1$ yields the functor $R_2$ given by the composite of the evaluation functor to the central vertex (labeled $\mathcal{V}^4_{0_{\mathcal{N}_{f^*}}}$) with the functor $\on{cof}_{1,3}[1]$, which is the composite of the suspension of the cofiber functor with the restriction functor to the first and third component of the semiorthogonal decomposition. We denote the diagonal edge of the central ribbon graph of \eqref{span2} by $e''$, adjacent to the upper vertex denoted $v_1$ and the lower vertex denoted $v_2$. Note that $\on{ev}_{e''}\simeq \varrho_1\circ \on{ev}_{v_1}\simeq \varrho_3\circ \on{ev}_{v_2}$. Further, there exist canonical natural transformations 
\[a:\varrho_3[-1]=\pi_1\longrightarrow \pi_2=\varrho_1\,,\] 
given at a vertex $x\xrightarrow{\alpha}y\in \mathcal{V}^3_{0_{\mathcal{N}_{f^*}}}=\{\on{Fun}(S^2,\mathcal{D}(k)),\on{Fun}(S^2,\mathcal{D}(k))\}$ by the edge $\alpha$ and a canonical natural transformation 
\[ b:\varrho_2=\on{cof}_{1,2}\longrightarrow\pi_1[1]=\varrho_3\,,\]
given by $b=a\circ T_{\mathcal{V}^3_{0_{\mathcal{N}_{f^*}}}}[-1]$. Again, it can be directly checked that the composition of $R_2$ with the equivalence $\lim \mathcal{G}_3\simeq \lim \mathcal{G}_2$ induced by the contraction $c_3$ yields the functor $R_3:\lim \mathcal{G}_3\rightarrow \on{Fun}(S^2,\mathcal{D}(k))$ given by the suspension of the cofiber 
\[ \on{cof}\big(\varrho_3[-1]\circ \on{ev}_{v_1}\oplus \varrho_2\circ \on{ev}_{v_2} \xrightarrow{(a\circ \on{ev}_{v_1},b\circ \on{ev}_{v_2})} \on{ev}_{e''}\big)[1]\]
in the stable $\infty$-category $\on{Fun}(\on{lim}\mathcal{G}_3,\on{Fun}(S^2,\mathcal{D}(k))$. The composition $R_4$ of $R_3$ with $\lim \mathcal{G}_4\simeq \lim \mathcal{G}_3$ yields the functor given by the suspension of the cofiber
\[ \on{cof}\big(\varrho_1\circ \on{ev}_{v_1}\oplus \varrho_1[1]\circ \on{ev}_{v_2} \rightarrow\on{ev}_{e''}\big)[1]\simeq \on{cof}\big(\on{ev}_{f_1}[-2]\oplus \on{ev}_{f_2}[-2] \rightarrow\on{ev}_{e''}\big)[1]\,.\]
Here $f_1$ and $f_2$ denote the two edges of $\Gamma_\mathcal{T}$ which precede $e$ in the cyclic order of the edges at the two vertices incident to $e$ (we consider these edges as edges of the ribbon graph underlying \eqref{schobers4}). Note also that the two edges $f_1,f_2$ exactly describe the targets of the arrows $\alpha\in (Q_\mathcal{T})_1$ satisfying $s(\alpha)=e$. Continuing as before, we see that the composite of the equivalence $\mathcal{H}(\Gamma_{\mathcal{T}'},\mathcal{F}_{\mathcal{T}'})\simeq \mathcal{H}(\Gamma_{\mathcal{T}},\mathcal{F}_{\mathcal{T}})$ with $\on{ev}_{e}$ yields the functor 
\[ R=\on{cof}\big(\on{ev}_{f_1}[-2]\oplus \on{ev}_{f_2}[-2]\rightarrow \on{ev}_{e'}[-2]\big)[1] \simeq \on{fib}\big(\on{ev}_{f_1}\oplus \on{ev}_{f_2}\rightarrow \on{ev}_{e'}\big)\,.\] 
It follows from \Cref{glgenprop} that $\on{ev}_g^*i_!\simeq \mhyphen \otimes P_g$ for $g=e',f_1,f_2$. The functor $i^*\circ \on{ev}_g$ is thus equivalent to the morphism object functor $\on{Mor}_{\mathcal{H}(\Gamma_{\mathcal{T}'},\mathcal{F}_{\mathcal{T}'})}(P_e,\mhyphen)$ with respect to the $k$-linear structure of $\mathcal{H}(\Gamma_{\mathcal{T}'},\mathcal{F}_{\mathcal{T}'})\simeq \mathcal{D}(\mathscr{G}_{\mathcal{T}'})$, see \cite[4.2.1.28]{HA}. It follows that the functor $i^*\circ R$ is equivalent to 
\[ \on{Mor}_{\mathcal{H}(\Gamma_{\mathcal{T}'},\mathcal{F}_{\mathcal{T}'})}\big(\on{cof}\big(P_{e'}\rightarrow P_{f_1}\oplus P_{f_2}\big),\mhyphen\big)\] 
for some morphism $P_{e'}\rightarrow P_{f_1}\oplus P_{f_2}$ and the left adjoint thus maps $k\in \mathcal{D}(k)$ to $\on{cof}\big(P_{e'}\rightarrow P_{f_1}\oplus P_{f_2}\big)$, showing statement 1.

Statement 2 can be approached like statement 1., but is more immediate, because the respective edges of the ribbon graphs corresponding to the the projective objects are not affected by \eqref{schobers0}-\eqref{schobers5}.
\end{proof}

\begin{remark}
The formulas in \Cref{mutprop1} for the images of the the projective modules under $\mu_e$ recover the formulas given in the context of completed non-relative Ginzburg algebras in \cite{KY11}. In the context of completed Ginzburg algebras, it is noted in \cite[Theorem 7.4]{Kel12} that there exist two mutation equivalences for each vertex $i$ of the quiver, whose values on objects differ by the spherical twist of the spherical object associated to $i$. In terms of our construction, we can also produce a second mutation equivalence $\mu_e'$ which differs on objects by the spherical twist around $S_e$ by replacing the spans of ribbons graphs \eqref{span1} and \eqref{span2} by the spans 
\begin{equation*}
\begin{tikzcd}[row sep=small, column sep=small]
                               & {} \arrow[d, no head, maps to] &                                &                                \\
{} \arrow[r, no head, maps to] & \cdot \arrow[rd, no head]     &                                &                                \\
                               &                                & \cdot                         & {} \arrow[l, no head, maps to] \\
                               &                                & {} \arrow[u, no head, maps to] &                               
\end{tikzcd} 
\longleftarrow
\begin{tikzcd}[row sep=small, column sep=small]
                               & {} \arrow[d, no head, maps to] &                                &                                \\
                               & \cdot \arrow[d, no head]      &                                &                                \\
{} \arrow[r, no head, maps to] & \cdot \arrow[rd, no head]      &                                &                                \\
                               &                                & \cdot                          & {} \arrow[l, no head, maps to] \\
                               &                                & \cdot \arrow[u, no head]      &                                \\
                               &                                & {} \arrow[u, no head, maps to] &                               
\end{tikzcd}
\longrightarrow
\begin{tikzcd}[row sep=small, column sep=small]
                               & {} \arrow[d, no head, maps to] &                                \\
                               & \cdot \arrow[d, no head]      &                                \\
{} \arrow[r, no head, maps to] & \cdot                          & {} \arrow[l, no head, maps to] \\
                               & \cdot \arrow[u, no head]      &                                \\
                               & {} \arrow[u, no head, maps to] &                               
\end{tikzcd}
\end{equation*}
\begin{equation*}
\begin{tikzcd}[row sep=small, column sep=small]
                               & {} \arrow[d, no head, maps to] &                                \\
                               & \cdot \arrow[d, no head]      &                                \\
{} \arrow[r, no head, maps to] & \cdot                          & {} \arrow[l, no head, maps to] \\
                               & \cdot \arrow[u, no head]      &                                \\
                               & {} \arrow[u, no head, maps to] &                               
\end{tikzcd}
\longleftarrow
\begin{tikzcd}[row sep=small, column sep=small]
                               &                                & {} \arrow[d, no head, maps to] &                                \\
                               &                                & \cdot \arrow[d, no head]      &                                \\
                               &                                & \cdot \arrow[ld, no head]      & {} \arrow[l, no head, maps to] \\
{} \arrow[r, no head, maps to] & \cdot                          &                                &                                \\
                               & \cdot \arrow[u, no head]      &                                &                                \\
                               & {} \arrow[u, no head, maps to] &                                &                               
\end{tikzcd}
\longrightarrow
\begin{tikzcd}[row sep=small, column sep=small]
                               &                                & {} \arrow[d, no head, maps to] &                                \\
                               &                                & \cdot \arrow[ld, no head]     & {} \arrow[l, no head, maps to] \\
{} \arrow[r, no head, maps to] & \cdot                         &                                &                                \\
                               & {} \arrow[u, no head, maps to] &                                &                               
\end{tikzcd}
\end{equation*}
and adapting the construction of the mutation equivalence $\mu_e$ accordingly.

It remains an interesting problem to construct a natural equivalence $\mu_e\simeq T_{S_e}\circ \mu_e'$, where $T_{S_e}$ denotes the twist functor of the spherical adjunction induced by the spherical object $S_e$.  
\end{remark}

\section{Further directions}\label{sec7}

\subsection{Invariants of triangulated spin surfaces}\label{spinsec}

In this section we show that the equivalence class of the parametrized perverse schober $\mathcal{F}_\mathcal{T}$ constructed in \Cref{thm2} does not depend on any of the choices made in its construction. Before we can state a precise result we need to briefly discuss combinatorial models for spin surfaces, following \cite{DK15}.

\begin{definition}\label{spgrdef}
Let ${\bf S}$ be an oriented marked surface. Let $\Gamma$ be a graph with an embedding $f:|\on{Exit}(\Gamma)|\rightarrow {\bf S}\backslash M$ and consider the induced ribbon graph $\Gamma$, see \Cref{orrem}. Let $B=\partial|\on{Exit}(\Gamma)|$ be the boundary of $|\on{Exit}(\Gamma)|$ in ${\bf S}\backslash M$. We call $f$ (or by abuse of notation $\Gamma$) a \textit{spanning graph} for ${\bf S}$ if
\begin{enumerate}
\item the embedding $f$ is a homotopy equivalence,
\item $f$ induces a homotopy equivalence $B\rightarrow \partial {\bf S}\backslash M$.
\end{enumerate}
\end{definition}

\begin{example}\label{ribglem}
Let ${\bf S}$ be a marked surface with an ideal triangulation $\mathcal{T}$. The dual ribbon graph $\Gamma_\mathcal{T}$ is a spanning graph for ${\bf S}$. 
\end{example}

\begin{definition}
Let $\Gamma$ be a ribbon graph. We define the incidence diagram $I:\on{Exit}(\Gamma)\rightarrow \on{Set}$ to be the functor determined by
\begin{itemize}
\item $I(v)=\on{H}(v)$ for $v\in \Gamma_0\subset \on{Exit}(\Gamma)_0$,
\item $I(e)=\{e_1,e_2\}$ for an edge $e\in \Gamma_1$ consisting of halfedges $e_1,e_2$ (counted twice for external edges),
\item and assigning to a morphism $v\rightarrow e$ from a vertex $v\in \Gamma_0$ to an incident edge $e\in \Gamma_1$ consisting of $\{e_1,e_2\}$ with $\sigma(e_1)=v$ the morphism of sets $\on{H}(v)\rightarrow \{e_1,e_2\}$, mapping $e_1$ to $e_2$ and $\on{H}(v_1)\backslash \{e_1\}$ to $e_1$. 
\end{itemize}
\end{definition}

We proceed with the definition of the $2$-cyclic category $\Lambda_2$. The definition is similar to the definition of the paracyclic category $\Lambda_\infty$, see \Cref{linftydef}, with the difference that the cyclic automorphisms $\tau^n$ satisfy the additional relation $(\tau^{n})^{2(n+1)}=\on{id}_{[n]}$.

\begin{definition}
For $n\geq 0$, let $[n]$ denote the set $\{0,\dots,n\}$. The $2$-cyclic category $\Lambda_2$ has as objects the \mbox{sets $[n]$}. The morphism in $\Lambda_2$ are generated by morphisms
\begin{itemize}
\item $\delta^{0},\dots,\delta^{n}:[n-1]\rightarrow [n]$,
\item $\sigma^{0},\dots,\sigma^{n-1}:[n]\rightarrow [n-1]$,
\item $\tau^n:[n]\rightarrow [n]$,
\end{itemize}
subject to the simplicial relations and the further relations
\begin{align*} 
(\tau^n)^{2(n+1)}=\on{id}_{[n]},\quad\quad \\
\tau^{n}\delta^{i}=\delta^{i-1}\tau^{n-1}\text{ for }i>0,\quad\quad & \tau^{n}\delta^{0}=\delta^n,\\
\tau^{n}\sigma^{i}=\tau^{n+1}\sigma^{i-1}\text{ for }i>0,\quad\quad & \tau^{n}\sigma^0=\sigma^{n}\,.
\end{align*}
\end{definition}

\begin{definition}\label{fsdef}
Let ${\bf S}$ be an oriented marked surface and $\Gamma$ a spanning ribbon graph. Denote by $\mathbb{N}\subset \on{Set}$ the full subcategory spanned by objects of the form $[n]$ with $n\geq 0$. Choose any diagram 
\[ \tilde{I}:\on{Exit}(\Gamma)\rightarrow \mathbb{N}\,,\] 
equipped with a natural equivalence $\nu:I\xRightarrow{\simeq} \tilde{I}$, which respects the cyclic orders on $I(x)$ for $x\in \on{Exit}(\Gamma)$ and on $\tilde{I}(x)=[|I(x)|-1]$ (obtained from the apparent linear order). A 2-spin structure, or simply spin structure, on ${\bf S}\backslash (M\cap{\bf S}^\circ)$ (or on $\Gamma$) is a lift $\tilde{I}^2$
\[
\begin{tikzcd}
                                             & \Lambda_2 \arrow[d] \\
{\on{Exit}(\Gamma)} \arrow[r, "{\tilde{I}}"] \arrow[ru, "{\tilde{I}^2}" ] & \mathbb{N}                
\end{tikzcd}
\]
of the diagram $\tilde{I}$. 
\end{definition}

\begin{remark}
In the definition of spin structure on a surface we deviate from \cite{DK15} to keep the exposition more direct and better applicable. While a spin structure in \cite{DK15} consists of a $\mathbb{Z}_6$-torsor at every trivalent vertex of the ribbon graph and a $\mathbb{Z}_4$-torsor at every edge of the ribbon graph, we include a identity element for each torsor, encoded in $\nu$. Every spin structure in the sense of \Cref{fsdef} defines a spin structure in the sense of \cite{DK15} and vice versa. Note that in Lemma IV.26 in \cite{DK15} it is shown that the datum of a spin structure coincides with the more standard notion of a spin structure on ${\bf S}\backslash (M\cap {\bf S}^\circ)$ in the sense of a reduction of the structure group of the tangent bundle to the connected two-fold covering of $\on{GL}^+(2,\mathbb{R})\subset \on{GL}(2,\mathbb{R})$. 
\end{remark}

\begin{definition}
An equivalence of two ribbon graphs $\Gamma\simeq \Gamma'$ is defined as an equivalence of posets $\phi:\on{Exit}(\Gamma)\xrightarrow{\simeq} \on{Exit}(\Gamma')$.
\end{definition}

Note that an equivalence $\phi:\on{Exit}(\Gamma)\xrightarrow{\simeq} \on{Exit}(\Gamma')$ extends to a natural equivalence
\[ \eta_\phi:I_{\Gamma'}\circ \phi\xRightarrow{~\simeq~} I_{\Gamma}\] 
between the incidence diagrams.

\begin{definition}
An equivalence between two ribbon graphs with spin structure $(\Gamma,\tilde{I}^2_{\Gamma}),(\Gamma',{\tilde{I}^2}_{\Gamma'})$ consists of an equivalence $\phi:\Gamma\xrightarrow{\simeq} \Gamma'$ of ribbon graphs together with a lift 
\[ \eta^2_{\phi}:\tilde{I}^{2}_{\Gamma'}\circ \phi\xRightarrow{~\simeq~} \tilde{I}^2_{\Gamma}\] 
of 
\[\tilde{I}_{\Gamma'}\circ \phi\xRightarrow{\simeq} I_{\Gamma'}\circ \phi\xRightarrow{\eta_{\phi}} I_{\Gamma}\xRightarrow{\simeq} \tilde{I}_{\Gamma}\,.\]
\end{definition}

The main result of this section is the following.

\begin{proposition}\label{thm3}
Let ${\bf S}$ be an oriented marked surface equipped with an ideal triangulation $\mathcal{T}$ and let $\Sigma={\bf S}\backslash (M\cap {\bf S}^\circ)$ be the surface without the interior marked points.
\begin{enumerate}
\item For every spin structure $U$ on $\Sigma$ there exists a $\Gamma_\mathcal{T}$-parametrized perverse schober $\mathcal{F}_{\mathcal{T}}^U$. If two spin structures $U,U'$ on $\Sigma$ are equivalent then there exists an equivalence of parametrized perverse schobers 
\begin{equation}\label{ftueq}
\mathcal{F}_\mathcal{T}^U\simeq \mathcal{F}_\mathcal{T}^{U'}\,.
\end{equation}
If $\on{char}(k)\neq 2$, the converse is also true, i.e.~if there exists an equivalence as in \eqref{ftueq}, then $U\simeq U'$. 
\item Given a parametrized perverse schober $\mathcal{F}_\mathcal{T}$ as constructed in the proof of \Cref{thm2}, there exists a spin structure $U$ on $\Sigma$ and an equivalence of parametrized perverse schobers 
\[ \mathcal{F}_\mathcal{T}\simeq \mathcal{F}^U_\mathcal{T}\,.\]
\end{enumerate}
\end{proposition}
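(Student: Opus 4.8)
\textbf{Proof proposal for \Cref{thm3}.} The plan is to organize all the combinatorial choices made in the construction of $\mathcal{F}_\mathcal{T}$ in \Cref{thm2} into the datum of a spin structure in the sense of \Cref{fsdef}, and conversely to run the construction of \Cref{thm2} starting from an arbitrary spin structure. First I would carefully list the choices entering the construction: at each vertex of $\Gamma_\mathcal{T}$ a total order of the incident halfedges compatible with the cyclic order (equivalently, an identification $\on{Exit}(\Gamma_\mathcal{T})_{v/}\simeq C_3$), and at each internal edge a choice of one of the two gluing functors (the autoequivalence $T_e = T$ or $T_e = \on{id}$ appearing in the induction step), which amounts to a sign. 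A total order refining the cyclic order on a $3$-element set is exactly a $\mathbb{Z}_6$-torsor element relative to the cyclic group $\mathbb{Z}_3$ of rotations together with the doubling from the $2$-spin (order-$2(n+1)=6$) relation; the sign at an edge is a $\mathbb{Z}_4$-torsor element. Via the $2$-cyclic category $\Lambda_2$ and the functor $\tilde I^2$, these are precisely packaged into a lift $\tilde I^2: \on{Exit}(\Gamma_\mathcal{T})\to \Lambda_2$, using that $|I(v)|=3$ at every (trivalent) vertex. So the first half of the proof is a dictionary: a choice of construction data $\leftrightarrow$ a spin structure $U$, yielding $\mathcal{F}_\mathcal{T}^U$ and establishing statement 2.

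Next I would prove the equivalence-invariance in statement 1. Given an equivalence of spin structures $(\phi,\eta^2_\phi)$ with $\phi = \on{id}$ on $\Gamma_\mathcal{T}$ (the only case needed, since an equivalence of spin structures on a fixed triangulation is the identity on the underlying ribbon graph up to the automorphisms of $\Gamma_\mathcal{T}$, which can be absorbed), the natural transformation $\eta^2_\phi$ records, vertex by vertex and edge by edge, how the two sets of choices differ. At a vertex this difference is an element of $\mathbb{Z}_6$ acting by a rotation composed with a shift; on $\mathcal{V}^3_{f^*}$ the rotation is realized by the paracyclic twist functor $T_{\mathcal{V}^3_{f^*}}$ of \Cref{paralem3,paralem4}, and the relation $\varrho_i\circ T_{\mathcal{V}^n_F} = \varrho_{i+1}$ (resp. $T_{\mathcal{N}_F}[n-1]\circ \varrho_1$) from \eqref{rhoeq} shows that applying $T_{\mathcal{V}^3_{f^*}}$ cyclically permutes the categorified restriction maps $\varrho_i$, hence intertwines the two local models $\mathcal{G}_3(f^*)$ associated to the two orderings. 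At an edge the difference is realized by the cotwist autoequivalence $T = T_{\on{Fun}(S^2,\mathcal{D}(k))}[2]$ of \Cref{ctwprop}, matching the $T_e$ in the construction. Assembling these local equivalences coherently — which is exactly what the lift $\eta^2_\phi$ provides, since a lift along $\Lambda_2\to \mathbb{N}$ is the coherence data needed to glue the vertexwise and edgewise autoequivalences into an equivalence of functors $\on{Exit}(\Gamma_\mathcal{T})\to \on{St}$ — produces the equivalence $\mathcal{F}_\mathcal{T}^U\simeq \mathcal{F}_\mathcal{T}^{U'}$.

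For the converse (when $\on{char}(k)\neq 2$), I would argue that an equivalence $\mathcal{F}_\mathcal{T}^U\simeq \mathcal{F}_\mathcal{T}^{U'}$ restricts at every vertex and edge to an autoequivalence of $\mathcal{V}^3_{f^*}$, resp. $\on{Fun}(S^2,\mathcal{D}(k))$, compatible with the $\varrho_i$ up to the permutations and shifts above; one then reads off that the discrepancy between $U$ and $U'$ lies in the subgroup generated by these twist functors, and that this discrepancy is trivial as a spin structure precisely when the autoequivalences can be chosen compatibly — i.e. when $\eta^2_\phi$ exists. The hypothesis $\on{char}(k)\neq 2$ enters because the obstruction to distinguishing the two signs $T_e = T$ vs. $T_e = \on{id}$ is detected by $H^*$ of the relevant endomorphism algebras with $\mathbb{Z}_2$-coefficients, which degenerates in characteristic $2$ (the autoequivalence $\varphi^*$ reversing the sign of $t_1$ becomes the identity); concretely, $T \simeq \on{id}$ on $\on{Fun}(S^2,\mathcal{D}(k))$ when $2=0$ in $k$, collapsing the $\mathbb{Z}_4$-torsor data.

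The main obstacle I expect is the coherence bookkeeping in the second paragraph: verifying that the vertexwise paracyclic-twist equivalences and the edgewise cotwist equivalences fit together into a genuine equivalence in $\on{Fun}(\on{Exit}(\Gamma_\mathcal{T}),\on{St})$ rather than merely a pointwise one. This is where the formalism of $\Lambda_2$ and the lift $\tilde I^2$ does the real work — one has to check that the relations defining $\Lambda_2$ (in particular $(\tau^n)^{2(n+1)} = \on{id}$, corresponding to $T_{\mathcal{V}^3_{f^*}}$ having the expected finite order up to shift, which follows from the $2/4$-property of spherical adjunctions and \Cref{paralem4}) exactly match the relations satisfied by the twist functors, so that a functor out of $\Lambda_2$ organizes the data correctly. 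I would treat the self-folded case separately but briefly, noting that the local model \eqref{sfs} is obtained from the trivalent one by a gluing that does not introduce new choices beyond those already accounted for, as in the base case of \Cref{thm2}.
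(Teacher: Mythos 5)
Your high-level strategy matches the paper's: encode the combinatorial choices of \Cref{thm2} as a spin structure in the sense of \Cref{fsdef}, define $\mathcal{F}_\mathcal{T}^U$ from a spin structure $U$, and read off both directions of statement 1 from this dictionary. But there is a genuine gap in how you propose to establish coherence, which is precisely the ``main obstacle'' you flag.

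You propose to assemble vertexwise paracyclic-twist equivalences and edgewise cotwist equivalences into an equivalence in $\on{Fun}(\on{Exit}(\Gamma_\mathcal{T}),\on{St})$, and to verify that the relations of $\Lambda_2$ ``exactly match the relations satisfied by the twist functors.'' In the $\infty$-categorical setting this is not just bookkeeping: the twist and cotwist functors satisfy the required identities only up to coherent homotopy, and promoting a diagram of pointwise equivalences to a genuine natural equivalence requires exhibiting all higher coherences, which is not something the paracyclic structure of \Cref{sec3.2} hands you for free. The paper sidesteps this entirely by constructing a \emph{strict} $1$-functor $\mathcal{Q}\colon M_2^{\on{op}}\to\on{dgCat}_k$ sending $[2]\mapsto D_3$, $[1]\mapsto k[t_1]$, $\tau^2\mapsto T_{D_3}$, $\tau^1\mapsto \varphi^*$, $\delta_j\mapsto i_j^{\pm}$, where the $M_2$-relations (e.g.\ $(\tau^2)^6=\on{id}$, $(\tau^1)^4=\on{id}$) hold on the nose as equalities of dg-functors. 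Since a spin structure is literally a functor $\tilde I^2\colon\on{Exit}(\Gamma_\mathcal{T})\to M_2$, one then defines $\mathbb{D}^L\mathcal{F}_\mathcal{T}^U$ as the composite $\mathcal{D}(\mhyphen)\circ L\circ\mathcal{Q}\circ(\tilde I^2)^{\on{op}}$, and functoriality handles all coherence automatically — both the construction of $\mathcal{F}_\mathcal{T}^U$ and the invariance under equivalence of spin structures become immediate. Your approach as written would stall on the coherence verification; what's missing is the observation that one can and should rigidify to strict dg-models before composing.

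Two smaller issues. For the converse in statement 1 your argument is too impressionistic: you should, as the paper does, invoke the classification of spin structures on $\Sigma$ by $H^1(\Sigma,\mathbb{Z}_2)$ to reduce to the case where $U,U'$ differ only on the monodromy around a single embedded circle in $\Gamma_\mathcal{T}$, and then observe that the resulting parametrized perverse schobers have different monodromy $\mathcal{D}(\varphi^*)\not\simeq\on{id}$ around that circle precisely when $\on{char}(k)\neq 2$. For statement 2, the ``dictionary'' is not quite a direct read-off: the paper builds $U$ inductively alongside the gluing of \Cref{thm2}, and at each gluing edge $e$ it modifies the naive spin structure by inserting an extra $\tau^1$ exactly when the superscripts of $i_e^1$ and $i_e^n$ agree (matching the choice of $T_e$). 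Your proposal should make explicit this correction step, since without it the assembled spin structure need not reproduce $\mathcal{F}_\mathcal{T}$.
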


\begin{definition}
A ribbon graph  $\Gamma$ is called trivalent if all vertices have valency three.
\end{definition}

\begin{definition}
We denote by $M_2$ the subcategory of $\Lambda_2$ spanned by the two objects $[1],[2]$ and morphisms generated under composition by $\tau^1:[1]\rightarrow [1]$, $\tau^2:[2]\rightarrow [2]$ and $\delta^l:[1] \rightarrow [2]$ for $l=1,2,3$.
\end{definition}

\begin{remark}
Let $\Gamma$ be a trivalent ribbon graph and $\tilde{I}^2:\on{Exit}(\Gamma)\rightarrow \Lambda_2$ a spin structure. Then $\tilde{I}^2$ factors through the inclusion $M_2\rightarrow \Lambda_2$.
\end{remark}

\begin{proof}[Proof of \Cref{thm3}]
Consider the morphism of dg-algebras $\varphi:k[t_1]\xrightarrow{t_1\rightarrow -t_1} k[t_1]$, the pullback functor $\varphi^*:\on{dgMod}(k[t_1])\rightarrow \on{dgMod}(k[t_1])$ and \Cref{ifn}. We further denote by $T_{D_3}:D_3\rightarrow D_3$ the dg-functor determined by
\begin{align*}
x,y,z\mapsto       & ~ y,z,x\,,\\
a,b,c\mapsto       & ~ b,c,a\,,\\
a^*,b^*,c^*\mapsto       & ~ -b^*,-c^*,-a^*\,.
\end{align*}
We define a functor
\[ \mathcal{Q}:M_2^{\on{op}}\rightarrow \on{dgCat}_k\] 
by
\begin{align*}
\mathcal{Q}([2])&=D_3\,, \\
\mathcal{Q}([1])&=k[t_1]\,, \\
\mathcal{Q}(\tau^2)&=T_{D_3}\,, \\
\mathcal{Q}(\tau^1)&=\varphi^*\,, \\
\mathcal{Q}(\delta_1)&=i_1^+\,, \\
\mathcal{Q}(\delta_2)&=i_2^-\,, \\
\mathcal{Q}(\delta_3)&=i_3^+\,. 
\end{align*}
Let $\mathcal{T}$ be an ideal triangulation and $\Gamma_\mathcal{T}$ the dual ribbon graph. Given a spin structure $U$ on $\Gamma_\mathcal{T}$ described in terms of $\tilde{I}^2:\on{Exit}(\Gamma_\mathcal{T})\rightarrow M_2$ we obtain a parametrized perverse schober $\mathcal{F}_\mathcal{T}^U$ defined via its left dual
\[ \mathbb{D}^L\mathcal{F}^U_\mathcal{T}:\on{Entry}(\Gamma_\mathcal{T})\xrightarrow{(\tilde{I}_2)^{\on{op}}} M_2^{\on{op}}\xrightarrow{\mathcal{Q}}\on{dgCat}_k\xrightarrow{L} \on{dgCat}_k[W^{-1}]\xrightarrow{\mathcal{D}(\mhyphen)} \on{St}\,.\]
Given an equivalence of spin structures $U\simeq U'$, expressed in terms of an equivalence of ribbon graph with spin structure $(\Gamma_\mathcal{T},\tilde{I}^2)\simeq(\Gamma_\mathcal{T},(\tilde{I}^2)')$, it is immediate from the construction that there is an equivalence between $\mathcal{F}^U_\mathcal{T}$ and $\mathcal{F}^{U'}_{\mathcal{T}}$. For the converse, we note that by a standard result spin structures on $\Sigma$ are classified by $\on{H}^1(\Sigma,\mathbb{Z}_2)$. Thus, if two spin structures structures differ, then there exists an embedded circle in $\Gamma_\mathcal{T}$ such that the restrictions of the spin structures to the full ribbon subgraph spanned by the vertices on the circle differ. Up to equivalence of spin structures, these two spin structures can be assumed to be identical, except that they assign to the incidence of some fixed edge lying on the circle and some fixed vertex two morphisms $[1]\rightarrow [2]$ in $\Lambda_2$ which differ exactly by precomposition with $\tau^1$. The corresponding restrictions of the parametrized perverse schobers thus differ at this subgraph in terms of their monodromy by $\mathcal{D}(\phi^*)$. Note that the assumption $\on{char}(k)\neq 2$ implies $\mathcal{D}(\phi^*)\neq \on{id}_{\mathcal{D}(k[t_1])}$. The two parametrized perverse schobers can thus not be equivalent, showing the converse implication.

Let $\mathcal{T}$ be an ideal triangulation and consider a parametrized perverse schober $\mathcal{F}_\mathcal{T}$ as constructed in \Cref{thm2}. In the following we describe a spin structure $U$ on $\Gamma_\mathcal{T}$ such that $\mathcal{F}_\mathcal{T}^U\simeq \mathcal{F}_\mathcal{T}$. We follow the iterative procedure and notation used in the proof of \Cref{thm2}. Starting with any ideal triangle $\mathcal{T}_1$ of $\mathcal{T}$, one can directly find a spin structure $U$ on $\Gamma_{\mathcal{T}_1}$ such that $\mathcal{F}_{\mathcal{T}_1}\simeq \mathcal{F}_{\mathcal{T}_1}^U$. We continue by extending the spin structure in each induction step by gluing. Consider an ideal triangulation $\mathcal{T}_{n+1}$, obtained from gluing two ideal triangulations $\mathcal{T}_n$ and $\mathcal{T}_1$ equipped with spin structures. There is a spin structure $\tilde{I}_{n+1}^2:\on{Exit}(\Gamma_{\mathcal{T}_{n+1}})\rightarrow M_2$ on $\Gamma_{\mathcal{T}_{n+1}}$ with the property that the restriction to $\on{Exit}(\Gamma_{\mathcal{T}_{n}})$ and $\on{Exit}(\Gamma_{\mathcal{T}_1})$ recovers the respective spin structures of $\Gamma_{\mathcal{T}_n}$ and $\Gamma_{\mathcal{T}_1}$. We now modify $\tilde{I}_{n+1}^2$ to obtain the desired spin structure $U$  on $\Gamma_{\mathcal{T}_{n+1}}$. At each edge $e$ used in the gluing of $\mathcal{T}_{n}$ and $\mathcal{T}_1$ and connecting two vertices $v_n$ and $v_1$ in $\Gamma_{\mathcal{T}_n}$ and $\Gamma_{\mathcal{T}_1}$, respectively, the spin structure $\tilde{I}^2_{n+1}$ is locally of the form 
\[ [2]=\tilde{I}^{2}_{n+1}(v_1)\xleftarrow{\delta^i(\tau^1)^{l_1}}[1]=\tilde{I}^{2}_{n+1}(e)\xrightarrow{\delta^j(\tau^1)^{l_2}} [2]=\tilde{I}^{2}_{n+1}(v_n)\,.\]
We obtain $U$ by changing the spin structure $I^2_{n+1}$ at each edge $e$ used in the gluing of $\Gamma_{\mathcal{T}_n}$ and $\Gamma_{\mathcal{T}_1}$ in the local picture to 
\[ [2]\xleftarrow{\delta^i(\tau^1)^{l_1+1}}[1]\xrightarrow{\delta^j(\tau^1)^{l_2}} [2]\]
if the superscripts of the associated functors $i_e^1=i_e^{\pm}$ and $i_e^n=i_e^{\pm}$, appearing in the proof of \Cref{thm2}, match. The resulting parametrized perverse schober $\mathcal{F}_{\mathcal{T}_{n+1}}^U$ is then equivalent to $\mathcal{F}_{\mathcal{T}_{n+1}}$. This shows statement 2, thus completing the proof.
\end{proof}

\subsection{Coefficients in spectra}\label{specsec}

Let $\mathcal{T}$ be an ideal triangulation of an oriented marked surface and consider the construction of the parametrized perverse schober $\mathcal{F}_\mathcal{T}$ of \Cref{thm2}. Given any stable $\infty$-category $\mathcal{D}$, we can construct a parametrized perverse schober $\mathcal{F}_\mathcal{T}(\mathcal{D})$ by replacing the spherical adjunction at each vertex with the spherical adjunction 
\[ f^*:\mathcal{D}\longleftrightarrow \on{Fun}(S^2,\mathcal{D}):f_*\]
and the autoequivalence $T=T_{\on{Fun}(S^2,\mathcal{D}(k))}[2]$ (used there for fixing the correct signs) by the two-fold suspension of the cotwist functor $T_{\on{Fun}(S^2,\mathcal{D})}[2]$ of the same adjunction $f^*\dashv f_*$ but with values in $\mathcal{D}$ instead of $\mathcal{D}(k)$. The goal of this sections is to discuss which results of this paper translate to this more general setting.

We begin by noting that the main result which works irrespective of the choice of $\mathcal{D}$ is the construction of the derived equivalence
\[ \mu_e:\mathcal{H}(\Gamma_\mathcal{T},\mathcal{F}_\mathcal{T}(\mathcal{D}))\simeq \mathcal{H}(\Gamma_{\mathcal{T}'},\mathcal{F}_{\mathcal{T}'}(\mathcal{D}))\]  
of \Cref{sec6.4} associated to the flip of the edge $e$ of the triangulation $\mathcal{T}$.  

More results extend in the case where $\mathcal{D}=\on{RMod}_R$ is the $\infty$-category of right modules of an $\mathbb{E}_\infty$-ring spectrum $R$. We can for example choose $R$ to be the sphere spectrum, so that $\mathcal{D}\simeq \on{Sp}$ is the $\infty$-category of spectra. The construction of the spherical/exceptional objects $S_e$, the objects $C_p$ and the projective objects $P_e$ given in \Cref{sec6.3} works in the same way for $\mathcal{H}(\Gamma_\mathcal{T},\mathcal{F}_\mathcal{T}(\on{RMod}_R))$. We denote by $R[t_n]$ the free algebra object in $\on{RMod}_R$ generated by $R[n]$. Note that if $R=k$ is a commutative ring, there exists an equivalence $R[t_n]\simeq k[t_n]$. We show below in \Cref{2genlem4} that there exists an equivalence of $\infty$-categories $\on{Fun}(S^n,\on{RMod}_R)\simeq \on{RMod}_{R[t_{n-1}]}$. We thus find a compact generator of the $\infty$-category $\mathcal{H}(\Gamma_\mathcal{T},\mathcal{F}_\mathcal{T}(\on{RMod}_R))$, as in \Cref{glgenprop}, given by the sum of the images of $R[t_1]\in \on{RMod}_{R[t_1]}\simeq \on{Fun}(S^2,\on{RMod}_R)$ under the left adjoints of the evaluation functors to the edges of $\Gamma_\mathcal{T}$. Furthermore, \Cref{mutprop1} describing the images of the $P_e$ under the derived equivalences also translates. Contrary to the dg-setting, it is however not clear if the endomorphism algebra of the compact generator admits an explicit description. 

Crucial for a relation between the construction of $\mathcal{F}_\mathcal{T}(\mathcal{D}(k))$ and spin structures on the surface without the interior marked points $\Sigma$ in \Cref{spinsec} is the observation that the suspended cotwist functor $T_{\on{Fun}(S^2,\mathcal{D}(k))}[2]$ is an involution, i.e.
\[ \big( T_{\on{Fun}(S^2,\mathcal{D}(k))}[2]\big)^2\simeq \on{id}_{\on{Fun}(S^2,\mathcal{D}(k))}\,.\]
It seems likely that if the cotwist functor $T_{\on{Fun}(S^2,\mathcal{D})}[2]$ is also an involution \Cref{thm3} can be generalized to $\mathcal{F}_\mathcal{T}(\mathcal{D})$. The remainder of this section consists of a proof of \Cref{2genlem4} and a conjecture for a description of $T_{\on{Fun}(S^2,\mathcal{D})}$ and an algebraic description of $T_{\on{Fun}(S^2,\on{RMod}_R)}$. 

The following lemma is a generalization of \Cref{l1lem}.

\begin{lemma}\label{l1lem2}
Consider the morphism of simplicial sets $g:L\rightarrow \ast$ and the associated pullback functor $g^*:\on{RMod}_R\rightarrow \on{Fun}(L,\on{RMod}_R)$. There exists an equivalence of $R$-linear $\infty$-categories 
\[ \on{Fun}(L,\on{RMod}_R)\simeq \on{RMod}_{R[t_0]}\] 
such that the following diagram commutes.
\begin{equation}\label{2tri1}
\begin{tikzcd}
                                            & \on{RMod}_R \arrow[ld, "g^*"'] \arrow[rd, "\phi^*"] &                      \\
{\on{Fun}(L,\on{RMod}_R)} \arrow[rr, "\simeq"] &                                                     & {\on{RMod}_{R[t_0]}}
\end{tikzcd}
\end{equation}
Here $\phi^*$ denotes the pullback functor along the morphism of $R$-algebras $R[t_0]\rightarrow R$, determined on the generator by the morphism $R\xrightarrow{\on{id}}R$ in $\on{RMod}_R$.
\end{lemma}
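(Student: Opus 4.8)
\textbf{Proof plan for \Cref{l1lem2}.}

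The plan is to mimic the proof of \Cref{l1lem} step by step, replacing $\mathcal{D}(k)$ by $\on{RMod}_R$ throughout and using the $R$-linear machinery from \Cref{sec2.1,sec2.2} in place of the $k$-linear one. First I would equip $\on{Fun}(L,\on{RMod}_R)$ with the structure of an $R$-linear $\infty$-category: since $\on{RMod}_R$ is symmetric monoidal and presentable, the functor $\infty$-category $\on{Fun}(L,\on{RMod}_R)$ inherits a pointwise symmetric monoidal structure, and pulling back along the symmetric monoidal functor $h^*$ (for $h\colon L\to \ast$) provides it with an $R$-linear structure with respect to which $h^*$, and hence $g^*$, is $R$-linear; this is the exact analogue of \Cref{rlinrem}. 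Next I would exhibit a compact generator: the object $X\in \on{Fun}(L,\on{RMod}_R)$ given by the diagram $R[t_0]\xrightarrow{\cdot t_0} R[t_0]$ — or more simply the diagram $R\xrightarrow{?}R$ classified by the nontrivial $1$-simplex — is compact (mapping spectra out of it commute with filtered colimits, as these are computed pointwise in $L$) and generates (an object on which it has no nonzero maps in any degree has vanishing value, by the explicit shape of $L$).

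The key new input is the identification of the $R$-linear endomorphism algebra $\on{End}_R(X)$ with $R[t_0]$. By \Cref{genlem1}, $\on{Fun}(L,\on{RMod}_R)\simeq \on{RMod}_{\on{End}_R(X)}(\on{RMod}_R)$ as $R$-linear $\infty$-categories, and by \Cref{modcor} the right-hand side is equivalent to $\on{RMod}_{\xi(\on{End}_R(X))}$. So it suffices to compute the underlying $\mathbb{E}_1$-ring spectrum $\xi(\on{End}_R(X)) = \on{End}(X)$. A direct computation of the limit/colimit defining $\on{Map}_{\on{Fun}(L,\on{RMod}_R)}(X,X)$ — using that $|L|\simeq S^1$, so that mapping objects in $\on{Fun}(L,\on{RMod}_R)$ are computed as equalizers built from the two legs of the loop — shows that $\on{End}(X)$ has homotopy groups concentrated in degree $0$, where they give the free (underived) polynomial ring on one generator over $\pi_0 R$; hence $\on{End}(X)$ is connective and $1$-truncated. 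One then argues that, as an $\mathbb{E}_1$-algebra in $\on{RMod}_R$, it agrees with $R[t_0]$: both are generated by a single degree-$0$ class, and the universal property of the free algebra $R[t_0]$ produces a map $R[t_0]\to \on{End}_R(X)$ which is an equivalence on underlying spectra by the homotopy computation just done, hence an equivalence of $\mathbb{E}_1$-algebras. This yields the desired equivalence $\on{Fun}(L,\on{RMod}_R)\simeq \on{RMod}_{R[t_0]}$.

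Finally, to check the commutativity of \eqref{2tri1} I would use, exactly as in \Cref{l1lem}, that an $R$-linear functor out of $\on{RMod}_R$ is determined by its value on the unit object: both $g^*$ and (after transport along the equivalence) $\phi^*$ are $R$-linear, and $g^*(R)$ is the constant diagram $R\xrightarrow{\on{id}}R$, which is carried under the equivalence to $\phi^*(R)$, the $R[t_0]$-module on which $t_0$ acts as the identity — that is, the module obtained by restriction along $R[t_0]\xrightarrow{t_0\mapsto \on{id}}R$. Matching these two images up to equivalence forces the triangle to commute. The main obstacle I anticipate is the endomorphism-spectrum computation: while in the $k$-linear case formality was immediate from the homology being concentrated in a single degree, over a general $\mathbb{E}_\infty$-ring $R$ one must instead argue via the universal property of $R[t_0]$ and the connectivity/truncation estimates, being careful that "the free $\mathbb{E}_1$-$R$-algebra on a degree-$0$ generator" is what genuinely appears; this is routine but needs the care flagged here. (The statement of \Cref{2genlem4}, quoted above as already used, will then follow by the same inductive pushout argument as in \Cref{genlem}, feeding \Cref{l1lem2} into the analogue of \Cref{pbpulem}.)
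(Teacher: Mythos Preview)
Your overall strategy matches the paper's: exhibit the compact generator $X=(R[t_0]\xrightarrow{\cdot t_0}R[t_0])$, identify its $R$-linear endomorphism algebra with $R[t_0]$, and deduce commutativity of \eqref{2tri1} from $R$-linear functors out of $\on{RMod}_R$ being determined on the unit. The paper also computes morphism objects via an equalizer, using an auxiliary square-shaped simplicial set $L'$ with a map $p\colon L'\to L$.

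There is, however, a genuine error in your endomorphism computation. You assert that $\on{End}(X)$ has homotopy groups concentrated in degree $0$ and is therefore ``connective and $1$-truncated''. This is false for non-discrete $R$: already for $R=\mathbb{S}$ the underlying spectrum of $R[t_0]$ is $\bigoplus_{n\geq 0}\mathbb{S}$, with $\pi_*$ equal to $\bigoplus_n \pi_*\mathbb{S}$, nonzero in every nonnegative degree. Your later hedge about ``connectivity/truncation estimates'' does not rescue this --- there is no truncatedness available, and the formality-style argument from \Cref{l1lem} genuinely does not transport to general $\mathbb{E}_\infty$-rings.

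The paper avoids this entirely by a different route. The equalizer computation is not used to compute homotopy groups; it is used to prove the functorial identity $\on{Mor}_{\on{Fun}(L,\on{RMod}_R)}(X,-)\simeq h_*$, where $h_*$ is evaluation at the vertex. This identifies $X$ with the image of $R$ under the left adjoint of evaluation (i.e.\ the free object on $R$ in the category of $R$-modules-with-endomorphism). From that description, \cite[4.1.1.18]{HA} directly yields an equivalence of $R$-algebras $\on{End}_R(X)\simeq R[t_0]$ --- no homotopy-group or formality argument is needed. Your alternative idea of producing a map $R[t_0]\to \on{End}_R(X)$ from the universal property and then checking it on underlying modules could in principle be made to work, but you would still have to verify that the map identifies the summands correctly, which essentially re-derives the cited result by hand; drop the incorrect truncation claim and either invoke \cite[4.1.1.18]{HA} or carry out that verification explicitly.
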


\begin{proof}
Consider the object $X\in \on{Fun}(L,\on{RMod}_R)$ given by the diagram $R[t_0]\xrightarrow{\cdot  t_0}R[t_0]$ in $\on{RMod}_R$. Let $h:\ast \rightarrow L$ be the morphism of simplicial sets given by inclusion of the unique vertex and consider the associated pullback functor $h^*:\on{RMod}_R\rightarrow \on{Fun}(L,\on{RMod}_R)$ with right adjoint $h_*$ given by evaluation at $\ast \in L$. We prove that $X\simeq h^*(R)$ by showing that $\on{Mor}_{\on{Fun}(L,\on{RMod}_R)}(X,\mhyphen)\simeq h_*$, where $\on{Mor}_{\on{Fun}(L,\on{RMod}_R)}(\mhyphen,\mhyphen)$ is the $R$-linear morphism object functor of \cite[4.2.1.28]{HA}.

Let $Y\in\on{Fun}(L,\on{RMod}_R)$. The morphism object $\on{Mor}_{\on{Fun}(L,\on{RMod}_R)}(X,Y)$ is equivalent to the equalizer 
\[ 
\begin{tikzcd}
\prod_{i\in \mathbb{N}}h_*(Y) \arrow[r, "\simeq"] & {\on{Mor}_{R}(R[t_0],Y)} \arrow[r, "(\mhyphen)\circ t_0", shift left] \arrow[r, "Y(l)\circ (\mhyphen)"', shift right] & {\on{Mor}_{R}(R[t_0],Y)} \arrow[r, "\simeq"] & \prod_{i\in \mathbb{N}}h_*(Y)
\end{tikzcd}\,,
\]
where $l$ is the unique non-degenerate $1$-simplex of $L$. This can be seen as follows. Consider the simplicial set $L'$ consisting of four vertices $x_1,x_2,x_3,x_4$ and four non-degenerate $1$-simplicies $l_1,l_2,l_3,l_4$ arranged as follows.
\[
\begin{tikzcd}
x_1 \arrow[r, "l_1"'] \arrow[d, "l_4"] & x_3                                    \\
x_4                                    & x_2 \arrow[u, "l_2"] \arrow[l, "l_3"']
\end{tikzcd}
\] 
The morphism of simplicial sets $p:L'\rightarrow L$, mapping all vertices to $\ast \in L$, $l_1$ to $l$ and $l_2,l_3,l_4$ to the degenerate $1$-simplex, induces a fully faithful $R$-linear functor $p^*:\on{Fun}(L,\on{RMod}_R)\rightarrow \on{Fun}(L',\on{RMod}_R)$. The description of $\on{Mor}_{\on{Fun}(L,\on{RMod}_R)}(X,Y)\simeq \on{Mor}_{\on{Fun}(L',\on{RMod}_R)}(p^*(X),p^*(Y))$ as an equalizer can now be obtained by using a pushout description of $p^*(X)\simeq X_1\amalg_{X_3}X_2$, with 
\[ 
X_1=\begin{tikzcd}[row sep=small, column sep=small]
{R[t_0]} \arrow[r, "\simeq"'] \arrow[d, "\simeq"] & {R[t_0]}              \\
{R[t_0]}                                          & 0 \arrow[u] \arrow[l]
\end{tikzcd}\,,\quad 
X_2=\begin{tikzcd}[row sep=small, column sep=small]
0 \arrow[r] \arrow[d] & {R[t_0]}                                          \\
{R[t_0]}              & {R[t_0]} \arrow[u, "\simeq"] \arrow[l, "\simeq"']
\end{tikzcd}\,,\quad
X_3=\begin{tikzcd}[row sep=small, column sep=small]
0 \arrow[r] \arrow[d] & {R[t_0]}              \\
{R[t_0]}              & 0 \arrow[u] \arrow[l]
\end{tikzcd}\,
\] 
and that $\on{Mor}_{\on{Fun}(L',\on{RMod}_R}(\mhyphen,p^*(Y))$ is an exact functor. The equalizer is given by $h_*(Y)$, the morphism of $R$-modules $h_*(Y)\rightarrow \prod_{i\in \mathbb{N}} h_*(Y)$ is informally given by mapping $z\in h_*(Y)$ to $(Y(l)^i(z))_{i\in \mathbb{N}}\in \prod_{i\in \mathbb{N}} h_*(Y)$. We note that the equivalence $\on{Mor}_{\on{Fun}(L,\on{RMod}_R)}(X,Y)\simeq h_*(Y)$ is functorial in $Y$, so that indeed $\on{Mor}_{\on{Fun}(L,\on{RMod}_R)}(X,\mhyphen)\simeq h_*$. 

It follows that $X$ is a compact generator of $\on{Fun}(L,\on{RMod}_R)$. Applying \cite[Proposition 4.1.1.18]{HA}, we further obtain an equivalence of $R$-linear ring spectra $\on{End}_R(X)\simeq R[t_0]$, showing the existence of an equivalence of $R$-linear $\infty$-categories $\on{Fun}(L,\on{RMod}_R)\simeq \on{RMod}_{R[t_0]}$.

The commutativity of the diagram \eqref{2tri1} can be checked using the fact that the $R$-linear functors $\phi^*,g^*:\on{RMod}_R\rightarrow \on{RMod}_{R[t_0]}$ are fully determined by $\phi^*(R)$, respectively $g^*(R)$, see \cite[Section 4.8.4]{HA}.
\end{proof}

\begin{lemma}\label{pbpulem2}~
\begin{enumerate}
\item There exists a pushout diagram in $\on{LinCat}_R$ as follows.
\begin{equation}\label{2pupbeq1}
\begin{tikzcd}
\on{Fun}(L,\on{RMod}_R)        \arrow[r, "g_!"] \arrow[d, "g_!"'] \arrow[rd, "\ulcorner", phantom, near end] & \on{RMod}_R \arrow[d, "i_!"] \\
\on{RMod}_R \arrow[r, "i_!"']                                               & \on{Fun}(S^2,\on{RMod}_R)              
\end{tikzcd}
\end{equation}
\item Let $n\geq 2$. There exists a pushout diagram in $\on{LinCat}_R$ as follows.
\begin{equation}\label{2pupbeq2}
\begin{tikzcd}
\on{Fun}(S^{n-1},\on{RMod}_R) \arrow[r, "f_!"] \arrow[d, "f_!"'] \arrow[rd, "\ulcorner", phantom, near end] & \on{RMod}_R\arrow[d, "i_!"] \\
\on{RMod}_R \arrow[r, "i_!"']                                               & \on{Fun}(S^n,\on{RMod}_R)              
\end{tikzcd}
\end{equation}
\end{enumerate}
\end{lemma}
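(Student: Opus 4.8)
The plan is to mirror exactly the proof of \Cref{pbpulem}, replacing $\mathcal{D}(k)$ by $\on{RMod}_R$ throughout and checking that every step used there was purely formal. The only input specific to $\mathcal{D}(k)$ in the original argument was \Cref{l1lem}, and its spectral analogue is \Cref{l1lem2}, which is available by the time we reach this lemma; so the substitution should go through without essential change.

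First I would prove statement 2. I would start from the homotopy pushout square of spaces
\[
\begin{tikzcd}
S^{n-1} \arrow[r, "f"] \arrow[d, "f"'] \arrow[rd, "\ulcorner", phantom, near end] & \ast \arrow[d, "i"] \\
\ast \arrow[r, "i"']                                           & S^n
\end{tikzcd}
\]
which is also a pushout in $\on{Cat}_\infty$. Applying the limit-preserving functor $\on{Fun}(\mhyphen,\on{RMod}_R):\on{Cat}_\infty^{op}\rightarrow \on{Cat}_\infty$ turns it into a pullback square in $\mathcal{P}r^R$ with all functors given by $i^*$ and $f^*$; here I use \Cref{rlinrem} (equally valid for $\on{RMod}_R$, cf.\ the remark that $\on{Fun}(Z,\on{RMod}_R)$ is $R$-linear and the pullback functors are $R$-linear) so that this is in fact a pullback in $\mathcal{P}r^R_{\on{St}}$, hence — passing to left adjoints via the adjoint equivalence $\on{radj}\dashv \on{ladj}$ of \Cref{sec2.1}(ii) and using that colimits of $R$-linear categories are computed in $\mathcal{P}r^L$ by \Cref{sec2.1}(iii) — a pushout in $\on{LinCat}_R$. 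That pushout square is precisely \eqref{2pupbeq2}.

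For statement 1 I would reuse the observation from the original proof that $|L|\simeq S^1$. Choose a map of simplicial sets $L\rightarrow S^1$ so that the limit functor $g_*\colon \on{Fun}(L,\on{RMod}_R)\rightarrow \on{RMod}_R$ is obtained from the limit functor $f_*\colon \on{Fun}(S^1,\on{RMod}_R)\rightarrow \on{RMod}_R$ by restriction along $i^*\colon \on{Fun}(S^1,\on{RMod}_R)\rightarrow \on{Fun}(L,\on{RMod}_R)$; equivalently $g^*$ factors through $\on{Fun}(S^1,\on{RMod}_R)$. Then the explicit formula for limits in $\on{Cat}_\infty$ (\Cref{sec2.1}(i)) shows that the right adjoint diagram of \eqref{2pupbeq1} is a pullback in $\mathcal{P}r^R$, so its left adjoint diagram \eqref{2pupbeq1} is a pushout in $\on{LinCat}_R$, again using the adjoint equivalence and the behaviour of colimits in $\on{LinCat}_R$.

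The main obstacle — really the only place any genuine content enters — is ensuring that the functors appearing ($g_!$, $f_!$, $i_!$) are the \emph{left adjoints in $\on{LinCat}_R$} of the $R$-linear functors $g^*,f^*,i^*$, i.e.\ that passing to adjoints interacts correctly with the $R$-linear (as opposed to merely $\on{Cat}_\infty$-level) structure; this is what \Cref{rlinrem} and the remarks in \Cref{sec2.1} are there to handle, exactly as in the proof of \Cref{pbpulem}. Everything else is formal manipulation of (co)limits and adjunctions, so I would keep the write-up to two short paragraphs: ``The proof of \Cref{pbpulem} applies verbatim, using \Cref{l1lem2} in place of \Cref{l1lem}'', followed by a sentence spelling out the pushout-of-spaces argument for statement 2 and the $|L|\simeq S^1$ argument for statement 1.
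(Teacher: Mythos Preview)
Your proposal is correct and takes exactly the same approach as the paper, whose entire proof reads ``The proof of \Cref{pbpulem} directly generalizes.'' One small correction: the proof of \Cref{pbpulem} does not actually use \Cref{l1lem} (that lemma is only invoked later in the proof of \Cref{genlem}), so your reference to \Cref{l1lem2} as a needed substitute is unnecessary here --- the argument is purely formal in both cases.
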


\begin{proof}
The proof of \Cref{pbpulem} directly generalizes.
\end{proof}

We now prove the analogue of \Cref{genlem}. 

\begin{proposition}\label{2genlem4}
Let $n\geq 2$. There exists an equivalence of $R$-linear $\infty$-categories 
\begin{equation}\label{eq77}
\on{Fun}(S^n,\on{RMod}_R)\simeq \on{RMod}_{R[t_{n-1}]}\,,
\end{equation}
such that the following diagram in $\on{LinCat}_R$ commutes. 
\begin{equation}\label{eqdiag14}
\begin{tikzcd}
{\on{Fun}(S^n,\on{RMod}_R)} \arrow[rr, "\simeq"] \arrow[rd, "i^*"] &                                                     & {\on{RMod}_{R[t_{n-1}]}} \arrow[ld, "G"] \\
                                                                           & \on{RMod}_R \arrow[ld, "f^*"'] \arrow[rd, "\phi^*"] &                                          \\
{\on{Fun}(S^n,\on{RMod}_R)} \arrow[rr, "\simeq"]                   &                                                     & {\on{RMod}_{R[t_{n-1}]}}                
\end{tikzcd}
\end{equation}
Here $G$ denoted the monadic functor and $\phi^*$ the pullback functor along the morphism of $R$-algebras $\phi:R[t_{n-1}]\rightarrow R$ determined on the generator by the morphism $R[n-1]\xrightarrow{0}R$ in $\on{RMod}_R$. 
\end{proposition}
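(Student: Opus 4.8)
\textbf{Proof strategy for \Cref{2genlem4}.} The plan is to mimic the proof of \Cref{genlem} almost verbatim, replacing $\mathcal{D}(k)$ by $\on{RMod}_R$ throughout and using \Cref{l1lem2} and \Cref{pbpulem2} in place of \Cref{l1lem} and \Cref{pbpulem}. First I would treat the base case $n=2$. By \Cref{l1lem2}, $\on{Fun}(L,\on{RMod}_R)\simeq \on{RMod}_{R[t_0]}$, and I claim the pushout square \eqref{2pupbeq2} for $n=2$ is equivalent, under $\theta:\on{Alg}(\on{RMod}_R)\to \on{LinCat}_R$ (see \Cref{sec2.2}), to the image of the pushout square of $R$-algebras
\[
\begin{tikzcd}
{R[t_0]} \arrow[r, "t_0\mapsto 0"] \arrow[d, "t_0\mapsto t_0"'] \arrow[rd, "\ulcorner", phantom, near end] & R \arrow[d] \\
{R[t_0,t_1]} \arrow[r]                                                   & {R[t_1]}
\end{tikzcd}
\]
where $R[t_0,t_1]$ is the free $R$-algebra on generators in degrees $0$ and $1$ with the differential-free analogue realized by a pushout of free algebras, and the map $R[t_0]\to R[t_0,t_1]$ classifies $t_0\mapsto t_0$. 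Here I use that $\theta$ preserves pushouts (stated at the end of \Cref{sec2.2}) and that for $\phi:A\to B$ the functor $\theta(\phi)=\mhyphen\otimes_A B$ has right adjoint $\phi^*$ (same reference), so that taking left adjoints of \eqref{2pupbeq2} identifies $g_!=\theta(t_0\mapsto t_0)$ after the shift-of-generator automorphism $t_0\mapsto t_0-1$, exactly as in \Cref{genlem}. Since $\on{cofib}(R[t_0]\xrightarrow{t_0}R[t_0])\simeq R$ in $\on{RMod}_{R[t_0]}$ and this is the unique module with that underlying spectrum, the pushout computes $R[t_1]$, giving \eqref{eq77} for $n=2$ and commutativity of the upper triangle of \eqref{eqdiag14}.

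For the inductive step I would argue just as in \Cref{genlem}: assuming $\on{Fun}(S^{n-1},\on{RMod}_R)\simeq \on{RMod}_{R[t_{n-2}]}$, the square \eqref{2pupbeq2} is the image under $\theta$ of the pushout of $R$-algebras
\[
\begin{tikzcd}
{R[t_{n-2}]} \arrow[r, "t_{n-2}\mapsto 0"] \arrow[d, "t_{n-2}\mapsto t_{n-2}"'] \arrow[rd, "\ulcorner", phantom, near end] & R \arrow[d] \\
{R[t_{n-2},t_{n-1}]} \arrow[r]                                                   & {R[t_{n-1}]}
\end{tikzcd}
\]
whose pushout is $R[t_{n-1}]$ because $R[t_{n-2},t_{n-1}]\otimes_{R[t_{n-2}]} R$ is computed by killing $t_{n-2}$, leaving the free algebra on $t_{n-1}$. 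This yields the equivalence \eqref{eq77} and the commutativity of the upper half of \eqref{eqdiag14}. For the lower half, I would use the same argument as in \Cref{genlem}: the $R$-linear functors $f^*$ and $\phi^*$ are determined by their values $f^*(R)$ and $\phi^*(R)$ (by \cite[Section 4.8.4]{HA}), and one computes $i^*f^*(R)\simeq R$, so that under the equivalence $f^*(R)$ goes to the right $R[t_{n-1}]$-module whose underlying spectrum is $R$; this module is unique up to equivalence (it is pulled back along $\phi:R[t_{n-1}]\to R$), which forces the lower triangle to commute.

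The main obstacle, compared to the $k$-linear case, is that one cannot invoke formality of endomorphism algebras or Massey-product-type vanishing arguments to identify the relevant ring spectra — there is no "homology concentrated in degree $0$" shortcut. Instead, the identifications $g_!\simeq \theta(t_0\mapsto t_0)$ and the recognition of $R[t_{n-1}]$ as the pushout must be carried out purely at the level of pushouts in $\on{Alg}(\on{RMod}_R)$, using that a pushout $A\otimes_C B$ of algebras is computed by the relative tensor product of underlying modules and that the free algebra on a shift of $R$ behaves well under base change. Concretely, the step that needs care is verifying that the cofiber $\on{cofib}(R[t_{n-2}]\xrightarrow{t_{n-2}} R[t_{n-2}])$, viewed as an object of $\on{RMod}_{R[t_{n-2}]}$, really is the module classified by $\phi:R[t_{n-2}]\to R$ and that there is a unique such module up to equivalence — the latter uniqueness being automatic since $\on{Map}_{\on{Alg}(\on{RMod}_R)}(R[t_{n-2}], R)\simeq \Omega^{\infty}R[-(n-2)]$ has a canonical choice of nullhomotopy for the generator, but one should still remark that all such algebra maps sending the generator to $0$ are equivalent. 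Once these points are settled, the remaining bookkeeping (tracking the left/right adjoint diagrams between $\mathcal{P}r^L$ and $\mathcal{P}r^R$, the shift automorphism $t_0\mapsto t_0-1$ in the $n=2$ case) is identical to \Cref{genlem} and I would simply say "the proof of \Cref{genlem} applies with the obvious modifications."
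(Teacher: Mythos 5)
There is a genuine gap in your $R$-algebra pushout square. You place $R[t_0,t_1]$ at the bottom-left corner, and for your square to map under $\theta$ to \eqref{2pupbeq1} you would need $\on{RMod}_{R[t_0,t_1]}\simeq \on{RMod}_R$. But in the spectral setting there is no analogue of the dg-algebra $(k[t_0,t_1],\, d(t_1)=t_0)$: the object ``free $R$-algebra on two generators'' is genuinely not equivalent to $R$, and there is no differential to impose. The phrase ``differential-free analogue realized by a pushout of free algebras'' does not resolve this — it hedges precisely where the dg-argument actually uses the cofibrant replacement. You do acknowledge that formality and degree-zero-concentration arguments are unavailable, but your proposed replacement — that a pushout of algebras ``is computed by the relative tensor product of underlying modules'' — is false for $\mathbb{E}_1$-algebra objects (pushouts in $\on{Alg}(\on{RMod}_R)$ are amalgamated free products, not relative tensor products; the tensor-product formula only holds in the commutative setting).

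The paper sidesteps all of this with a different and cleaner idea: start from the biCartesian suspension square $R[n-1]\to 0,\ 0\to R[n]$ in $\on{RMod}_R$, and apply the free $R$-algebra functor, which is a left adjoint and hence preserves colimits. This produces, with no further work, a pushout of \emph{free} algebras $R[t_{n-1}]\xrightarrow{t_{n-1}\mapsto 0} R,\ R\to R[t_n]$ in $\on{Alg}(\on{RMod}_R)$; one then matches this under $\theta$ with the pushout squares of \Cref{pbpulem2} (for the $n=1$ case, using the automorphism $t_0\mapsto t_0+1$ to reconcile the map $t_0\mapsto 1$ from \Cref{l1lem2} with $t_0\mapsto 0$, which is the part your $t_0\mapsto t_0-1$ shift gestures at). The lower-triangle commutativity argument — computing the module $f^*(R)$ via $i^*f^*(R)\simeq R$ and uniqueness of the augmentation $R[t_{n-1}]\to R$ — is as you describe and is fine. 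It is the identification of the pushout in $\on{Alg}(\on{RMod}_R)$ that your proposal does not actually achieve.
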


\begin{proof}
Consider the following biCartesian square in $\on{RMod}_R$. 
\[
\begin{tikzcd}
R[n-1] \arrow[r] \arrow[d] \arrow[rd, "\square", phantom] & 0 \arrow[d] \\
0 \arrow[r]                                 & {R[n]}     
\end{tikzcd}
\] 
Applying the colimit preserving free $R$-algebra functor $\on{RMod}_R\rightarrow \on{Alg}(\on{RMod}_R)$ yields the following pushout diagram of $R$-algebras. 
\begin{equation}\label{Rpu1}
\begin{tikzcd}
{R[t_{n-1}]} \arrow[r, "t_{n-1}\mapsto 0"] \arrow[d, "t_{n-1}\mapsto 0"'] \arrow[rd, "\ulcorner", phantom, near end] & R \arrow[d] \\
R \arrow[r]                                              & {R[t_n]}   
\end{tikzcd}  
\end{equation}
Consider the morphism of ring spectra $R[t_0]\xrightarrow{t_0\mapsto t_0+1} R[t_0]$, determined from the universal property by the morphism $R\xrightarrow{1\mapsto 1+t_0} R[t_0]$ in $\on{RMod}_R$. Using the commutativity of the diagram
\[ 
\begin{tikzcd}
{R[t_0]} \arrow[r, "t_0\mapsto 1"] \arrow[d, "t_0\mapsto t_0+1"'] & R \\
{R[t_0]} \arrow[ru, "t_0\mapsto 0"']                              &  
\end{tikzcd}
\]
it follows that for $n=1$ the image of the diagram \eqref{Rpu1} under $\theta:\on{Alg}(\on{RMod})\rightarrow \on{LinCat}_R$ is equivalent to the pushout diagram in \eqref{2pupbeq1}. It follows that there exists an equivalence of $R$-linear $\infty$-categories $\on{Fun}(S^2,\on{RMod}_R)\simeq \on{RMod}_{R[t_1]}$. Using that the monadic functor $G$ is equivalent to the pullback along $R\rightarrow R[t_1]$, we obtain that the upper half of the diagram \eqref{eqdiag1} commutes. 

Using that $f_!\circ i_!\simeq \on{id}_{\on{RMod}_R}$, see \Cref{sphrem}, we obtain the following commutative diagram.  

\begin{equation}\label{2pudiag1}
\begin{tikzcd}[column sep=small]
{\on{Fun}(L,\on{RMod}_R)} \arrow[dr, phantom, near end, "\ulcorner"]\arrow[r, "g_!"] \arrow[d, "g_!"] & \on{RMod}_R \arrow[d, "i_!"] \arrow[rdd, "\on{id}", bend left] &             \\
\on{RMod}_R \arrow[r, "i_!"] \arrow[rrd, "\on{id}", bend right=18]  & {\on{Fun}(S^2,\on{RMod}_R)} \arrow[rd, "f_!"]             &             \\
                                                            &                                                           & \on{RMod}_R
\end{tikzcd}
\end{equation}
The diagram \eqref{2pudiag1} is equivalent to the image under $\theta$ of the following diagram in $\on{Alg}(\on{RMod}_R)$.
\begin{equation}\label{2pudiag2}
\begin{tikzcd}
R[t_0] \arrow[r] \arrow[d]  \arrow[dr, phantom, near end, "\ulcorner"]             & R \arrow[d] \arrow[rdd, bend left , "\on{id}"] &   \\
R \arrow[r] \arrow[rrd, bend right, "\on{id}"] & {R[t_1]} \arrow[rd]                  &   \\
                                    &                                    & R
\end{tikzcd}
\end{equation}
By the universal property of the pushout in $\on{Alg}(\on{RMod}_R)$ there exists a unique morphism of ring spectra $R[t_1]\rightarrow R$ such that \eqref{2pudiag2} commutes. Such a map is given by $\phi$. It follows that the functor $f_!$ is equivalent to $\theta(\phi)$ and using \cite[4.6.2.17]{HA} also that the functor $f^*$ is equivalent to the pullback functor along $\phi$.

For $n\geq 2$, we can continue by induction and as before. The image of \eqref{Rpu1} under the functor $\theta$ is the pushout diagram in \eqref{2pupbeq2}. We thus find the desired equivalence $\on{Fun}(S^n,\on{RMod}_R)\simeq \on{RMod}_{R[t_{n-1}]}$ so that the upper half of diagram \eqref{eqdiag14} commutes. Analogous to the case $n=1$, it can be checked that the lower half of the diagram \eqref{eqdiag14} commutes.
\end{proof}

Our proof of \Cref{ctwprop} characterizing the cotwist functor of the spherical adjunction $f^*\dashv f_*$ does not directly generalize to the spectral setting. We conjecture the following. 

\begin{conjecture}\label{conj2}
For any stable $\infty$-category $\mathcal{D}$ and $n\geq 2$, consider the cotwist functor $T_{\on{Fun}(S^{n},\mathcal{D})}$ of the spherical adjunction $f^*:\mathcal{D}\leftrightarrow \on{Fun}(S^n,\mathcal{D}):f_*$, see \Cref{locprop1}.
\begin{enumerate}
\item Let $R$ be an $\mathbb{E}_\infty$-ring spectrum and $\mathcal{D}=\on{RMod}_R$. Let $\varphi:R[t_{n-1}]\rightarrow R[t_{n-1}]$ be the equivalence of ring spectra determined by $\varphi(t_{n-1})=(-1)^{n-1}t_{n-1}$ (via the involved universal properties). There exists a commutative diagram in $\on{LinCat}_R$ as follows.
\[
\begin{tikzcd}[column sep=60]
{\on{Fun}(S^n,\on{RMod}_R)} \arrow[r, "{T_{\on{Fun}(S^n,\on{RMod}_R)}}"] \arrow[d, "{\eqref{eq77}}"']
\arrow[d, "\simeq"] & {\on{Fun}(S^n,\on{RMod}_R)} \arrow[d, "{\eqref{eq77}}"']\arrow[d, "\simeq"]\\
{\on{RMod}_{R[t_{n-1}]}} \arrow[r, "{\varphi^*[-n]}"]                    & {\on{RMod}_{R[t_{n-1}]}}                      
\end{tikzcd}
\]
\item Denote by $S^n_{\on{top}}$ the topological $n$-sphere embedded as the unit-sphere into $\mathbb{R}^{n+1}$, so that its singular set is given by $\on{Sing}(S^n_{\on{top}})=S^n$. Let $r:S^n_{\on{top}}\rightarrow S^n_{\on{top}}$ be the antipodal map, mapping $x\in S^n_{\on{top}}$ to $-x\in S^n_{\on{top}}$ and $r^*$ its pullback functor. There exists a natural equivalence 
\[T_{\on{Fun}(S^n,\mathcal{D})}\simeq r^*[-n]\,.\] 
\end{enumerate}
\end{conjecture}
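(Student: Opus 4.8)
The plan is to deduce \Cref{conj2} from the $k$-linear result \Cref{ctwprop} by a sequence of reductions, the first two formal and the last geometric. I would begin by observing that part~1 follows from part~2 together with the identification, under the equivalence \eqref{eq77} of \Cref{2genlem4}, of the autoequivalence $r^*$ of $\on{Fun}(S^n,\on{RMod}_R)$ with the pullback functor $\varphi^*$ on $\on{RMod}_{R[t_{n-1}]}$. Since the antipodal map $r$ is a self-homeomorphism of $S^n$ covering the identity of the point, we have $f\circ r=f$, hence $r^*\circ f^*\simeq f^*$; consequently $r^*$ preserves, up to equivalence, the compact generator $i_!(R)$ corresponding under \eqref{eq77} to the free module $R[t_{n-1}]$ (cf.\ \Cref{sphrem}), because $r^*i_!(R)\simeq(r\circ i)_!(R)$ and $r\circ i$ is the inclusion of the antipodal point, homotopic to $i$. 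Thus $r^*$ corresponds to pullback along the automorphism of the $\mathbb{E}_1$-algebra $\on{End}_R(i_!(R))\simeq R[t_{n-1}]$ induced by $r$, which acts on the generator by a unit of $\pi_0 R$; the sign bookkeeping identifying this unit as $(-1)^{n-1}$ — the antipodal map acts on the bottom cell $S^{n-1}$ of $\Omega S^n$, i.e.\ on the class $t_{n-1}$, by $(-1)^{n}$, with the residual sign coming from the chosen homotopy $r\circ i\simeq i$ — is the same one carried out explicitly in the proof of \Cref{ctwprop}.

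For part~2 I would first set up the formal skeleton over an arbitrary stable $\infty$-category $\mathcal{D}$. By definition $T_{\on{Fun}(S^n,\mathcal{D})}=\on{fib}(f^*f_*\to\on{id})$, and the standard compatibilities of twist and cotwist in a spherical adjunction (see \cite[Section~4]{Chr20}), combined with $T_{\mathcal{D}}\simeq[-n]$ from \Cref{locprop1}, yield $f^*[-n]\simeq T_{\on{Fun}(S^n,\mathcal{D})}\circ f^*$ and $f_*[-n]\simeq f_*\circ T_{\on{Fun}(S^n,\mathcal{D})}$. It follows that $T_{\on{Fun}(S^n,\mathcal{D})}\simeq\Phi\circ[-n]$ for a unique autoequivalence $\Phi$ of $\on{Fun}(S^n,\mathcal{D})$ with $\Phi\circ f^*\simeq f^*$ and $f_*\circ\Phi\simeq f_*$, and the antipodal autoequivalence $r^*$ is readily checked to have both of these invisibility properties. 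The content of part~2 is thus precisely that $\Phi\simeq r^*$.

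To identify $\Phi$, I would reduce to the universal case $\mathcal{D}=\on{Sp}\simeq\on{RMod}_{\mathbb{S}}$: since $S^n$ is a finite space, $\on{Fun}(S^n,\mathcal{D})$ and the adjunction $f^*\dashv f_*$ (equivalently $f^*\dashv f_![-n]$, using colimits) are obtained from the case $\mathcal{D}=\on{Sp}$ by the colimit-preserving base change $\mathcal{D}\mapsto\on{Fun}(S^n,\mathcal{D})$ — this is the content of the pullback squares in \Cref{pbpulem2} over $\on{RMod}_R$ and their analogue over general $\mathcal{D}$, compare \Cref{limcolim} — and both the cotwist (built naturally from the counit) and $r^*$ are manifestly compatible with this base change. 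Over $\on{Sp}$, \Cref{2genlem4} furnishes $\on{Fun}(S^n,\on{Sp})\simeq\on{RMod}_{\mathbb{S}[t_{n-1}]}$ with an explicit endomorphism ring, so $\Phi$ becomes pullback along an automorphism of the $\mathbb{E}_1$-algebra $\mathbb{S}[t_{n-1}]$, forced by invisibility to $f^*\simeq\phi^*$ to act on the generator by a unit of $\pi_0\mathbb{S}=\mathbb{Z}$, i.e.\ by $\pm1$; the sign is then pinned down by induction on $n$ using the suspension decomposition $S^n=\ast\amalg_{S^{n-1}}\ast$, in which $r$ swaps the two cells and restricts to the antipodal map on $S^{n-1}$, the base case $n=1$ being that rotation by $\pi$ of $S^1$ is homotopic to the identity while $T_{\on{Fun}(S^1,\mathcal{D})}\simeq[-1]$ (consistent with \Cref{signrem}).

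The main obstacle is exactly this last sign/monodromy step: in the $k$-linear setting \Cref{ctwprop} disposes of it by an explicit dg-bimodule model for $\varphi^*[-n]$ (the cone of $\alpha:\widehat{k[t_{n-1}]}[n-1]\to k[t_{n-1}]$ and the resulting pushout square), and no such strict model is available over $\mathbb{S}$ or over a general $\mathcal{D}$. I expect the clean way around this is to track the antipodal $\mathbb{Z}/2$-symmetry through the descent (Mayer–Vietoris) square for $S^n=D^n\cup_{S^{n-1}}D^n$ and show it acts on the resulting bottom class by $(-1)^{n-1}$ after accounting for the basepoint-change path; carried out functorially in $\mathcal{L}$, this would produce a natural fiber sequence $r^*\mathcal{L}[-n]\to f^*f_*\mathcal{L}\to\mathcal{L}$ and hence establish the geometric description $T_{\on{Fun}(S^n,\mathcal{D})}\simeq r^*[-n]$ of part~2 directly, with part~1 then following from the reduction above.
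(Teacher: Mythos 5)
The statement you are addressing is left as a \emph{Conjecture} in the paper: the author notes immediately before it that the proof of \Cref{ctwprop} (the $k$-linear case, which rests on an explicit dg-bimodule model for the twist) ``does not directly generalize to the spectral setting,'' and no proof is offered. There is therefore no argument of the author's to compare against, and your proposal has to be judged entirely on its own merits.

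Your formal skeleton for part~2 is sound as far as it goes: the intertwining relations give $T_{\on{Fun}(S^n,\mathcal{D})}\simeq\Phi\circ[-n]$ with $\Phi$ invisible to $f^*$ and to $f_*$, and $r^*$ enjoys the same invisibility, so the whole content is the identification $\Phi\simeq r^*$. But the reduction and the identification both have real gaps. First, the base-change step $\on{Fun}(S^n,\mathcal{D})\simeq\on{Fun}(S^n,\on{Sp})\otimes\mathcal{D}$ is available for presentable $\mathcal{D}$, whereas part~2 quantifies over arbitrary stable $\infty$-categories; one would have to pass through $\on{Ind}$-completion and check that the cotwist is the restriction of the cotwist upstairs, which is plausible but not free. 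Second, and more seriously, the step from ``$\Phi$ is an $\mathbb{S}$-linear autoequivalence of $\on{RMod}_{\mathbb{S}[t_{n-1}]}$'' to ``$\Phi$ is pullback along an $\mathbb{E}_1$-ring automorphism of $\mathbb{S}[t_{n-1}]$'' requires $\Phi$ to preserve the free module $i_!(\mathbb{S})\leftrightarrow\mathbb{S}[t_{n-1}]$; invisibility to $f^*$ only hands you that $\Phi$ preserves $f^*(\mathbb{S})\leftrightarrow\phi^*(\mathbb{S})$, a different object, and showing that $\Phi=T_{\on{Fun}(S^n,\mathcal{D})}[n]$ fixes the generator is essentially equivalent to the statement you are after (for $r^*$ it is a direct geometric computation, but for the cotwist it is not). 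Third, even granting that reduction, over $\mathbb{S}$ the automorphism is not a priori pinned down to $t_{n-1}\mapsto\pm t_{n-1}$: $\pi_{n-1}\mathbb{S}[t_{n-1}]$ receives contributions from higher stable stems (e.g.\ $\eta\in\pi_1\mathbb{S}$ when $n=2$), so one must also rule out $t_{n-1}\mapsto\pm t_{n-1}+(\text{torsion})$ before the sign question is even well posed. Your concluding Mayer--Vietoris programme is a reasonable way to try to close all three issues simultaneously, but as you acknowledge it is a programme rather than a proof -- which is consistent with the paper's choice to leave the statement open.
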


\bibliography{biblio} 
\bibliographystyle{alpha}

\textsc{Fachbereich Mathematik, Universität Hamburg, Bundesstraße 55, 20146 Hamburg, Germany}

\textit{Email address:} \texttt{merlin.christ@uni-hamburg.de}
\end{document}